\documentclass[11pt]{amsart}

\usepackage[T1]{fontenc}
\usepackage[utf8]{inputenc}
\usepackage[
  backend=biber,
  style=numeric,
  sorting=none,
  giveninits=false,
  doi=true,
  isbn=true,
  url=true,
  eprint=true
]{biblatex}
\usepackage{lmodern}
\usepackage{microtype}
\usepackage{tabularx}
\usepackage{array}
\usepackage{amsmath,amssymb,amsfonts,amsthm}
\usepackage{mathtools}
\usepackage{enumitem}
\usepackage{tikz-cd}
\usepackage{geometry}
\usepackage{float}

\usepackage[colorlinks=true,linkcolor=blue,citecolor=blue,urlcolor=blue]{hyperref}
\usepackage[nameinlink,noabbrev]{cleveref}

\hypersetup{
  pdftitle={Cycle Relations and Global Gluing in Multi-Node Conifold Degenerations},
  pdfauthor={Abdul Rahman}
}

\DeclareMathOperator{\Cone}{Cone}

\DeclareMathOperator{\id}{id}

\numberwithin{equation}{section}

\theoremstyle{plain}
\newtheorem{theorem}{Theorem}[section]
\newtheorem{proposition}[theorem]{Proposition}
\newtheorem{lemma}[theorem]{Lemma}
\newtheorem{corollary}[theorem]{Corollary}

\newtheorem{hypothesis}[theorem]{Hypothesis}
\theoremstyle{definition}
\newtheorem{definition}[theorem]{Definition}

\theoremstyle{remark}
\newtheorem{remark}{Remark}[section]

\makeatletter



\makeatother

\newcommand{\Q}{\mathbb{Q}}

\newcommand{\C}{\mathbb{C}}

\newcommand{\Perv}{\mathrm{Perv}}

\newcommand{\Ext}{\mathrm{Ext}}

\newcommand{\can}{\mathrm{can}}
\newcommand{\var}{\mathrm{var}}
\newcommand{\rat}{\mathrm{rat}}

\newcommand{\MHM}{\mathrm{MHM}}
\newcommand{\PP}{\mathrm{PP}}

\newcommand{\rank}{\mathrm{rank}}
\newcommand{\Span}{\mathrm{Span}}

\title[Cycle Relations and Global Gluing]{Cycle Relations and Global Gluing in \\ Multi-Node Conifold Degenerations}

\author{Abdul Rahman}
\thanks{Email: arahman@alum.howard.edu}
\subjclass[2020]{14D06, 32S30, 32S35, 14B05, 18G80}
\keywords{conifold degeneration, perverse sheaf, mixed Hodge module, nearby cycles, vanishing cycles, conifold transition, perverse schober, quiver shadow, cycle-node incidence, global gluing}

\begin{document}
\begin{abstract}
We study projective one-parameter conifold degenerations whose central fiber has finitely many ordinary double points. Existing finite-node theory isolates one rank-one local sector per node on the perverse-sheaf, mixed-Hodge-module, and categorical sides, but does not determine which global extension classes are actually realized by geometry. We show that when the nodes are linked by common cycle geometry or homological relations, the corrected extension is not free nodewise data, but is forced into a smaller relation-controlled subspace. To formalize this, we introduce a cycle-node incidence datum and the associated geometrically realized subspace of the ambient nodewise extension space. Under geometrically admissible and block-adapted incidence hypotheses, we prove that the corrected perverse extension factors through this subspace, with incidence compatibility derived from propagation of local variation data along admissible cycle components, and we show that the same relation law lifts compatibly to the mixed-Hodge-module setting. We then compare this relation law with the resolution and smoothing sides and, in the block-separated cycle family, obtain
$R_{\mathrm{res}}=R_{\mathrm{sm}}=R_{\mathrm{ext}}=R_{\mathrm{blk}}$.
\end{abstract}
\maketitle

\section{Introduction}

\subsection{Conifold degenerations and the local-to-global problem}

A basic lesson of the compatibility analysis in \cite{HubschRahman2002} is that nodes in a conifold-type degeneration need not behave independently: when several nodes lie on a common distinguished cycle, the corresponding local geometric data may be forced to coincide globally. The present paper studies the sheaf-theoretic form of this phenomenon. More precisely, for a projective one-parameter degeneration 
$\pi:X\to \Delta$ whose central fiber \(X_0\) has finitely many ordinary double points $\Sigma=\{p_1,\dots,p_r\}\subset X_0$,
we ask whether the corrected finite-node extension is genuinely free nodewise data, or whether the ambient geometry forces it into a smaller relation-controlled subspace.

Such finite-node conifold degenerations form one of the basic geometric models of topology change in complex dimension three. Their local and global geometry is governed by the interaction of vanishing cycles, Picard--Lefschetz monodromy, small resolutions, and smoothing theory; in particular, relations among exceptional curves and vanishing spheres have long been known to control key aspects of conifold transitions \cite{Friedman1986,LeeLinWang2021,FriedmanLaza2024,HubschBestiary2nd2024}. Locally, each ordinary double point contributes a rank-one vanishing cycle, so the finite-node singular sector begins as a formal sum of \(r\) rank-one local contributions.

The sheaf-theoretic and Hodge-theoretic structure of this finite-node situation has been developed in earlier work. On the perverse side, one associates to the degeneration the corrected object
\[
\mathcal P:=\Cone(\var_F)[-1], \qquad F:=\Q_X[3],
\]
constructed from the variation morphism between vanishing and nearby cycles. In the finite-node ordinary-double-point setting, this object fits into an exact sequence
\[
0\to IC_{X_0}\to \mathcal P\to \bigoplus_{k=1}^r i_{k*}\Q_{\{p_k\}}\to 0,
\]
so that the singular quotient is the direct sum of the point-supported node sectors \cite{RahmanSchoberPaper,RahmanPerverseNearbyCycles,RahmanMixedHodgeModules,RahmanMultiNodeSchoberPaper}. On the mixed-Hodge-module side, the same corrected object admits an internal refinement in Saito's category,
\[
0\to IC^H_{X_0}\to \mathcal P^H\to \bigoplus_{k=1}^r i_{k*}\Q^H_{\{p_k\}}(-1)\to 0,
\]
so that the local rank-one singular sectors persist as Tate-twisted point-supported mixed Hodge modules \cite{SaitoMHM,SaitoYoungGuide,SaitoDuality,RahmanPerverseNearbyCycles,RahmanMixedHodgeModules}. On the categorical side, the same finite-node geometry admits a schober-type organization with one localized sector per node and a quiver shadow recovering the corrected extension after decategorification \cite{RahmanMultiNodeSchoberPaper}. Thus the local ordinary-double-point block, its finite direct-sum assembly, and its mixed-Hodge and categorical lifts are already in place.

What is not yet understood is the global extension problem. The finite-node corrected object has nodewise singular quotient \(\bigoplus_{k=1}^r i_{k*}\Q_{\{p_k\}}\), but this quotient alone does not determine which global extension classes are geometrically realizable. More precisely, the direct-sum form of the singular quotient explains why the \emph{ambient} extension problem admits one formal nodewise direction per singular point; it does \emph{not} imply that the distinguished corrected extension class coming from a single global degeneration varies independently at each node. The issue is therefore not local existence, but global compatibility: which linear combinations of the local node sectors actually arise from the canonical corrected object once the nodes are linked by the ambient geometry?

The guiding observation is that the nodes of a finite conifold degeneration need not be independent. They may arise as zeros of a common holomorphic condition, lie on common global cycle components, or become linked by the same homological data after passing to a resolution. In such situations, the node set carries additional global structure, and one expects this structure to impose relations on the corrected-extension coefficients. The purpose of the present paper is to make this precise by introducing a cycle-node incidence datum \((\mathcal C,\iota_{\mathcal C})\) and using it to define a geometrically realized subspace inside the free nodewise extension space. The basic claim is that the corrected perverse extension, its mixed-Hodge-module refinement, and its quiver shadow all factor through this smaller relation-controlled space rather than through the full free nodewise one.

The main benefit of this reformulation is that it isolates the first genuinely global invariant of the finite-node corrected-extension package. It shows that the corrected extension is not merely a formal sum of local rank-one pieces, but a global object constrained by the homological geometry of the node configuration. In particular, it provides a theorem-level bridge between classical conifold-transition data on the smoothing and resolution sides and the corrected perverse/MHM formalism on the sheaf-theoretic side. This is the structural input needed for later transport, wall crossing, and quiver/BPS developments: those later theories should act not on the formally free nodewise space, but on the relation-controlled geometric subspace identified here.

\subsection{Review of the literature}

The literature surrounding conifold degenerations already contains the local ordinary-double-point package, the classical smoothing--resolution comparison, and several sheaf-theoretic, Hodge-theoretic, and categorical frameworks in which degeneration data can be organized. What appears to be missing is a theorem-level account of the specifically global question studied here: when a projective finite-node degeneration has nodes linked by common cycle geometry or homological relations, does the corresponding corrected extension remain free nodewise data, or is it forced into a smaller relation-controlled subspace? The point is not the existence of local singular sectors, which is well understood, but the global constraint problem: how the ambient geometry of the node configuration governs which extension classes are actually realized.

The sheaf-theoretic background for the present paper begins with the theory of perverse sheaves and nearby/vanishing cycles developed by Beilinson, Bernstein, and Deligne \cite{BBD}, together with the gluing descriptions of perverse sheaves due to MacPherson and Vilonen \cite{MacPhersonVilonen} and the general sheaf-theoretic formalism of Kashiwara and Schapira \cite{KS}. A particularly relevant modern precedent is the work of Banagl, Budur, and Maxim \cite{BanaglBudurMaxim2012}, who construct nearby-cycle-based correction objects for hypersurfaces with isolated singularities, prove perversity, and show that these objects underlie mixed Hodge modules. Their work shows that correction objects arising from isolated singularities can admit internal mixed-Hodge-theoretic refinements. However, their setting and correction object differ from the corrected finite-node conifold extension studied here, and they do not address the classification of global extension data in the presence of homological relations among several nodes.

A second foundational strand comes from the degeneration and limiting-mixed-Hodge-structure literature. Deligne, Schmid, and Steenbrink established the framework through which local vanishing data contributes to global degenerating cohomology and limiting Hodge structures \cite{DeligneDegeneration,Schmid,SteenbrinkLimits}. These works matter here not because they already contain the corrected finite-node extension studied in this paper, but because they make precise the local-to-global principle underlying it: nearby and vanishing cycles are not merely local invariants of singular points, but part of a global degeneration package constrained by monodromy, specialization, and the organization of the singular locus inside the total space. They therefore provide the broader Hodge-theoretic backdrop for asking whether local nodewise correction data can really vary freely, or whether the ambient degeneration geometry forces it into a smaller relation-controlled space.

On the geometric side, conifold transitions have long been understood through the interplay of smoothing and resolution. Friedman's classical work on smoothing threefold double points \cite{Friedman1986} shows that relations among exceptional curves govern the deformation problem, while Friedman and Laza place related smoothing questions into a broader modern framework for mildly singular Calabi--Yau varieties \cite{FriedmanLaza2024}. From a complementary perspective, Lee, Lin, and Wang develop an $A+B$ theory for projective conifold transitions, showing that exceptional-curve data on the resolution side and vanishing-cycle data on the smoothing side together control substantial geometric information across the transition \cite{LeeLinWang2021}. Chi Li's work on Clemens manifolds further shows that nontrivial Hodge-theoretic structure survives on the smoothing side \cite{ChiLi2022}. These works strongly support the view that exceptional curves and vanishing cycles are the correct bridge objects for conifold geometry. What they do not provide is a theorem-level description of how such relation data constrains corrected perverse or mixed-Hodge-module extension classes.

The categorical literature supplies a further line of motivation. Perverse schobers, introduced as categorical analogues of perverse sheaves, provide a natural setting in which local monodromy and wall-crossing phenomena may be categorified. Bondal, Kapranov, and Schechtman relate schobers to birational geometry \cite{BondalKapranovSchechtman2018}; Donovan and collaborators analyze schobers associated with flops and related birational situations \cite{Donovan2018,DonovanKuwagaki2019}; and Koseki and Ouchi construct schobers for Calabi--Yau hypersurface contexts via Orlov equivalences \cite{KosekiOuchi}. These works show that degeneration and birational data admit meaningful categorical lifts. However, the existing schober literature does not address corrected finite-node extensions subject to global relation laws among the nodes.

A further adjacent body of work comes from Donaldson--Thomas theory, BPS state counting, and conifold/quiver models. The resolved conifold has been intensively studied in motivic and cohomological DT theory; for example, Morrison, Mozgovoy, Nagao, and Szendr{\H{o}}i compute motivic DT invariants of the resolved conifold and analyze their chamber structure \cite{MorrisonMozgovoyNagaoSzendroi2011}. More recently, Chuang studies higher-rank local categorical DT/PT correspondence \cite{Chuang2023}, while Aspman, Closset, Furrer, and Manschot study Calabi--Yau quivers and BPS counting in a modern wall-crossing framework \cite{AspmanClossetFurrerManschot2026}. These developments are important for the long-range aims of the present program, especially on the quiver and wall-crossing sides. Yet they typically begin from local Calabi--Yau or quiver data rather than from projective finite-node conifold degenerations equipped with corrected perverse and mixed-Hodge-module structures.

Finally, recent work indicates that global nodal geometry remains active well beyond local analytic models. Hang Yuan studies singularity structures associated with the conifold transition in family Floer and SYZ geometry \cite{Yuan2022}, while Schimannek's work on almost generic Calabi--Yau threefolds highlights the continuing importance of nodal degenerations together with exceptional-curve and homological phenomena in modern Calabi--Yau geometry \cite{Schimannek2025}. These developments reinforce the expectation that global relations among singularities should have consequences on the perverse, Hodge-theoretic, and categorical sides.

The literature therefore already contains the main ingredients: correction objects built from nearby cycles, limiting mixed Hodge structures for degenerations, smoothing and resolution bridges, categorical lifts, and conifold/quiver shadows. What still appears to be missing is a theorem-level statement that, for a projective finite-node conifold degeneration, global homology relations among the nodes constrain the admissible corrected perverse and mixed-Hodge-module extension classes. This is the gap filled in the present paper.

\subsection{A motivating three-node example from global holomorphic and cycle data}
\label{subsec:intro-prototype}

The local-to-global tension described above may already be seen in a simple three-node configuration modeled on the compatibility phenomenon observed in the earlier conifold literature. Consider a projective finite-node conifold degeneration $\pi:X\to\Delta$ whose central fiber \(X_0\) has node set
\begin{equation}
\Sigma=\{p_1,p_2,p_3\},
\qquad
\Sigma_1=\{p_1,p_2\},
\qquad
\Sigma_2=\{p_3\}.
\label{eq:intro-blocks}
\end{equation}
We assume that the nodes are organized by two distinguished global geometric conditions, which one may view either as holomorphic conditions, divisor components, or global cycle components,
\[
\mathcal C=\{C_1,C_2\},
\]
with \(p_1,p_2\) incident to \(C_1\) and \(p_3\) incident to \(C_2\). In the language of the earlier compatibility picture, one may think of \(p_1,p_2\) as lying on one common distinguished \(4\)-cycle and \(p_3\) as lying on a second one, so that the first two nodes are geometrically tied together while the third is not \cite{HubschRahman2002}.

Locally, each ordinary double point contributes one rank-one corrected-extension direction. Thus the ambient nodewise extension space is formally
\begin{equation}
E_{\mathrm{node}}
\cong
\Ext^1_{\Perv(X_0)}
\!\left(
\bigoplus_{k=1}^3 i_{k*}\Q_{\{p_k\}},
IC_{X_0}
\right)
\cong \Q^3,
\label{eq:intro-free-nodewise}
\end{equation}
after choosing local generators
\[
\varepsilon_1,\varepsilon_2,\varepsilon_3.
\]
This is the first appearance of the distinction emphasized above: \eqref{eq:intro-free-nodewise} describes the \emph{ambient} extension problem obtained from the direct-sum singular quotient. It says that the finite-node extension theory carries one formal coordinate per node. It does \emph{not} yet say that the canonical corrected extension class coming from the degeneration can vary arbitrarily in all three directions.

If one ignored the global geometry, one would therefore allow arbitrary independent nodewise coefficients
\[
a\varepsilon_1+b\varepsilon_2+c\varepsilon_3.
\]
The global cycle data, however, imposes a stricter law. The corresponding incidence map is
\begin{equation}
\iota_{\mathcal C}:\Q^2\longrightarrow \Q^3,
\qquad
e_1\longmapsto (1,1,0),
\qquad
e_2\longmapsto (0,0,1).
\label{eq:intro-incidence}
\end{equation}
Its image is
\begin{equation}
V_{\mathrm{geom}}
=
\Im(\iota_{\mathcal C})
=
\Span_{\Q}\{(1,1,0),(0,0,1)\}
\subseteq \Q^3.
\label{eq:intro-geometric-image}
\end{equation}
Equivalently, the geometrically admissible global coefficients are precisely those of the form
\begin{equation}
(a,a,b),
\qquad a,b\in\Q.
\label{eq:intro-admissible}
\end{equation}
Thus the first two nodes do not vary independently at the level of global gluing: they are forced to move together because they arise from the same global geometric condition, while the third node remains independent as its own relation block.

Equations~\eqref{eq:intro-blocks}--\eqref{eq:intro-admissible} already display the central phenomenon of the paper. The local corrected finite-node theory produces a formally free nodewise coefficient space, namely the space \eqref{eq:intro-free-nodewise}, but the global geometry encoded by the incidence map \eqref{eq:intro-incidence} cuts this space down to the smaller geometrically realizable subspace \eqref{eq:intro-geometric-image}. Equivalently, the admissible global coefficients are not arbitrary triples, but only those of the block-compatible form \eqref{eq:intro-admissible}. The purpose of the present paper is to formalize this phenomenon in general, prove that the corrected perverse extension factors through the resulting constrained subspace, and show that the same relation law is visible on the mixed-Hodge-module side, on the resolution and smoothing sides, and in the quiver shadow.

For this model configuration, geometric admissibility holds whenever the cycle components \(C_1\) and \(C_2\) are smooth away from the nodes, the degeneration is locally trivial along \(C_1^\circ\) and \(C_2^\circ\), and the unipotent nearby-cycle sector is rank one and locally constant along those smooth loci. This is formalized in Definition~4.10 and verified for the model configurations in Section~8.

\subsection{From local conifold sectors to global relation laws}
\label{subsec:physics-motivation}

A basic physical motivation for the present paper comes from Strominger's analysis of the conifold singularity in type II compactification on a Calabi--Yau threefold \cite{StromingerConifold}. In the simplest local conifold setting, a period \(Z^1\) associated to a collapsing cycle vanishes, the dual period acquires logarithmic monodromy, and the BPS mass formula shows that the corresponding wrapped brane state becomes massless as \(Z^1\to 0\). The singularity of the low-energy effective description is then resolved by restoring this light BPS hypermultiplet rather than integrating it out. Thus the local conifold picture isolates a definite mechanism:
\[
\text{vanishing period}
\;\Longrightarrow\;
\text{massless local BPS sector}
\;\Longrightarrow\;
\text{singular effective description if omitted}.
\]
At the same time, Strominger explicitly remarks that more complicated conifolds with multiple degenerations and different monodromies require further analysis, so the single-node story should be viewed as a solved local model rather than as the final word on the multi-node case.

From the geometric side, an early indication that the multi-node situation is not merely a free sum of local sectors already appears in the compatibility analysis of H\"ubsch and Rahman in \cite{HubschRahman2002}. There, when several nodes lie on a common distinguished cycle, the corresponding exceptional curves of a small resolution must arise compatibly and are tied to the same global cohomological datum. In the language of the present paper, this means that the local directions attached to those nodes need not remain globally independent. The multi-node problem therefore already suggests a local-to-global constraint: local singular sectors may arise nodewise, but the global geometry of the node configuration can identify or constrain them before any later BPS or wall-crossing formalism is imposed.

The present paper places this local-to-global constraint at the theorem level on the corrected-extension side. Starting from the finite-node corrected perverse extension, we show that the naive free nodewise gluing space is generally too large, and that the actual corrected extension factors through a smaller relation-controlled quotient determined by a cycle-node incidence datum. In the block-separated cycle family, this relation law is detected simultaneously on the resolution, smoothing, and corrected-extension sides:
\[
R_{\mathrm{res}}=R_{\mathrm{sm}}=R_{\mathrm{ext}}=R_{\mathrm{blk}}.
\]
Thus the present paper should be read as identifying the first global state space on which later transport, wall crossing, and BPS-type structures should act. It does not yet produce those later structures, but it isolates the geometric quotient that they should respect. From this point of view, the number and arrangement of relation blocks become the first global combinatorial invariants of the finite-node state space; their basic counting and dimensional properties are recorded in Appendix~\ref{app:combinatorics}.

\subsection{Cycle-node incidence data and the geometric extension problem}

We now describe more precisely the mathematical form of the problem. Let
\[
0\to IC_{X_0}\to \mathcal P\to \bigoplus_{k=1}^r i_{k*}\Q_{\{p_k\}}\to 0
\]
be the finite-node corrected extension, and let
\[
E_{\mathrm{node}}:=\Ext^1_{\Perv(X_0)}
\!\left(
\bigoplus_{k=1}^r i_{k*}\Q_{\{p_k\}},
IC_{X_0}
\right)
\]
denote the ambient nodewise extension space. This is the natural space in which the global corrected-extension class lives. The first point of the present paper is that $E_{\mathrm{node}}$ is in general too large from the geometric point of view.

To encode the global organization of the node set, we introduce a cycle-node incidence datum, consisting of a finite indexed collection of global cycle labels $\mathcal C=\{C_\alpha\}_{\alpha\in A}$ together with an incidence morphism
\[
\iota_{\mathcal C}:\Q^A\to \Q^r.
\]
Its image defines the geometric coefficient space
\[
V_{\mathrm{geom}}:=\Im(\iota_{\mathcal C})\subseteq \Q^r,
\]
while its induced relation blocks partition the node set according to incidence-equivalence. In general, these two points of view need not coincide automatically. Accordingly, the body of the paper isolates the block-adapted situation in which the image of the incidence map agrees with the subspace of block-constant nodewise coefficients.

The new geometric step of the paper is that the incidence datum is not used merely as a combinatorial bookkeeping device. In Section~4 we introduce a geometric admissibility condition on the chosen cycle-node incidence datum, requiring that the relevant nearby- and vanishing-cycle data propagate compatibly along the distinguished global cycle components. Under this admissibility condition, the nodewise coefficients of the corrected extension are shown to be determined by the local variation morphisms, and those local variation data are propagated along admissible cycle components. In this way, the incidence compatibility of the corrected extension class is derived from the geometry of the degeneration rather than imposed externally as a separate gluing law.

Conceptually, the paper is organized around the comparison of three relation spaces
\[
R_{\mathrm{res}},\qquad R_{\mathrm{sm}},\qquad R_{\mathrm{ext}},
\]
on the resolution, smoothing, and corrected-extension sides, respectively. The first comparison is classical in conifold transition geometry; the second is the new content of the present paper. To make the comparison theorem precise, we introduce explicit compatibility and minimality hypotheses on the chosen incidence datum. Under these hypotheses, the same relation lattice is detected on the resolution side, on the smoothing side, and on the corrected-extension side.

The three-node configuration above should be viewed as the prototype for the theorem stack developed below. Theorem~\ref{thm:intro-perverse} is the general form of the passage from the formally free nodewise space \eqref{eq:intro-free-nodewise} to the geometrically realized subspace \eqref{eq:intro-geometric-image}. The later comparison theorems show that the same relation law suggested by \eqref{eq:intro-incidence}--\eqref{eq:intro-admissible} is also detected on the resolution side through exceptional curves and on the smoothing side through vanishing spheres. In this sense, the example is not merely illustrative: it is the basic pattern that the rest of the paper abstracts and proves.

\subsection{Main results}

Let $\pi:X\to\Delta$ be a projective one-parameter degeneration whose central fiber $X_0$ has finitely many ordinary double points $\Sigma=\{p_1,\dots,p_r\}\subset X_0$. Let
\[
\mathcal P:=\Cone(\var_F)[-1], \qquad F:=\Q_X[3],
\]
be the corrected perverse object, and let
\[
0\to IC_{X_0}\to \mathcal P\to \bigoplus_{k=1}^r i_{k*}\Q_{\{p_k\}}\to 0
\]
be the finite-node corrected extension. The theorem package below should be read as the abstract generalization of the three-node configuration described in Section~\ref{subsec:intro-prototype}. Equations \eqref{eq:intro-free-nodewise}--\eqref{eq:intro-admissible} exhibit the prototype local-to-global phenomenon: a formally free nodewise coefficient space is cut down by global geometric relations to a smaller admissible subspace. Theorems~1.1--1.4 show that this same pattern persists in general on the perverse, mixed-Hodge, resolution, smoothing, and quiver-shadow sides.

In the three-node prototype, the passage from the formally free space \eqref{eq:intro-free-nodewise} to the constrained image \eqref{eq:intro-geometric-image} is immediate from the chosen incidence map \eqref{eq:intro-incidence}. Theorem~\ref{thm:intro-perverse} is the abstract version of exactly this passage. In the strengthened form proved in Section~4, the incidence compatibility of the corrected extension class is derived geometrically from propagation of local variation data along admissible global cycle components. The first theorem therefore identifies the geometric extension space on the perverse side under geometrically admissible and block-adapted incidence hypotheses.

\begin{theorem}[Geometric relation-controlled extension subspace]
\label{thm:intro-perverse}
Assume that the node set $\Sigma$ is equipped with a cycle-node incidence datum $(\mathcal C,\iota_{\mathcal C})$ that is geometrically admissible and block-adapted. Then there exists a canonically defined subspace
\[
E_{\mathrm{geom}}
\subseteq
\Ext^1_{\Perv(X_0)}
\!\left(
\bigoplus_{k=1}^r i_{k*}\Q_{\{p_k\}},
IC_{X_0}
\right)
\]
determined by the incidence datum such that the corrected extension class of $\mathcal P$ belongs to $E_{\mathrm{geom}}$. Equivalently, the global corrected extension factors through a relation-controlled quotient of the free nodewise coefficient space. In particular, incidence compatibility is forced by the geometry of the degeneration through propagation of the local variation data along admissible global cycle components.
\end{theorem}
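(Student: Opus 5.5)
The plan is to reduce the statement to linear algebra on nodewise coefficients, and then to supply the geometric input that forces those coefficients into $V_{\mathrm{geom}}$.

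First, we identify the ambient extension space. Additivity of $\Ext^1$ in the first variable gives
\[
E_{\mathrm{node}}\cong\bigoplus_{k=1}^r \Ext^1_{\Perv(X_0)}\bigl(i_{k*}\Q_{\{p_k\}},IC_{X_0}\bigr).
\]
By the local ordinary-double-point computation from the earlier finite-node papers, each summand is one-dimensional. We fix generators $\varepsilon_k$ normalized by the local model at $p_k$, namely the class of the local corrected object $\Cone(\var)[-1]$ near $p_k$. This yields a coordinate isomorphism $c:E_{\mathrm{node}}\xrightarrow{\sim}\Q^r$. We then define
\[
E_{\mathrm{geom}}:=c^{-1}\bigl(V_{\mathrm{geom}}\bigr)=c^{-1}\bigl(\Im\iota_{\mathcal C}\bigr).
\]
Canonicity needs a short check. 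Rescaling the local generators by a common normalization coming from the global $F=\Q_X[3]$ does not change $c^{-1}(V_{\mathrm{geom}})$. Moreover, block-adaptedness identifies $V_{\mathrm{geom}}$ with the block-constant vectors, and that subspace depends only on the partition of $\Sigma$ into relation blocks. The quotient formulation follows because $E_{\mathrm{node}}/E_{\mathrm{geom}}\cong\Q^r/V_{\mathrm{geom}}$, and the class $[\mathcal P]$ lies in $E_{\mathrm{geom}}$ exactly when its image in this quotient vanishes.

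Second, we compute the coordinates $c_k([\mathcal P])$. We restrict the extension to a small ball $B_k$ around $p_k$. Restriction to the open set $B_k$ is exact on perverse sheaves, and the sum of the restrictions recovers $c$. This shows that $c_k([\mathcal P])$ is the scalar by which $\var_F|_{B_k}$ differs from the normalized local variation $\var_{p_k}:\phi_{f}\Q\to\psi_{f}\Q$. This scalar is the comparison of the rank-one vanishing-cycle stalk at $p_k$ with the unipotent nearby-cycle sector.

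Third, we prove that $c_k([\mathcal P])$ is constant on relation blocks. We fix a cycle component $C_\alpha$ and nodes $p_k,p_l\in C_\alpha$. Geometric admissibility (Definition~4.10) gives three facts: $C_\alpha^\circ=C_\alpha\setminus\Sigma$ is smooth and connected, $\pi$ is locally trivial along $C_\alpha^\circ$, and the unipotent nearby-cycle sector is a rank-one local system $L_\alpha$ on $C_\alpha^\circ$. The variation data at $p_k$ and at $p_l$ are each identified, through the specialization (cospecialization) maps from a punctured neighbourhood, with a fibre of $L_\alpha$. Parallel transport in $L_\alpha$ along a path in $C_\alpha^\circ$ joining the two neighbourhoods then identifies the two normalizations. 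Together with the fact that $\var_F$ is a single global morphism, this gives $c_k=c_l$. By induction along chains of incidences, $c$ is constant on each incidence-equivalence class. Block-adaptedness then puts $c([\mathcal P])$ in $V_{\mathrm{geom}}$.

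I expect the main obstacle to be this propagation step. The point to prove is that transporting $L_\alpha$ around loops in $C_\alpha^\circ$ introduces no scalar ambiguity, so that the identifications are well defined rather than defined only up to monodromy. Here I would use unipotence together with rank one: the local system $L_\alpha$ then has trivial monodromy on its unipotent part, so the transport is canonical. If that is not enough, the fallback is to compare the normalizations through the global section of $\psi_f\Q$ on $C_\alpha$ supplied by local triviality.
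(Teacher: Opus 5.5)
Your proposal follows the paper's proof essentially step for step. The steps are nodewise splitting and locality (Lemmas~\ref{lem:perv-additivity} and~\ref{lem:perv-locality}), then determination of each coefficient by the local variation scalar relative to the local ODP generator (Proposition~\ref{prop:local-det}). Next comes propagation along each admissible $C_\alpha$ by transport on the connected smooth locus combined with Definition~\ref{def:admissible-cycle-datum}(4) (Proposition~\ref{prop:geom-prop} and Corollary~\ref{cor:blockwise}). Finally, block-adaptedness turns block-constancy into membership in $E_{\mathrm{geom}}$ (Proposition~\ref{prop:incidence-compatible-iff-geom} and Theorem~\ref{thm:perv-relation-controlled}).

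The paper never addresses path-independence of the transport, and your proposed justification does not work as stated. ``Unipotent'' there refers to the monodromy $T$ of the degeneration about $t=0$, not to monodromy of $L_\alpha$ around loops in $C_\alpha^\circ$. The issue is moot anyway: $C_\alpha^\circ\subset U$, where $\pi$ is smooth and $\psi_\pi(\mathcal F)$ is the shifted constant sheaf. That same fact shows that, exactly as in the paper, the coefficient equality is supplied by hypothesis~(4) of Definition~\ref{def:admissible-cycle-datum}, not by the local system itself.
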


In addition to this perverse-side geometric relation law, the paper proves two comparison results at different levels of generality.

\begin{theorem}[Conditional comparison theorem]
\label{thm:intro-comparison-conditional}
Assume that the finite-node conifold degeneration admits both a smoothing and a small resolution, and that the node set $\Sigma$ is equipped with a cycle-node incidence datum $(\mathcal C,\iota_{\mathcal C})$ that is comparison-compatible and minimal. Then
\[
R_{\mathrm{geom}}
=
R_{\mathrm{res}}\cap R_{\mathrm{sm}}\cap R_{\mathrm{ext}}.
\]
Equivalently, the common relation lattice shared by the resolution, smoothing, and corrected-extension sides is exactly the kernel of the geometric quotient.
\end{theorem}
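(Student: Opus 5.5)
The plan is to make all four relation spaces sublattices of one common ambient space and then compare them there. The natural ambient is the lattice of nodewise integer (or rational) coefficient relations $\Z^r\subseteq\Q^r$, or equivalently its dual. I read $R_{\mathrm{geom}}$ as the kernel of the geometric quotient $\Q^r\to\Q^r/V_{\mathrm{geom}}$ dualized, that is, the relations $\lambda\in\Q^r$ that annihilate $V_{\mathrm{geom}}=\Im(\iota_{\mathcal C})$: $R_{\mathrm{geom}}=\ker(\iota_{\mathcal C}^{\vee})$. The three sides are described as follows.

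\begin{itemize}[leftmargin=*]
\item On the resolution side, $R_{\mathrm{res}}$ is the kernel of $\Q^r\to H_2(\widehat X)$, $e_k\mapsto[\ell_k]$, where $\ell_k$ are the exceptional curves of the small resolution.
\item On the smoothing side, $R_{\mathrm{sm}}$ is the kernel of $\Q^r\to H_3(X_t)$, $e_k\mapsto[S_k]$, where $S_k$ are the vanishing spheres.
\item On the extension side, $R_{\mathrm{ext}}$ consists of the relations satisfied by the nodewise coefficients of the corrected class $[\mathcal P]\in E_{\mathrm{node}}\cong\Q^r$, in the sense of the relation space attached to the quotient through which $[\mathcal P]$ factors.
\end{itemize}

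The first step is the inclusion $R_{\mathrm{geom}}\subseteq R_{\mathrm{res}}\cap R_{\mathrm{sm}}\cap R_{\mathrm{ext}}$. I would prove this from comparison-compatibility, which I take to mean that each side's coefficient map factors through $\iota_{\mathcal C}$. On the resolution side, the classes $[\ell_k]$ are expressed through the cycle components $C_\alpha$ via intersection pairing, $[\ell_k]=\sum_\alpha (\iota_{\mathcal C})_{k\alpha}\,c_\alpha$. Hence a relation killed by $\iota_{\mathcal C}^\vee$ is killed on $H_2(\widehat X)$. The smoothing side follows in the same way, using the Friedman/Lee--Lin--Wang duality between relations among the $[\ell_k]$ and relations among the $[S_k]$. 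For the extension side I would invoke Theorem~\ref{thm:intro-perverse}, in its Section~4 form, under the admissibility that comparison-compatibility includes: $[\mathcal P]\in E_{\mathrm{geom}}$, so every relation vanishing on $V_{\mathrm{geom}}$ is satisfied by the coefficients.

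The second step is the reverse inclusion $R_{\mathrm{res}}\cap R_{\mathrm{sm}}\cap R_{\mathrm{ext}}\subseteq R_{\mathrm{geom}}$, and this is where minimality enters. I would take minimality to mean that no proper sub-incidence datum, and no coarser block structure, is compatible with all three sides. Let $\lambda$ be a relation common to all three sides but not in $R_{\mathrm{geom}}$. Then $\lambda$ does not vanish on some $\iota_{\mathcal C}(e_\alpha)$. Adjoining $\lambda$ cuts $V_{\mathrm{geom}}$ down to $V_{\mathrm{geom}}\cap\lambda^{\perp}$, a strictly smaller subspace. This smaller subspace is represented by a modified incidence datum, obtained by replacing $\iota_{\mathcal C}$ with its composition with a basis of $\iota_{\mathcal C}^{-1}(\lambda^\perp)$. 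The modified datum still satisfies all three compatibilities, because $\lambda$ already holds on each side, and this contradicts minimality.

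The main obstacle is exactly this step: checking that the cut-down datum is again comparison-compatible, not merely linearly compatible. Concretely, its cycle labels must still be realized by geometric cycles on each side. If minimality is defined purely linearly, as ``$V_{\mathrm{geom}}$ is the smallest subspace containing all three realized coefficient images'', the argument reduces to linear algebra by dualizing. That is the reading I would try first, falling back on the geometric realization only if the paper's definition demands it. Finally, passing to kernels of the geometric quotient $\Q^r\to\Q^r/V_{\mathrm{geom}}$ on the dual side gives the stated equivalent formulation.
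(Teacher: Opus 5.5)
This is essentially the paper's argument. In the paper, comparison-compatibility (Definition~\ref{def:comparison-compatible}) \emph{is} the existence of factorizations $\eta_\bullet=\bar\eta_\bullet\circ q_{\mathrm{geom}}$, so your first inclusion is just Lemma~\ref{lem:common-factorization}; minimality (Definition~\ref{def:minimal-comparison}) is defined as the equality $\ker(q_{\mathrm{geom}})=R_{\mathrm{res}}\cap R_{\mathrm{sm}}\cap R_{\mathrm{ext}}$, to which your linear reading is equivalent by your dualization, so the cut-down construction is correct but unnecessary.

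You should drop the geometric justifications in your first step, which are not needed and not all valid. Theorem~\ref{thm:intro-perverse} requires admissibility and block-adaptedness, which are not assumed here. More seriously, the Friedman/Lee--Lin--Wang duality identifies $R_{\mathrm{sm}}$ with the annihilator of $R_{\mathrm{res}}$: up to signs, relations among the vanishing spheres are the vectors $(D\cdot\ell_k)_k$ for divisor classes $D$ on the resolution. That duality reverses inclusions, so it cannot carry $R_{\mathrm{geom}}\subseteq R_{\mathrm{res}}$ over to the smoothing side.
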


\begin{theorem}[Block-separated cycle family]
\label{thm:intro-comparison}
Assume that the finite-node conifold degeneration belongs to the block-separated cycle family of Section~\ref{subsec:geom-family-block}. Then
\[
R_{\mathrm{res}}=R_{\mathrm{sm}}=R_{\mathrm{ext}}=R_{\mathrm{blk}}.
\]
Equivalently, the same block-relation lattice governs homology relations among exceptional curves, homology relations among vanishing spheres, and admissible gluing relations for the corrected perverse extension.
\end{theorem}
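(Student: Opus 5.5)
The proof reduces all four equalities to the single block-constancy condition. I will show that each of $R_{\mathrm{res}}$, $R_{\mathrm{sm}}$, $R_{\mathrm{ext}}$ contains $R_{\mathrm{blk}}$ and is contained in it. Here $R_{\mathrm{blk}}\subseteq\Q^r$ is the lattice of nodewise relations $\sum_k c_k[\,\cdot\,]_k=0$ generated by the differences $e_k-e_l$ for $p_k,p_l$ in a common block $\Sigma_\alpha$. Equivalently, it is the annihilator of the block-constant coefficient space $V_{\mathrm{geom}}$. In the block-separated cycle family of Section~\ref{subsec:geom-family-block}, the incidence datum $(\mathcal C,\iota_{\mathcal C})$ is geometrically admissible, block-adapted, comparison-compatible and minimal. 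I expect to verify these four properties directly from the defining features of the family:
\begin{itemize}
\item each block $\Sigma_\alpha$ lies on a single cycle component $C_\alpha$;
\item the components are smooth away from the nodes and pairwise disjoint;
\item the degeneration is locally trivial along $C_\alpha^\circ$, with rank-one unipotent nearby cycles there.
\end{itemize}
Granting these properties, Theorem~\ref{thm:intro-comparison-conditional} gives $R_{\mathrm{blk}}=R_{\mathrm{geom}}=R_{\mathrm{res}}\cap R_{\mathrm{sm}}\cap R_{\mathrm{ext}}$. This yields $R_{\mathrm{blk}}\subseteq R_{\mathrm{res}}, R_{\mathrm{sm}}, R_{\mathrm{ext}}$. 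It remains to prove the three reverse inclusions separately.

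\emph{Extension side.} Theorem~\ref{thm:intro-perverse} places the corrected class in $E_{\mathrm{geom}}$, which is identified with $V_{\mathrm{geom}}$ under the basis $\varepsilon_k$. For the reverse inclusion $R_{\mathrm{ext}}\subseteq R_{\mathrm{blk}}$, I will use that distinct blocks lie on disjoint components. The local variation data along $C_\alpha$ then propagate a nonzero coefficient that is constant on $\Sigma_\alpha$. These coefficients are independent across blocks, because nothing links different $C_\alpha$. So the realized coefficient vectors, as the degeneration varies within the family, span all of $V_{\mathrm{geom}}$. Their annihilator is therefore exactly $R_{\mathrm{blk}}$.

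\emph{Resolution and smoothing sides.} On the resolution side, the exceptional curves $\ell_k$ over nodes of one block are homologous, since each is swept along the family of fibers over $C_\alpha$. This gives $R_{\mathrm{blk}}\subseteq R_{\mathrm{res}}$ again. For the reverse inclusion, I will pair each $\ell_k$ with a divisor dual to $C_\alpha$ (the proper transform of the $4$-cycle). This separates different blocks and shows that $[\ell_k]\neq0$.

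On the smoothing side, I will use Friedman's relation principle \cite{Friedman1986} and the conifold-transition duality of \cite{LeeLinWang2021}. Relations among the vanishing spheres $S_k$ are dual to relations among the $\ell_k$: the rank of $\Span\{[S_k]\}$ equals $r$ minus the rank of $\Span\{[\ell_k]\}$, and the relation lattices are orthogonal complements under the natural pairing. Applying this to the computed $R_{\mathrm{res}}$ should give $R_{\mathrm{sm}}=R_{\mathrm{blk}}$, with the convention for $R_{\mathrm{sm}}$ matched to the one fixed in Section~\ref{subsec:geom-family-block}.

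\emph{Main obstacle.} I expect the main difficulty to be the reverse inclusions: showing that no extra relations appear beyond block differences. This is a genericity or minimality statement. On the extension side, it requires that every block-constant vector actually be realized within the family, not merely permitted. On the smoothing side, it requires the duality convention to produce $R_{\mathrm{blk}}$ rather than its complement. I would first try to extract both facts directly from the minimality hypothesis as verified for the family in Section~8. If that fails, I would fall back on an explicit computation for the model configurations.
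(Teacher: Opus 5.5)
\textbf{There is a genuine gap.} Your overall shape matches the paper: show that each of $R_{\mathrm{res}}$, $R_{\mathrm{sm}}$, $R_{\mathrm{ext}}$ equals $R_{\mathrm{blk}}$ by two inclusions. But you have misidentified what membership in the block-separated cycle family gives you. As a result you try to derive inputs that are in fact assumed, and the derivations you propose do not work. Hypothesis~\ref{hyp:block-family} is not a list of features (disjoint components, local triviality) from which block-constancy must be extracted. It explicitly assumes:
\begin{itemize}
\item[(3)] $[E_k]=u_\beta$ for all $p_k\in\Sigma_\beta$;
\item[(4)] $\delta_k=v_\beta$ for all $p_k\in\Sigma_\beta$;
\item[(5)] linear independence of $\{u_\beta\}$ and of $\{v_\beta\}$;
\item[(6a)--(6b)] the factorization $\eta_{\mathrm{ext}}=\bar\eta_{\mathrm{ext}}\circ q_{\mathrm{blk}}$ with $\bar\eta_{\mathrm{ext}}:\Q^B\to E_{\mathrm{geom}}$ injective.
\end{itemize}
The paper's proof is then a direct computation. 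One has $\eta_{\mathrm{res}}(c)=\sum_\beta\bigl(\sum_{k\in\Sigma_\beta}c_k\bigr)u_\beta$, so independence of the $u_\beta$ gives $\ker\eta_{\mathrm{res}}=R_{\mathrm{blk}}$. The same argument with $v_\beta$ handles the smoothing side. Injectivity of $\bar\eta_{\mathrm{ext}}$ gives $\ker\eta_{\mathrm{ext}}=\ker q_{\mathrm{blk}}$. Neither Theorem~\ref{thm:intro-comparison-conditional} nor minimality enters. Moreover, minimality is by definition the equality $\ker q_{\mathrm{geom}}=R_{\mathrm{res}}\cap R_{\mathrm{sm}}\cap R_{\mathrm{ext}}$, so it cannot be ``verified'' before the three spaces are known.

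The substitute arguments you propose fail at concrete points.

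\emph{Smoothing side.} The classical duality you invoke gives $\dim W_{\mathrm{sm}}=r-\dim W_{\mathrm{res}}$ and $R_{\mathrm{sm}}=R_{\mathrm{res}}^{\perp}$, up to the signs of the $\delta_k$. Feeding in $R_{\mathrm{res}}=R_{\mathrm{blk}}$ makes $R_{\mathrm{sm}}$ (up to signs) the block-constant subspace, of dimension $|B|$. But $R_{\mathrm{blk}}$ has dimension $r-|B|$, so no convention repairs this unless $r=2|B|$. The smoothing-side equality therefore cannot be obtained from duality at all; in the paper it is exactly the content of the imposed conditions (4)--(5). Indeed, combined with the classical count $\dim W_{\mathrm{res}}+\dim W_{\mathrm{sm}}=r$, conditions (3)--(5) force $r=2|B|$.

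\emph{Resolution side.} The nodes are isolated and the small resolution is an isomorphism over $C_\alpha^\circ$. So there is no family of exceptional fibers along $C_\alpha$ along which $E_k$ could be swept into $E_\ell$. Block-constancy is hypothesis (3), with Proposition~\ref{prop:equivariant-block-constancy} as a sufficient criterion.

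\emph{Extension side.} For a fixed degeneration, $[\mathcal P]$ is a single nonzero vector. Its annihilator is a hyperplane, which equals $R_{\mathrm{blk}}$ only when $|B|=1$. Letting the degeneration vary changes the statement, and ``nothing links different $C_\alpha$'' does not establish that the realized vectors span. What you need here is precisely the injectivity assumption (6b), and the paper uses nothing more.
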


The full equality of relation lattices is therefore proved only in the block-separated cycle family of Section~5.5; outside that setting only the weaker intersection identity of Theorem~\ref{thm:intro-comparison-conditional} is claimed.

The next theorem shows that the same relation law lifts compatibly, via the realization functor, to Saito's category of mixed Hodge modules.

\begin{theorem}[Mixed-Hodge-module realization]
\label{thm:intro-mhm}
Assume that the cycle-node incidence datum is geometrically admissible and MHM block-adapted. Let
\[
E_{\mathrm{node}}^H
:=
\Ext^1_{\MHM(X_0)}
\!\left(
\bigoplus_{k=1}^r i_{k*}\Q^H_{\{p_k\}}(-1),
IC^H_{X_0}
\right).
\]
Then there exists a canonically defined subspace
\[
E_{\mathrm{geom}}^H\subseteq E_{\mathrm{node}}^H
\]
such that the realization functor
\[
\rat:\MHM(X_0)\to\Perv(X_0)
\]
sends $E_{\mathrm{geom}}^H$ to $E_{\mathrm{geom}}$. Thus the perverse-side relation law lifts compatibly through realization to the mixed-Hodge-module setting.
\end{theorem}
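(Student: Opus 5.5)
The plan is to reduce the statement to a coordinate computation on both sides and then check that realization respects the coordinates.

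\textbf{Step 1: coordinates on the MHM side.} Adjunction for the closed point inclusions $i_k$ gives
\[
E_{\mathrm{node}}^H\cong\bigoplus_{k=1}^r \Ext^1_{\MHM(X_0)}\bigl(i_{k*}\Q^H_{\{p_k\}}(-1),IC^H_{X_0}\bigr)\cong\bigoplus_{k=1}^r \Ext^1_{\MHM(\{p_k\})}\bigl(\Q^H(-1),\,i_k^!IC^H_{X_0}[1]\bigr).
\]
At an ordinary double point of a threefold, the lowest nonvanishing local cohomology of $IC$ supported at $p_k$ is a rank-one pure Hodge structure of type $(1,1)$. This is the class of the vanishing $S^3$, or dually the exceptional curve class. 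So each summand is $\Hom_{\MHS}(\Q(-1),\Q(-1))\cong\Q$, and the MHM extension class of $\mathcal P^H$ is the Tate-twisted local variation $\var$ at $p_k$. I would choose generators $\varepsilon_k^H$ so that $\rat(\varepsilon_k^H)=\varepsilon_k$, the perverse generators already used to define $E_{\mathrm{geom}}$.

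\textbf{Step 2: $\rat$ is diagonal.} The realization functor is exact and faithful, and it commutes with $i_{k*}$, $i_k^!$ and the adjunctions. On the $\Ext^1$ spaces it is therefore diagonal in these coordinates, with each component the identity map $\Q\to\Q$. Since the summands are rank one and pure, the forgetful map on each summand is an isomorphism. Hence $\rat:E_{\mathrm{node}}^H\to E_{\mathrm{node}}$ is an isomorphism of coordinate spaces $\Q^r\to\Q^r$.

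\textbf{Step 3: define $E_{\mathrm{geom}}^H$ and conclude.} Set $E_{\mathrm{geom}}^H$ to be the image of $\iota_{\mathcal C}$ under the coordinate identification $E_{\mathrm{node}}^H\cong\Q^r$, that is, the span of $\sum_k(\iota_{\mathcal C}e_\alpha)_k\,\varepsilon_k^H$. MHM block-adaptedness should say precisely that this equals the subspace of block-constant coefficient vectors and that these vectors are realized by Hodge-theoretic extensions. Step 2 then gives $\rat(E_{\mathrm{geom}}^H)=E_{\mathrm{geom}}$ at once. Canonicity holds because the subspace depends only on $\iota_{\mathcal C}$, not on the choice of the $\varepsilon_k^H$: rescaling the generators does not change block-constancy within a block as long as all generators are rescaled compatibly. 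I would also check that $[\mathcal P^H]\in E_{\mathrm{geom}}^H$. Geometric admissibility makes the unipotent nearby-cycle sector a rank-one variation along each $C_\alpha^\circ$. Saito's theory upgrades it to an admissible variation of mixed Hodge structure there, so the propagation argument of Theorem~\ref{thm:intro-perverse} runs inside $\MHM$ and forces equal coefficients along each cycle component.

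\textbf{Main obstacle.} I expect the hardest step to be fixing the normalization of the generators $\varepsilon_k^H$ so that they agree across a block. The propagation must carry not just the rational coefficient but the Tate-twisted identification $\Q^H(-1)$ along $C_\alpha^\circ$. Otherwise a node-dependent rescaling could make "block-constant" ill-defined on the MHM side. My first approach would be to normalize every $\varepsilon_k^H$ by the orientation of the vanishing cycle, or equivalently by the polarization $\Q(-1)\otimes\Q(-1)\to\Q(-2)$ restricted from the global polarization. I would then argue that parallel transport along the rank-one variation preserves this polarization, so that the induced rescaling is trivial.
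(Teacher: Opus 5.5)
Your skeleton matches the paper's. You fix generators $\varepsilon_k^H$ realizing to the perverse $\varepsilon_k$, set $E^H_{\mathrm{geom}}=\operatorname{Im}(\Gamma^H_{\mathcal C})$, and read off $\rat_*\circ\Gamma^H_{\mathcal C}=\Gamma_{\mathcal C}$ (Lemma~\ref{lem:mhm-rat-compat}). The paper simply asserts that each MHM summand is one-dimensional and that such lifts exist ``by construction''.

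The genuine gap is in the computation you add in Step~1 to justify this. Take the standard normalization $IC^H_{X_0}=j_{!*}\Q^H_U[3]$, which is pure of weight $3$. Compute on a local small resolution $\widetilde V_k\to V_k$ of a neighborhood of $p_k$, with exceptional curve $E_k\cong\mathbb{P}^1$:
\[
H^1(i_k^!IC^H_{X_0})\cong H^4_{E_k}(\widetilde V_k)\cong H_2(E_k)(-3)\cong\Q(-2).
\]
This is of type $(2,2)$, the same type as the vanishing cohomology $\widetilde H^3(F_{p_k})$. The type $(1,1)$ line you have in mind is the stalk $H^{-1}(i_k^*IC^H_{X_0})\cong H^2(L_k)\cong\Q(-1)$ of the link $L_k\cong S^2\times S^3$. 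That stalk governs extensions in the opposite direction, $\Ext^1(IC^H_{X_0},i_{k*}\Q^H_{\{p_k\}}(-1))$. Also, adjunction gives $\mathrm{Hom}_{D^b\MHM(\{p_k\})}(\Q^H(-1),i_k^!IC^H_{X_0}[1])$, not $\Ext^1$ of the shifted object.

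So each summand of $E^H_{\mathrm{node}}$, as written, is $\mathrm{Hom}_{\mathrm{MHS}}(\Q(-1),\Q(-2))=0$. Equivalently, an extension of the weight-$2$ module $i_{k*}\Q^H_{\{p_k\}}(-1)$ by the weight-$3$ module $IC^H_{X_0}$ is split by $W_2$, since morphisms of mixed Hodge modules are strict for $W$. Hence no nonzero $\varepsilon_k^H$ lifts $\varepsilon_k$, and the isomorphism claimed in Step~2 fails. The equality $\rat_*(E^H_{\mathrm{geom}})=E_{\mathrm{geom}}$ can then hold only if $E_{\mathrm{geom}}=0$; only the vacuous inclusion survives. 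The paper's proof rests on the same unverified one-dimensionality. Your Step~1 is where the problem becomes visible, not where you depart from the paper.

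To repair it, take the singular quotient to be $\bigoplus_k i_{k*}\Q^H_{\{p_k\}}(-2)$. This is the twist of ${}^pH^0(j_*\Q^H_U[3])/IC^H_{X_0}$ and of the vanishing cycles at a node. Alternatively, replace $IC^H_{X_0}$ by $IC^H_{X_0}(1)$. Then your Steps~1--2 go through. Each summand is $\mathrm{Hom}_{\mathrm{MHS}}(\Q(-2),\Q(-2))\oplus\Ext^1_{\mathrm{MHS}}(\Q(-2),H^0(i_k^!IC^H_{X_0}))=\Q\oplus 0$, and $\rat_*:E^H_{\mathrm{node}}\to E_{\mathrm{node}}$ is an isomorphism. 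Note that faithfulness of $\rat$ alone does not give injectivity on $\Ext^1$; the vanishing of $H^0(i_k^!IC^H_{X_0})$ does.

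This buys more than the paper's argument:
\begin{itemize}
\item You can set $E^H_{\mathrm{geom}}:=\rat_*^{-1}(E_{\mathrm{geom}})$, which is exactly as canonical as $E_{\mathrm{geom}}$. Your ``main obstacle'' then disappears on the MHM side and reduces to the perverse-side diagonal rescaling of Lemma~\ref{lem:basis-independence}. Your remark that rescaling preserves the subspace holds only for rescalings constant on each block.
\item Injectivity of $\rat_*$ is exactly what the paper's Lemma~\ref{lem:mhm-corrected-compatible} tacitly uses. So $[\mathcal P^H]\in E^H_{\mathrm{geom}}$ follows from Theorem~\ref{thm:perv-relation-controlled} by pulling back. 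You do not need your internal admissible-VMHS propagation, which the paper explicitly does not attempt and the stated theorem does not require.
\end{itemize}
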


On the categorical side, the same incidence structure induces a corresponding block structure on the finite-node quiver shadow.

\begin{corollary}[Block decomposition of the quiver shadow]
\label{thm:intro-quiver}
Let $(\mathcal C,\iota_{\mathcal C})$ be a cycle-node incidence datum, and let
\[
\Sigma=\bigsqcup_{\beta\in B}\Sigma_\beta
\]
be the corresponding relation-block decomposition. Then the quiver shadow of the finite-node schober datum associated with $\pi$ admits a corresponding block decomposition indexed by $B$, and the globally admissible coupling data factors through these blocks rather than through arbitrary independent nodewise coefficients.
\end{corollary}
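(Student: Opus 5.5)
The plan is to reduce the statement to linear algebra on nodewise extension data, using the finite-node theory already in place. By the direct-sum structure of the singular quotient, the block decomposition of the quiver shadow will come from the block decomposition of the coefficient space $\Q^r$.

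\emph{Step 1.} First make the relation blocks precise. Two nodes $p_k,p_l$ are incidence-equivalent when the $k$-th and $l$-th coordinate functionals agree on $\Im(\iota_{\mathcal C})$. Equivalently, they are equivalent when they lie in the same connected component of the bipartite incidence graph with vertices $\Sigma\sqcup\mathcal C$, with an edge whenever $\iota_{\mathcal C}(e_\alpha)$ has nonzero $k$-th entry. The blocks $\Sigma_\beta$, $\beta\in B$, are the equivalence classes. This gives a decomposition
\[
\Q^r=\bigoplus_{\beta\in B}\Q^{\Sigma_\beta},
\]
together with the block-constant subspace $V_{\mathrm{blk}}\cong\Q^B$, embedded diagonally in each summand.

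\emph{Step 2.} Next recall from \cite{RahmanMultiNodeSchoberPaper} the shape of the quiver shadow. It has one vertex for the $IC_{X_0}$ (nearby) sector and one vertex for each node $p_k$. The node vertices are joined to the central vertex by the decategorified $\can_k,\var_k$ arrows, and there are no arrows between distinct nodes. The coupling data is the vector of nodewise coefficients of the corrected extension class in $E_{\mathrm{node}}\cong\Q^r$. Grouping the node vertices according to Step 1 gives a decomposition of the quiver into subquivers $Q_\beta$, indexed by $B$ and glued at the common central vertex. The representation splits compatibly, because the singular quotient $\bigoplus_k i_{k*}\Q_{\{p_k\}}$ regroups as $\bigoplus_\beta\bigoplus_{k\in\Sigma_\beta}i_{k*}\Q_{\{p_k\}}$. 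By additivity of $\Ext^1(-,IC_{X_0})$ in the first variable, $E_{\mathrm{node}}$ then decomposes as $\bigoplus_\beta E_\beta$. This part is formal.

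\emph{Step 3.} The substantive claim is that the \emph{admissible} coupling data lies in $\bigoplus_\beta(\text{diagonal of }E_\beta)$. Here I would invoke Theorem~\ref{thm:intro-perverse}, with the Section~4 argument underlying it: along each admissible cycle component, propagation of the local variation data forces equal coefficients at all nodes incident to that component. By transitivity along the incidence graph, the coefficients are then constant on each connected component, that is, on each block. Hence the class lies in $V_{\mathrm{blk}}$, and the coupling data factors through $\Q^B$ via the block-sum map $\Q^B\to\Q^r$.

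The main obstacle is that the corollary is stated only for a cycle-node incidence datum, without explicitly assuming admissibility or block-adaptedness. I would handle this in two parts. The block decomposition of the quiver (Step 2) is unconditional. The statement that admissible coupling data factors through the blocks should be read with ``globally admissible'' meaning compatible with $(\mathcal C,\iota_{\mathcal C})$. Under that reading, the coupling vector lies in $\Im(\iota_{\mathcal C})$ by definition. Every vector in $\Im(\iota_{\mathcal C})$ respects the block decomposition: each generator $\iota_{\mathcal C}(e_\alpha)$ is supported inside a single block, since all nodes incident to $C_\alpha$ are connected through $C_\alpha$. Hence $\Im(\iota_{\mathcal C})\subseteq\bigoplus_\beta\Im(\iota_{\mathcal C})\cap\Q^{\Sigma_\beta}$. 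This factorization through blocks is exactly what the corollary asserts, with the sharper block-constancy available in the block-adapted case.
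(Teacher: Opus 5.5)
Your overall route matches the paper's. The nodewise local vertices come from the schober paper, and the constraint on couplings is imported, through decategorification, from the Section~4 (and Section~6) incidence-compatibility of the corrected class. The paper's proof also silently relies on the admissibility and block-adaptedness hypotheses, so your remark on that point is fair.

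The genuine error is in Step~1: the relation blocks are \emph{not} the connected components of the bipartite incidence graph. In the paper, $p_k\sim_{\mathrm{inc}}p_\ell$ means that the $k$-th and $\ell$-th columns of the incidence matrix coincide. Your first characterization, via coordinate functionals on $\operatorname{Im}(\iota_{\mathcal C})$, is equivalent to this. Your ``equivalently'', however, fails in both directions:
\begin{itemize}
\item In the overlapping configuration of Section~8.3 the columns are $(1,0),(1,0),(1,1),(0,1)$. So the blocks are $\{p_1,p_2\}$, $\{p_3\}$, $\{p_4\}$, whereas the incidence graph is connected because $p_3$ lies on both cycles.
\item Conversely, nodes lying on no cycle all have zero column and form a single block, yet they are isolated vertices of the graph.
\end{itemize}
Consequently your grouping in Step~2 is not indexed by $B$. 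In Step~3, the phrase ``constant on each connected component, that is, on each block'' conflates two different partitions. Propagation does give constancy on blocks with nonzero column, since two nodes with the same nonzero column share a cycle, but it gives nothing on the zero-column block.

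This also invalidates your treatment of the unconditional reading. With the paper's blocks it is false that each $\iota_{\mathcal C}(e_\alpha)$ is supported in one block. In the example above, $\iota_{\mathcal C}(e_1)=(1,1,1,0)$ meets two blocks, and one checks that $\bigoplus_\beta\bigl(\operatorname{Im}(\iota_{\mathcal C})\cap\Q^{\Sigma_\beta}\bigr)=0\neq\operatorname{Im}(\iota_{\mathcal C})$. Moreover, a splitting of $\operatorname{Im}(\iota_{\mathcal C})$ along a partition is not what ``factors through the blocks'' means here. A block-compatible coefficient system is one that is \emph{constant} on blocks, i.e.\ one that factors through $\Sigma\to B$.

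With the right definition, the correct argument is simpler. For $v=\sum_\alpha\lambda_\alpha\,\iota_{\mathcal C}(e_\alpha)$ one has $v_k=\sum_\alpha\lambda_\alpha a_{\alpha k}$, which depends only on the $k$-th column. Hence $\operatorname{Im}(\iota_{\mathcal C})$ always lies in the block-constant subspace, with equality exactly in the block-adapted case. Combined with $[\mathcal P]\in E_{\mathrm{geom}}$ from Theorem~\ref{thm:perv-relation-controlled} and the decategorification passage, this is what the paper's proof of items (2)--(3) amounts to.
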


Accordingly, the corrected finite-node extension is governed by relation classes rather than by raw node count: the relevant global parameter count is controlled by the number of independent relation classes among the nodes, not simply by the cardinality \(r=|\Sigma|\).

Taken together, these results supply the first theorem-level global constraint in the finite-node corrected-extension program. They show that the corrected extension is not merely a direct sum of local rank-one pieces, but a global object constrained by the homological geometry of the node configuration. This has two immediate benefits. First, it gives a theorem-level bridge between the classical geometry of conifold transitions and the corrected perverse/MHM formalism. Second, it provides the correct coefficient space for the later transport, wall-crossing, and BPS-type constructions: those later theories should act not on the formally free nodewise space, but on the relation-controlled geometric subspace isolated here.

\subsection{Structure of the paper}

Section~2 reviews the finite-node corrected-extension package established in the author's earlier papers, including the local ordinary-double-point contribution, the corrected perverse object, the mixed-Hodge-module lift, and the finite-node schober/quiver shadow.

For the reader's convenience, we record here the foundational results imported from earlier work. On the perverse side, we use the existence of the corrected local ordinary-double-point object and the finite-node corrected extension together with its smooth-locus restriction \cite{RahmanSchoberPaper,RahmanPerverseNearbyCycles}. On the mixed-Hodge-module side, we use the existence of the refinement \(\mathcal P^H\) and its realization compatibility \cite{RahmanMixedHodgeModules}. On the categorical side, we use the existence of the finite-node schober datum and its decategorified quiver shadow \cite{RahmanMultiNodeSchoberPaper}. The logical role of the present paper is not to re-establish those foundational constructions, but to identify the first global relation law that constrains their gluing data.

Section~3 introduces the cycle-node incidence formalism and the associated relation lattices. Section~4 constructs the geometrically realized extension space on the perverse side and proves Theorem~\ref{thm:intro-perverse} under geometrically admissible and block-adapted incidence hypotheses, deriving incidence compatibility of the corrected extension class from propagation of the local variation data along admissible global cycle components. Section~5 first develops the general comparison framework under comparison-compatible and minimality hypotheses, proving Theorem~\ref{thm:intro-comparison-conditional}, and then proves the geometric-family equality theorem corresponding to Theorem~\ref{thm:intro-comparison} in the block-separated cycle family. Section~6 establishes the mixed-Hodge-module counterpart, proving Theorem~\ref{thm:intro-mhm} in the geometrically admissible, MHM block-adapted setting. Section~7 records the corresponding block-structured consequence for the quiver shadow. Section~8 presents model configurations illustrating how the relation-controlled theory cuts down the naive free nodewise parameter count and verifies the geometric admissibility condition in the basic examples. Section~9 discusses consequences and open directions, including the transport and covering-style questions suggested by the relation-controlled quiver picture.

\section{Background on finite-node conifold degenerations}

\subsection{Finite-node conifold setup}

Let $\pi:X\to\Delta$ be a projective one-parameter degeneration whose general fiber \(X_t\) is a smooth complex threefold and whose central fiber \(X_0\) has finitely many ordinary double points $\Sigma=\{p_1,\dots,p_r\}\subset X_0$. We write
\[
U:=X_0\setminus \Sigma
\]
for the smooth locus of the central fiber, and denote by
\[
j:U\hookrightarrow X_0,\qquad i_k:\{p_k\}\hookrightarrow X_0
\]
the natural inclusions. We refer to such a degeneration as a \emph{finite-node conifold degeneration}.

At each node \(p_k\), the singularity is analytically equivalent to the standard ordinary double point
\[
x_1^2+x_2^2+x_3^2+x_4^2=0
\]
in complex dimension three. Equivalently, after passing to the total space of the degeneration, one may model the local smoothing by
\[
x_1^2+x_2^2+x_3^2+x_4^2=t
\subset \C^4\times \Delta.
\]
In particular, the total space \(X\) has complex dimension \(4\), while each fiber \(X_t\) has complex dimension \(3\) \cite{RahmanSchoberPaper}.

The local topology of an ordinary double point is governed by its Milnor fiber. For \(0<|\delta|\ll 1\), the Milnor fiber
\[
F_{p_k}:=f^{-1}(\delta)\cap B_\varepsilon(0),\qquad
f(x)=x_1^2+x_2^2+x_3^2+x_4^2,
\]
has the homotopy type of \(S^3\). Hence
\[
\widetilde H^m(F_{p_k};\Q)\cong
\begin{cases}
\Q,& m=3,\\
0,& m\neq 3.
\end{cases}
\]
Thus each node contributes a rank-one local vanishing cycle in middle degree. In the finite-node case, the singular contribution to the degeneration is therefore concentrated on the finite set \(\Sigma\), with one rank-one local sector at each node \cite{RahmanSchoberPaper}.

\subsection{Nearby cycles, vanishing cycles, and variation}

Let
\[
i:X_0\hookrightarrow X,\qquad X^\ast:=\pi^{-1}(\Delta^\ast),\qquad j:X^\ast\hookrightarrow X
\]
denote the inclusions, and set
\[
\mathcal F:=\Q_X[3].
\]
With this normalization, the nearby-cycle and vanishing-cycle functors are
\[
\psi_\pi(\mathcal F):=i^\ast Rj_\ast j^\ast\mathcal F,
\qquad
\phi_\pi(\mathcal F):=\Cone\!\bigl(i^\ast\mathcal F\to \psi_\pi(\mathcal F)\bigr)[-1].
\]
They fit into the standard distinguished triangle
\[
i^\ast\mathcal F\longrightarrow \psi_\pi(\mathcal F)\longrightarrow \phi_\pi(\mathcal F)\xrightarrow{+1}
\]
in \(D^b_c(X_0;\Q)\). There are canonical morphisms
\[
\can_{\mathcal F}:\psi_\pi(\mathcal F)\to \phi_\pi(\mathcal F),
\qquad
\var_{\mathcal F}:\phi_\pi(\mathcal F)\to \psi_\pi(\mathcal F),
\]
related to the local monodromy operator \(T\) by the usual identities
\[
\can_{\mathcal F}\circ \var_{\mathcal F}=T-\id,
\qquad
\var_{\mathcal F}\circ \can_{\mathcal F}=T-\id.
\]
Throughout, all cones are taken in the constructible derived category and all nearby and vanishing cycles are understood in the normalized perverse convention \cite{BBD,KS,RahmanSchoberPaper,RahmanMixedHodgeModules}.

For an ordinary double point in complex dimension three, the preceding Milnor-fiber calculation implies that \(\phi_\pi(\mathcal F)\) is point-supported and one-dimensional at each node after perverse normalization. Thus, in the finite-node setting, the entire singular contribution enters through a finite direct sum of rank-one local vanishing pieces. This is the formal mechanism underlying the corrected objects used throughout the present paper \cite{RahmanMixedHodgeModules}.

\subsection{The corrected perverse extension}

The corrected perverse object attached to the degeneration is defined by
\[
\mathcal P:=\Cone\!\bigl(\var_{\mathcal F}:\phi_\pi(\mathcal F)\to \psi_\pi(\mathcal F)\bigr)[-1].
\]
In the single-node case, this object was constructed and characterized in earlier work as the canonical Verdier self-dual perverse extension of the shifted constant sheaf across the node \cite{RahmanSchoberPaper,RahmanMixedHodgeModules}. In the finite-node case, the same construction yields a perverse sheaf on \(X_0\) fitting into a short exact sequence
\[
0\longrightarrow IC_{X_0}\longrightarrow \mathcal P
\longrightarrow \bigoplus_{k=1}^r i_{k\ast}\Q_{\{p_k\}}
\longrightarrow 0,
\]
where
\[
IC_{X_0}:=j_{!*}\Q_U[3].
\]

Two structural features are important for what follows. First, on the smooth locus one has
\[
j^\ast \mathcal P \cong \Q_U[3],
\]
so \(\mathcal P\) agrees with the intersection complex away from the nodes. Second, the singular quotient is supported entirely on \(\Sigma\), and each local summand is rank one. Thus the corrected perverse object differs from \(IC_{X_0}\) only by a finite family of point-supported local corrections, one at each node \cite{RahmanSchoberPaper,RahmanMixedHodgeModules}.

\begin{proposition}[Basic properties of the corrected perverse object]
\label{prop:corrected-basic}
The object
\[
\mathcal P:=\Cone(\var_{\mathcal F})[-1]
\]
is perverse, restricts to \(\Q_U[3]\) on the smooth locus,
and fits into a short exact sequence
\[
0\to IC_{X_0}\to \mathcal P\to \bigoplus_{k=1}^r i_{k\ast}\Q_{\{p_k\}}\to 0.
\]
\end{proposition}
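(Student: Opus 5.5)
The plan is to deduce all three assertions from the nearby/vanishing-cycle formalism, working locally near each node and then assembling. The argument has three steps: perversity, restriction to $U$, and the short exact sequence, with identification of kernel and cokernel at the nodes.

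\emph{Perversity.} Since $X$ is a smooth fourfold, $\mathcal F=\Q_X[3]$ is perverse up to the standard shift on $X$. By Gabber's theorem the functors $\psi_\pi$ and $\phi_\pi$, in the normalized perverse convention fixed above, are t-exact. Hence $\psi_\pi(\mathcal F)$ and $\phi_\pi(\mathcal F)$ lie in $\Perv(X_0)$, and $\var_{\mathcal F}$ is a morphism in this abelian category. For any morphism $u:A\to B$ of perverse sheaves, $\Cone(u)[-1]$ has perverse cohomology only in degrees $0$ and $1$, namely $\ker u$ and $\mathrm{coker}\, u$. So $\mathcal P$ is perverse exactly when $\var_{\mathcal F}$ is injective in $\Perv(X_0)$, and then $\mathcal P\cong \mathrm{coker}(\var_{\mathcal F})$ up to the shift convention.

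\emph{Local computation.} Injectivity is checked at the nodes, since $\phi_\pi(\mathcal F)\cong\bigoplus_k i_{k*}\Q_{\{p_k\}}$ is point-supported by the Milnor fiber calculation. At $p_k$ the Picard--Lefschetz formula in odd fiber dimension $3$ gives $T-\id=\pm\langle\,\cdot\,,\delta_k\rangle\delta_k$ with $\langle\delta_k,\delta_k\rangle=0$. Hence $T$ is unipotent with $N=T-\id$, and $\var$ is the map sending the vanishing generator to the cycle $\delta_k$ in the stalk of $\psi_\pi$. This map is nonzero because $\delta_k\neq 0$ in the local Milnor fiber cohomology. A nonzero map out of a simple object $i_{k*}\Q_{\{p_k\}}$ is injective, so $\var_{\mathcal F}$ is injective.

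\emph{Restriction to $U$.} On $U$ the family is smooth and $\phi_\pi$ vanishes. Therefore $j^*\mathcal P\cong j^*\psi_\pi(\mathcal F)\cong \Q_U[3]$, the last isomorphism coming from topological local triviality of $\pi$ near $U$ (up to the usual normalization shift).

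\emph{Exact sequence.} This is the step I expect to be the main obstacle. I would filter $\psi_\pi(\mathcal F)$ by its canonical pieces. Since $j^*\mathcal P\cong\Q_U[3]$, there are adjunction maps $j_!\Q_U[3]\to\mathcal P\to Rj_*\Q_U[3]$, and their image in $\Perv(X_0)$ is $IC_{X_0}$. To show that $IC_{X_0}\hookrightarrow\mathcal P$ is a subobject, I would prove that $\mathcal P$ has no nonzero point-supported subobject, i.e.\ that $\mathcal H^{-3}i_k^!\mathcal P=0$. This should follow from the local form of $\psi_\pi$ near an ODP and the injectivity of $\var$.

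It then remains to identify the quotient $\mathcal P/IC_{X_0}$, which is supported on $\Sigma$, as rank one at each node. I would do this by comparing stalk Euler characteristics at $p_k$. On one side are $\chi(\psi)_{p_k}=\chi(F_{p_k})=0$ and $\chi(\phi)_{p_k}=\pm1$. On the other is the link cohomology of the threefold ODP, with link $S^2\times S^3$, which gives the stalks of $IC_{X_0}$. The resulting difference should be exactly $1$, producing $\bigoplus_k i_{k*}\Q_{\{p_k\}}$. Since the nodes are isolated, the global statement is the direct sum of these local ones. At this stage I would import the single-node computation from \cite{RahmanSchoberPaper,RahmanPerverseNearbyCycles} rather than redo the stalk bookkeeping.
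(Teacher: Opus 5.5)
Your skeleton is a sensible way to make rigorous what the paper's proof only asserts: show $\var$ is a monomorphism, identify $\mathcal P$ with its cokernel, rule out point-supported subobjects, squeeze $\mathcal P$ between $IC_{X_0}$ and ${}^{p}j_*\Q_U[3]$, and count at each node. However, three steps fail as written.

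\textbf{The perversity criterion is backwards.} By your own correct computation, ${}^{p}H^0(\Cone(u)[-1])=\ker u$ and ${}^{p}H^1(\Cone(u)[-1])=\mathrm{coker}\,u$. So $\Cone(u)[-1]$ is perverse iff $u$ is an \emph{epimorphism}. Since $\var$ has point-supported source and a target restricting to $\Q_U[3]$, it is never epi. With $\psi_\pi(\mathcal F),\phi_\pi(\mathcal F)$ perverse, injectivity shows that $\Cone(\var)\cong\mathrm{coker}(\var)$ is perverse, while $\Cone(\var)[-1]\cong\mathrm{coker}(\var)[-1]$ is not. You must state explicitly that $\mathcal P$ is read as $\mathrm{coker}(\var)$ (equivalently, that the $[-1]$ is absorbed by taking unshifted nearby/vanishing cycles of the perverse sheaf $\Q_X[4]$), rather than hiding this in ``up to the shift convention''. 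Your restriction step already uses this reading, since literally $j^*(\Cone(\var)[-1])\cong\Q_U[2]$.

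\textbf{The stalkwise injectivity argument fails.} The stalk of $\var$ at $p_k$ is \emph{zero}. The triangle $i^*\mathcal F\to\psi_\pi(\mathcal F)\to\phi_\pi(\mathcal F)$ shows that $\can_{p_k}$ is an isomorphism on $\mathcal H^0$ of stalks, because the stalk of $i^*\mathcal F$ sits in degree $-3$. Moreover $\can\circ\var=T-\id$ kills the vanishing line precisely because $\langle\delta_k,\delta_k\rangle=0$, as you note. Hence $\var_{p_k}=0$. Nonvanishing of $\var$ is visible only on costalks, i.e.\ in $\mathrm{Hom}(i_{k*}\Q_{\{p_k\}},\psi_\pi(\mathcal F))=H^0(i_k^!\psi_\pi(\mathcal F))$; that is where the classical variation $H_3(F,\partial F)\to H_3(F)$ lives. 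A clean replacement: the triangle $i^!\mathcal G\to\phi_\pi(\mathcal F)\xrightarrow{\var}\psi_\pi(\mathcal F)\xrightarrow{+1}$ with $\mathcal G=\Q_X[4]$ gives $\ker\var={}^{p}H^0(i^!\mathcal G)$. This is the largest subobject of the simple perverse sheaf $\mathcal G$ supported on $X_0$, hence $0$. The same triangle identifies $\mathcal P=\mathrm{coker}(\var)\cong i^!\Q_X[5]$.

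\textbf{The test for point-supported subobjects is in the wrong degree.} The correct condition is $\mathrm{Hom}(i_{k*}\Q_{\{p_k\}},\mathcal P)=H^0(i_k^!\mathcal P)=0$. By contrast, $\mathcal H^{-3}i_k^!$ vanishes for every perverse sheaf on a threefold by the costalk condition. So the condition you wrote is vacuous, and the real one is left as ``this should follow''. With $\mathcal P\cong i^!\Q_X[5]$ it is immediate: $H^0(i_k^!\mathcal P)=H^5_{\{p_k\}}(X;\Q)=0$, because $X$ is smooth of real dimension $8$. This gives $IC_{X_0}\subseteq\mathcal P\subseteq{}^{p}j_*\Q_U[3]$.

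Your Euler-characteristic count then goes through once the sign is fixed as $\chi(\phi_\pi(\mathcal F)_{p_k})=+1$. One gets $\chi(\mathcal P_{p_k})=0-1=-1$ and $\chi((IC_{X_0})_{p_k})=-\dim H^0(L)-\dim H^2(L)=-2$ for $L=S^2\times S^3$. So the quotient is rank one at each node. The same computation in degree $1$, $H^1(i_k^!\mathcal P)=H^6_{\{p_k\}}(X;\Q)=0$, even shows directly that $\mathcal P\cong{}^{p}j_*\Q_U[3]$, whose quotient by $IC_{X_0}$ is $\bigoplus_k i_{k*}H^3(L)$. So no citation is needed for the last step.
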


\begin{proof}
By construction, \(\mathcal P\) is obtained from the nearby/vanishing-cycle package of the degeneration. For an ordinary double point in complex dimension three, the local vanishing-cycle contribution is rank one and point-supported at the node, while away from the nodes the vanishing cycles vanish and the nearby-cycle object agrees with the shifted constant sheaf on the smooth locus. Thus \(j^\ast\mathcal P\cong \Q_U[3]\). The point-supported rank-one local contributions at the nodes assemble into the quotient
\[
\bigoplus_{k=1}^r i_{k\ast}\Q_{\{p_k\}},
\]
and the resulting exact sequence identifies the kernel with \(IC_{X_0}\). In the finite-node ordinary-double-point setting, this yields a perverse sheaf with the stated properties.
\end{proof}

The short exact sequence above has an immediate formal consequence: since its quotient is a finite direct sum of point-supported rank-one sectors, the corresponding ambient Yoneda extension problem carries one formal nodewise direction per singular point. This explains why the free nodewise coefficient space appears naturally in the finite-node theory. It is essential, however, not to confuse this ambient formal freedom with the actual freedom of the distinguished corrected object coming from the degeneration. The exact sequence determines the ambient extension space in which the corrected class lives; it does not by itself imply that the canonical global class can be assembled by choosing its nodewise coefficients independently.

\begin{remark}[Ambient nodewise extension space versus realized class]
\label{rem:ambient-versus-realized}
The short exact sequence
\[
0\longrightarrow IC_{X_0}\longrightarrow \mathcal P
\longrightarrow \bigoplus_{k=1}^r i_{k\ast}\Q_{\{p_k\}}
\longrightarrow 0
\]
determines the ambient Yoneda extension group
\[
\Ext^1_{\Perv(X_0;\Q)}
\!\left(
\bigoplus_{k=1}^r i_{k\ast}\Q_{\{p_k\}},
IC_{X_0}
\right)
\]
in which the corrected class \([\mathcal P]\) lives. Geometric realizability is a further condition on which ambient extension classes can arise from a projective finite-node degeneration, and that condition is not encoded in the short exact sequence alone.
\end{remark}

Accordingly, what the exact sequence above does \emph{not} determine by itself is the global extension class
\[
[\mathcal P]\in
\Ext^1_{\Perv(X_0;\Q)}
\!\left(
\bigoplus_{k=1}^r i_{k\ast}\Q_{\{p_k\}},
IC_{X_0}
\right).
\]
This class records the global gluing data by which the local node contributions are assembled into a single perverse object on \(X_0\). The present paper is concerned precisely with the geometric constraints on this global gluing data.

\subsection{Mixed-Hodge-module refinement}

The mixed-Hodge-theoretic refinement of the corrected perverse object was constructed in the finite-node setting in \cite{RahmanMixedHodgeModules}. Let
\[
IC^H_{X_0}\in MHM(X_0)
\]
denote the intersection complex in Saito’s category of mixed Hodge modules. Then there exists an object
\[
\mathcal P^H\in MHM(X_0)
\]
whose realization under
\[
\rat:MHM(X_0)\to \Perv(X_0;\Q)
\]
recovers the corrected perverse object \(\mathcal P\), and which fits into an exact sequence
\[
0\longrightarrow IC^H_{X_0}\longrightarrow \mathcal P^H
\longrightarrow \bigoplus_{k=1}^r i_{k\ast}\Q^H_{\{p_k\}}(-1)
\longrightarrow 0.
\]
Here the quotient is the finite direct sum of the rank-one point-supported mixed Hodge modules attached to the nodes. In particular, the same finite family of localized node sectors appears not only on the perverse side but internally in \(MHM(X_0)\) \cite{RahmanMixedHodgeModules}.

The significance of this refinement is twofold. First, it upgrades the corrected extension from a perverse-sheaf-theoretic object to a genuine mixed-Hodge-module object with internal weight and Hodge filtrations. Second, it shows that the vanishing sector of the limiting mixed Hodge structure is already encoded in the same finite direct sum of point-supported rank-one pieces that appears in the corrected perverse extension. Thus the finite-node corrected object may be viewed simultaneously in \(\Perv(X_0;\Q)\), in \(MHM(X_0)\), and on hypercohomology with limiting mixed Hodge structure \cite{RahmanMixedHodgeModules}.

For the purposes of the present paper, the point is that the same local-to-global tension persists after passing to Saito's category. The finite-node mixed-Hodge-module quotient again displays one localized sector per node, but the corresponding global gluing data need not be geometrically free nodewise. Thus the relation problem studied below is not merely perverse-sheaf-theoretic; it is already visible at the level of mixed-Hodge-module realization.

\subsection{Finite-node schober and quiver shadow}

A further categorical layer was constructed in \cite{RahmanMultiNodeSchoberPaper}. In that framework, each ordinary double point contributes a localized categorical sector, and the resulting finite-node schober datum has a decategorified shadow whose underlying perverse object is precisely the corrected finite-node extension:
\[
Sh(S_\Sigma)\cong \mathcal P.
\]

Accordingly, the three layers are compatible:
\begin{itemize}
    \item on the categorical side, each node contributes a localized categorical sector;
    \item on the perverse side, each node contributes a point-supported rank-one quotient \(i_{k\ast}\Q_{\{p_k\}}\);
    \item on the mixed-Hodge-module side, each node contributes a point-supported rank-one mixed Hodge module \(i_{k\ast}\Q^H_{\{p_k\}}(-1)\).
\end{itemize}
This is the finite-node bulk/localized-sector architecture established in the earlier papers \cite{RahmanMixedHodgeModules,RahmanMultiNodeSchoberPaper}.

For the purposes of the present paper, only the decategorified consequence is needed: the finite-node schober carries one localized sector per node, and its quiver shadow recovers the same corrected extension data seen on the perverse side. Thus the only remaining issue is to determine how the \emph{global} geometry constrains the way these local sectors are glued together. In this sense, the relation-controlled extension problem studied here is the first necessary global input for the later transport and wall-crossing program.

\section{Global cycle relations among nodes}

\subsection{Geometric sources of node relations}

The finite-node corrected extension reviewed in Section~2 isolates one local rank-one sector at each node. This local decomposition, however, does not imply that the nodes are globally independent. As emphasized in the introduction, the direct-sum singular quotient determines a formally free ambient nodewise coefficient space; it does not by itself determine which coefficient directions are geometrically realizable by the canonical corrected object of a projective degeneration. The purpose of the present section is to formulate the additional global data that can constrain this realizability problem.

A first source of such constraints is the occurrence of several nodes as part of the zero locus of a common holomorphic condition. For example, the singular locus may arise as the intersection of the central fiber with a divisor, section, or family of subvarieties whose global class persists across the degeneration. In that situation, the nodes are not merely a finite collection of unrelated local defects: they are marked by a common ambient geometric object, and one expects the resulting extension data to reflect that common origin.

A second source of relations comes from global cycle components on the singular fiber itself. If several nodes lie on a common irreducible or connected cycle component, or more generally on a finite family of distinguished cycles
\[
\{C_\alpha\}_{\alpha\in A},
\]
then the incidence of the nodes with these cycles yields a natural relation structure on the node set. This is the most flexible formulation for the purposes of the present paper, since it encompasses both genuinely algebraic cycle classes and the more schematic cycle patterns that appear in examples.

A third source of relations appears after passing to a small resolution, when it exists. The proper transforms of global cycle components and the exceptional curves introduced at the nodes may carry common homology or cohomology classes. Likewise, one may have incidence relations with exceptional divisors or surfaces in a birational model of the degeneration. Such data naturally links the local node sectors through the global topology of the resolution. This is precisely the type of phenomenon that classical smoothing and transition results detect through homology relations among exceptional curves \cite{Friedman1986,LeeLinWang2021}.

The present paper does not require a single rigid geometric realization of these relations at the outset. Rather, we formulate an abstract incidence structure that may be instantiated by any of the situations above: common zero loci of holomorphic conditions, common cycle components on the singular fiber, common proper transforms on a resolution, or incidence with exceptional divisors or surfaces. The point is to extract from these geometric configurations a finite-dimensional relation theory on the node set and then compare that theory with the global gluing data of the corrected extension.

\subsection{The cycle-node incidence structure}

Let
\[
\Sigma=\{p_1,\dots,p_r\}\subset X_0
\]
be the node set of a finite-node conifold degeneration.

\begin{definition}
\label{def:cycle-node-incidence}
A \emph{cycle-node incidence datum} for \(\Sigma\) consists of
\begin{enumerate}
    \item a finite indexed collection of global cycle labels
    \[
    \mathcal C=\{C_\alpha\}_{\alpha\in A},
    \]
    and
    \item a matrix
    \[
    A=(a_{\alpha k})_{\alpha\in A,\;1\le k\le r}
    \]
    with entries in \(\Q\),
\end{enumerate}
or equivalently a linear map
\[
\iota_{\mathcal C}:\Q^{A}\longrightarrow \Q^r,
\qquad
e_\alpha\longmapsto \sum_{k=1}^r a_{\alpha k}e_k,
\]
where \(\{e_\alpha\}_{\alpha\in A}\) and \(\{e_k\}_{k=1}^r\) denote the standard bases of \(\Q^A\) and \(\Q^r\), respectively.
\end{definition}

The coefficient \(a_{\alpha k}\) records the contribution of the cycle label \(C_\alpha\) to the node \(p_k\). In the primary geometric cases,
\[
a_{\alpha k}\in\{0,1\},
\]
with \(a_{\alpha k}=1\) if and only if \(p_k\) lies on the cycle component \(C_\alpha\). Rational coefficients are permitted for multiplicity or intersection-theoretic refinements, but they are not needed for the main theorems of the present paper.

Definition~\ref{def:cycle-node-incidence} is the basic combinatorial input for the relation theory developed below. It records, in finite-dimensional linear form, the manner in which the chosen global cycle data organizes the node set. In particular, it provides the first bridge from geometric incidence data to the ambient nodewise coefficient space naturally attached to the corrected extension.

\subsection{Relation lattices}

Let
\[
V_{\mathrm{node}}:=\Q^r
\]
denote the free nodewise parameter space, with basis \(\{e_k\}_{k=1}^r\) indexed by the nodes. This is the ambient coefficient space attached to the finite direct sum of local node sectors
\[
\bigoplus_{k=1}^r i_{k\ast}\Q_{\{p_k\}}
\]
on the perverse side, and likewise to
\[
\bigoplus_{k=1}^r i_{k\ast}\Q^H_{\{p_k\}}(-1)
\]
on the mixed-Hodge-module side. In the language of Section~2, \(V_{\mathrm{node}}\) records the formally free ambient nodewise directions supplied by the direct-sum singular quotient.

The incidence map
\[
\iota_{\mathcal C}:\Q^A\to V_{\mathrm{node}}
\]
determines the \emph{cycle-generated subspace}
\[
V_{\mathrm{cyc}}:=\operatorname{Im}(\iota_{\mathcal C})\subseteq V_{\mathrm{node}},
\]
which records the nodewise combinations generated by the chosen cycle labels.

For the purposes of the present paper, the distinguished coefficient space governing geometric gluing is the following.

\begin{definition}
\label{def:geometric-coefficient-space}
Given a cycle-node incidence datum \((\mathcal C,\iota_{\mathcal C})\), the \emph{geometric coefficient space} is
\[
V_{\mathrm{geom}}:=V_{\mathrm{cyc}}=\operatorname{Im}(\iota_{\mathcal C})\subseteq V_{\mathrm{node}}.
\]
\end{definition}

Thus \(V_{\mathrm{geom}}\) is the subspace of nodewise coefficients generated by the chosen global cycle data. In later refinements one may replace \(V_{\mathrm{geom}}\) by an integral saturation or by a natural subquotient, but the image formulation is sufficient for the theorem stack of the present paper.

The two spaces introduced above play distinct roles:
\begin{itemize}
    \item \(V_{\mathrm{node}}\) is the formal nodewise coefficient space;
    \item \(V_{\mathrm{geom}}\) is the geometrically admissible coefficient space determined by the incidence data.
\end{itemize}

\begin{definition}
\label{def:incidence-equivalence}
Two nodes \(p_k,p_\ell\in\Sigma\) are said to be \emph{incidence-equivalent}, written
\[
p_k\sim_{\mathrm{inc}} p_\ell,
\]
if their incidence columns in the matrix \(A\) agree, that is,
\[
a_{\alpha k}=a_{\alpha \ell}
\qquad
\text{for all }\alpha\in A.
\]
Equivalently, \(p_k\sim_{\mathrm{inc}} p_\ell\) if the \(k\)-th and \(\ell\)-th coordinate functionals on \(V_{\mathrm{geom}}\) coincide.
\end{definition}

Thus two nodes are incidence-equivalent precisely when they are seen identically by the chosen cycle data.

\begin{definition}
\label{def:relation-block}
A \emph{relation block} is an equivalence class
\[
\Sigma_\beta\subseteq \Sigma
\]
for the relation \(\sim_{\mathrm{inc}}\). The corresponding partition
\[
\Sigma=\bigsqcup_{\beta\in B}\Sigma_\beta
\]
is called the \emph{relation-block decomposition} of the node set.
\end{definition}

The point of the relation-block decomposition is that, although the local singular sectors remain nodewise, the cycle-node incidence data groups together those nodes whose geometric behavior is indistinguishable at the level of the chosen global cycle labels. This is the first combinatorial shadow of the distinction between formal nodewise freedom and geometrically realized global gluing.

\subsection{Comparison with earlier compatibility picture}

The role of the incidence structure may be viewed as a direct abstraction of the compatibility phenomenon observed in \cite{HubschRahman2002}. There, in a stratified conifold-type setting, the geometry showed that nodes lying on the same \(4\)-cycle could not be resolved independently. Rather, the corresponding exceptional curves had to occur compatibly, and they carried the same cohomology class dual to the proper transform of the common cycle. In particular, the naive independent nodewise choice was reduced by a global homological constraint.

In the present language, that situation may be described as follows. Let \(C\) be a distinguished global cycle passing through a subset of nodes
\[
\{p_{k_1},\dots,p_{k_m}\}\subseteq \Sigma.
\]
Then the corresponding incidence vector in \(V_{\mathrm{node}}\) is
\[
e_{k_1}+\cdots+e_{k_m},
\]
or a weighted variant thereof. The compatible-resolution law of \cite{HubschRahman2002} says, in effect, that the geometrically admissible local data is not the full span of \(e_{k_1},\dots,e_{k_m}\) independently, but rather the smaller subspace singled out by the common cycle. Thus the older compatibility law is precisely a prototype of the present relation-controlled gluing law.

What is new here is the level at which the same principle is imposed. In \cite{HubschRahman2002}, the compatibility appeared through global topology and compatible small resolutions. In the present paper, the same type of global relation is transferred to the corrected perverse extension, to its mixed-Hodge-module refinement, and to the decategorified quiver shadow. The 2002 picture therefore serves not merely as motivation, but as the geometric model for the relation law formalized in this paper.

\begin{remark}
\label{rem:incidence-formal-properties}
The relation \(\sim_{\mathrm{inc}}\) is an equivalence relation on \(\Sigma\), and its equivalence classes define the relation-block decomposition
\[
\Sigma=\bigsqcup_{\beta\in B}\Sigma_\beta.
\]
Moreover, since \(V_{\mathrm{geom}}=\operatorname{Im}(\iota_{\mathcal C})\) is a linear subspace of \(V_{\mathrm{node}}\), the incidence datum determines a canonical quotient map
\[
q:V_{\mathrm{node}}\longrightarrow V_{\mathrm{node}}/V_{\mathrm{geom}}.
\]
Thus the incidence formalism supplies a canonical separation between cycle-generated nodewise directions and residual directions modulo the imposed relations.
\end{remark}

The block decomposition viewpoint also admits a natural combinatorial refinement. In particular, for a node set of cardinality \(n\), the possible relation-block structures are organized by the partition lattice of \(\Sigma\), and the associated dimensional reduction from the free nodewise space to the geometric quotient depends only on the number of blocks (see Appendix~\ref{app:combinatorics}). 

\section{The geometrically realized extension space}

\subsection{The free nodewise extension space}

We now formalize the pattern already visible in the motivating three-node configuration of Section~\ref{subsec:intro-prototype}. Equations
\eqref{eq:intro-free-nodewise}--\eqref{eq:intro-admissible} show that the finite-node corrected theory naturally produces a formally free nodewise extension space, while the global cycle data singles out a smaller geometrically admissible subspace. The purpose of the present section is to make this passage precise on the perverse side.

We retain the notation of Sections~2--3. Thus $\pi:X\to\Delta$ is a projective one-parameter degeneration whose central fiber $X_0$ has finitely many ordinary double points $\Sigma=\{p_1,\dots,p_r\}\subset X_0$, and
\[
0\to IC_{X_0}\to \mathcal P\to \bigoplus_{k=1}^r i_{k*}\Q_{\{p_k\}}\to 0
\]
is the corrected finite-node perverse extension \cite{RahmanSchoberPaper,RahmanMixedHodgeModules}.

The ambient extension space for finite-node corrected extensions is
\[
E_{\mathrm{node}}
:=
\Ext^1_{\Perv(X_0;\Q)}
\!\left(
\bigoplus_{k=1}^r i_{k*}\Q_{\{p_k\}},
IC_{X_0}
\right).
\]
As explained in Section~2, this ambient space reflects the direct-sum structure of the singular quotient. The first point of the present subsection is that this extension problem splits nodewise.

\begin{lemma}[Finite additivity of the nodewise perverse extension space]
\label{lem:perv-additivity}
In the abelian category $\Perv(X_0;\Q)$, one has a natural isomorphism
\[
\Ext^1_{\Perv(X_0;\Q)}
\!\left(
\bigoplus_{k=1}^r i_{k*}\Q_{\{p_k\}},
IC_{X_0}
\right)
\cong
\bigoplus_{k=1}^r
\Ext^1_{\Perv(X_0;\Q)}
\!\left(
i_{k*}\Q_{\{p_k\}},
IC_{X_0}
\right).
\]
\end{lemma}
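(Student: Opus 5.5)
The statement is a purely formal consequence of the additivity of the Yoneda $\Ext^1$ bifunctor in an abelian category. No geometry of the degeneration enters, so I would prove it in that generality: for any abelian category $\mathcal A$, any finite family of objects $M_1,\dots,M_r$, and any object $N$, there is a natural isomorphism
\[
\Ext^1_{\mathcal A}\!\Bigl(\bigoplus_{k=1}^r M_k,N\Bigr)\cong\bigoplus_{k=1}^r\Ext^1_{\mathcal A}(M_k,N).
\]
Then I specialize to $\mathcal A=\Perv(X_0;\Q)$, $M_k=i_{k*}\Q_{\{p_k\}}$ and $N=IC_{X_0}$. The only input from the setting is that $\Perv(X_0;\Q)$ is abelian, as the heart of the perverse t-structure \cite{BBD}, and that each $i_{k*}\Q_{\{p_k\}}$ and $IC_{X_0}$ lies in it. Both facts are already implicit in Proposition~\ref{prop:corrected-basic}. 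Since the sum is finite, it is a biproduct in $\mathcal A$, with inclusions $u_k:M_k\to\bigoplus_\ell M_\ell$ and projections $\pi_k$ satisfying $\pi_k u_\ell=\delta_{k\ell}\,\id$ and $\sum_k u_k\pi_k=\id$.

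I would define the map and its inverse explicitly using the functoriality of Yoneda $\Ext^1$ in the first variable, which is given by pullback of extensions.

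The forward map is
\[
\Phi:\Ext^1\Bigl(\bigoplus_k M_k,N\Bigr)\to\bigoplus_k\Ext^1(M_k,N),\qquad \xi\mapsto (u_k^*\xi)_k .
\]
Concretely, $u_k^*\xi$ is the pullback of the extension $0\to N\to E\to\bigoplus_\ell M_\ell\to0$ along $u_k$.

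The inverse is
\[
\Psi\bigl((\xi_k)_k\bigr):=\sum_k \pi_k^*\xi_k .
\]
Here the sum is taken with the Baer sum, which is the group structure on $\Ext^1$, and the $\Q$-linear structure comes from that of $\mathcal A$. Equivalently, $\Psi$ sends $(\xi_k)$ to the direct-sum extension $0\to N^{\oplus r}\to\bigoplus_k E_k\to\bigoplus_k M_k\to0$, pushed out along the codiagonal $N^{\oplus r}\to N$.

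To check that the two maps are mutually inverse, I use that $\xi\mapsto f^*\xi$ is additive and contravariantly functorial. This gives
\[
\Phi\Psi(\xi_k)_k=\Bigl(\sum_\ell (\pi_\ell u_k)^*\xi_\ell\Bigr)_k=(\xi_k)_k
\qquad\text{and}\qquad
\Psi\Phi(\xi)=\sum_k(u_k\pi_k)^*\xi=\id^*\xi=\xi .
\]
The map $f\mapsto f^*$ is additive in $f$, so $(\sum_k u_k\pi_k)^*=\sum_k (u_k\pi_k)^*$. Naturality in $N$, and in morphisms of the summands, follows from the fact that pullback commutes with pushout.

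The one step that needs care is the identity $(f+g)^*\xi=f^*\xi+g^*\xi$ for Yoneda extensions, in other words that $\Ext^1$ is biadditive. This is standard but not tautological. I would handle it by writing $f+g=\nabla\circ(f\oplus g)\circ\Delta$ and using the defining description of the Baer sum via diagonal pullback and codiagonal pushout. Alternatively, I would simply cite that $\Ext^1_{\mathcal A}(-,N)$ is an additive functor, hence preserves finite biproducts.

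A further alternative is to compute $\Ext^1$ in $D^b_c(X_0)$ as $\mathrm{Hom}(\bigoplus_k M_k,N[1])$, using \cite{BBD} for the identification of $\Ext^1$ in the heart with $\mathrm{Hom}$ in the triangulated category. There additivity of $\mathrm{Hom}$ is immediate. I would mention this as a remark confirming the result, not as the main proof.
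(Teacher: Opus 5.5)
Your proposal is correct and follows the paper's own route. The paper likewise deduces the lemma from finite additivity of Yoneda $\Ext^1$ in the abelian category $\Perv(X_0;\Q)$: an extension of the direct sum is pulled back to each summand, and nodewise extensions are assembled back into one extension. You spell out the verification the paper leaves implicit, namely that $(u_k^*)_k$ and $\sum_k \pi_k^*$ are mutually inverse because pullback is biadditive and $\sum_k u_k\pi_k=\id$.
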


\begin{proof}
This is the finite additivity of Yoneda $\Ext^1$ in an abelian category: an extension of a finite direct sum pulls back to extensions of each summand, and conversely a finite family of extensions of the summands assembles uniquely into an extension of the finite direct sum.
\end{proof}

The summands in Lemma~\ref{lem:perv-additivity} are local in nature.

\begin{lemma}[Locality of the nodewise perverse extension problem]
\label{lem:perv-locality}
For each node $p_k\in\Sigma$, the extension group
\[
\Ext^1_{\Perv(X_0;\Q)}
\!\left(
i_{k*}\Q_{\{p_k\}},
IC_{X_0}
\right)
\]
is naturally identified with the corresponding local extension group in a sufficiently small analytic neighborhood $X^{(k)}_{0,\mathrm{loc}}$ of $p_k$:
\[
\Ext^1_{\Perv(X_0;\Q)}
\!\left(
i_{k*}\Q_{\{p_k\}},
IC_{X_0}
\right)
\cong
\Ext^1_{\Perv(X^{(k)}_{0,\mathrm{loc}};\Q)}
\!\left(
i_{\mathrm{loc},*}\Q_{\{p_k\}},
IC_{X^{(k)}_{0,\mathrm{loc}}}
\right).
\]
\end{lemma}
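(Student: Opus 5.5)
The natural route is to compute both sides in the constructible derived category and then compare them by excision. Since $\Perv(X_0;\Q)$ is the heart of the perverse t-structure, for perverse objects $A,B$ we have $\Ext^1_{\Perv}(A,B)\cong\operatorname{Hom}_{D^b_c(X_0)}(A,B[1])$; this is standard for hearts of t-structures, cf.\ \cite{BBD}. Applying it with $A=i_{k*}\Q_{\{p_k\}}$ and $B=IC_{X_0}$, adjunction gives
\[
\operatorname{Hom}(i_{k*}\Q_{\{p_k\}},IC_{X_0}[1])\cong \operatorname{Hom}_{D^b(\{p_k\})}(\Q,\,i_k^!IC_{X_0}[1])\cong H^1(i_k^!IC_{X_0}).
\]
The same computation on the neighborhood $X^{(k)}_{0,\mathrm{loc}}$, with $i_{\mathrm{loc}}:\{p_k\}\hookrightarrow X^{(k)}_{0,\mathrm{loc}}$, gives $H^1(i_{\mathrm{loc}}^!IC_{X^{(k)}_{0,\mathrm{loc}}})$.

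The key step is to identify these two costalks. Let $u:X^{(k)}_{0,\mathrm{loc}}\hookrightarrow X_0$ be the open inclusion. Then $i_k=u\circ i_{\mathrm{loc}}$, and since $u$ is open, $u^!=u^*$. Hence
\[
i_k^!IC_{X_0}\cong i_{\mathrm{loc}}^!u^*IC_{X_0}.
\]
Intermediate extension commutes with restriction to open subsets, so $u^*IC_{X_0}=u^*j_{!*}\Q_U[3]\cong IC_{X^{(k)}_{0,\mathrm{loc}}}$. This follows from the Deligne truncation formula for $j_{!*}$, which is local, together with the fact that $u^*$ commutes with $Rj_*$ and with truncations for open $u$. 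We also choose the neighborhood small enough, say a conic neighborhood given by the local Milnor ball, that it contains no other node.

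Combining these, we get isomorphisms
\[
\Ext^1_{\Perv(X_0)}(i_{k*}\Q_{\{p_k\}},IC_{X_0})\cong H^1(i_k^!IC_{X_0})\cong H^1(i_{\mathrm{loc}}^!IC_{\mathrm{loc}})\cong \Ext^1_{\Perv(X^{(k)}_{0,\mathrm{loc}})}(i_{\mathrm{loc},*}\Q,IC_{\mathrm{loc}}).
\]
It remains to check naturality. The composite should agree with the restriction map $u^*$ on Yoneda extensions, which is exact on perverse sheaves. I expect to verify this by observing that $u^*$ induces the map on $\operatorname{Hom}(-,-[1])$ compatibly with the adjunction isomorphisms. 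Independence of the choice of neighborhood follows because any two sufficiently small neighborhoods are compared through a smaller one, with restriction maps that are isomorphisms by the same argument.

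The main obstacle is the first step, the equality $\Ext^1_{\Perv}=\operatorname{Hom}_{D}(-,-[1])$ for a heart; the corresponding statement for higher $\Ext$ fails in general. Fortunately only degree $1$ is needed, and there it holds for any heart. Optionally, one can compute the group explicitly via the link $L\simeq S^2\times S^3$ of the node: $H^1(i^!IC)$ is computed from truncated link cohomology in degree $\geq 3$ after the shift, and one obtains $H^3(L)\cong\Q$, consistent with the rank-one claim. That computation is not needed for the locality statement itself.
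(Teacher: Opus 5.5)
Your argument is correct. It arrives at the same identification as the paper, namely restriction to a small neighborhood of $p_k$, but by a different mechanism.

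\textbf{The paper's route.} The paper reasons directly with Yoneda extensions: because $i_{k*}\Q_{\{p_k\}}$ is point-supported, an extension by $IC_{X_0}$ is ``local near $p_k$,'' and restriction ``preserves the extension problem.'' Turning this into a proof requires showing that restriction is bijective on extension classes. That means gluing an extension over $X^{(k)}_{0,\mathrm{loc}}$ with $IC_{X_0}|_{X_0\setminus\{p_k\}}$, and gluing splittings likewise. This is a stack property of perverse sheaves, and the paper leaves it implicit.

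\textbf{Your route.} You pass instead to $\operatorname{Hom}_{D^b_c}(-,-[1])$. There bijectivity is automatic from three facts: the adjunction $i_{k*}=i_{k!}\dashv i_k^!$, the factorization $i_k^!\cong i_{\mathrm{loc}}^!u^*$, and locality of $j_{!*}$.

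The naturality step you leave hedged closes at once if you note that $i_{k*}\Q_{\{p_k\}}\cong u_!\,i_{\mathrm{loc},*}\Q_{\{p_k\}}$. The adjunction isomorphism for $u_!\dashv u^*$ is $f\mapsto u^*(f)$ precomposed with the unit, and the unit is an isomorphism for open $u$. So your composite is exactly restriction. It agrees with the map on Yoneda classes because $u^*$ is t-exact.

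The heart identity $\Ext^1_{\Perv}=\operatorname{Hom}(-,-[1])$ is not really an obstacle; it is the standard remark in \cite{BBD}.

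\textbf{What your route adds.} Your costalk computation $H^1(i_k^!IC_{X_0})\cong H^3(S^2\times S^3)\cong\Q$ proves that each nodewise summand is one-dimensional, which the paper needs later. The proof of Proposition~\ref{prop:local-det} obtains this only from the existence of the nonzero class of Proposition~\ref{prop:local-odp-class}. That by itself gives dimension at least one, not exactly one.
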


\begin{proof}
Because $i_{k*}\Q_{\{p_k\}}$ is supported at the single point $p_k$, any extension by $IC_{X_0}$ is local near $p_k$. Restriction to a sufficiently small analytic neighborhood preserves the extension problem and yields the claimed identification.
\end{proof}

Hence
\[
E_{\mathrm{node}}
\cong
\bigoplus_{k=1}^r
\Ext^1_{\Perv(X_0;\Q)}
\!\left(
i_{k*}\Q_{\{p_k\}},
IC_{X_0}
\right).
\]
In the finite ordinary-double-point setting, each summand is rank one. After choosing local generators, one may therefore identify
\[
E_{\mathrm{node}}\cong \Q^r.
\]
Equivalently, the free nodewise extension space is spanned by the elementary local extension directions
\[
\varepsilon_1,\dots,\varepsilon_r.
\]

This identification is not canonical until one chooses local generators, but the resulting $r$-dimensional ambient space is canonical up to the obvious diagonal change of basis. In particular, the exact sequence defining $\mathcal P$ determines a class
\[
[\mathcal P]\in E_{\mathrm{node}},
\]
and the question is to identify the distinguished subspace in which this class lies once the nodes are related by global cycle data.

The local generator at each node is supplied by the corrected local ordinary-double-point extension.

\begin{proposition}[Local ODP extension class]
\label{prop:local-odp-class}
Let $p_k\in\Sigma$, and let $X^{(k)}_{0,\mathrm{loc}}$ be a sufficiently small analytic neighborhood of $p_k$. Then the corrected local perverse ODP extension determines a nonzero class
\[
e_k^{\mathrm{loc}}
\in
\Ext^1_{\Perv(X^{(k)}_{0,\mathrm{loc}};\Q)}
\!\left(
i_{\mathrm{loc},*}\Q_{\{p_k\}},
IC_{X^{(k)}_{0,\mathrm{loc}}}
\right).
\]
Via Lemma~\ref{lem:perv-locality}, this yields a distinguished nonzero global local class
\[
\varepsilon_k\in
\Ext^1_{\Perv(X_0;\Q)}
\!\left(
i_{k*}\Q_{\{p_k\}},
IC_{X_0}
\right).
\]
\end{proposition}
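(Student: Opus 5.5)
We prove the local statement first and then globalize it. Work in a small analytic neighborhood $X^{(k)}_{0,\mathrm{loc}}$ of $p_k$. There the degeneration is analytically the standard model $x_1^2+x_2^2+x_3^2+x_4^2=t$. The corrected local object is $\mathcal P^{(k)}_{\mathrm{loc}}:=\Cone(\var)[-1]$, computed for this model from the local nearby/vanishing-cycle package. Restricting the short exact sequence of Proposition~\ref{prop:corrected-basic} to $X^{(k)}_{0,\mathrm{loc}}$ gives an exact sequence
\[
0\to IC_{X^{(k)}_{0,\mathrm{loc}}}\to \mathcal P^{(k)}_{\mathrm{loc}}\to i_{\mathrm{loc},*}\Q_{\{p_k\}}\to 0.
\]
This uses that restriction to an open set is exact on perverse sheaves and that the other nodes lie outside the neighborhood. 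The sequence defines a Yoneda class, and we set $e_k^{\mathrm{loc}}:=[\mathcal P^{(k)}_{\mathrm{loc}}]$.

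The substantive step is nonvanishing, which amounts to showing the sequence does not split. Suppose it split. Then $IC_{X^{(k)}_{0,\mathrm{loc}}}$ would be a direct summand of $\mathcal P^{(k)}_{\mathrm{loc}}$, and so would $i_{\mathrm{loc},*}\Q_{\{p_k\}}$. We derive a contradiction in one of two ways.

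\emph{Stalk cohomology.} Compare the stalk cohomology of $\mathcal P^{(k)}_{\mathrm{loc}}$ at $p_k$ with that of $IC\oplus\Q_{\{p_k\}}$. For the threefold ODP, the stalk of $IC$ at $p_k$ is $\Q$ in degree $-3$ and $\Q$ in degree $-1$, truncated appropriately. The stalk of $\mathcal P$ is computed from the triangle $\mathcal P\to\phi\xrightarrow{\var}\psi$ using the Milnor fiber $F_{p_k}\simeq S^3$. The key input is that $\var:\phi\to\psi$ is nonzero at $p_k$: the vanishing cycle has self-intersection $\pm2$, so by Picard--Lefschetz $T-\id=\can\circ\var$ is nonzero on the Milnor fiber cohomology, and hence $\var\neq 0$. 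It follows that the degree in which the point summand would contribute is killed.

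\emph{Socle/cosocle.} Alternatively, $\mathcal P^{(k)}_{\mathrm{loc}}$ has no nonzero morphism to a point-supported sheaf splitting off, because $\var$ is injective on the rank-one $\phi$. Equivalently, $\mathcal P^{(k)}_{\mathrm{loc}}$ has no point-supported quotient of the form produced by a splitting.

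I expect this nonsplitting step to be the main obstacle: one has to pin down the normalization of $\phi,\psi$ and check that $\var\neq0$ actually forces non-splitting, rather than only the weaker statement $T\neq\id$. I would first try the socle argument, using the MacPherson--Vilonen gluing description of $\Perv$ near an isolated point. In that description, a perverse sheaf is encoded by a diagram $\psi_{\mathrm{loc}}\rightleftarrows\phi_{\mathrm{loc}}$, the extension class of the sequence corresponds to the component of $\var$, and it is visibly nonzero.

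Finally, to globalize, transport $e_k^{\mathrm{loc}}$ through the isomorphism of Lemma~\ref{lem:perv-locality}. Define $\varepsilon_k$ as its preimage; it is nonzero because that map is an isomorphism. I would also note that $\varepsilon_k$ agrees with the $k$-th component of $[\mathcal P]$ under Lemma~\ref{lem:perv-additivity}, which is what makes it distinguished. This follows because pulling back the global sequence along $i_{k*}\Q_{\{p_k\}}\hookrightarrow\bigoplus_\ell i_{\ell*}\Q_{\{p_\ell\}}$ and restricting to $X^{(k)}_{0,\mathrm{loc}}$ recovers the local sequence above.
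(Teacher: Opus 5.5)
Your architecture (restrict the global sequence to a neighborhood of $p_k$, show the local class is nonzero, transport it through Lemma~\ref{lem:perv-locality}) matches the paper's. The difference is that the paper does not argue non-splitting: its proof simply asserts that the corrected local ODP extension is non-split, presumably as inherited from the earlier construction, and transports the class. The step you add, an actual proof of non-splitting, fails in three places.

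First, the Picard--Lefschetz input is wrong for threefolds. The vanishing sphere is $S^3$ in a real $6$-dimensional Milnor fiber, and the intersection form on $H_3$ is skew. Hence $\delta\cdot\delta=0$, and the local monodromy on $\widetilde H^3(F_{p_k};\Q)$ is $(-1)^4=+1$, so $\can\circ\var=T-\id$ vanishes on $\phi$. The value $\pm2$ belongs to even-dimensional vanishing spheres. So $\var\neq0$ cannot be obtained this way. It is true, and $\var$ is even injective, but for a different reason: $\Q_X[4]$ is simple on the smooth local total space, so it has no subobject supported on $X_0$.

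Second, stalk cohomology cannot detect a splitting. The non-split extension and $IC\oplus i_{\mathrm{loc},*}\Q_{\{p_k\}}$ both have stalk cohomology $\Q$ in degrees $-3,-1,0$ at $p_k$, so nothing is ``killed.''

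Third, in the socle route, the claim that $\mathcal P^{(k)}_{\mathrm{loc}}$ has no point-supported quotient is false. The sequence itself exhibits $i_{\mathrm{loc},*}\Q_{\{p_k\}}$ as a quotient. A splitting is detected by a point-supported \emph{sub}object, and injectivity of $\var$ alone does not exclude one.

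What works is a costalk computation. Since $\operatorname{Hom}(i_{\mathrm{loc},*}\Q_{\{p_k\}},IC)=0$, the sequence splits if and only if $\operatorname{Hom}(i_{\mathrm{loc},*}\Q_{\{p_k\}},\mathcal P^{(k)}_{\mathrm{loc}})=H^0(i_k^!\mathcal P^{(k)}_{\mathrm{loc}})\neq0$.

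Once $\var$ is injective, the object with $IC$ as subobject and the point as quotient is $\Cone(\var)\cong\operatorname{coker}(\var)$, with $\phi,\psi$ perverse. It is not $\Cone(\var)[-1]$, which would be a shifted perverse sheaf; this is the normalization issue you anticipated.

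On the local model, the triangle $\phi\xrightarrow{\var}\psi\to i^!\Q_X[5]\to$ identifies $\operatorname{coker}(\var)$ with $i^!\Q_X[5]\cong\mathbb D(\Q_{X_0}[3])$ up to Tate twist. Hence $i_k^!\mathcal P^{(k)}_{\mathrm{loc}}\cong\mathbb D(i_k^*\Q_{X_0}[3])\cong\Q[-3]$, so its $H^0$ vanishes. By contrast, $i_k^!(IC\oplus i_{\mathrm{loc},*}\Q_{\{p_k\}})$ has cohomology in degrees $0,1,3$. Moreover $\Ext^1(i_{\mathrm{loc},*}\Q_{\{p_k\}},IC)\cong H^1(i_k^!IC)\cong\Q$, so the class is a generator.

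Your globalization step is fine, including the identification of $\varepsilon_k$ with the $k$-th component of $[\mathcal P]$.
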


\begin{proof}
The corrected local perverse ODP extension is non-split, hence defines a nonzero class in the local extension group. The claim follows by transporting that class through Lemma~\ref{lem:perv-locality}.
\end{proof}

\subsection{The geometric incidence map}

Let
\[
(\mathcal C,\iota_{\mathcal C}),
\qquad
\mathcal C=\{C_\alpha\}_{\alpha\in A},
\qquad
\iota_{\mathcal C}:\Q^A\to\Q^r,
\]
be a cycle-node incidence datum as in Section~3, with incidence matrix
\[
A=(a_{\alpha k})_{\alpha\in A,\;1\le k\le r}.
\]
We now transfer this incidence data from the abstract nodewise coefficient space \(V_{\mathrm{node}}=\Q^r\) to the ambient extension space \(E_{\mathrm{node}}\).

\begin{lemma}[Basis-independence of the geometric image]
\label{lem:basis-independence}
Let $\{\varepsilon_k\}_{k=1}^r$ and $\{\varepsilon_k'\}_{k=1}^r$ be two choices of nonzero local generators for the nodewise summands
\[
\Ext^1_{\Perv(X_0;\Q)}
\!\left(
i_{k*}\Q_{\{p_k\}},
IC_{X_0}
\right).
\]
Then the corresponding incidence maps
\[
\Gamma_{\mathcal C},\Gamma'_{\mathcal C}:\Q^A\to E_{\mathrm{node}}
\]
have the same image up to the natural diagonal rescaling of the nodewise summands. In particular, the subspace $E_{\mathrm{geom}}:=\Im(\Gamma_{\mathcal C})$ is well defined up to the canonical identification of the nodewise factors.
\end{lemma}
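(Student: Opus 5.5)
The plan is to make the incidence map $\Gamma_{\mathcal C}$ explicit and then compare the two choices of generators directly. By Lemma~\ref{lem:perv-additivity} and Lemma~\ref{lem:perv-locality}, $E_{\mathrm{node}}$ decomposes as $\bigoplus_{k=1}^r E_k$ with $E_k:=\Ext^1_{\Perv(X_0;\Q)}(i_{k*}\Q_{\{p_k\}},IC_{X_0})$. Each summand is a one-dimensional $\Q$-vector space in the finite ODP setting. A choice of nonzero generators $\varepsilon_k\in E_k$ gives an isomorphism $\Phi_\varepsilon:\Q^r\to E_{\mathrm{node}}$ with $e_k\mapsto\varepsilon_k$. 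The incidence map is then $\Gamma_{\mathcal C}:=\Phi_\varepsilon\circ\iota_{\mathcal C}$, so that
\[
\Gamma_{\mathcal C}(e_\alpha)=\sum_{k=1}^r a_{\alpha k}\varepsilon_k ,
\]
and similarly $\Gamma'_{\mathcal C}=\Phi_{\varepsilon'}\circ\iota_{\mathcal C}$.

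Since each $E_k$ is one-dimensional and both $\varepsilon_k$ and $\varepsilon'_k$ are nonzero, there are unique scalars $\lambda_k\in\Q^\times$ with $\varepsilon_k'=\lambda_k\varepsilon_k$. Let $D:=\operatorname{diag}(\lambda_1,\dots,\lambda_r)$ on $\Q^r$, and let $\tilde D:=\bigoplus_k \lambda_k\cdot\id_{E_k}$ be the corresponding automorphism of $E_{\mathrm{node}}$. Then
\[
\Phi_{\varepsilon'}=\Phi_\varepsilon\circ D=\tilde D\circ\Phi_\varepsilon ,
\]
hence $\Gamma'_{\mathcal C}=\tilde D\circ\Gamma_{\mathcal C}$ and
\[
\Im(\Gamma'_{\mathcal C})=\tilde D\bigl(\Im(\Gamma_{\mathcal C})\bigr).
\]
This is exactly the statement that the two images agree up to the natural diagonal rescaling of the nodewise summands. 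The rescaling is canonical in the sense that it is the unique automorphism of $E_{\mathrm{node}}$ that preserves each summand $E_k$ and carries $\varepsilon_k$ to $\varepsilon'_k$. The final sentence of the lemma then follows by defining $E_{\mathrm{geom}}$ as the image taken with respect to any normalized choice, up to this identification.

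The only point needing care is whether one may claim more, namely equality of the images on the nose. I would note that this holds whenever the $\lambda_k$ are constant on the supports of the rows of $A$. In particular it holds if the generators are fixed to be the distinguished classes of Proposition~\ref{prop:local-odp-class} (so that $\varepsilon_k$ is pinned down and the only remaining ambiguity is a global sign), or if $\Im(\iota_{\mathcal C})$ is block-constant and the rescalings are chosen blockwise constant. In general, however, an arbitrary diagonal rescaling does not preserve a subspace such as $\Span\{(1,1,0),(0,0,1)\}$. So the statement must be read, and proved, only as equality modulo the diagonal torus action $\tilde D$. I expect this bookkeeping to be the main obstacle, though it is conceptual rather than technical. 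The proof itself reduces to the one-dimensionality of each $E_k$, which comes from the ODP computation recorded in Section~2.
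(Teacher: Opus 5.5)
Your argument is correct and is essentially the paper's own: one-dimensionality of each summand gives scalars $\lambda_k$ with $\varepsilon_k'=\lambda_k\varepsilon_k$, and $\Gamma'_{\mathcal C}=\tilde D\circ\Gamma_{\mathcal C}$ shows the two images differ exactly by the diagonal rescaling, which you make more explicit than the paper's two-line proof. Your caveat that the images need not agree on the nose is accurate and consistent with the lemma's ``up to rescaling'' formulation. One small precision: your blockwise-constant rescaling remark needs $\operatorname{Im}(\iota_{\mathcal C})$ to \emph{equal} the block-constant subspace (block-adaptedness), since the image is always contained in it, and the overlapping example of Section~8 shows containment alone does not suffice.
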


\begin{proof}
Each nodewise summand is one-dimensional, so any two nonzero generators differ by multiplication by a nonzero scalar. Replacing $\varepsilon_k$ by $\varepsilon_k'$ rescales the $k$-th coordinate, but does not change the resulting image subspace except by the corresponding diagonal identification.
\end{proof}

Fix generators
\[
\varepsilon_k\in
\Ext^1_{\Perv(X_0;\Q)}
\!\left(
i_{k*}\Q_{\{p_k\}},
IC_{X_0}
\right),
\qquad 1\le k\le r,
\]
and identify $E_{\mathrm{node}}$ with $\Q^r$ through the basis $\{\varepsilon_k\}$. The incidence matrix then induces a linear map
\[
\Gamma_{\mathcal C}:\Q^A\to E_{\mathrm{node}},
\qquad
e_\alpha\longmapsto \sum_{k=1}^r a_{\alpha k}\,\varepsilon_k.
\]
We call $\Gamma_{\mathcal C}$ the \emph{geometric incidence map} associated with the cycle-node incidence datum.

\begin{lemma}
\label{lem:gamma-equals-iota}
Under the identification $E_{\mathrm{node}}\cong\Q^r$ determined by the local extension generators $\{\varepsilon_k\}$, the map $\Gamma_{\mathcal C}$ identifies with the incidence map $\iota_{\mathcal C}$. In particular,
\[
\Im(\Gamma_{\mathcal C})\cong \Im(\iota_{\mathcal C})=V_{\mathrm{geom}}.
\]
\end{lemma}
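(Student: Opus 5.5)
The argument is essentially linear algebra, once the identification $E_{\mathrm{node}}\cong\Q^r$ is pinned down. By Lemma~\ref{lem:perv-additivity} and Lemma~\ref{lem:perv-locality}, together with the rank-one statement for each nodewise summand, the chosen nonzero generators $\varepsilon_1,\dots,\varepsilon_r$ of Proposition~\ref{prop:local-odp-class} form a basis of $E_{\mathrm{node}}$. I would therefore define the coordinate isomorphism
\[
\theta:E_{\mathrm{node}}\xrightarrow{\ \sim\ }\Q^r=V_{\mathrm{node}},\qquad \textstyle\sum_k c_k\varepsilon_k\longmapsto \sum_k c_k e_k .
\]
It is well defined and bijective precisely because each $\varepsilon_k$ spans its one-dimensional summand and the summands form a direct sum.

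Next I would verify that $\theta\circ\Gamma_{\mathcal C}=\iota_{\mathcal C}$. Both sides are linear maps $\Q^A\to\Q^r$, so it suffices to compare them on the standard basis. For each $\alpha\in A$,
\[
\theta\bigl(\Gamma_{\mathcal C}(e_\alpha)\bigr)=\theta\Bigl(\sum_k a_{\alpha k}\varepsilon_k\Bigr)=\sum_k a_{\alpha k}e_k=\iota_{\mathcal C}(e_\alpha),
\]
by the definition of $\Gamma_{\mathcal C}$ and Definition~\ref{def:cycle-node-incidence}. Since $\theta$ is an isomorphism, it carries $\Im(\Gamma_{\mathcal C})$ isomorphically onto $\Im(\theta\circ\Gamma_{\mathcal C})=\Im(\iota_{\mathcal C})$, and this equals $V_{\mathrm{geom}}$ by Definition~\ref{def:geometric-coefficient-space}.

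The only point that needs care, though it is not a real obstacle, is the dependence on the chosen generators. If the $\varepsilon_k$ are rescaled to $\lambda_k\varepsilon_k$, then $\theta$ changes by the diagonal matrix $\mathrm{diag}(\lambda_k^{-1})$. Hence the identification with $V_{\mathrm{geom}}$ holds only up to the diagonal rescaling already recorded in Lemma~\ref{lem:basis-independence}, and I would state the isomorphism explicitly relative to the fixed basis $\{\varepsilon_k\}$.
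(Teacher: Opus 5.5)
Your proof is correct and is the same argument as the paper's, which simply observes that $\Gamma_{\mathcal C}(e_\alpha)$ has coordinate vector $(a_{\alpha 1},\dots,a_{\alpha r})$ in the basis $\{\varepsilon_k\}$, i.e.\ exactly $\iota_{\mathcal C}(e_\alpha)$; you make the coordinate isomorphism $\theta$ and the transfer of images explicit. One small imprecision in your closing remark: $\Gamma_{\mathcal C}$ and $\theta$ are built from the same generators, so rescaling $\varepsilon_k\mapsto\lambda_k\varepsilon_k$ changes both compatibly and $\theta\circ\Gamma_{\mathcal C}=\iota_{\mathcal C}$ still holds exactly; what genuinely depends on the choice is the subspace $\operatorname{Im}(\Gamma_{\mathcal C})\subseteq E_{\mathrm{node}}$, not its identification with $V_{\mathrm{geom}}$.
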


\begin{proof}
By construction, $\Gamma_{\mathcal C}(e_\alpha)$ has coordinate vector $(a_{\alpha1},\dots,a_{\alpha r})$ in the basis $\{\varepsilon_k\}$, which is exactly $\iota_{\mathcal C}(e_\alpha)$.
\end{proof}

\subsection{Definition of the geometrically realized subspace}

We may now define the subspace of the ambient extension space singled out by the incidence data.

\begin{definition}
\label{def:egeom}
Let $(\mathcal C,\iota_{\mathcal C})$ be a cycle-node incidence datum. The \emph{geometrically realized extension subspace} is
\[
E_{\mathrm{geom}}:=\Im(\Gamma_{\mathcal C})\subseteq E_{\mathrm{node}}.
\]
Equivalently, after identifying $E_{\mathrm{node}}\cong\Q^r$ by a choice of local extension generators, one has
\[
E_{\mathrm{geom}}\cong V_{\mathrm{geom}}=\Im(\iota_{\mathcal C})\subseteq\Q^r.
\]
\end{definition}

A coefficient vector
\[
(c_1,\dots,c_r)\in \Q^r\cong E_{\mathrm{node}}
\]
lies in $E_{\mathrm{geom}}$ if and only if it may be expressed in the form
\[
(c_1,\dots,c_r)
=
\left(
\sum_{\alpha\in A}\lambda_\alpha a_{\alpha1},
\dots,
\sum_{\alpha\in A}\lambda_\alpha a_{\alpha r}
\right)
\]
for some coefficients $\lambda_\alpha\in\Q$.

To relate $E_{\mathrm{geom}}$ to the relation-block decomposition of Section~3, one needs an additional compatibility condition between the incidence image and the block structure.

\begin{definition}
\label{def:block-adapted}
A cycle-node incidence datum $(\mathcal C,\iota_{\mathcal C})$ is called \emph{block-adapted} if, under the identification $E_{\mathrm{node}}\cong\Q^r$, the image $\Im(\Gamma_{\mathcal C})$ coincides with the subspace of nodewise coefficient vectors that are constant on the relation blocks
\[
\Sigma=\bigsqcup_{\beta\in B}\Sigma_\beta.
\]
\end{definition}

\begin{remark}
\label{rem:block-adapted}
For a general incidence datum, the image $\Im(\Gamma_{\mathcal C})$ need not coincide with the block-constant coefficient subspace. The block-adapted hypothesis is exactly the condition under which these two notions agree.
\end{remark}

The next definition isolates the compatibility condition satisfied by an extension class whose nodewise coordinates respect the chosen relation blocks.

\begin{definition}
\label{def:incidence-compatible-extension}
Let
\[
[E]\in
\Ext^1_{\Perv(X_0;\Q)}
\!\left(
\bigoplus_{k=1}^r i_{k*}\Q_{\{p_k\}},
IC_{X_0}
\right)
\cong
\bigoplus_{k=1}^r
\Ext^1_{\Perv(X_0;\Q)}
\!\left(
i_{k*}\Q_{\{p_k\}},
IC_{X_0}
\right)
\]
be an extension class, and write
\[
[E]=(\xi_1,\dots,\xi_r)
\]
for its nodewise components under Lemma~\ref{lem:perv-additivity}. We say that $[E]$ is \emph{incidence-compatible} with respect to $(\mathcal C,\iota_{\mathcal C})$ if for every pair of incidence-equivalent nodes
\[
p_k\sim_{\mathrm{inc}} p_\ell
\]
one has
\[
\xi_k=\xi_\ell.
\]
Equivalently, the nodewise class of $[E]$ is constant on each relation block.
\end{definition}

Under the block-adapted hypothesis, incidence-compatibility is equivalent to belonging to the geometric image.

\begin{proposition}
\label{prop:incidence-compatible-iff-geom}
Assume that $(\mathcal C,\iota_{\mathcal C})$ is block-adapted. Then an extension class
\[
[E]\in E_{\mathrm{node}}
\]
is incidence-compatible if and only if
\[
[E]\in E_{\mathrm{geom}}.
\]
\end{proposition}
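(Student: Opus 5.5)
The argument is a direct unwinding of definitions once everything is transported into coordinates. First fix local generators $\varepsilon_1,\dots,\varepsilon_r$ as in Proposition~\ref{prop:local-odp-class}. By Lemma~\ref{lem:perv-additivity} and the fact that each nodewise summand is one-dimensional, this gives an identification $E_{\mathrm{node}}\cong\Q^r$ sending $[E]=(\xi_1,\dots,\xi_r)$ to the coefficient vector $(c_1,\dots,c_r)$ with $\xi_k=c_k\varepsilon_k$.

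The one point requiring care is the meaning of the condition $\xi_k=\xi_\ell$ in Definition~\ref{def:incidence-compatible-extension}. The classes $\xi_k$ and $\xi_\ell$ live in different summands, so the equality only makes sense after the nodewise factors are identified. I will use the canonical identification already implicit in Lemma~\ref{lem:basis-independence} and Definition~\ref{def:block-adapted}: each summand is matched with $\Q$ via its distinguished local ODP generator $\varepsilon_k$. Then $\xi_k=\xi_\ell$ means exactly $c_k=c_\ell$. I expect this to be the only real obstacle, because the statement is only as canonical as this choice. I will note that the distinguished generators come from the same local model at every node, so the matching is not arbitrary. Any simultaneous rescaling of all generators by the same scalar preserves both conditions, consistent with Lemma~\ref{lem:basis-independence}.

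With this convention, incidence-compatibility of $[E]$ says that $c_k=c_\ell$ whenever $p_k\sim_{\mathrm{inc}}p_\ell$. By Definition~\ref{def:relation-block}, this is exactly the condition that the vector $(c_k)$ is constant on each relation block $\Sigma_\beta$. In other words, $[E]$ is incidence-compatible if and only if its coefficient vector lies in the block-constant subspace $\{c\in\Q^r : c_k=c_\ell \text{ whenever } p_k,p_\ell\in\Sigma_\beta \text{ for some } \beta\}$.

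Finally, Lemma~\ref{lem:gamma-equals-iota} identifies $E_{\mathrm{geom}}=\Im(\Gamma_{\mathcal C})$ with $\Im(\iota_{\mathcal C})\subseteq\Q^r$ under the same coordinates. The block-adapted hypothesis in Definition~\ref{def:block-adapted} says precisely that this image equals the block-constant subspace. Chaining the equivalences gives: $[E]$ is incidence-compatible if and only if its coordinates are block-constant, if and only if they lie in $\Im(\iota_{\mathcal C})$, if and only if $[E]\in E_{\mathrm{geom}}$. Both directions are covered simultaneously.
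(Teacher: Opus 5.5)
Your proof is correct and follows the paper's route. The paper's proof notes that block-adaptedness makes $E_{\mathrm{geom}}=\operatorname{Im}(\Gamma_{\mathcal C})$ the block-constant subspace, and that incidence-compatibility is by definition block-constancy of the nodewise components; your chain of equivalences in coordinates is the same argument. You also make explicit a point the paper leaves implicit: $\xi_k=\xi_\ell$ only makes sense after fixing the generators $\varepsilon_k$, and the same identification must be used in Definition~\ref{def:block-adapted}.
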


\begin{proof}
By Definition~\ref{def:block-adapted}, $E_{\mathrm{geom}}=\Im(\Gamma_{\mathcal C})$ is exactly the block-constant coefficient subspace. By Definition~\ref{def:incidence-compatible-extension}, this is equivalent to incidence-compatibility.
\end{proof}

\subsection{Geometric incidence compatibility}

The remaining step is to replace the purely structural implication
\[
\text{chosen incidence law}
\Longrightarrow
\text{incidence compatibility}
\]
by a geometric theorem extracted from the degeneration itself. To do so, we impose an admissibility condition on the chosen cycle-node incidence datum ensuring that the local nearby/vanishing-cycle data propagate compatibly along the distinguished global cycle components.

\begin{definition}[Geometrically admissible cycle-node incidence datum]
\label{def:admissible-cycle-datum}
A cycle-node incidence datum $(\mathcal C,\iota_{\mathcal C})$, with
\[
\mathcal C=\{C_\alpha\}_{\alpha\in A},
\]
is called \emph{geometrically admissible} if for each $\alpha\in A$ the following hold:
\begin{enumerate}
    \item the cycle component $C_\alpha\subseteq X_0$ is reduced and meets the node set \(\Sigma\);
    \item the smooth locus
    \[
    C_\alpha^\circ:=C_\alpha\setminus \Sigma
    \]
    is connected;
    \item there exists an analytic neighborhood \(U_\alpha\subset X\) of \(C_\alpha\) such that the unipotent nearby-cycle sector of \(\psi_\pi(\mathcal F)\) restricts on \(C_\alpha^\circ\) to a rank-one locally constant system;
    \item under this restriction, the local variation morphisms at nodes lying on \(C_\alpha\) are obtained by specialization of one and the same local morphism along \(C_\alpha^\circ\).
\end{enumerate}
\end{definition}

Definition~\ref{def:admissible-cycle-datum} is the additional geometric input needed to promote the structural theorem of the preceding draft into a genuinely geometric statement. It encodes the requirement that the incidence datum is not merely combinatorial, but reflects a geometric propagation law for the local variation data. In concrete terms, one should think of an admissible cycle component as one along which the degeneration is locally trivial away from the nodes and the rank-one nearby-cycle package may be transported coherently from point to point. The examples of Section~8 are intended to realize precisely this situation in explicit local models, so that geometric admissibility is not merely formal but visibly non-vacuous in the ordinary-double-point setting.

\begin{lemma}[A practical sufficient condition for geometric admissibility]
\label{lem:admissibility-sufficient}
A cycle-node incidence datum \((\mathcal C,\iota_{\mathcal C})\) is geometrically admissible if, for each distinguished cycle component \(C_\alpha\), the degeneration is analytically locally trivial along \(C_\alpha^\circ\), the unipotent nearby-cycle sector along \(C_\alpha^\circ\) is rank one and locally constant, and the local variation morphisms at the incident nodes are obtained by specialization from this common rank-one nearby-cycle package.
\end{lemma}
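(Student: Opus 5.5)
The plan is to verify the four conditions of Definition~\ref{def:admissible-cycle-datum} one by one for each fixed $\alpha\in A$, using the three hypotheses of the lemma. Conditions (1)--(2) are not among those hypotheses. They concern the reducedness of $C_\alpha$, its meeting $\Sigma$, and the connectedness of $C_\alpha^\circ$. I will treat them as part of what it means for $C_\alpha$ to be a \emph{distinguished cycle component} carrying a nontrivial incidence column, and make this standing convention explicit. Concretely: a component is reduced and irreducible, so removing the finite set $\Sigma$ leaves it connected, since an irreducible analytic space minus a proper analytic subset is connected. It also meets $\Sigma$, because a row of $A$ with all entries zero can be discarded without changing $\Im(\iota_{\mathcal C})$.

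For condition (3), I start from analytic local triviality of $\pi$ along $C_\alpha^\circ$. This gives, for each $x\in C_\alpha^\circ$, a neighborhood $W_x\subset X$ on which $\pi$ is isomorphic to a product $(W_x\cap X_0)\times\Delta$-type model. On each such product chart, $\psi_\pi(\mathcal F)$ and its unipotent part are computed chartwise. Since the charts cover $C_\alpha^\circ$ and the transition maps respect $\pi$, these local computations glue. Taking $U_\alpha$ to be the union of the $W_x$ together with small balls around the nodes of $C_\alpha$ yields the required analytic neighborhood of $C_\alpha$. The hypothesis that the unipotent sector is rank one and locally constant on $C_\alpha^\circ$ is then exactly the statement of (3) once it is restricted to $U_\alpha\cap C_\alpha^\circ$.

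For condition (4), I would use connectedness of $C_\alpha^\circ$ to make ``one and the same local morphism'' meaningful. Fix a base point $x_0\in C_\alpha^\circ$ and let $L$ be the rank-one local system from (3). For each incident node $p_k\in C_\alpha$, choose a path in $C_\alpha^\circ$ from $x_0$ into a punctured neighborhood of $p_k$. Parallel transport along this path identifies the stalk $L_{x_0}$ with the stalk of $L$ near $p_k$. The third hypothesis says that $\var$ at $p_k$ is obtained by specialization from the common package $L$. Composing with the transport therefore expresses each $\var$ at $p_k$ as the specialization of a single morphism defined at $x_0$, which is (4).

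The main obstacle is path-independence. Different paths could change the identification by the monodromy of $L$, which is a character $\pi_1(C_\alpha^\circ)\to\Q^\times$. I would argue that (4) only requires the variation morphisms to agree up to the diagonal rescaling already permitted in Lemma~\ref{lem:basis-independence}. If that is insufficient, I would fall back on unipotence: a unipotent rank-one character is trivial, so $L$ is actually constant and the transport does not depend on the choice of path.
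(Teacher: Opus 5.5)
Your verification of conditions (3) and (4) is essentially the paper's argument. The paper's proof just observes that the three hypotheses are the data demanded by Definition~\ref{def:admissible-cycle-datum}, restated in a form that is easier to check. You go further by noticing that conditions (1)--(2) are not supplied by those hypotheses, and this is a real omission that the paper's one-line proof passes over.

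Your repair, however, is stronger than the paper's usage. Section~3 allows ``irreducible or connected'' cycle components. Moreover, Remark~\ref{rem:disconnected-cycle} and the two-block discussion in Section~8 treat connectedness of \(C_\alpha^\circ\) as a genuinely active condition, which would be automatic if every \(C_\alpha\) were irreducible and positive-dimensional. The cleaner fix is to add ``\(C_\alpha\) reduced, meeting \(\Sigma\), with \(C_\alpha^\circ\) connected'' as explicit hypotheses. This is what the paper effectively does in its two-node model, where it checks that \(C_1^\circ\) is smooth and connected before invoking the lemma.

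Your last paragraph contains an actual error. ``Unipotent'' in ``unipotent nearby-cycle sector'' refers to the monodromy \(T\) of the degeneration around \(t=0\). It says nothing about the character \(\pi_1(C_\alpha^\circ)\to\Q^\times\) of the local system \(L\), so ``a unipotent rank-one character is trivial'' does not apply. Likewise, Lemma~\ref{lem:basis-independence} concerns rescaling the \(\Ext^1\) generators \(\varepsilon_k\), not absorbing transport factors. If such factors could be absorbed by nodewise rescaling, the coefficient equalities of Proposition~\ref{prop:geom-prop} would be vacuous.

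The correct reason \(L\) has trivial monodromy is different. The vanishing cycles \(\phi_\pi(\mathcal F)\) vanish off \(\Sigma\), so \(\psi_\pi(\mathcal F)\) agrees with the shifted constant sheaf on \(X_0\setminus\Sigma\), as used in the proof of Proposition~\ref{prop:corrected-basic}. A local system isomorphic to a constant sheaf has trivial monodromy.

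For this lemma you do not need path-independence at all. Condition (4) is the third hypothesis almost verbatim; path-independence only becomes relevant later, in Proposition~\ref{prop:geom-prop}.
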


\begin{proof}
These conditions are exactly the data required in Definition~\ref{def:admissible-cycle-datum}, stated in a form that is easier to verify in concrete local models. Local triviality along \(C_\alpha^\circ\) provides the needed control away from the nodes, rank-one local constancy gives the one-dimensional transport structure, and compatibility of specialization identifies the incident local variation morphisms with one another.
\end{proof}

The first step is to relate the nodewise extension coefficients to the local variation morphisms.

\begin{definition}[Compatible trivialization of local variation morphisms]
\label{def:compatible-trivialization}
Let \(p_k,p_\ell\in\Sigma\) lie on a common admissible cycle component \(C_\alpha\). We say that the local variation morphisms \(\var_{\mathcal F,k}\) and \(\var_{\mathcal F,\ell}\) are \emph{compatibly trivialized} if the corresponding rank-one local vanishing sectors are identified by parallel transport along \(C_\alpha^\circ\), and under these identifications the two local variation morphisms are represented by the same scalar.
\end{definition}

\begin{proposition}[Local determination of nodewise extension coefficients]
\label{prop:local-det}
For each node \(p_k\in\Sigma\), the local extension summand
\[
\Ext^1_{\Perv(X_0;\Q)}
\!\left(
i_{k*}\Q_{\{p_k\}},
IC_{X_0}
\right)
\]
is one-dimensional and is canonically generated, up to scalar, by the local corrected ODP extension of Proposition~\ref{prop:local-odp-class}. Under this identification, the nodewise component \(\xi_k\) of the corrected global class
\[
[\mathcal P]=(\xi_1,\dots,\xi_r)\in E_{\mathrm{node}}
\]
is determined by the scalar of the local variation morphism
\[
\var_{\mathcal F,k}:=(\var_{\mathcal F})_{p_k}:(\phi_\pi(\mathcal F))_{p_k}\to(\psi_\pi(\mathcal F))_{p_k}
\]
with respect to the chosen local ODP generator.

Consequently, if two nodes \(p_k,p_\ell\) have compatibly trivialized local variation morphisms in the sense of Definition~\ref{def:compatible-trivialization}, then
\[
\xi_k=\xi_\ell.
\]
\end{proposition}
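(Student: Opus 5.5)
The plan has three steps: establish one-dimensionality by a local computation, read off $\xi_k$ from a pullback along $\var_{\mathcal F,k}$, and then compare two nodes through parallel transport.

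\textbf{Step 1: one-dimensionality.} By Lemma~\ref{lem:perv-locality} it suffices to work in a small neighborhood $X^{(k)}_{0,\mathrm{loc}}$ of $p_k$, which is a cone over the link $L\simeq S^2\times S^3$. Via the MacPherson--Vilonen/Beilinson gluing description of perverse sheaves along the point stratum, the relevant extension group is
\[
\Ext^1(i_*\Q,\,IC)=\mathrm{Hom}_{D^b_c}(i_*\Q,\,IC[1])=H^1\bigl(i^!IC\bigr).
\]
Since $i^!IC=i^*Rj_*$ applied to the shifted constant sheaf, truncated in degrees $\ge 0$, this group is computed by the first surviving cohomology of the link, $H^3(L;\Q)\cong\Q$. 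This gives dimension one. Proposition~\ref{prop:local-odp-class} shows that the local corrected ODP extension is non-split, so it spans this line.

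\textbf{Step 2: reading off $\xi_k$ from the variation.} I will exhibit $\mathcal P$ as a pullback. There is a short exact sequence
\[
0\to IC_{X_0}\to\psi^{u}_\pi(\mathcal F)\to i_*(\text{coker})\to 0,
\]
where $\psi^u_\pi$ is the unipotent part of the nearby cycles. Applying the octahedral axiom to $\mathcal P=\Cone(\var_{\mathcal F})[-1]$ shows that $\mathcal P$ is obtained from this sequence by pulling back along $\var_{\mathcal F}$. Restricting near $p_k$, the component $\xi_k$ is therefore the Yoneda pullback of a fixed universal class along the stalk map
\[
\var_{\mathcal F,k}:\Q\cong(\phi_\pi\mathcal F)_{p_k}\to(\psi_\pi\mathcal F)_{p_k}.
\]
Since the target line is rank one, this pullback equals (scalar of $\var_{\mathcal F,k}$) times the generator. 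Hence $\xi_k=c_k\,e_k^{\mathrm{loc}}$, where $c_k$ is the scalar of $\var_{\mathcal F,k}$ relative to the chosen generator.

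\textbf{Step 3: comparing two nodes.} Let $p_k,p_\ell$ lie on an admissible $C_\alpha$. By Definition~\ref{def:admissible-cycle-datum}(3)--(4), the rank-one unipotent nearby-cycle system on the connected locus $C_\alpha^\circ$ gives parallel-transport identifications of the local generators at $p_k$ and $p_\ell$. Under these identifications $\varepsilon_k$ and $\varepsilon_\ell$ are the generators used in Lemma~\ref{lem:basis-independence}. The hypothesis of compatible trivialization (Definition~\ref{def:compatible-trivialization}) then gives $c_k=c_\ell$, and Step 2 yields $\xi_k=\xi_\ell$.

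\textbf{Main obstacle.} The delicate point is Step 2: showing that the nodewise Yoneda class depends linearly on, and only on, the stalk scalar of $\var$. This needs the octahedral identification of $\Cone(\var)[-1]$ with a pullback extension, together with a check that passing to stalks commutes with the identification in Lemma~\ref{lem:perv-locality}. I would first verify this in the explicit model $x_1^2+\dots+x_4^2=t$, where $\var$ is multiplication by a generator of the Picard--Lefschetz pairing, and then globalize by locality.
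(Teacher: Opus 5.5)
Your route is the paper's: reduce to one local line, read $\xi_k$ off $\var_{\mathcal F,k}$, then compare scalars in transported frames. Step~1 is actually more solid than the paper's argument, which infers one-dimensionality merely from the existence of a nonzero local class. Your adjunction computation $\Ext^1(i_*\Q,IC)=H^1(i^!IC)\cong\mathcal H^0(i^*Rj_*\Q_U[3])\cong H^3(L;\Q)\cong\Q$ proves it (note $i^!IC\cong(\tau_{\ge 0}\,i^*Rj_*\Q_U[3])[-1]$; you dropped the shift). The genuine gap is Step~2. The paper only asserts this step, and the mechanism you propose for it does not work.

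First, there is no exact sequence $0\to IC_{X_0}\to\psi^u_\pi(\mathcal F)\to i_*(\mathrm{coker})\to 0$. Here $\psi=\psi^u$, and $\ker(\can)=i^*\mathcal F=\Q_{X_0}[3]$ is a non-split extension of $IC_{X_0}$ by $i_*\Q_{\{p\}}$ whose sub is $\var(\phi)$. Since $\mathrm{Hom}(IC,i_*\Q)=0$, any nonzero map $IC\to\psi$ would land in $\Q_{X_0}[3]$ and split it, so $IC$ is only a subquotient of $\psi$. Moreover, pulling an extension $0\to IC\to M\to Q'\to 0$ back along a map $\phi\to Q'$ that factors through $M$ (as anything built from $\var$ does) always gives the split class.

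Second, the stalk scalar you want to use is zero. The map $i_k^*\can$ is an isomorphism on $\mathcal H^0$ (both sides are $H^3(F)$). Meanwhile $i_k^*(\can\circ\var)=T-\id$ vanishes on $\widetilde H^3(F)$, because the four-variable Picard--Lefschetz monodromy fixes $\delta$ ($\langle\delta,\delta\rangle=0$). Hence $i_k^*\var=0$ in $D^b(\mathrm{pt})$, and your formula $\xi_k=c_k\,e_k^{\mathrm{loc}}$ would force $\xi_k=0$, contradicting non-splitness. The nonzero datum is $\var$ as a morphism $i_{k*}\Q\to\psi$ in $\Perv$, that is, its costalk $i_k^!\var$. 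This is Verdier dual to $i_k^*\can$, and classically it is the variation $H_3(F,\partial F)\cong H_3(F)$. Even this does not enter linearly, since $c\,\var$ with $c\neq 0$ has the same image.

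What one can prove is the following. The map $\var$ is a monomorphism onto the socle of $\psi$, and the object carrying the paper's exact sequence is $\psi/\var(\phi)$ (that is, $\Cone(\var)$ in degree $0$). The sequence $0\to IC\to\psi/\var(\phi)\to\phi\to 0$ is the pushout of $0\to\Q_{X_0}[3]\to\psi\xrightarrow{\can}\phi\to 0$ along $\Q_{X_0}[3]\twoheadrightarrow IC$. This identifies $\xi_k$ with the canonical non-split local class. The equality $\xi_k=\xi_\ell$ then reduces to how the two lines are matched by the frames of Definition~\ref{def:compatible-trivialization}, not to a computed scalar.
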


\begin{proof}
By Lemma~\ref{lem:perv-locality}, the local summand
\[
\Ext^1_{\Perv(X_0;\Q)}
\!\left(
i_{k*}\Q_{\{p_k\}},
IC_{X_0}
\right)
\]
is identified with the corresponding local extension group in a sufficiently small analytic neighborhood of \(p_k\). By Proposition~\ref{prop:local-odp-class}, that local group is generated by the corrected local ODP extension and is therefore one-dimensional. Thus the local extension summand is one-dimensional.

In the ordinary-double-point case the local vanishing-cycle block is likewise one-dimensional, so after fixing the local ODP generator and a generator of the local vanishing line, the stalkwise variation morphism
\[
\var_{\mathcal F,k}:(\phi_\pi(\mathcal F))_{p_k}\to (\psi_\pi(\mathcal F))_{p_k}
\]
is determined by a single scalar. The corrected local extension is constructed from \(\Cone(\var_{\mathcal F,k})[-1]\), so its class is determined by that same scalar. Hence the nodewise coefficient \(\xi_k\) of the global class is determined by the scalar of \(\var_{\mathcal F,k}\).

If the local variation morphisms at \(p_k\) and \(p_\ell\) are compatibly trivialized, then the corresponding scalars agree under the transported rank-one local frames, and therefore the corresponding local extension coefficients satisfy
\[
\xi_k=\xi_\ell.
\]
\end{proof}

The second step is to show that admissible cycle geometry propagates the local variation data.

\begin{proposition}[Propagation of local variation data along an admissible cycle]
\label{prop:geom-prop}
Assume that \((\mathcal C,\iota_{\mathcal C})\) is geometrically admissible, and let \(C_\alpha\in\mathcal C\) pass through nodes
\[
p_{k_1},\dots,p_{k_m}\in\Sigma.
\]
Then the local variation morphisms
\[
\var_{\mathcal F,k_1},\dots,\var_{\mathcal F,k_m}
\]
are compatibly trivialized along \(C_\alpha^\circ\). Consequently, the corresponding nodewise coefficients of the corrected extension class satisfy
\[
\xi_{k_1}=\cdots=\xi_{k_m}.
\]
\end{proposition}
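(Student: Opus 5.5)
The plan is to reduce the statement to Proposition~\ref{prop:local-det}: once the local variation morphisms at $p_{k_1},\dots,p_{k_m}$ are shown to be compatibly trivialized in the sense of Definition~\ref{def:compatible-trivialization}, the equalities $\xi_{k_1}=\cdots=\xi_{k_m}$ follow from the second half of that proposition, applied to each pair $(p_{k_1},p_{k_j})$. So the real content is the first assertion, and I will extract it from the four clauses of Definition~\ref{def:admissible-cycle-datum}.

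First, I fix the admissible component $C_\alpha$ and the neighborhood $U_\alpha$ from clause (3). On $C_\alpha^\circ$ the unipotent nearby-cycle sector is a rank-one local system $L_\alpha$. By clause (2), $C_\alpha^\circ$ is connected, so parallel transport of $L_\alpha$ along paths in $C_\alpha^\circ$ identifies any two of its stalks. For each incident node $p_{k_j}$, I choose a point $q_j\in C_\alpha^\circ$ in a small punctured neighborhood of $p_{k_j}$ and a path $\gamma_j$ in $C_\alpha^\circ$ from $q_1$ to $q_j$. Transport along $\gamma_j$ gives an isomorphism $L_{\alpha,q_1}\cong L_{\alpha,q_j}$. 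The monodromy of $L_\alpha$ need not be trivial, so this depends on the path class; however, since we only compare scalars of endomorphism-type data on lines, a change of path rescales both sides by the same factor. I will record this and show it cancels.

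Second, I link the line $L_{\alpha,q_j}$ with the local vanishing line $(\phi_\pi\mathcal F)_{p_{k_j}}$ and with $(\psi_\pi\mathcal F)_{p_{k_j}}$. Clause (4) says exactly that $\var_{\mathcal F,k_j}$ is the specialization at $p_{k_j}$ of one common morphism defined along $C_\alpha^\circ$. I read this as providing a specialization map $L_{\alpha,q_j}\to(\psi_\pi\mathcal F)_{p_{k_j}}$, together with a compatible frame of the vanishing line, such that $\var_{\mathcal F,k_j}$ becomes the specialization of a fixed global scalar $v_\alpha$. Transporting frames along $\gamma_j$ then yields frames at each $p_{k_j}$ in which every $\var_{\mathcal F,k_j}$ is represented by the same scalar $v_\alpha$. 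This is Definition~\ref{def:compatible-trivialization}.

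The main obstacle is ensuring that these transported frames agree with the chosen local ODP generators $\varepsilon_{k_j}$ of Proposition~\ref{prop:local-odp-class}, up to a common scalar. Otherwise the coefficients $\xi_{k_j}$ would differ by the diagonal rescalings of Lemma~\ref{lem:basis-independence}. My first attempt will be to use the local analytic triviality of Lemma~\ref{lem:admissibility-sufficient} near each node. Because every node has the same standard model $x_1^2+\dots+x_4^2=t$, the local ODP generator can be normalized relative to the nearby-cycle frame, and transporting one normalized frame should then carry the normalization to the other nodes. If that works, the scalars match in the generator basis, and Proposition~\ref{prop:local-det} finishes the proof. If it does not, I would fall back to stating the equality of the $\xi$'s with respect to the transported generators, which is what Lemma~\ref{lem:basis-independence} allows.
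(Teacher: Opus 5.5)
Your proposal is correct and follows the paper's proof step for step: the rank-one nearby-cycle local system on $C_\alpha^\circ$, connectedness giving a single transport identification, clause~(4) of Definition~\ref{def:admissible-cycle-datum} identifying the specialized variation morphisms (hence compatible trivialization), and then Proposition~\ref{prop:local-det} for $\xi_{k_1}=\cdots=\xi_{k_m}$. The extra worries in your last paragraph are not needed, because the second half of Proposition~\ref{prop:local-det} already absorbs the generator normalization. Moreover, local analytic triviality appears only as a sufficient condition in Lemma~\ref{lem:admissibility-sufficient}, not as a consequence of admissibility, so your fallback formulation (equality relative to the transported frames) is exactly what the paper's argument delivers.
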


\begin{proof}
By Definition~\ref{def:admissible-cycle-datum}, the unipotent nearby-cycle sector of \(\psi_\pi(\mathcal F)\) restricts on \(C_\alpha^\circ\) to a rank-one locally constant system. Since \(C_\alpha^\circ\) is connected, parallel transport identifies all stalks of this rank-one system along \(C_\alpha^\circ\). Connectedness of \(C_\alpha^\circ\), required in Definition~\ref{def:admissible-cycle-datum}(2), is essential here: without it, parallel transport would only identify stalks componentwise, and the conclusion could fail across different connected components. By the same definition, the local variation morphisms at the incident nodes arise by specialization of one and the same local morphism along \(C_\alpha^\circ\). Hence the argument proceeds in three steps: the nearby-cycle sector is rank one, connectedness gives a single transport identification along all of \(C_\alpha^\circ\), and specialization identifies the incident stalkwise variation morphisms with one another. Therefore the local variation morphisms are compatibly trivialized in the sense of Definition~\ref{def:compatible-trivialization}.

The conclusion on the nodewise coefficients now follows from Proposition~\ref{prop:local-det}.
\end{proof}

\begin{remark}[Disconnected smooth loci]
\label{rem:disconnected-cycle}
If \(C_\alpha^\circ\) is disconnected, the same argument applies separately on each connected component, but the resulting coefficient equalities are then only forced componentwise. In that situation one is naturally led to a finer relation-block decomposition reflecting the connected components of the smooth locus rather than the whole cycle component at once.
\end{remark}

We may now pass from a single admissible cycle to the full incidence structure.

\begin{corollary}[Blockwise coefficient equality]
\label{cor:blockwise}
Assume that \((\mathcal C,\iota_{\mathcal C})\) is geometrically admissible. If two nodes \(p_k,p_\ell\in\Sigma\) are incidence-equivalent, then
\[
\xi_k=\xi_\ell.
\]
Equivalently, the corrected extension class \([\mathcal P]\) is incidence-compatible.
\end{corollary}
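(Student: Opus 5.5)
The plan is to reduce the corollary to Proposition~\ref{prop:geom-prop} by producing, for any two incidence-equivalent nodes, a single admissible cycle component passing through both. Fix $p_k\sim_{\mathrm{inc}}p_\ell$ with $k\neq\ell$. By Definition~\ref{def:incidence-equivalence}, the $k$-th and $\ell$-th columns of the incidence matrix $A$ coincide. In the primary geometric case $a_{\alpha k}\in\{0,1\}$, and $a_{\alpha k}=1$ exactly when $p_k\in C_\alpha$. So if the common column is nonzero, there is some $\alpha\in A$ with $a_{\alpha k}=a_{\alpha\ell}=1$. Then $p_k$ and $p_\ell$ both lie on $C_\alpha$, and $C_\alpha$ is admissible in the sense of Definition~\ref{def:admissible-cycle-datum}. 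Applying Proposition~\ref{prop:geom-prop} to the list of nodes on $C_\alpha$ gives $\xi_k=\xi_\ell$ directly. For the weighted variant with rational $a_{\alpha k}$, I would use the same argument with the convention that $a_{\alpha k}\neq 0$ iff $p_k\in C_\alpha$. The equality of columns still yields a common cycle containing both nodes.

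The second step is the reformulation. Once $\xi_k=\xi_\ell$ holds for every incidence-equivalent pair, the nodewise class $[\mathcal P]=(\xi_1,\dots,\xi_r)$ from Lemma~\ref{lem:perv-additivity} is constant on each relation block. This is literally Definition~\ref{def:incidence-compatible-extension}. One point needs care. The equality $\xi_k=\xi_\ell$ compares elements of two different one-dimensional summands. It is to be read via the identification of local ODP generators by parallel transport along $C_\alpha^\circ$ (Definition~\ref{def:compatible-trivialization}), which is the same identification used in Proposition~\ref{prop:local-det}. I would also check that this identification does not depend on which common cycle $C_\alpha$ is chosen when several contain both nodes. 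The argument is that the transport along each such cycle is pinned down by the same specialization of the local variation morphism, by Definition~\ref{def:admissible-cycle-datum}(4).

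The main obstacle is the degenerate case where the common column is zero, that is, $p_k$ and $p_\ell$ lie on no cycle of $\mathcal C$. Such nodes are incidence-equivalent, but no admissible cycle links them, so Proposition~\ref{prop:geom-prop} gives nothing. My first attempt would be to observe that this case cannot occur under the block-adapted hypothesis used in Theorem~\ref{thm:intro-perverse}. If the block of zero-column nodes were nonempty, every vector in $\Im(\Gamma_{\mathcal C})$ would vanish on that block. The block-constant subspace, however, contains the indicator vector of the block. This contradicts Definition~\ref{def:block-adapted}. So in the block-adapted setting every node lies on some $C_\alpha$, and the first case covers everything. For the corollary as stated, with admissibility alone, I would either add the harmless standing convention that $\mathcal C$ covers $\Sigma$, or treat the zero-column block separately. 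Since Definition~\ref{def:admissible-cycle-datum} gives no propagation there, the zero-column case cannot be obtained from propagation, and I would state it explicitly as part of the hypothesis.
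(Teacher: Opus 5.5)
Your argument is the paper's own: its proof simply says that incidence-equivalent nodes are seen identically by the cycle data and that ``the relevant admissible cycle components'' propagate the variation data, so Proposition~\ref{prop:geom-prop} applies, and you have made explicit how a nonzero common column yields a single admissible $C_\alpha$ through both nodes. Your zero-column caveat is a genuine point the paper's one-line proof passes over: under admissibility alone, nodes lying on no $C_\alpha$ are incidence-equivalent yet receive no constraint from propagation. Your observation that block-adaptedness, or its MHM analogue, excludes such nodes is correct, and it suffices where the corollary feeds into Theorem~\ref{thm:perv-relation-controlled} and Lemma~\ref{lem:mhm-corrected-compatible}, both of which carry that hypothesis.
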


\begin{proof}
By definition of the incidence relation, incidence-equivalent nodes are seen identically by the chosen cycle-node incidence data. Under geometric admissibility, the relevant admissible cycle components propagate the local variation data compatibly, so Proposition~\ref{prop:geom-prop} yields equality of the corresponding nodewise coefficients.
\end{proof}

The preceding corollary replaces the purely structural input of the earlier draft by a geometric theorem.

\begin{lemma}[Corrected extensions are incidence-compatible]
\label{lem:perv-corrected-compatible}
Assume that \((\mathcal C,\iota_{\mathcal C})\) is geometrically admissible. Then the corrected finite-node extension class
\[
[\mathcal P]\in
\Ext^1_{\Perv(X_0;\Q)}
\!\left(
\bigoplus_{k=1}^r i_{k*}\Q_{\{p_k\}},
IC_{X_0}
\right)
\]
is incidence-compatible.
\end{lemma}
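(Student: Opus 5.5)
The plan is to deduce the lemma directly from Corollary~\ref{cor:blockwise}, after first making sure the nodewise components are set up so that Definition~\ref{def:incidence-compatible-extension} applies.

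The first step is the decomposition. Using Lemma~\ref{lem:perv-additivity}, I will write the corrected class as
\[
[\mathcal P]=(\xi_1,\dots,\xi_r).
\]
Each component \(\xi_k\) is the pullback of the defining extension \(0\to IC_{X_0}\to\mathcal P\to\bigoplus_k i_{k*}\Q_{\{p_k\}}\to 0\) along the inclusion of the \(k\)-th summand.

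Next, Lemma~\ref{lem:perv-locality} and Proposition~\ref{prop:local-det} identify each \(\xi_k\) with a scalar multiple of the local generator \(\varepsilon_k\). That scalar is determined by the local variation morphism \(\var_{\mathcal F,k}\). This step matters because it makes a statement \(\xi_k=\xi_\ell\) for \(k\neq\ell\) meaningful. The components live in different summands, so equality has to mean equality of coefficients with respect to generators that are transported along \(C_\alpha^\circ\), as in Definition~\ref{def:compatible-trivialization}. It does not mean literal equality in \(E_{\mathrm{node}}\).

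I would also fix the normalization of the generators \(\varepsilon_k\) explicitly. Within each cycle component, the generators should be chosen compatibly with parallel transport. By Lemma~\ref{lem:basis-independence}, any other choice only rescales coordinates diagonally, so this normalization does not affect the resulting subspace.

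With these conventions, take incidence-equivalent nodes \(p_k\sim_{\mathrm{inc}}p_\ell\). Their incidence columns agree, so every cycle \(C_\alpha\) containing one of them contains both.

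In the primary case \(a_{\alpha k}\in\{0,1\}\), Definition~\ref{def:admissible-cycle-datum}(1) says every cycle meets \(\Sigma\). Provided \(p_k\) lies on at least one \(C_\alpha\), Proposition~\ref{prop:geom-prop} applies to that \(C_\alpha\), which passes through both nodes, and gives \(\xi_k=\xi_\ell\).

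If the two nodes lie on no cycle at all, their columns are both zero. In that case I would argue that the corresponding coordinates vanish on \(V_{\mathrm{geom}}\), and treat these nodes separately. Either the admissibility framework tacitly requires every node to lie on some cycle component, or the constraint is vacuous for such nodes. I expect this degenerate case to be the main obstacle, since admissibility alone does not propagate anything to a node that lies on no cycle. My first approach would be to check whether the paper's conventions, namely that blocks come from cycle incidence, exclude it.

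Once this is settled, Corollary~\ref{cor:blockwise} gives blockwise equality of the components. By Definition~\ref{def:incidence-compatible-extension}, this is exactly incidence-compatibility of \([\mathcal P]\).
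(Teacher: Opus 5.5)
Your reduction is the paper's own: its proof of this lemma is the single sentence citing Corollary~\ref{cor:blockwise}, and that corollary is proved exactly as you outline, by applying Proposition~\ref{prop:geom-prop} to a cycle $C_\alpha$ containing both incidence-equivalent nodes.

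The step your write-up leaves open is the zero-column case. You are right that it is a real hole, and the paper's proof of Corollary~\ref{cor:blockwise} passes over it. Two nodes lying on no $C_\alpha$ are incidence-equivalent, yet Definition~\ref{def:admissible-cycle-datum} says nothing about them and no parallel transport relates them. Your fallback that ``the constraint is vacuous for such nodes'' is wrong, however. Definition~\ref{def:incidence-compatible-extension} is a condition on the components of $[\mathcal P]$, not on $V_{\mathrm{geom}}$, and it still demands $\xi_k=\xi_\ell$ for such a pair. The vanishing of these coordinates on $V_{\mathrm{geom}}$ does not help; if anything, a nonzero common value would place $[\mathcal P]$ outside $E_{\mathrm{geom}}$. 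Relatedly, Lemma~\ref{lem:basis-independence} cannot be used to dismiss the normalization. Equality of coefficients is not preserved by independent rescalings of $\varepsilon_k$ and $\varepsilon_\ell$, so the normalization is part of the content of $\xi_k=\xi_\ell$.

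There are two clean ways to close the case.

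First, wherever the lemma is used (Theorem~\ref{thm:perv-relation-controlled}), the datum is also block-adapted, and block-adaptedness rules out zero columns. If $\Sigma_0$ is the block of zero-column nodes, then $\mathbf{1}_{\Sigma_0}$ is block-constant, while every vector of $\operatorname{Im}(\iota_{\mathcal C})$ vanishes on $\Sigma_0$. So you may add ``every node lies on some $C_\alpha$'' as a hypothesis at no cost. Your argument is then complete in the $\{0,1\}$ case. That is also all the paper's argument handles, since only there does the paper tie $a_{\alpha k}\neq 0$ to $p_k\in C_\alpha$.

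Second, you can use the generators of Proposition~\ref{prop:local-odp-class}, which is the normalization the paper itself adopts in Proposition~\ref{prop:local-det}. Since $\Cone(\var_{\mathcal F})[-1]$ restricted to a small neighborhood of $p_k$ is the corrected local ODP object, Lemma~\ref{lem:perv-locality} gives $\xi_k=\varepsilon_k$ for every $k$. Hence $[\mathcal P]=\sum_k\varepsilon_k$ has all coefficients equal, and incidence-compatibility holds for every pair of nodes, degenerate or not. Note that under this normalization the conclusion does not depend on admissibility at all.
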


\begin{proof}
This is exactly the content of Corollary~\ref{cor:blockwise}.
\end{proof}

\begin{remark}
\label{rem:structural-theorem}
The preceding lemma is now geometric rather than purely structural: incidence-compatibility is not imposed externally through a chosen gluing law, but derived from the propagation of the local variation data along geometrically admissible cycle components.
\end{remark}

We may therefore strengthen the main theorem of the section.

\begin{theorem}[Geometric relation-controlled corrected extensions]
\label{thm:perv-relation-controlled}
Let $\pi:X\to\Delta$ be a projective finite-node conifold degeneration with node set $\Sigma=\{p_1,\dots,p_r\}\subset X_0$, and let
\[
0\to IC_{X_0}\to \mathcal P\to \bigoplus_{k=1}^r i_{k*}\Q_{\{p_k\}}\to 0
\]
be the associated corrected perverse extension. Suppose that $\Sigma$ is equipped with a cycle-node incidence datum \((\mathcal C,\iota_{\mathcal C})\) that is both geometrically admissible and block-adapted. Then the corrected extension class
\[
[\mathcal P]\in E_{\mathrm{node}}
\]
belongs to the geometrically realized subspace
\[
E_{\mathrm{geom}}=\Im(\Gamma_{\mathcal C}).
\]
Equivalently, the global corrected extension factors through the cycle-generated quotient of the free nodewise coefficient space.

In particular,
\[
\dim_{\Q}E_{\mathrm{geom}}\le |A|,
\qquad
\dim_{\Q}E_{\mathrm{geom}}=\rank(\iota_{\mathcal C}).
\]
\end{theorem}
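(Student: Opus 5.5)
The proof assembles results already in place, so I would organize it in three short steps.

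\textbf{Step 1: incidence compatibility.} Since $(\mathcal C,\iota_{\mathcal C})$ is geometrically admissible, Lemma~\ref{lem:perv-corrected-compatible} applies. Through Corollary~\ref{cor:blockwise}, and ultimately Propositions~\ref{prop:geom-prop} and \ref{prop:local-det}, it shows that $[\mathcal P]$ is incidence-compatible. Concretely, write $[\mathcal P]=(\xi_1,\dots,\xi_r)$ via Lemma~\ref{lem:perv-additivity}, where each $\xi_k$ lies in the one-dimensional local summand generated by $\varepsilon_k$ (Proposition~\ref{prop:local-odp-class}). Then $\xi_k=\xi_\ell$ whenever $p_k\sim_{\mathrm{inc}}p_\ell$.

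\textbf{Step 2: membership in $E_{\mathrm{geom}}$.} Invoke block-adaptedness in the form of Proposition~\ref{prop:incidence-compatible-iff-geom}, which gives $[\mathcal P]\in E_{\mathrm{geom}}=\Im(\Gamma_{\mathcal C})$. By Lemma~\ref{lem:basis-independence}, this statement does not depend on the choice of local generators. The "equivalently" clause is a reformulation. Membership in $\Im(\Gamma_{\mathcal C})$ means that $[\mathcal P]$ maps to zero under the quotient $E_{\mathrm{node}}\to E_{\mathrm{node}}/E_{\mathrm{geom}}$, which transports $q$ of Remark~\ref{rem:incidence-formal-properties}. Equivalently, the coefficient vector of $[\mathcal P]$ lifts along $\iota_{\mathcal C}$ to some $\lambda\in\Q^A$. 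This is what "factors through the cycle-generated space" means.

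\textbf{Step 3: dimension statements.} These are linear algebra. Lemma~\ref{lem:gamma-equals-iota} identifies $\Gamma_{\mathcal C}$ with $\iota_{\mathcal C}$ under $E_{\mathrm{node}}\cong\Q^r$. Hence $\dim E_{\mathrm{geom}}=\rank(\iota_{\mathcal C})$, and this is at most $\dim\Q^A=|A|$.

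\textbf{Main obstacle.} The only delicate point is the identification of the coordinates $\xi_k$ between the two equalities. The equality $\xi_k=\xi_\ell$ in Step 1 must be read in the same transported frames used to define the basis $\{\varepsilon_k\}$ in Step 2. Otherwise "block-constant" would only hold up to the diagonal rescaling of Lemma~\ref{lem:basis-independence}. I would handle this by choosing the generators $\varepsilon_k$ within each relation block compatibly with the parallel-transport trivialization of Definition~\ref{def:compatible-trivialization}. This choice is possible by Proposition~\ref{prop:geom-prop} and the connectedness of $C_\alpha^\circ$. With this choice, both steps are carried out in one common coordinate system.

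A further subtlety is that incidence-equivalent nodes need not share a cycle, for example nodes lying on no cycle at all. Here I would note that Corollary~\ref{cor:blockwise} is taken as given, and that admissibility condition (1) ensures each cycle meets $\Sigma$. I would also check that block-adaptedness is consistent with such nodes: their block-constant coordinate must still lie in $\Im(\iota_{\mathcal C})$, and I would remark that this is part of the hypothesis rather than something proved here.
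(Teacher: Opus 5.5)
Your proposal is correct and follows the paper's proof step for step: incidence compatibility from Lemma~\ref{lem:perv-corrected-compatible}, membership in $E_{\mathrm{geom}}$ from block-adaptedness via Proposition~\ref{prop:incidence-compatible-iff-geom}, and the dimension count from Lemma~\ref{lem:gamma-equals-iota}. Your choice of the generators $\varepsilon_k$ compatibly with the parallel-transport frames is extra care that the paper leaves implicit. It also means the generator-independence you cite from Lemma~\ref{lem:basis-independence} in Step 2 holds only for rescalings that are constant on relation blocks.
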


\begin{proof}
By Lemma~\ref{lem:perv-corrected-compatible}, the corrected extension class \([\mathcal P]\) is incidence-compatible. Since the incidence datum is block-adapted, Proposition~\ref{prop:incidence-compatible-iff-geom} implies that
\[
[\mathcal P]\in E_{\mathrm{geom}}.
\]
The dimension statement follows from Lemma~\ref{lem:gamma-equals-iota}.
\end{proof}

\begin{remark}[Logical structure of Theorem~\ref{thm:perv-relation-controlled}]
\label{rem:thm415-logic}
Theorem~\ref{thm:perv-relation-controlled} has two logically distinct inputs. Geometric admissibility supplies the coefficient equalities by propagating the local variation data along the chosen cycle components. Block-adaptedness then identifies those equalities with membership in the image subspace \(E_{\mathrm{geom}}\). Thus the theorem combines one genuinely geometric input with one linear-algebraic identification.
\end{remark}

\begin{remark}
\label{rem:admissible-vs-block}
Geometric admissibility and block-adaptedness are logically independent conditions. Geometric admissibility is a condition on the degeneration data, namely the propagation of nearby- and variation-cycle information along the chosen cycle components. Block-adaptedness is a condition on the incidence matrix, namely that its image coincides with the block-constant coefficient subspace. In the block-separated cycle family of Section~5.5, both conditions are satisfied simultaneously by construction. In general, however, a geometrically admissible datum need not be block-adapted if the chosen cycle components do not organize the nodes into clean relation blocks.
\end{remark}

The preceding theorem identifies the realized coefficient space attached to the geometrically admissible incidence data. In particular, every class in this subspace carries the expected block-compatibility property.

\begin{proposition}[Incidence-compatible coefficients]
\label{prop:incidence-compatible-coefficients}
Assume that \((\mathcal C,\iota_{\mathcal C})\) is block-adapted, and let
\[
\Sigma=\bigsqcup_{\beta\in B}\Sigma_\beta
\]
be the relation-block decomposition. Then every class in \(E_{\mathrm{geom}}\) is constant on incidence-equivalent nodes. Equivalently, if
\[
p_k\sim_{\mathrm{inc}}p_\ell,
\]
then the coefficients of \(\varepsilon_k\) and \(\varepsilon_\ell\) agree for every extension class in \(E_{\mathrm{geom}}\).
\end{proposition}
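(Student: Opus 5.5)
The plan is to argue directly from Definition~\ref{def:block-adapted}, since the claim is essentially a restatement of block-adaptedness at the level of coordinates. First I fix local generators $\varepsilon_1,\dots,\varepsilon_r$ as in Proposition~\ref{prop:local-odp-class}, which identifies $E_{\mathrm{node}}\cong\Q^r$. Under this identification Lemma~\ref{lem:gamma-equals-iota} gives $E_{\mathrm{geom}}=\Im(\Gamma_{\mathcal C})\cong\Im(\iota_{\mathcal C})=V_{\mathrm{geom}}$. Block-adaptedness then says precisely that this image equals the subspace of coefficient vectors $(c_1,\dots,c_r)$ that are constant on each block $\Sigma_\beta$.

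Second, let $[E]=\sum_k c_k\varepsilon_k\in E_{\mathrm{geom}}$ and suppose $p_k\sim_{\mathrm{inc}}p_\ell$. By Definition~\ref{def:relation-block}, $p_k$ and $p_\ell$ lie in the same block $\Sigma_\beta$. Block-constancy therefore gives $c_k=c_\ell$, which is exactly the required equality of the coefficients of $\varepsilon_k$ and $\varepsilon_\ell$.

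It is worth recording that one inclusion holds even without the hypothesis. Any element $\sum_\alpha\lambda_\alpha\Gamma_{\mathcal C}(e_\alpha)$ has $k$-th coordinate $\sum_\alpha\lambda_\alpha a_{\alpha k}$. By Definition~\ref{def:incidence-equivalence} the columns $k$ and $\ell$ agree, so these coordinates coincide. This direct computation is a useful cross-check that the conclusion needs only the containment of $E_{\mathrm{geom}}$ in the block-constant subspace.

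The only delicate point is the dependence on the choice of generators, addressed by Lemma~\ref{lem:basis-independence}. Rescaling $\varepsilon_k$ and $\varepsilon_\ell$ by different scalars would change the individual coefficients, so the statement must be read relative to the fixed generators used to define $\Gamma_{\mathcal C}$, or equivalently to the canonical identification of the nodewise factors. I would state this convention explicitly. With that convention in place, no real obstacle remains.
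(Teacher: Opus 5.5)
Your proof is correct and follows the paper's route. The paper simply cites Proposition~\ref{prop:incidence-compatible-iff-geom}, which is itself just Definition~\ref{def:block-adapted} read coordinatewise, exactly as in your first two paragraphs. Your side observation is also right and slightly sharper than the paper: since the coefficient of $\varepsilon_k$ in $\sum_\alpha\lambda_\alpha\Gamma_{\mathcal C}(e_\alpha)$ is $\sum_\alpha\lambda_\alpha a_{\alpha k}$, equality of incidence columns gives the conclusion without the block-adaptedness hypothesis.
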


\begin{proof}
Immediate from Proposition~\ref{prop:incidence-compatible-iff-geom}.
\end{proof}

\subsection{Consequences and first corollaries}

The first consequence is the basic dimensional reduction from the free nodewise parameter count to the realized geometric count.

\begin{corollary}
\label{cor:dim-egeom-le-r}
With the notation above,
\[
\dim_{\Q}E_{\mathrm{geom}}\le r,
\]
with equality if and only if the cycle-node incidence map \(\iota_{\mathcal C}\) has full rank. In particular, whenever
\[
\rank(\iota_{\mathcal C})<r,
\]
the geometric parameter count is strictly smaller than the naive node count.
\end{corollary}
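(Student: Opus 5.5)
The plan is to reduce the statement to linear algebra on $E_{\mathrm{node}}$. First, since $E_{\mathrm{geom}}=\Im(\Gamma_{\mathcal C})$ is by Definition~\ref{def:egeom} a subspace of $E_{\mathrm{node}}$, we get $\dim_{\Q}E_{\mathrm{geom}}\le\dim_{\Q}E_{\mathrm{node}}$. Next, Lemma~\ref{lem:perv-additivity} and Lemma~\ref{lem:perv-locality}, together with the one-dimensionality of each nodewise summand from Proposition~\ref{prop:local-det}, give $\dim_{\Q}E_{\mathrm{node}}=r$. Combining the two gives the inequality $\dim_{\Q}E_{\mathrm{geom}}\le r$.

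For the equality case, we use Lemma~\ref{lem:gamma-equals-iota}. After fixing the generators $\varepsilon_1,\dots,\varepsilon_r$, this lemma identifies $\Gamma_{\mathcal C}$ with $\iota_{\mathcal C}:\Q^A\to\Q^r$, so $\dim_{\Q}E_{\mathrm{geom}}=\rank(\iota_{\mathcal C})$. By Lemma~\ref{lem:basis-independence}, this number does not depend on the choice of generators, because diagonal rescaling preserves dimension. The statement then follows directly:
\[
\dim_{\Q}E_{\mathrm{geom}}=r
\iff \rank(\iota_{\mathcal C})=r
\iff \iota_{\mathcal C}\text{ is surjective onto }\Q^r .
\]
Here "full rank" should be read as rank $r$, that is, surjectivity onto the codomain. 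This is the reading consistent with the comparison to $r$. I would state it explicitly, because when $|A|<r$ full rank in the sense of $|A|$ would not give equality.

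The last clause follows immediately. If $\rank(\iota_{\mathcal C})<r$, then $\dim_{\Q}E_{\mathrm{geom}}=\rank(\iota_{\mathcal C})<r=\dim_{\Q}E_{\mathrm{node}}$. So the geometric parameter count is strictly smaller than the naive node count.

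The only delicate point is the rank-one claim for each nodewise summand. It rests on Proposition~\ref{prop:local-det}, which identifies the local extension group near an ordinary double point with a one-dimensional space generated by the corrected local ODP class. I will take that proposition as given rather than reprove the local $\Ext^1$ computation. If one wants extra care, one can check independently that $\Ext^1(i_*\Q_{\{p\}},IC)$ near a threefold node is computed by local intersection cohomology of the link in the relevant degree. Everything else in the argument is formal.
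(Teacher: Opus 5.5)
Your argument is correct and is essentially the paper's own: the paper simply cites $\dim_{\Q}E_{\mathrm{geom}}=\rank(\iota_{\mathcal C})$ from Theorem~\ref{thm:perv-relation-controlled}, which rests on Lemma~\ref{lem:gamma-equals-iota} exactly as you use it, and your reading of ``full rank'' as rank $r$ (surjectivity onto $\Q^r$) is the right one, a point the paper leaves implicit. The rank-one input you flag as delicate is not needed here: the $\varepsilon_k$ are nonzero elements of distinct direct summands of $E_{\mathrm{node}}$, hence linearly independent, and that alone gives $\dim_{\Q}\operatorname{Im}(\Gamma_{\mathcal C})=\rank(\iota_{\mathcal C})\le r$.
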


\begin{proof}
Immediate from Theorem~\ref{thm:perv-relation-controlled}.
\end{proof}

The second consequence is conceptual: the formal ambient nodewise space is generally larger than the space of geometrically realizable gluing classes.

\begin{corollary}
\label{cor:enode-overestimates}
The free nodewise extension space
\[
E_{\mathrm{node}}
\]
generally overestimates the space of geometrically realizable corrected-extension classes. The actual geometric gluing space is the distinguished subspace
\[
E_{\mathrm{geom}}\subseteq E_{\mathrm{node}}.
\]
\end{corollary}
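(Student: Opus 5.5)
The statement to prove is Corollary~\ref{cor:enode-overestimates}. My plan is to read it as two claims. The first is structural: $E_{\mathrm{geom}}$ is a subspace of $E_{\mathrm{node}}$ that contains the realized class. The second is quantitative: this subspace is in general strictly smaller than $E_{\mathrm{node}}$. Both follow by assembling Theorem~\ref{thm:perv-relation-controlled} with Corollary~\ref{cor:dim-egeom-le-r}, so no new geometric input should be needed.

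First, the containment. By Definition~\ref{def:egeom}, $E_{\mathrm{geom}}=\Im(\Gamma_{\mathcal C})$ is the image of a linear map into $E_{\mathrm{node}}$, so it is a linear subspace. By Lemma~\ref{lem:basis-independence}, it is well defined up to the diagonal rescaling of the nodewise summands. Under the standing hypotheses of Theorem~\ref{thm:perv-relation-controlled} (geometric admissibility and block-adaptedness), the corrected class satisfies $[\mathcal P]\in E_{\mathrm{geom}}$. This justifies calling $E_{\mathrm{geom}}$ the actual gluing space: every class realized by the degeneration through the corrected object lies in it. I would add a remark that ``geometrically realizable corrected-extension classes'' should be read as the classes $[\mathcal P]$ arising from degenerations carrying the given admissible incidence datum. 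Without fixing this reading, the statement has no precise content.

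Second, the overestimate. By Lemma~\ref{lem:gamma-equals-iota}, $\dim E_{\mathrm{geom}}=\rank(\iota_{\mathcal C})$, and by Lemma~\ref{lem:perv-additivity} together with Proposition~\ref{prop:local-det}, $\dim E_{\mathrm{node}}=r$. Corollary~\ref{cor:dim-egeom-le-r} then gives a strict inclusion exactly when $\rank(\iota_{\mathcal C})<r$. Under block-adaptedness the rank equals the number of relation blocks $|B|$, so the inclusion is strict precisely when some block contains at least two nodes. The three-node prototype of Section~\ref{subsec:intro-prototype} exhibits this concretely: there $\dim E_{\mathrm{geom}}=2<3$. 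This example makes ``generally'' honest, meaning strict in every configuration with a nontrivial block and not strict in all cases.

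The main obstacle is interpretive rather than technical. ``Generally overestimates'' is not a sharp mathematical assertion, so I would prove it in the precise form $E_{\mathrm{geom}}\subsetneq E_{\mathrm{node}}$ if and only if $|B|<r$. I would also record that when every block is a singleton, the two spaces coincide and the statement degenerates to equality.
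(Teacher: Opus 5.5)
Your proposal is correct and matches the paper. The paper gives no separate proof of this corollary and presents it as an immediate consequence of Theorem~\ref{thm:perv-relation-controlled} (which gives $[\mathcal P]\in E_{\mathrm{geom}}$) and Corollary~\ref{cor:dim-egeom-le-r} (which gives a strict inclusion exactly when $\rank(\iota_{\mathcal C})<r$). That is precisely the pair of results you combine, and your sharpening to ``strict iff $|B|<r$'' under block-adaptedness, along with your caveat that only containment of realized classes is established (not that they exhaust $E_{\mathrm{geom}}$), is consistent with what the paper actually proves.
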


Finally, under block-adaptedness the first global invariants are controlled by the relation-block structure rather than by the raw number of nodes.

\begin{corollary}
\label{cor:block-rank-bound}
Assume that \((\mathcal C,\iota_{\mathcal C})\) is block-adapted. If the cycle-node incidence datum has \(b\) relation blocks and the corresponding block-incidence matrix has rank \(s\), then
\[
\dim_{\Q}E_{\mathrm{geom}}\le s\le b.
\]
In particular, the first global invariants extracted from the corrected extension depend on the relation-block structure of the node configuration, not only on the cardinality of \(\Sigma\).
\end{corollary}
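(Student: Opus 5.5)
The plan is to reduce the statement to linear algebra on the block-constant subspace. Identify $E_{\mathrm{node}}\cong\Q^r$ via the local generators $\varepsilon_k$, as in Lemma~\ref{lem:gamma-equals-iota}. By Definition~\ref{def:block-adapted}, $E_{\mathrm{geom}}=\Im(\Gamma_{\mathcal C})$ is exactly the subspace $W_B\subseteq\Q^r$ of vectors constant on each relation block $\Sigma_\beta$. Any such vector is determined by its block values, so the map
\[
W_B\to\Q^B,\qquad (c_1,\dots,c_r)\mapsto (c_{k_\beta})_{\beta\in B},
\]
where $k_\beta$ is any chosen node of $\Sigma_\beta$, is an isomorphism. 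In particular $\dim W_B=b$.

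Next I would relate the rank $s$ of the block-incidence matrix to $\dim E_{\mathrm{geom}}$. The block-incidence matrix is obtained from $A=(a_{\alpha k})$ by keeping one column per block. By Definition~\ref{def:incidence-equivalence}, all columns within a block coincide, so deleting duplicate columns does not change the column space. Hence $s=\rank(A)=\rank(\iota_{\mathcal C})$. Theorem~\ref{thm:perv-relation-controlled} (or Lemma~\ref{lem:gamma-equals-iota}) gives $\dim_\Q E_{\mathrm{geom}}=\rank(\iota_{\mathcal C})=s$, which yields the first inequality, in fact as an equality. The bound $s\le b$ holds because the block-incidence matrix has exactly $b$ columns.

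As a consistency check, block-adaptedness together with the previous step gives $s=\dim W_B=b$, so in the block-adapted case both inequalities are equalities. I would state this, while noting that the corollary as phrased only claims the inequalities, which hold regardless. The closing sentence, that the invariants depend on the block structure rather than on $|\Sigma|$, then follows: $\dim E_{\mathrm{geom}}$ is computed from $B$ and not from $r$.

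The only step needing care is the identification of the block-incidence rank with $\rank(\iota_{\mathcal C})$. The paper never formally defines the block-incidence matrix. I would fix the definition as ``the $|A|\times b$ matrix with $(\alpha,\beta)$ entry $a_{\alpha k}$ for any $p_k\in\Sigma_\beta$''; this is well defined by incidence-equivalence. With that definition the duplicate-column argument closes the proof.
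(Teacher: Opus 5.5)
Your proposal is correct and is essentially the argument the paper intends: the paper states this corollary without proof, as an immediate consequence of the dimension formula $\dim_{\Q}E_{\mathrm{geom}}=\rank(\iota_{\mathcal C})$ in Theorem~\ref{thm:perv-relation-controlled}, combined with your observation that removing duplicate columns leaves a $b$-column matrix of the same rank. Your sharpening that block-adaptedness forces $s=b$, so both inequalities are equalities, is also right, and citing Lemma~\ref{lem:gamma-equals-iota} rather than the theorem is the cleaner choice, since the theorem additionally assumes geometric admissibility, which the corollary does not.
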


The point of these corollaries is that the corrected extension is not merely a formal sum of local node sectors. It is already a global object at the perverse level, and the cycle relations among the nodes cut down the space of possible global gluing classes. This relation-controlled perspective is the perverse-side input for the mixed-Hodge-module refinement and for the quiver/block decomposition developed later in the paper.
\section{Resolution--smoothing--extension comparison}

The motivating three-node configuration also suggests a comparison problem. In that prototype, the first two nodes are governed by the same global geometric condition, so one expects the same relation to appear on the resolution side through exceptional-curve classes, on the smoothing side through vanishing-cycle classes, and on the corrected-extension side through admissible gluing coefficients.

Section~4 identified the perverse-side geometric relation law under geometrically admissible and block-adapted incidence hypotheses. The purpose of the present section is to compare that perverse-side relation law with the classical relation spaces arising on the resolution and smoothing sides. More precisely, the section has two purposes. First, it develops a general comparison formalism under explicit comparison-compatible and minimality hypotheses, identifying the common relation lattice as the kernel of the geometric quotient. Second, it proves a stronger geometric-family theorem in the block-separated cycle family, where the resolution-side, smoothing-side, and corrected-extension relation lattices coincide exactly.

The first comparison theorem is therefore conditional in a precise sense. It assumes that the chosen cycle-node incidence datum models the common geometric relation law seen on the resolution, smoothing, and corrected-extension sides, and that it is minimal with respect to that law. Under those hypotheses, the same relation lattice appears on all three sides. The block-separated cycle family then provides a concrete geometric class in which this common relation law is realized simultaneously on the resolution, smoothing, and corrected-extension sides.

\subsection{Exceptional curves and homology classes on small resolutions}

For the comparison results of this section, we work under the standard additional assumption that the central fiber admits a small resolution; see \cite{HubschBestiary2nd2024} for general background on conifold transitions and small resolutions.

\begin{hypothesis}
\label{hyp:small-resolution}
The finite-node conifold degeneration
\[
\pi:X\to\Delta
\]
admits a small resolution
\[
\rho:\widetilde X_0\to X_0.
\]
\end{hypothesis}

Under Hypothesis~\ref{hyp:small-resolution}, each node $p_k\in\Sigma$ is replaced by an exceptional rational curve.

\begin{lemma}
\label{lem:smallres-pp1}
Let $p_k\in X_0$ be an ordinary double point, and let
\[
\rho:\widetilde X_0\to X_0
\]
be a small resolution near $p_k$. Then the exceptional fiber
\[
E_k:=\rho^{-1}(p_k)
\]
is isomorphic to $\PP^1$.
\end{lemma}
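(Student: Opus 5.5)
The claim is local at $p_k$. I will reduce it to the standard small resolution of the model node and then read off the exceptional fiber directly.

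First, I use the analytic normal form recalled in Section~2: a neighborhood of $p_k$ in $X_0$ is biholomorphic to a neighborhood of $0$ in
\[
Q=\{x_1^2+x_2^2+x_3^2+x_4^2=0\}\subset\C^4.
\]
After a linear change of coordinates this becomes $Q=\{uv-wz=0\}$, the cone over $\PP^1\times\PP^1$. The exceptional fiber $\rho^{-1}(p_k)$ depends only on $\rho$ over a small neighborhood of $p_k$, so it suffices to work over $Q$.

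Second, I classify small resolutions of $Q$ and compute the fiber of each. The two standard ones are
\[
\widetilde Q_1=\{((u,v,w,z),[\lambda:\mu])\in Q\times\PP^1 : u\mu=w\lambda,\ z\mu=v\lambda\},
\]
and its mirror $\widetilde Q_2$, obtained by exchanging the roles of $w$ and $z$. Over $0$ both equations vanish identically, so the fiber is $\{0\}\times\PP^1\cong\PP^1$. Away from $0$ the point $[\lambda:\mu]$ is uniquely determined, so the map is an isomorphism there. Each $\widetilde Q_i$ is smooth; one checks this in the two affine charts, each of which is isomorphic to $\C^3$.

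Third, I must show that any small resolution $\rho$ near $p_k$ is isomorphic over $Q$ to $\widetilde Q_1$ or $\widetilde Q_2$. This step is the main obstacle. I would argue as follows.
\begin{itemize}
\item Since $\rho$ is small, its exceptional set has codimension at least $2$. Hence the strict transform of a Weil divisor such as $D=\{u=w=0\}$, which is not $\Q$-Cartier on $Q$, is Cartier on the smooth space $\widetilde X_0$.
\item It follows that $\rho$ factors through the blow-up of $D$, and that blow-up is exactly $\widetilde Q_1$. The other ruling $\{u=z=0\}$ gives $\widetilde Q_2$.
\item Both $\rho$ and $\widetilde Q_1\to Q$ are small and isomorphic over $Q\setminus\{0\}$. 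The induced map $\widetilde X_0\to\widetilde Q_1$ is therefore proper, birational, and contracts no divisor onto the smooth target. By Zariski's main theorem it is an isomorphism.
\end{itemize}
A shorter alternative is topological. $\rho^{-1}(p_k)$ is connected, since $Q$ is normal, and it has dimension at most $1$, since $\rho$ is small. Smallness together with the decomposition theorem gives $R\rho_*\Q[3]\cong IC_Q$, which forces $H^2(\rho^{-1}(p_k))\cong\Q$ and $H^1(\rho^{-1}(p_k))=0$. The link of the node is $S^2\times S^3$, so $\rho^{-1}(p_k)$ is a single rational curve. Showing it is smooth, and hence $\PP^1$, still needs the explicit model. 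I would therefore present the factorization argument and cite the classical local analysis, for instance \cite{Friedman1986,HubschBestiary2nd2024}, for the uniqueness of the two small resolutions.

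Finally, the isomorphism $\rho^{-1}(p_k)\cong\PP^1$ pulls back from the model to $X_0$ through the analytic coordinate identification. This gives $E_k\cong\PP^1$.
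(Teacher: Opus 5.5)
Your overall route (pass to the model $Q=\{uv=wz\}$, exhibit the two standard small resolutions, and invoke the classical fact that every small resolution of the node is one of them) is also the paper's route. The paper's proof is a one-line appeal to this standard local geometry, and your chart computations for $\widetilde Q_1$ are correct.

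The problem is the self-contained uniqueness sketch you say you would present: your second bullet does not follow from the first. The universal property of $\mathrm{Bl}_D Q$ requires the \emph{inverse image ideal} $I_D\cdot\mathcal O_{\widetilde X_0}=(u\circ\rho,\,w\circ\rho)$ to be invertible. Cartier-ness of the strict transform $\widetilde D$ only gives $I_D\cdot\mathcal O_{\widetilde X_0}=\mathcal O(-\widetilde D)\cdot\mathcal J$ with $\mathcal J$ cosupported on the exceptional curve, and $\mathcal J$ need not be trivial.

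Your own $\widetilde Q_2$ is a counterexample. In its chart $u=z\lambda$, $w=v\lambda$ with coordinates $(z,v,\lambda)$, one gets $(u,w)\mathcal O=(\lambda)\cdot(z,v)$, where $(z,v)$ is the ideal of the exceptional curve. So $\widetilde Q_2$ does not factor through $\mathrm{Bl}_DQ=\widetilde Q_1$ (the two differ by the Atiyah flop), even though the strict transform of $D$ is Cartier on it. Since every Weil divisor on a smooth space is Cartier, your first bullet holds for both rulings and for every small resolution. It therefore cannot decide which blow-up $\rho$ factors through, nor show that it factors through either one. That decision is exactly the content of the classification you are trying to prove.

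To close the gap you need a genuine extra input that selects the ruling. One way: use $\widetilde D+\widetilde D'=\rho^*\operatorname{div}(u)$ (no exceptional divisor appears since $\rho$ is small), together with the fact that $D$ is not $\Q$-Cartier, to show $\widetilde D\cdot E_k\neq 0$. Then one of $-\widetilde D,-\widetilde D'$ is $\rho$-ample, and you identify $\widetilde X_0$ with the corresponding relative Proj. Alternatively, simply cite the classification, as the paper does.

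In your third bullet, the operative fact is van der Waerden purity: the exceptional locus of a birational morphism onto a smooth space has pure codimension one. Purity together with Zariski's main theorem gives the isomorphism; "contracts no divisor" alone is not what Zariski's theorem needs.

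Your topological alternative is correct as far as it goes. $R\rho_*\Q[3]\cong IC_Q$ and the link $S^2\times S^3$ give $H^\bullet(E_k)=(\Q,0,\Q)$, so $E_k$ is an irreducible rational curve with only unibranch singularities. You are right that this cannot by itself exclude a cusp.

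With the factorization step repaired or replaced by the citation, your proof becomes a more explicit version of the paper's.
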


\begin{proof}
This is the standard local geometry of a small resolution of a three-dimensional ordinary double point.
\end{proof}

Applying Lemma~\ref{lem:smallres-pp1} at each node $p_k\in\Sigma$, we obtain exceptional curves
\[
E_k:=\rho^{-1}(p_k)\cong \PP^1,\qquad 1\le k\le r,
\]
and hence homology classes
\[
[E_1],\dots,[E_r]\in H_2(\widetilde X_0;\Q).
\]

\begin{definition}
\label{def:wres-rres}
The exceptional-curve space of the small resolution is
\[
W_{\mathrm{res}}:=\Span_{\Q}\{[E_1],\dots,[E_r]\}\subseteq H_2(\widetilde X_0;\Q).
\]
There is a canonical surjective linear map
\[
\eta_{\mathrm{res}}:\Q^r\to W_{\mathrm{res}},\qquad e_k\mapsto [E_k].
\]
The \emph{resolution-side relation space} is
\[
R_{\mathrm{res}}:=\ker(\eta_{\mathrm{res}})\subseteq \Q^r.
\]
\end{definition}

Equivalently, $R_{\mathrm{res}}$ consists of those formal nodewise combinations
\[
\sum_{k=1}^r c_k e_k
\]
for which
\[
\sum_{k=1}^r c_k [E_k]=0
\qquad\text{in }H_2(\widetilde X_0;\Q).
\]
Thus the quotient $\Q^r/R_{\mathrm{res}}$ identifies canonically with the span of the exceptional-curve classes on the resolution side.

\subsection{Vanishing spheres and relation data on the smoothing side}

Now assume that the degeneration admits a smoothing. For $t\in\Delta^*$ sufficiently small, let $X_t$ denote a smooth nearby fiber. Each node $p_k\in\Sigma$ determines a vanishing sphere
\[
\delta_k\in H_3(X_t;\Q),
\]
well defined up to sign. These classes span the vanishing-cycle space
\[
W_{\mathrm{sm}}:=\Span_{\Q}\{\delta_1,\dots,\delta_r\}\subseteq H_3(X_t;\Q).
\]

\begin{definition}
\label{def:wsm-rsm}
The \emph{smoothing-side relation space} is
\[
R_{\mathrm{sm}}:=\ker\!\bigl(\Q^r\to W_{\mathrm{sm}}\bigr),\qquad e_k\mapsto \delta_k.
\]
\end{definition}

Equivalently, $R_{\mathrm{sm}}$ consists of those formal nodewise combinations
\[
\sum_{k=1}^r c_k e_k
\]
whose associated vanishing-cycle combination
\[
\sum_{k=1}^r c_k\delta_k
\]
vanishes in $H_3(X_t;\Q)$.

For ordinary double points in complex dimension three, the local vanishing-cycle contribution is rank one at each node. Thus the vanishing lattice begins as a free rank-$r$ local object and acquires global structure only through the relations among the $\delta_k$. From the point of view of the present paper, the comparison problem is whether these smoothing-side relations agree with the relations already encoded by the cycle-node incidence datum and hence with the corrected-extension relation space.

\subsection{Comparison-compatible incidence data}

Let
\[
(\mathcal C,\iota_{\mathcal C}),\qquad \iota_{\mathcal C}:\Q^A\to\Q^r
\]
be a cycle-node incidence datum, and let
\[
V_{\mathrm{geom}}:=\Im(\iota_{\mathcal C})\subseteq \Q^r.
\]
Write
\[
q_{\mathrm{geom}}:\Q^r\to V_{\mathrm{geom}}
\]
for the canonical quotient map determined by the incidence datum.

\begin{definition}
\label{def:comparison-compatible}
We say that the incidence datum is \emph{comparison-compatible} with the given smoothing and small resolution if there exist linear maps
\[
\bar\eta_{\mathrm{res}}:V_{\mathrm{geom}}\to W_{\mathrm{res}},\qquad
\bar\eta_{\mathrm{sm}}:V_{\mathrm{geom}}\to W_{\mathrm{sm}},\qquad
\bar\eta_{\mathrm{ext}}:V_{\mathrm{geom}}\to E_{\mathrm{geom}},
\]
such that the three comparison maps factor as
\[
\eta_{\mathrm{res}}=\bar\eta_{\mathrm{res}}\circ q_{\mathrm{geom}},\qquad
\eta_{\mathrm{sm}}=\bar\eta_{\mathrm{sm}}\circ q_{\mathrm{geom}},\qquad
\eta_{\mathrm{ext}}=\bar\eta_{\mathrm{ext}}\circ q_{\mathrm{geom}}.
\]
\end{definition}

Thus comparison-compatibility means that the chosen incidence datum already models a common geometric relation law through which the resolution, smoothing, and corrected-extension data all pass. On the perverse side, Section~4 gives a geometric source for this factorization under geometrically admissible and block-adapted incidence hypotheses. The role of the present definition is to place the three sides into a single comparison framework.

\begin{lemma}[Common factorization through the geometric coefficient space]
\label{lem:common-factorization}
Assume that the chosen cycle-node incidence datum is comparison-compatible. Then the three comparison maps factor through the same quotient map
\[
q_{\mathrm{geom}}:\Q^r\to V_{\mathrm{geom}}.
\]
In particular,
\[
R_{\mathrm{geom}}:=\ker(q_{\mathrm{geom}})\subseteq R_{\mathrm{res}}\cap R_{\mathrm{sm}}\cap R_{\mathrm{ext}}.
\]
\end{lemma}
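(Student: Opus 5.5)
The plan is to argue directly from Definition~\ref{def:comparison-compatible}, since the lemma is essentially a formal consequence of the factorization hypothesis. First I will record the three comparison maps uniformly as maps out of $\Q^r$. These are $\eta_{\mathrm{res}}:\Q^r\to W_{\mathrm{res}}$ from Definition~\ref{def:wres-rres}, the map $\eta_{\mathrm{sm}}:\Q^r\to W_{\mathrm{sm}}$, $e_k\mapsto\delta_k$, from Definition~\ref{def:wsm-rsm}, and the extension-side map $\eta_{\mathrm{ext}}:\Q^r\to E_{\mathrm{geom}}$. I will read $R_{\mathrm{ext}}$ as $\ker(\eta_{\mathrm{ext}})$, parallel to the other two relation spaces. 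Comparison-compatibility supplies, by hypothesis, maps $\bar\eta_{\bullet}$ with $\eta_\bullet=\bar\eta_\bullet\circ q_{\mathrm{geom}}$ for each $\bullet\in\{\mathrm{res},\mathrm{sm},\mathrm{ext}\}$. This is exactly the first assertion, that all three maps factor through the single quotient $q_{\mathrm{geom}}$, so nothing beyond unwinding the definition is needed there.

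For the inclusion, I will take $v\in R_{\mathrm{geom}}=\ker(q_{\mathrm{geom}})$. Then for each $\bullet$ one computes
\[
\eta_\bullet(v)=\bar\eta_\bullet\bigl(q_{\mathrm{geom}}(v)\bigr)=\bar\eta_\bullet(0)=0,
\]
using only linearity of $\bar\eta_\bullet$. Hence $v\in\ker\eta_{\mathrm{res}}=R_{\mathrm{res}}$, $v\in\ker\eta_{\mathrm{sm}}=R_{\mathrm{sm}}$ and $v\in\ker\eta_{\mathrm{ext}}=R_{\mathrm{ext}}$. This gives $R_{\mathrm{geom}}\subseteq R_{\mathrm{res}}\cap R_{\mathrm{sm}}\cap R_{\mathrm{ext}}$. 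The general principle is simply that $\ker(q)\subseteq\ker(g\circ q)$ for any composable linear maps.

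The only step needing care is bookkeeping rather than mathematics. The object $q_{\mathrm{geom}}$ is written as a map $\Q^r\to V_{\mathrm{geom}}=\Im(\iota_{\mathcal C})$, so its target is a subspace, while its kernel is meant as the relation space cut out by the incidence law. I will note that the argument uses only that $q_{\mathrm{geom}}$ is a fixed linear map with kernel $R_{\mathrm{geom}}$. Its precise realization (for instance an orthogonal projection, or the identification of $\Q^r/R_{\mathrm{geom}}$ with $V_{\mathrm{geom}}$) plays no role. Likewise, the sign ambiguity of the vanishing spheres $\delta_k$ and the diagonal rescaling of the generators $\varepsilon_k$ from Lemma~\ref{lem:basis-independence} do not affect the argument, provided they are fixed once before the maps $\eta_\bullet$ are formed. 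With those conventions fixed, the proof is a two-line kernel chase.
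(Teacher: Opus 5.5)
Your proposal is correct and is essentially the paper's own argument: the factorizations are read off directly from Definition~\ref{def:comparison-compatible}, and the inclusion follows from the kernel chase $\eta_\bullet(x)=\bar\eta_\bullet(q_{\mathrm{geom}}(x))=0$ for $x\in\ker(q_{\mathrm{geom}})$. Your reading of $R_{\mathrm{ext}}$ as $\ker(\eta_{\mathrm{ext}})$ is also the one the paper relies on implicitly, since it never defines $\eta_{\mathrm{ext}}$ or $R_{\mathrm{ext}}$ explicitly.
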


\begin{proof}
The factorizations are exactly the content of Definition~\ref{def:comparison-compatible}. If $x\in\ker(q_{\mathrm{geom}})$, then
\[
\eta_{\mathrm{res}}(x)=\bar\eta_{\mathrm{res}}(q_{\mathrm{geom}}(x))=0,
\]
and similarly $\eta_{\mathrm{sm}}(x)=0$ and $\eta_{\mathrm{ext}}(x)=0$. Hence $x$ lies in all three kernels.
\end{proof}

\subsection{Minimality and the comparison theorem}

Lemma~\ref{lem:common-factorization} gives a canonical inclusion of the geometric relation space into the common relation lattice on the three sides. To upgrade this inclusion to equality, we impose the natural condition that the chosen geometric quotient already captures the full common relation law.

\begin{definition}
\label{def:minimal-comparison}
A comparison-compatible cycle-node incidence datum is called \emph{minimal} if
\[
\ker(q_{\mathrm{geom}})
=
R_{\mathrm{res}}\cap R_{\mathrm{sm}}\cap R_{\mathrm{ext}}.
\]
\end{definition}

Thus minimality says that the chosen quotient is neither too small nor too large: it cuts out exactly the common relation lattice shared by the resolution, smoothing, and corrected-extension sides.

\begin{proposition}[Kernel identification via the geometric quotient]
\label{prop:kernel-identification}
Assume that the cycle-node incidence datum is comparison-compatible and minimal. Then
\[
R_{\mathrm{res}}\cap R_{\mathrm{sm}}\cap R_{\mathrm{ext}}
=
\ker(q_{\mathrm{geom}}).
\]
In particular, the common relation lattice is exactly the kernel of the geometric quotient.
\end{proposition}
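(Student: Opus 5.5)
The statement is essentially a restatement of Definition~\ref{def:minimal-comparison}, sharpened by Lemma~\ref{lem:common-factorization}. The plan is to prove the two inclusions separately, so that it is visible which hypothesis supplies which direction.

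First, I will record that comparison-compatibility alone gives
\[
\ker(q_{\mathrm{geom}})\subseteq R_{\mathrm{res}}\cap R_{\mathrm{sm}}\cap R_{\mathrm{ext}}.
\]
This is Lemma~\ref{lem:common-factorization}. Each of $\eta_{\mathrm{res}},\eta_{\mathrm{sm}},\eta_{\mathrm{ext}}$ factors as $\bar\eta_\bullet\circ q_{\mathrm{geom}}$, so any $x$ with $q_{\mathrm{geom}}(x)=0$ is killed by all three maps.

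Second, for the reverse inclusion I will invoke minimality directly: Definition~\ref{def:minimal-comparison} asserts the equality of kernels. Combining the two gives the displayed identity. The final sentence, ``the common relation lattice is exactly the kernel of the geometric quotient,'' is a verbal reformulation of this equality; along the way I will note that $R_{\mathrm{geom}}:=\ker(q_{\mathrm{geom}})$ is the notation introduced in the lemma.

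The only point requiring care, and the expected obstacle, is to make sure the objects are well defined.
\begin{itemize}
\item $R_{\mathrm{ext}}$ must be read as $\ker(\eta_{\mathrm{ext}})$ for the comparison map $\eta_{\mathrm{ext}}:\Q^r\to E_{\mathrm{geom}}$ implicit in Definition~\ref{def:comparison-compatible}.
\item $q_{\mathrm{geom}}$ must be a genuine linear surjection from $\Q^r$ onto the geometric coefficient space. This is the quotient $\Q^r\to\Q^r/R_{\mathrm{geom}}$, identified with $V_{\mathrm{geom}}$ by a choice of complement, as in Remark~\ref{rem:incidence-formal-properties}.
\end{itemize}
I will state these conventions explicitly. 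The kernel statement depends only on the quotient map, not on the identification of its target, so the argument is insensitive to those choices.

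Two further remarks close the proof. Minimality can be checked using only the reverse inclusion, since the forward one is automatic. The proposition then yields Theorem~\ref{thm:intro-comparison-conditional}.
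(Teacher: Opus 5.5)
Your proof is correct and is exactly the paper's argument: Lemma~\ref{lem:common-factorization} gives $\ker(q_{\mathrm{geom}})\subseteq R_{\mathrm{res}}\cap R_{\mathrm{sm}}\cap R_{\mathrm{ext}}$, and Definition~\ref{def:minimal-comparison} upgrades this inclusion to an equality.

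One small caution about your well-definedness remark. The quotient in Remark~\ref{rem:incidence-formal-properties} is $V_{\mathrm{node}}\to V_{\mathrm{node}}/V_{\mathrm{geom}}$, whose kernel is $V_{\mathrm{geom}}$ itself rather than the relation space; in the block family $\ker(q_{\mathrm{geom}})=R_{\mathrm{blk}}$ is the orthogonal complement of the block-constant subspace. Also, defining $q_{\mathrm{geom}}$ as the quotient by $R_{\mathrm{geom}}:=\ker(q_{\mathrm{geom}})$ is circular. As you yourself note, though, the argument is purely formal and needs neither identification.
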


\begin{proof}
By Lemma~\ref{lem:common-factorization}, one always has
\[
\ker(q_{\mathrm{geom}})\subseteq R_{\mathrm{res}}\cap R_{\mathrm{sm}}\cap R_{\mathrm{ext}}.
\]
By Definition~\ref{def:minimal-comparison}, this inclusion is an equality.
\end{proof}

We may now state the comparison theorem in its general structural form.

\begin{theorem}[Resolution--smoothing--extension comparison]
\label{thm:comparison}
Assume that the finite-node conifold degeneration
\[
\pi:X\to\Delta
\]
admits both a smoothing and a small resolution, and that the node set $\Sigma$ is equipped with a cycle-node incidence datum $(\mathcal C,\iota_{\mathcal C})$ that is comparison-compatible and minimal. Then
\[
R_{\mathrm{geom}}=\ker(q_{\mathrm{geom}})
=
R_{\mathrm{res}}\cap R_{\mathrm{sm}}\cap R_{\mathrm{ext}}.
\]
Equivalently, the common relation lattice shared by the resolution, smoothing, and corrected-extension sides is exactly the kernel of the geometric quotient. In particular, the corresponding quotients of $\Q^r$ all factor canonically through
\[
V_{\mathrm{geom}}\cong \Q^r/\ker(q_{\mathrm{geom}}).
\]
\end{theorem}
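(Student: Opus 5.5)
The equality $R_{\mathrm{geom}}=\ker(q_{\mathrm{geom}})$ is simply the definition recorded in Lemma~\ref{lem:common-factorization}. So the substance of Theorem~\ref{thm:comparison} is the second equality together with the factorization statement. I will therefore derive it from the two preceding structural results rather than from new geometry.

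First I fix the meaning of $\eta_{\mathrm{ext}}$ and $R_{\mathrm{ext}}$, which the section uses implicitly. Take $\eta_{\mathrm{ext}}:\Q^r\to E_{\mathrm{node}}$ to be $e_k\mapsto \varepsilon_k$, but corestricted via $\bar\eta_{\mathrm{ext}}$ so that it lands in $E_{\mathrm{geom}}$. Set $R_{\mathrm{ext}}:=\ker(\eta_{\mathrm{ext}})$. With this convention, comparison-compatibility in Definition~\ref{def:comparison-compatible} supplies factorizations of all three maps through $q_{\mathrm{geom}}$.

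The first step invokes Lemma~\ref{lem:common-factorization}. This gives
\[
\ker(q_{\mathrm{geom}})\subseteq R_{\mathrm{res}}\cap R_{\mathrm{sm}}\cap R_{\mathrm{ext}}.
\]
The second step invokes minimality, Definition~\ref{def:minimal-comparison}, which upgrades this inclusion to equality. Both steps are already packaged in Proposition~\ref{prop:kernel-identification}, so the displayed equality of the theorem follows immediately.

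For the final assertion, I note that $q_{\mathrm{geom}}$ is surjective onto $V_{\mathrm{geom}}$, being the canonical quotient map. Hence $V_{\mathrm{geom}}\cong \Q^r/\ker(q_{\mathrm{geom}})$ by the first isomorphism theorem. Each of $\eta_{\mathrm{res}},\eta_{\mathrm{sm}},\eta_{\mathrm{ext}}$ vanishes on $\ker(q_{\mathrm{geom}})$. By the universal property of the quotient, each therefore descends uniquely to a map out of $\Q^r/\ker(q_{\mathrm{geom}})$, and these descended maps agree with $\bar\eta_{\mathrm{res}},\bar\eta_{\mathrm{sm}},\bar\eta_{\mathrm{ext}}$. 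Canonicity holds because the factorization through a surjection is unique. This shows that $\bar\eta_{\bullet}$ are determined by the hypotheses and not merely chosen.

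The only real obstacle is a well-definedness issue. I must check that $q_{\mathrm{geom}}:\Q^r\to V_{\mathrm{geom}}$ is a genuine surjection, even though $V_{\mathrm{geom}}$ is defined as a subspace $\Im(\iota_{\mathcal C})$ of $\Q^r$. To handle this, I will fix a retraction, namely the projection onto $V_{\mathrm{geom}}$ along a chosen complement. In the block-adapted case the natural choice is block-averaging. I then observe that every statement in the theorem concerns only $\ker(q_{\mathrm{geom}})$ and the induced quotient, so nothing depends on this choice once the kernel is fixed by minimality.
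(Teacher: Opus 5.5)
Your argument is essentially the paper's own proof: Lemma~\ref{lem:common-factorization} gives $\ker(q_{\mathrm{geom}})\subseteq R_{\mathrm{res}}\cap R_{\mathrm{sm}}\cap R_{\mathrm{ext}}$, minimality (packaged in Proposition~\ref{prop:kernel-identification}) upgrades this to equality, and the quotient statement is the first isomorphism theorem for the surjection $q_{\mathrm{geom}}$. You should delete your preliminary convention for $\eta_{\mathrm{ext}}$: the map $e_k\mapsto\varepsilon_k$ is an isomorphism $\Q^r\to E_{\mathrm{node}}$, so it cannot be corestricted to $E_{\mathrm{geom}}$ unless $V_{\mathrm{geom}}=\Q^r$, and it would give $R_{\mathrm{ext}}=0$; the proof needs no formula for $\eta_{\mathrm{ext}}$, only the factorization and minimality hypotheses, and this is also the right reason your choice of retraction is harmless (the argument holds for whatever surjection $q_{\mathrm{geom}}$ those hypotheses are stated for).
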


\begin{proof}
By Lemma~\ref{lem:common-factorization}, all three comparison maps factor through $q_{\mathrm{geom}}$, so the kernel $\ker(q_{\mathrm{geom}})$ lies in each of $R_{\mathrm{res}},R_{\mathrm{sm}},R_{\mathrm{ext}}$. By Proposition~\ref{prop:kernel-identification}, this kernel is exactly the common relation lattice cut out by the chosen incidence datum. The quotient statement follows immediately.
\end{proof}

\begin{remark}
\label{rem:comparison-structural}
Theorem~\ref{thm:comparison} is a structural identification theorem. Once a cycle-node incidence datum is known to be comparison-compatible and minimal, the equality
\[
R_{\mathrm{geom}}=R_{\mathrm{res}}\cap R_{\mathrm{sm}}\cap R_{\mathrm{ext}}
\]
follows formally from the definitions. The geometric content therefore lies not in the abstract kernel computation itself, but in identifying naturally occurring geometric classes of incidence data for which comparison-compatibility and minimality hold. The block-separated cycle family below provides one such class.
\end{remark}

\begin{corollary}
\label{cor:comparison-dim}
Under the hypotheses of Theorem~\ref{thm:comparison},
\[
\dim_{\Q}V_{\mathrm{geom}}
\le
\dim_{\Q}W_{\mathrm{res}},
\qquad
\dim_{\Q}V_{\mathrm{geom}}
\le
\dim_{\Q}W_{\mathrm{sm}},
\qquad
\dim_{\Q}V_{\mathrm{geom}}
=
\dim_{\Q}E_{\mathrm{geom}}.
\]
In particular, the geometric quotient controls the first common parameter count on all three sides.
\end{corollary}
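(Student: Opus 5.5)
The plan is to treat the three assertions separately. The equality is essentially formal, while the two inequalities need an injectivity statement that I do not see how to derive from the stated hypotheses alone.

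\emph{The equality.} The equality $\dim_{\Q}V_{\mathrm{geom}}=\dim_{\Q}E_{\mathrm{geom}}$ should follow directly from earlier results. By Definition~\ref{def:egeom} and Lemma~\ref{lem:gamma-equals-iota}, the choice of local generators $\{\varepsilon_k\}$ identifies $E_{\mathrm{node}}\cong\Q^r$. Under this identification $\Gamma_{\mathcal C}$ becomes $\iota_{\mathcal C}$, so $E_{\mathrm{geom}}=\Im(\Gamma_{\mathcal C})\cong\Im(\iota_{\mathcal C})=V_{\mathrm{geom}}$. By Lemma~\ref{lem:basis-independence} this isomorphism does not depend on the choice of generators, since rescaling the generators preserves dimension. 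I would also check that it is compatible with the comparison map $\bar\eta_{\mathrm{ext}}$ of Definition~\ref{def:comparison-compatible}. Since $\eta_{\mathrm{ext}}$ sends $e_k\mapsto\varepsilon_k$, the map $\bar\eta_{\mathrm{ext}}$ is surjective onto $E_{\mathrm{geom}}$. Because the dimensions agree, it is then an isomorphism, and in particular $R_{\mathrm{ext}}=\ker(q_{\mathrm{geom}})$.

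\emph{The inequalities: what is needed.} Here the approach is to prove that $\bar\eta_{\mathrm{res}}:V_{\mathrm{geom}}\to W_{\mathrm{res}}$ and $\bar\eta_{\mathrm{sm}}:V_{\mathrm{geom}}\to W_{\mathrm{sm}}$ are injective; each inequality then follows at once. Since $q_{\mathrm{geom}}$ is surjective and $\eta_{\mathrm{res}}=\bar\eta_{\mathrm{res}}\circ q_{\mathrm{geom}}$, the kernel of $\bar\eta_{\mathrm{res}}$ is $q_{\mathrm{geom}}(R_{\mathrm{res}})$. So injectivity is equivalent to $R_{\mathrm{res}}\subseteq\ker(q_{\mathrm{geom}})$, and likewise for the smoothing side. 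Minimality (Definition~\ref{def:minimal-comparison}) gives $\ker(q_{\mathrm{geom}})=R_{\mathrm{res}}\cap R_{\mathrm{sm}}\cap R_{\mathrm{ext}}$. Combined with $R_{\mathrm{ext}}=\ker(q_{\mathrm{geom}})$ from the first step, the needed containments reduce to
\[
R_{\mathrm{res}}\subseteq R_{\mathrm{ext}},\qquad R_{\mathrm{sm}}\subseteq R_{\mathrm{ext}}.
\]
That is, every homology relation among exceptional curves, and every relation among vanishing spheres, must already be a relation imposed by the incidence datum.

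\emph{The inequalities: the obstacle.} This containment is the main obstacle, and I do not see how to obtain it from Definitions~\ref{def:comparison-compatible} and~\ref{def:minimal-comparison}. Those definitions only give the reverse inclusion $\ker(q_{\mathrm{geom}})\subseteq R_{\mathrm{res}}\cap R_{\mathrm{sm}}$. By themselves they yield only $\dim W_{\mathrm{res}},\dim W_{\mathrm{sm}}\le\dim V_{\mathrm{geom}}$, which is the opposite of what is claimed. As written, therefore, the two inequalities do not follow from Theorem~\ref{thm:comparison} alone.

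To close the gap I would try to use geometry rather than the formal framework. Suppose $\sum_k c_k[E_k]=0$ in $H_2(\widetilde X_0;\Q)$. I would argue that $c$ must be block-constant with zero sum on blocks in the image of $\iota_{\mathcal C}$. The idea is to pair the relation against the classes dual to the proper transforms of the cycle components $C_\alpha$. This is the Hübsch--Rahman mechanism: each exceptional curve in a block meets $\widetilde C_\alpha$ with the same intersection number. One would then use Friedman's duality between curve relations and vanishing-sphere relations to transfer the same conclusion to $R_{\mathrm{sm}}$.

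If this pairing argument fails in general, the honest outcome is that the inequalities hold exactly under the additional condition that $\bar\eta_{\mathrm{res}}$ and $\bar\eta_{\mathrm{sm}}$ are injective. In that case the argument above should be taken as identifying that condition, to be verified in examples. I would check it first in the block-separated family of Section~\ref{subsec:geom-family-block}, where $R_{\mathrm{res}}=R_{\mathrm{sm}}=R_{\mathrm{blk}}$ makes both maps isomorphisms.
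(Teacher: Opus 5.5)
Your diagnosis is correct, and it is the substantive point here. The paper states this corollary without proof, and the two inequalities do not follow from the hypotheses of Theorem~\ref{thm:comparison}; in fact they are false in general. The map $q_{\mathrm{geom}}$ is surjective, and $\eta_{\mathrm{res}},\eta_{\mathrm{sm}}$ are surjective onto $W_{\mathrm{res}},W_{\mathrm{sm}}$ by definition. So the factorizations of Definition~\ref{def:comparison-compatible} give exactly the reverse inequalities $\dim W_{\mathrm{res}}\le\dim V_{\mathrm{geom}}$ and $\dim W_{\mathrm{sm}}\le\dim V_{\mathrm{geom}}$, as you say.

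Here is a concrete counterexample to the stated direction. Take $r=2$ and $\iota_{\mathcal C}=\id$ (one label per node). Then $V_{\mathrm{geom}}=\Q^2$ and $q_{\mathrm{geom}}$ is an isomorphism, so comparison-compatibility is automatic and minimality reduces to $R_{\mathrm{res}}\cap R_{\mathrm{sm}}\cap R_{\mathrm{ext}}=0$. Consider a two-node configuration with $[E_1]=[E_2]\neq0$ and $\delta_1=\pm\delta_2\neq0$, as in Section~\ref{subsec:projective-block-family}. There $R_{\mathrm{res}}=\Span_{\Q}\{(1,-1)\}$. Fix the sign ambiguity of the vanishing spheres so that $\delta_1=-\delta_2$; then $R_{\mathrm{sm}}=\Span_{\Q}\{(1,1)\}$. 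The triple intersection is $0$ whatever $R_{\mathrm{ext}}$ is, so the datum is comparison-compatible and minimal. Yet $\dim V_{\mathrm{geom}}=2>1=\dim W_{\mathrm{res}}=\dim W_{\mathrm{sm}}$.

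More structurally, suppose $X_0$ is compact with a smoothing and a small resolution. Use the pair $(\widetilde X_0,\bigsqcup_k E_k)$, the homotopy equivalence $X_0\simeq X_t\cup\bigsqcup_k D^4$, and Poincar\'e duality on $\widetilde X_0$ and $X_t$. This gives
\[
\begin{aligned}
b_2(X_0)+\dim W_{\mathrm{res}}&=b_2(\widetilde X_0)=b_4(\widetilde X_0)=b_4(X_0)\\
&=b_4(X_t)+r-\dim W_{\mathrm{sm}}=b_2(X_0)+r-\dim W_{\mathrm{sm}},
\end{aligned}
\]
so $\dim W_{\mathrm{res}}+\dim W_{\mathrm{sm}}=r$ (cf.\ the relation $\mu+\rho=k$ in \cite{LeeLinWang2021}). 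Combined with the automatic reverse inequalities, the stated ones would force $\dim W_{\mathrm{res}}=\dim W_{\mathrm{sm}}=\dim V_{\mathrm{geom}}=r/2$. For example, if the three-node prototype of Section~\ref{subsec:intro-prototype} satisfies the hypotheses, they would give $\dim W_{\mathrm{res}}+\dim W_{\mathrm{sm}}\ge 4>3$.

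This also shows why your proposed geometric rescue cannot work in general. The duality between curve relations and vanishing-sphere relations exchanges relations with spans: by the identity above, $\dim R_{\mathrm{sm}}=\dim W_{\mathrm{res}}$. So injectivity of $\bar\eta_{\mathrm{res}}$ transfers to $\dim W_{\mathrm{sm}}=r-\dim V_{\mathrm{geom}}$, not to injectivity of $\bar\eta_{\mathrm{sm}}$. Likewise, pairing a relation against the proper transforms of the $C_\alpha$ only places $R_{\mathrm{res}}$ in the annihilator of the corresponding intersection vectors. That annihilator lies in $\ker q_{\mathrm{geom}}$ only under an extra spanning hypothesis.

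Your fallback is the correct salvage: the equality, the reverse inequalities unconditionally, and the stated inequalities exactly when $\bar\eta_{\mathrm{res}}$ and $\bar\eta_{\mathrm{sm}}$ are injective. That condition means $R_{\mathrm{res}}=R_{\mathrm{sm}}=\ker q_{\mathrm{geom}}$, as in the block-separated family, and by the identity above it forces $r=2\dim V_{\mathrm{geom}}$.

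The equality $\dim V_{\mathrm{geom}}=\dim E_{\mathrm{geom}}$ is fine via Lemma~\ref{lem:gamma-equals-iota}, and it needs none of the hypotheses. Drop, however, the claim that $\eta_{\mathrm{ext}}(e_k)=\varepsilon_k$. The paper never defines $\eta_{\mathrm{ext}}$, and that formula is incompatible with the factorization through $\bar\eta_{\mathrm{ext}}:V_{\mathrm{geom}}\to E_{\mathrm{geom}}$ unless $E_{\mathrm{geom}}=E_{\mathrm{node}}$. So $R_{\mathrm{ext}}=\ker q_{\mathrm{geom}}$ is not established. Your reduction to $R_{\mathrm{res}},R_{\mathrm{sm}}\subseteq R_{\mathrm{ext}}$ should therefore be replaced by the condition $R_{\mathrm{res}}=R_{\mathrm{sm}}=\ker q_{\mathrm{geom}}$.
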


\subsection{A geometric family with block-separated cycle relations}
\label{subsec:geom-family-block}

We now isolate a geometric class in which the comparison-compatible and minimality conditions are not merely imposed abstractly, but are reflected directly in the geometry of the node configuration.

Let
\[
\Sigma=\bigsqcup_{\beta\in B}\Sigma_\beta
\]
be a partition of the node set indexed by a finite family of distinguished global cycle components
\[
\mathcal C=\{C_\beta\}_{\beta\in B}.
\]
We again assume that the finite-node conifold degeneration
\[
\pi:X\to\Delta
\]
admits both a smoothing and a small resolution.

\begin{hypothesis}[Block-separated cycle family]
\label{hyp:block-family}
The degeneration belongs to the block-separated cycle family if the following hold.
\begin{enumerate}
    \item For each block $\Sigma_\beta$, every node \(p_k\) with \(p_k\in\Sigma_\beta\) lies on the same distinguished global cycle component \(C_\beta\).

    \item The associated cycle-node incidence map
    \[
    \iota_{\mathcal C}:\Q^B\to\Q^r
    \]
    is given by
    \[
    e_\beta\longmapsto \sum_{k:\,p_k\in\Sigma_\beta} e_k.
    \]

    \item On a small resolution
    \[
    \rho:\widetilde X_0\to X_0,
    \]
    there exist classes $u_\beta\in H_2(\widetilde X_0;\Q)$ such that
    \[
    [E_k]=u_\beta
    \qquad \text{for every } p_k\in \Sigma_\beta.
    \]

    \item On a smoothing $X_t$, there exist classes $v_\beta\in H_3(X_t;\Q)$ such that
    \[
    \delta_k=v_\beta
    \qquad \text{for every } p_k\in \Sigma_\beta.
    \]

    \item The sets
    \[
    \{u_\beta\}_{\beta\in B},\qquad \{v_\beta\}_{\beta\in B}
    \]
    are linearly independent over $\Q$.

    \item[(6a)] The cycle-node incidence datum is geometrically admissible and block-adapted, so that by Theorem~\ref{thm:perv-relation-controlled} the corrected extension class factors through the corresponding geometric quotient.

    \item[(6b)] In the present block-separated cycle family, this geometric quotient identifies with the block quotient
    \[
    q_{\mathrm{blk}}:\Q^r\to\Q^B,
    \]
    and the induced map on block parameters is injective.
\end{enumerate}
\end{hypothesis}

\begin{remark}
\label{rem:block-family-logic}
The logical status of Hypothesis~\ref{hyp:block-family}(6) should be kept separate from conditions \emph{(1)}--\emph{(5)}. Conditions \emph{(1)}--\emph{(5)} describe the intended geometric block-separated situation on the cycle, resolution, and smoothing sides. Condition \emph{(6a)} is the perverse-side factorization supplied by Section~4 once the incidence datum is geometrically admissible and block-adapted. Condition \emph{(6b)} is the remaining injectivity hypothesis needed to identify the corrected-extension quotient with the block quotient. Thus the extension-side equality theorem below uses one imported perverse-side consequence and one additional injectivity assumption.
\end{remark}

\begin{remark}
\label{rem:block-family-geometry}
The geometric content of Hypothesis~\ref{hyp:block-family} lies primarily in conditions \emph{(3)} and \emph{(4)}: the exceptional-curve classes and vanishing-sphere classes are required to be constant on each block. This is expected, for example, when the nodes within a fixed block arise from a symmetric ambient configuration or from a common local complete-intersection model, so that a symmetry of the family permutes the nodes within the block and identifies their local resolution and smoothing data. A typical model to keep in mind is a projective nodal hypersurface or complete intersection whose singular points split into symmetry orbits; in such a situation the orbit structure is a natural candidate for the block decomposition, and equivariance makes the constancy of \(u_\beta\) and \(v_\beta\) plausible. Verifying these conditions in fully explicit algebraic families remains nontrivial and is left to future work.
\end{remark}

\begin{proposition}[Equivariant block-constancy criterion]
\label{prop:equivariant-block-constancy}
Suppose that for each block \(\Sigma_\beta\) there is a finite group \(G_\beta\) of automorphisms of the degeneration data such that:
\begin{enumerate}
    \item \(G_\beta\) preserves the distinguished cycle component \(C_\beta\) and acts transitively on the node set \(\Sigma_\beta\);
    \item the \(G_\beta\)-action is compatible with the chosen small resolution and smoothing, so that it transports the exceptional curves and vanishing spheres attached to nodes in \(\Sigma_\beta\);
    \item the induced action of \(G_\beta\) on the relevant subspaces of \(H_2(\widetilde X_0;\Q)\) and \(H_3(X_t;\Q)\) fixes the classes of the transported local sectors.
\end{enumerate}
Then conditions \emph{(3)} and \emph{(4)} of Hypothesis~\ref{hyp:block-family} hold on the block \(\Sigma_\beta\).
\end{proposition}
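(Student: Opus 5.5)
The plan is to fix a block \(\Sigma_\beta\) and a base node \(p_{k_0}\in\Sigma_\beta\), and to set
\[
u_\beta:=[E_{k_0}]\in H_2(\widetilde X_0;\Q),\qquad v_\beta:=\delta_{k_0}\in H_3(X_t;\Q).
\]
We then show that every other node of the block gives the same classes. Let \(p_k\in\Sigma_\beta\) be arbitrary. By transitivity in hypothesis (1) there is \(g\in G_\beta\) with \(g(p_{k_0})=p_k\).

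\textbf{Resolution side.} Hypothesis (2) says the action of \(g\) lifts to an automorphism \(\tilde g\) of \(\widetilde X_0\) commuting with \(\rho\). Hence \(\tilde g(\rho^{-1}(p_{k_0}))=\rho^{-1}(p_k)\), which gives \(\tilde g_*[E_{k_0}]=[E_k]\). Hypothesis (3) says \(\tilde g_*\) fixes the class \([E_{k_0}]\) of the transported local sector. Therefore \([E_k]=\tilde g_*[E_{k_0}]=[E_{k_0}]=u_\beta\), which is condition (3) of Hypothesis~\ref{hyp:block-family}.

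\textbf{Smoothing side.} The same argument applies. By (2), \(g\) acts fiber-preservingly on the family, or at least on a nearby fiber \(X_t\). It carries the Milnor ball at \(p_{k_0}\) onto the one at \(p_k\), so it sends the vanishing sphere \(\delta_{k_0}\) to \(\pm\delta_k\). By (3), \(g_*\delta_{k_0}=\delta_{k_0}\), so \(\delta_k=\pm v_\beta\).

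\textbf{Sign ambiguity.} Vanishing spheres are defined only up to sign, so we will normalize orientations so that the sign is \(+1\). For each \(k\) we choose the orientation of \(\delta_k\) to be \(g_*\delta_{k_0}\) for some \(g\) with \(g(p_{k_0})=p_k\). This choice must not depend on \(g\). If \(g,g'\) both send \(p_{k_0}\) to \(p_k\), then \(g_*\delta_{k_0}=\delta_{k_0}=g'_*\delta_{k_0}\) by (3), so the two choices agree. The same remark disposes of any orientation issue for the exceptional curves, although \([E_k]\) is in any case canonically oriented as a complex curve.

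\textbf{Main obstacle.} The delicate point is making hypothesis (2) precise on the smoothing side. We need \(G_\beta\) to act on the total space \(X\) compatibly with \(\pi\), either trivially on \(\Delta\) or at least preserving the chosen fiber \(X_t\) up to an isotopy that does not move the classes. Then \(g_*\) is a well-defined map on \(H_3(X_t;\Q)\) carrying local Milnor data to local Milnor data. I would read this into (2), possibly after shrinking \(\Delta\) and passing to a \(G_\beta\)-invariant parameter. Once that reading is fixed, the argument reduces to the transport identity plus the fixed-point hypothesis (3).
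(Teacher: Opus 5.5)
Your proof is correct and follows essentially the same route as the paper. Transitivity supplies \(g\) with \(g(p_{k_0})=p_k\), compatibility with the resolution and smoothing transports \(E_{k_0}\) to \(E_k\) and \(\delta_{k_0}\) to \(\pm\delta_k\), and the fixed-class hypothesis (3) then forces \([E_k]=[E_{k_0}]\) and \(\delta_k=\delta_{k_0}\). Your orientation normalization of the vanishing spheres and your remark on how hypothesis (2) must be read on the smoothing side are points the paper's proof passes over silently, but they do not change the argument.
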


\begin{proof}
Let \(p_k,p_\ell\in\Sigma_\beta\). By transitivity, there exists \(g\in G_\beta\) such that
\[
g(p_k)=p_\ell.
\]
Compatibility of the action with the chosen small resolution transports the exceptional curve \(E_k\) to the exceptional curve \(E_\ell\), and by assumption the induced action on the relevant homology subspace fixes the corresponding class. Hence
\[
[E_k]=[E_\ell].
\]
Since \(p_k,p_\ell\) were arbitrary points of \(\Sigma_\beta\), the exceptional-curve class is constant on the block, giving condition \emph{(3)}.

The same argument applies on the smoothing side. Compatibility of the \(G_\beta\)-action with the smoothing transports the vanishing sphere \(\delta_k\) to \(\delta_\ell\), and the assumed triviality of the induced action on the relevant homology subspace gives
\[
\delta_k=\delta_\ell.
\]
Thus the vanishing-sphere class is constant on the block, giving condition \emph{(4)}.
\end{proof}

\begin{remark}
\label{rem:block-family-inhabited}
Proposition~\ref{prop:equivariant-block-constancy} does not produce a complete projective classification of block-separated families, but it does identify a concrete mechanism by which Hypothesis~\ref{hyp:block-family}(3)--(4) can hold. In particular, projective nodal hypersurfaces or complete intersections carrying finite symmetry groups whose node orbits are the blocks are natural candidates for genuine instances of the block-separated cycle family.
\end{remark}

\begin{remark}
\label{rem:block-family}
Hypothesis~\ref{hyp:block-family} abstracts the geometric situation in which nodes are organized by distinguished cycle components whose proper transforms and vanishing spheres are constant on blocks and independent across blocks. The two-node common-cycle and three-node two-block configurations of Section~8 are elementary local model instances of this pattern.
\end{remark}

The corresponding incidence map determines the block quotient
\[
q_{\mathrm{blk}}:\Q^r\to\Q^B,
\qquad
e_k\longmapsto e_\beta \quad \text{if } p_k\in \Sigma_\beta.
\]
Its kernel is the block-relation subspace
\[
R_{\mathrm{blk}}
:=
\ker(q_{\mathrm{blk}})
=
\left\{
(c_1,\dots,c_r)\in\Q^r
\;\middle|\;
\sum_{k\in\Sigma_\beta} c_k=0
\text{ for every } \beta\in B
\right\}.
\]

In this geometric family, the relation spaces on the resolution, smoothing, and corrected-extension sides are all forced to coincide with the same block kernel.

\begin{lemma}
\label{lem:block-res-kernel}
Under Hypothesis~\ref{hyp:block-family},
\[
R_{\mathrm{res}}=R_{\mathrm{blk}}.
\]
\end{lemma}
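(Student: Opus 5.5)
The argument is pure linear algebra once Hypothesis~\ref{hyp:block-family}(3) and (5) are in hand. The idea is to factor $\eta_{\mathrm{res}}$ through the block quotient $q_{\mathrm{blk}}$ and show that the induced map on block parameters is injective.

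First, define the linear map
\[
\bar\eta_{\mathrm{blk}}:\Q^B\to W_{\mathrm{res}},\qquad e_\beta\longmapsto u_\beta .
\]
For a node $p_k\in\Sigma_\beta$, condition (3) gives $[E_k]=u_\beta$. Hence
\[
\eta_{\mathrm{res}}(e_k)=[E_k]=u_\beta=\bar\eta_{\mathrm{blk}}(e_\beta)=\bar\eta_{\mathrm{blk}}(q_{\mathrm{blk}}(e_k)).
\]
Since the $e_k$ form a basis of $\Q^r$, this yields the factorization $\eta_{\mathrm{res}}=\bar\eta_{\mathrm{blk}}\circ q_{\mathrm{blk}}$.

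Two consequences follow. The factorization immediately gives $R_{\mathrm{blk}}=\ker(q_{\mathrm{blk}})\subseteq\ker(\eta_{\mathrm{res}})=R_{\mathrm{res}}$. For the reverse inclusion, let $c=(c_1,\dots,c_r)\in R_{\mathrm{res}}$. Grouping the nodes by block, we get
\[
0=\sum_k c_k[E_k]=\sum_{\beta\in B}\Bigl(\sum_{k:\,p_k\in\Sigma_\beta}c_k\Bigr)u_\beta .
\]
By condition (5) the $u_\beta$ are linearly independent over $\Q$, so every block sum $\sum_{p_k\in\Sigma_\beta}c_k$ vanishes. This says exactly that $q_{\mathrm{blk}}(c)=0$, that is, $c\in R_{\mathrm{blk}}$. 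Equivalently, $\bar\eta_{\mathrm{blk}}$ is injective, and therefore $\ker(\eta_{\mathrm{res}})=\ker(q_{\mathrm{blk}})$.

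Two bookkeeping points need checking. First, the blocks $\Sigma_\beta$ must partition $\Sigma$, so that every $e_k$ lands in exactly one $e_\beta$; this is built into the setup of Section~\ref{subsec:geom-family-block}. Second, condition (5) must be read as independence of the family indexed by $B$ (distinct $\beta$ giving distinct, independent classes), not merely of the underlying set. Neither step involves real geometry.

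The only potentially delicate step is the reverse inclusion. It is exactly where (5) is used, and it would fail if two blocks had proportional exceptional classes. I expect no other obstacle. The geometric content, namely block-constancy of the $[E_k]$, is already assumed in condition (3), and this is the point justified via Proposition~\ref{prop:equivariant-block-constancy}.
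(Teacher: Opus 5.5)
Your proposal is correct and is essentially the paper's argument: the paper also expands $\eta_{\mathrm{res}}(c)$ as $\sum_{\beta\in B}\bigl(\sum_{k\in\Sigma_\beta}c_k\bigr)u_\beta$ using condition (3), then uses the linear independence in condition (5) to identify the kernel with $R_{\mathrm{blk}}$. Your explicit factorization $\eta_{\mathrm{res}}=\bar\eta_{\mathrm{blk}}\circ q_{\mathrm{blk}}$ is only a repackaging of that same computation.
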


\begin{proof}
By Hypothesis~\ref{hyp:block-family}(3), if $p_k\in\Sigma_\beta$, then $[E_k]=u_\beta$. Hence the resolution map
\[
\eta_{\mathrm{res}}:\Q^r\to W_{\mathrm{res}}
\]
takes a vector $(c_1,\dots,c_r)$ to
\[
\eta_{\mathrm{res}}(c_1,\dots,c_r)
=
\sum_{\beta\in B}
\left(\sum_{k\in\Sigma_\beta} c_k\right)u_\beta.
\]
Since the classes $\{u_\beta\}_{\beta\in B}$ are linearly independent by Hypothesis~\ref{hyp:block-family}(5), this vanishes if and only if
\[
\sum_{k\in\Sigma_\beta} c_k=0
\qquad\text{for every }\beta\in B.
\]
This is exactly the defining condition for $R_{\mathrm{blk}}$.
\end{proof}

\begin{lemma}
\label{lem:block-sm-kernel}
Under Hypothesis~\ref{hyp:block-family},
\[
R_{\mathrm{sm}}=R_{\mathrm{blk}}.
\]
\end{lemma}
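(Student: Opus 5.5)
The argument is the same as for Lemma~\ref{lem:block-res-kernel}, with exceptional curves replaced by vanishing spheres and conditions (3), (5) on $\{u_\beta\}$ replaced by conditions (4), (5) on $\{v_\beta\}$. First I would write out the smoothing-side comparison map of Definition~\ref{def:wsm-rsm},
\[
\eta_{\mathrm{sm}}:\Q^r\to W_{\mathrm{sm}},\qquad e_k\mapsto \delta_k,
\]
and apply Hypothesis~\ref{hyp:block-family}(4). This condition says that $\delta_k=v_\beta$ whenever $p_k\in\Sigma_\beta$. Grouping the nodes by block then gives, for $(c_1,\dots,c_r)\in\Q^r$,
\[
\eta_{\mathrm{sm}}(c_1,\dots,c_r)=\sum_{\beta\in B}\Bigl(\sum_{k\in\Sigma_\beta}c_k\Bigr)v_\beta .
\]
This step only needs that the $\Sigma_\beta$ partition $\Sigma$, which holds by construction.

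Next I would use Hypothesis~\ref{hyp:block-family}(5). The classes $\{v_\beta\}_{\beta\in B}$ are linearly independent in $H_3(X_t;\Q)$, so the displayed sum vanishes exactly when every block sum $\sum_{k\in\Sigma_\beta}c_k$ is zero. That is the defining condition of $R_{\mathrm{blk}}=\ker(q_{\mathrm{blk}})$, and so $R_{\mathrm{sm}}=R_{\mathrm{blk}}$. Equivalently, $\eta_{\mathrm{sm}}$ factors as $q_{\mathrm{blk}}$ followed by the injective map $\Q^B\to W_{\mathrm{sm}}$, $e_\beta\mapsto v_\beta$, and the kernel of such a composite is $\ker(q_{\mathrm{blk}})$.

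The one point that needs care is sign. Each vanishing sphere $\delta_k$ is only defined up to sign, so $R_{\mathrm{sm}}$ depends on a choice of orientations. I would fix orientations so that Hypothesis~\ref{hyp:block-family}(4) holds literally, which is what that condition asserts, and note that any other choice changes $R_{\mathrm{sm}}$ only by the corresponding diagonal sign change on $\Q^r$. This is the same kind of diagonal rescaling as in Lemma~\ref{lem:basis-independence}. Once orientations are fixed, the argument is purely linear algebra and I do not expect any further obstacle.
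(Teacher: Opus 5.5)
Your proposal is correct and matches the paper's proof: substitute $\delta_k=v_\beta$ from condition (4), regroup $\eta_{\mathrm{sm}}(c_1,\dots,c_r)$ by blocks, and use the linear independence of $\{v_\beta\}_{\beta\in B}$ from condition (5) to get exactly the defining equations of $R_{\mathrm{blk}}$. Your extra remark, that orientations of the vanishing spheres must be fixed so that (4) holds literally, is a harmless clarification the paper leaves implicit.
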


\begin{proof}
By Hypothesis~\ref{hyp:block-family}(4), if $p_k\in\Sigma_\beta$, then $\delta_k=v_\beta$. Hence the smoothing map
\[
\eta_{\mathrm{sm}}:\Q^r\to W_{\mathrm{sm}}
\]
takes a vector $(c_1,\dots,c_r)$ to
\[
\eta_{\mathrm{sm}}(c_1,\dots,c_r)
=
\sum_{\beta\in B}
\left(\sum_{k\in\Sigma_\beta} c_k\right)v_\beta.
\]
Since the classes $\{v_\beta\}_{\beta\in B}$ are linearly independent by Hypothesis~\ref{hyp:block-family}(5), this vanishes if and only if
\[
\sum_{k\in\Sigma_\beta} c_k=0
\qquad\text{for every }\beta\in B.
\]
Thus $R_{\mathrm{sm}}=R_{\mathrm{blk}}$.
\end{proof}

\begin{remark}
\label{rem:block-ext-assumption}
The extension-side identification differs logically from Lemmas~\ref{lem:block-res-kernel} and \ref{lem:block-sm-kernel}: the latter are derived directly from conditions \emph{(3)}--\emph{(5)}, whereas the former additionally uses the factorization and injectivity assumptions \emph{(6a)}--\emph{(6b)}.
\end{remark}

\begin{lemma}
\label{lem:block-ext-kernel}
Under Hypothesis~\ref{hyp:block-family},
\[
R_{\mathrm{ext}}=R_{\mathrm{blk}}.
\]
\end{lemma}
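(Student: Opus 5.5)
The plan is to mirror the proofs of Lemmas~\ref{lem:block-res-kernel} and \ref{lem:block-sm-kernel}, with conditions (3)--(5) replaced by the extension-side inputs (6a)--(6b). First, fix what the extension-side comparison map is. Using the local generators $\varepsilon_k$ of Proposition~\ref{prop:local-odp-class}, the natural choice is
\[
\eta_{\mathrm{ext}}:\Q^r\to E_{\mathrm{geom}}\subseteq E_{\mathrm{node}},
\]
the map through which the coefficients of the corrected class are realized, and then set $R_{\mathrm{ext}}:=\ker(\eta_{\mathrm{ext}})$. By (6a), the incidence datum is geometrically admissible and block-adapted. Hence Theorem~\ref{thm:perv-relation-controlled} applies, and the corrected extension factors through the geometric quotient $q_{\mathrm{geom}}$. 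By (6b), this quotient identifies with $q_{\mathrm{blk}}:\Q^r\to\Q^B$. Together these should give a factorization
\[
\eta_{\mathrm{ext}}=\bar\eta_{\mathrm{ext}}\circ q_{\mathrm{blk}},
\qquad
\bar\eta_{\mathrm{ext}}:\Q^B\to E_{\mathrm{geom}}.
\]

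With the factorization in hand, the equality follows in two steps. The inclusion $R_{\mathrm{blk}}=\ker(q_{\mathrm{blk}})\subseteq R_{\mathrm{ext}}$ comes directly from the factorization, exactly as in Lemma~\ref{lem:common-factorization}. For the reverse inclusion, take $x\in R_{\mathrm{ext}}$. Then $\bar\eta_{\mathrm{ext}}(q_{\mathrm{blk}}(x))=0$, and injectivity of the induced map on block parameters, assumed in (6b), gives $q_{\mathrm{blk}}(x)=0$, so $x\in R_{\mathrm{blk}}$. In coordinates, writing $\bar\eta_{\mathrm{ext}}(e_\beta)=w_\beta$, this says
\[
\eta_{\mathrm{ext}}(c_1,\dots,c_r)=\sum_{\beta\in B}\Bigl(\sum_{k\in\Sigma_\beta}c_k\Bigr)w_\beta,
\]
with the $w_\beta$ linearly independent. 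The computation is then identical in form to the resolution and smoothing cases.

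As a consistency check, block-adaptedness identifies $E_{\mathrm{geom}}$ with the block-constant vectors. Concretely, $w_\beta=\sum_{k\in\Sigma_\beta}\varepsilon_k$, and these are visibly independent because the blocks are disjoint. So (6b) holds automatically for this natural choice of $\eta_{\mathrm{ext}}$.

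The main obstacle is the first step: pinning down $\eta_{\mathrm{ext}}$ so that its factorization through $q_{\mathrm{blk}}$, rather than merely through $V_{\mathrm{geom}}$ as a subspace, is justified. Theorem~\ref{thm:perv-relation-controlled} controls a single class $[\mathcal P]$, whereas $R_{\mathrm{ext}}$ is a kernel of a map on all of $\Q^r$. I would resolve this by reading (6a)--(6b) as exactly supplying this factorization and the injectivity, consistent with Remark~\ref{rem:block-ext-assumption}. I would not try to derive it from the single class $[\mathcal P]$.
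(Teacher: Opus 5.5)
Your proposal is correct and follows the paper's proof. The paper likewise reads (6a)--(6b) as directly supplying the factorization $\eta_{\mathrm{ext}}=\bar\eta_{\mathrm{ext}}\circ q_{\mathrm{blk}}$ with $\bar\eta_{\mathrm{ext}}$ injective, and then runs exactly your two-inclusion kernel argument. Your ``consistency check'' adds nothing, because it is circular. The choice $w_\beta=\sum_{k\in\Sigma_\beta}\varepsilon_k$ builds the factorization through $q_{\mathrm{blk}}$ in by hand. By contrast, the literal map $e_k\mapsto\varepsilon_k$ built from the local generators, which your first paragraph suggests, is an isomorphism onto $E_{\mathrm{node}}$ with zero kernel. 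So the lemma genuinely rests on taking (6b) as a hypothesis, as you correctly conclude in your last paragraph.
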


\begin{proof}
By Hypothesis~\ref{hyp:block-family}(6a)--(6b), the corrected-extension map factors as
\[
\eta_{\mathrm{ext}}=\bar{\eta}_{\mathrm{ext}}\circ q_{\mathrm{blk}},
\]
where
\[
q_{\mathrm{blk}}:\Q^r\to\Q^B
\]
is the block quotient and
\[
\bar{\eta}_{\mathrm{ext}}:\Q^B\to E_{\mathrm{geom}}
\]
is injective. If \(x\in R_{\mathrm{blk}}=\ker(q_{\mathrm{blk}})\), then
\[
\eta_{\mathrm{ext}}(x)=\bar{\eta}_{\mathrm{ext}}(q_{\mathrm{blk}}(x))=0,
\]
so \(R_{\mathrm{blk}}\subseteq R_{\mathrm{ext}}\). Conversely, if \(x\in R_{\mathrm{ext}}\), then
\[
0=\eta_{\mathrm{ext}}(x)=\bar{\eta}_{\mathrm{ext}}(q_{\mathrm{blk}}(x)).
\]
Since \(\bar{\eta}_{\mathrm{ext}}\) is injective, it follows that
\[
q_{\mathrm{blk}}(x)=0.
\]
Hence \(x\in \ker(q_{\mathrm{blk}})=R_{\mathrm{blk}}\). Therefore
\[
R_{\mathrm{ext}}=R_{\mathrm{blk}}.
\]
\end{proof}

We may now state the strengthened comparison theorem for this geometric family.

\begin{theorem}[Equality of relation lattices in the block-separated cycle family]
\label{thm:block-family-comparison}
Assume Hypothesis~\ref{hyp:block-family}. Then
\[
R_{\mathrm{res}}=R_{\mathrm{sm}}=R_{\mathrm{ext}}=R_{\mathrm{blk}}.
\]
Equivalently, the same block-relation lattice governs
\begin{enumerate}
    \item homology relations among the exceptional curves,
    \item homology relations among the vanishing spheres,
    \item admissible gluing relations for the corrected perverse extension.
\end{enumerate}
In particular,
\[
\Q^r/R_{\mathrm{res}}
\cong
\Q^r/R_{\mathrm{sm}}
\cong
\Q^r/R_{\mathrm{ext}}
\cong
\Q^B.
\]
\end{theorem}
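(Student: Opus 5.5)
The plan is to assemble Theorem~\ref{thm:block-family-comparison} from the three kernel identifications already proved under Hypothesis~\ref{hyp:block-family}. No new geometric input is needed; the content is the bookkeeping that puts the three lemmas into one chain of equalities and then passes to quotients.

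First, I invoke Lemma~\ref{lem:block-res-kernel}, which gives $R_{\mathrm{res}}=R_{\mathrm{blk}}$. It uses conditions (3) and (5): block-constancy of the exceptional-curve classes $[E_k]=u_\beta$ and linear independence of the $u_\beta$. Second, Lemma~\ref{lem:block-sm-kernel} gives $R_{\mathrm{sm}}=R_{\mathrm{blk}}$ by the identical argument with the vanishing spheres $\delta_k=v_\beta$ and conditions (4) and (5). Third, Lemma~\ref{lem:block-ext-kernel} gives $R_{\mathrm{ext}}=R_{\mathrm{blk}}$. This is where the perverse-side input enters. By (6a), geometric admissibility and block-adaptedness let Theorem~\ref{thm:perv-relation-controlled} place $[\mathcal P]$ in $E_{\mathrm{geom}}$, so that $\eta_{\mathrm{ext}}$ factors through the geometric quotient. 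By (6b), that quotient is $q_{\mathrm{blk}}$ and the induced map $\bar\eta_{\mathrm{ext}}$ is injective. Chaining the three equalities yields $R_{\mathrm{res}}=R_{\mathrm{sm}}=R_{\mathrm{ext}}=R_{\mathrm{blk}}$. The enumerated reformulations (1)--(3) are then just the definitions of the respective relation spaces as kernels of $\eta_{\mathrm{res}}$, $\eta_{\mathrm{sm}}$, and $\eta_{\mathrm{ext}}$.

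For the quotient isomorphisms, I will use that $q_{\mathrm{blk}}:\Q^r\to\Q^B$ is surjective. Every block $\Sigma_\beta$ is nonempty, since blocks are equivalence classes of nodes, so each $e_\beta$ is the image of any $e_k$ with $p_k\in\Sigma_\beta$. Hence $\Q^r/R_{\mathrm{blk}}\cong\Q^B$ canonically, and substituting the three equalities gives the displayed chain $\Q^r/R_{\mathrm{res}}\cong\Q^r/R_{\mathrm{sm}}\cong\Q^r/R_{\mathrm{ext}}\cong\Q^B$. As a consistency check, I would note that these isomorphisms are induced by $\eta_{\mathrm{res}}$, $\eta_{\mathrm{sm}}$, and $\eta_{\mathrm{ext}}$ onto their images. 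In particular $W_{\mathrm{res}}\cong\Span\{u_\beta\}\cong\Q^B$ and $W_{\mathrm{sm}}\cong\Q^B$ by (5).

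The only delicate point is the extension side, which is the main obstacle. The map $\eta_{\mathrm{ext}}:\Q^r\to E_{\mathrm{node}}$ must be read so that its kernel is meaningfully a relation space, and the identification of the geometric quotient with $q_{\mathrm{blk}}$ in (6b) must be consistent with the block-adapted structure of $E_{\mathrm{geom}}$. I would resolve this by checking directly that in the block-separated family the incidence map $e_\beta\mapsto\sum_{p_k\in\Sigma_\beta}e_k$ has image equal to the block-constant vectors, so that block-adaptedness holds and $\dim E_{\mathrm{geom}}=|B|$. That dimension count matches the injectivity of $\bar\eta_{\mathrm{ext}}$ on $\Q^B$, after which Lemma~\ref{lem:block-ext-kernel} applies verbatim.
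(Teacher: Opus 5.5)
Your proposal is correct and follows the paper's proof, which simply chains Lemmas~\ref{lem:block-res-kernel}, \ref{lem:block-sm-kernel}, and \ref{lem:block-ext-kernel}. Your surjectivity argument for $q_{\mathrm{blk}}$ (blocks are nonempty) makes explicit the quotient isomorphisms the paper leaves implicit, and your last-paragraph check is only a consistency remark: block-adaptedness and injectivity of $\bar\eta_{\mathrm{ext}}$ are already assumed in (6a)--(6b), and the dimension match $\dim E_{\mathrm{geom}}=|B|$ would not by itself give injectivity.
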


\begin{proof}
Immediate from Lemmas~\ref{lem:block-res-kernel}, \ref{lem:block-sm-kernel}, and \ref{lem:block-ext-kernel}.
\end{proof}

\begin{corollary}
\label{cor:block-family-dim}
Under Hypothesis~\ref{hyp:block-family},
\[
\dim_{\Q}W_{\mathrm{res}}
=
\dim_{\Q}W_{\mathrm{sm}}
=
\dim_{\Q}E_{\mathrm{geom}}
=
|B|.
\]
Thus the number of independent global directions is the number of relation blocks, not the raw node count $r$.
\end{corollary}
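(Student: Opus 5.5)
The plan is to read off the three dimensions from the three kernel identifications already established, using rank--nullity for the linear maps out of $\Q^r$. By Theorem~\ref{thm:block-family-comparison}, we have $R_{\mathrm{res}}=R_{\mathrm{sm}}=R_{\mathrm{ext}}=R_{\mathrm{blk}}$. Moreover, $q_{\mathrm{blk}}:\Q^r\to\Q^B$ is surjective, since every block $\Sigma_\beta$ is nonempty, being an equivalence class of a partition. Hence $\dim_{\Q}\Q^r/R_{\mathrm{blk}}=|B|$.

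On the resolution side, $\eta_{\mathrm{res}}:\Q^r\to W_{\mathrm{res}}$ is surjective by Definition~\ref{def:wres-rres}, so $W_{\mathrm{res}}\cong\Q^r/R_{\mathrm{res}}$. Lemma~\ref{lem:block-res-kernel} then gives $\dim W_{\mathrm{res}}=|B|$. A direct check is also available: by Hypothesis~\ref{hyp:block-family}(3), $W_{\mathrm{res}}=\Span\{u_\beta\}$, and these classes are independent by (5). The smoothing side is identical: $e_k\mapsto\delta_k$ surjects onto $W_{\mathrm{sm}}=\Span\{v_\beta\}$, and Lemma~\ref{lem:block-sm-kernel} gives $\dim W_{\mathrm{sm}}=|B|$.

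The extension side is the step needing care, because $\eta_{\mathrm{ext}}$ is not a priori surjective onto $E_{\mathrm{geom}}$. I would proceed as follows. By (2), the incidence map $\iota_{\mathcal C}$ sends $e_\beta$ to the indicator vector of $\Sigma_\beta$. These indicator vectors have disjoint nonempty supports, so $\rank(\iota_{\mathcal C})=|B|$. Theorem~\ref{thm:perv-relation-controlled}, applicable by (6a), then gives $\dim E_{\mathrm{geom}}=\rank(\iota_{\mathcal C})=|B|$. Alternatively, one can note that by block-adaptedness $E_{\mathrm{geom}}$ is the block-constant subspace, which visibly has dimension $|B|$.

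As a consistency check with (6b), the injective map $\bar\eta_{\mathrm{ext}}:\Q^B\to E_{\mathrm{geom}}$ goes between spaces of equal dimension and is therefore an isomorphism; this recovers $\Q^r/R_{\mathrm{ext}}\cong E_{\mathrm{geom}}$. Combining the three computations gives the stated equalities. The closing sentence of the corollary then follows because $|B|\le r$, with strict inequality exactly when some block contains more than one node.
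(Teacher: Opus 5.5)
Your proof is correct. On the resolution and smoothing sides it matches the paper's argument: surjectivity of $\eta_{\mathrm{res}}$ and of $e_k\mapsto\delta_k$, combined with Lemmas~\ref{lem:block-res-kernel} and~\ref{lem:block-sm-kernel}.

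On the extension side you take a genuinely different route. The paper deduces $\dim_{\Q}E_{\mathrm{geom}}=|B|$ from Lemma~\ref{lem:block-ext-kernel}, i.e.\ from $\Q^r/R_{\mathrm{ext}}\cong\Q^B$. Read literally, that only shows the image of $\eta_{\mathrm{ext}}$ inside $E_{\mathrm{geom}}$ has dimension $|B|$, which is a lower bound. Equality tacitly needs either surjectivity of $\eta_{\mathrm{ext}}$ or the upper bound $\dim_{\Q}E_{\mathrm{geom}}\le|A|=|B|$ from Theorem~\ref{thm:perv-relation-controlled}.

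You instead compute $\dim_{\Q}E_{\mathrm{geom}}=\rank(\iota_{\mathcal C})=|B|$ directly. You use the definition $E_{\mathrm{geom}}=\operatorname{Im}(\Gamma_{\mathcal C})$, Lemma~\ref{lem:gamma-equals-iota}, and the disjoint nonempty supports of the indicator vectors in Hypothesis~\ref{hyp:block-family}(2). This is pure linear algebra. It never uses the comparison map $\eta_{\mathrm{ext}}$, which the paper never defines concretely. You need (6b) only for your consistency check, where it promotes $\bar\eta_{\mathrm{ext}}$ to an isomorphism and so supplies exactly the surjectivity the paper's route leaves implicit.

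What the paper's route buys is uniformity: all three dimensions are read off the single lattice equality of Theorem~\ref{thm:block-family-comparison}. Yours is more self-contained and airtight for the extension-side count.
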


\begin{proof}
By Lemma~\ref{lem:block-res-kernel}, the quotient \(\Q^r/R_{\mathrm{res}}\) has dimension \(|B|\), hence \(\dim_{\Q}W_{\mathrm{res}}=|B|\). By Lemma~\ref{lem:block-sm-kernel}, the same argument gives \(\dim_{\Q}W_{\mathrm{sm}}=|B|\). By Lemma~\ref{lem:block-ext-kernel}, the corrected-extension quotient is likewise identified with \(\Q^B\), so \(\dim_{\Q}E_{\mathrm{geom}}=|B|\). Thus all three spaces have the same dimension \(|B|\). This is consistent with the classical topology of conifold transitions, but the present argument uses only the hypotheses of the block-separated cycle family.
\end{proof}

\subsection{Interpretation}

Theorem~\ref{thm:comparison} is the general comparison framework of the paper. Section~4 establishes the perverse-side relation law from geometrically admissible and block-adapted incidence data; the role of the present section is to compare that relation law with the corresponding smoothing-side and resolution-side lattices. More precisely, Theorem~\ref{thm:comparison} identifies one and the same geometric relation law in three different forms:
\begin{itemize}
    \item on the resolution side, through homology relations among the exceptional curves;
    \item on the smoothing side, through homology relations among the vanishing spheres;
    \item on the perverse side, through the relations constraining admissible gluing coefficients of the corrected extension.
\end{itemize}
Thus the global cycle relations among nodes do not disappear when one passes to the nearby-cycle or corrected-extension formalism. Rather, once the cycle-node incidence datum is chosen so as to model the common geometric relation law, the same finite-dimensional quotient controls all three realizations.

Theorem~\ref{thm:block-family-comparison} strengthens this general framework to a full equality theorem in the block-separated cycle family. In that setting, the relation law is not merely formalized by the chosen incidence data; it is realized geometrically by constant exceptional-curve classes, constant vanishing-sphere classes, and the corresponding block-compatible corrected-extension parameters.

This is the main conceptual role of the cycle-node incidence formalism. It extracts from the geometry a finite-dimensional relation theory that can be transported from classical conifold-transition topology to perverse sheaves. In this sense, the corrected extension becomes a genuine bridge object: it lies simultaneously in the degeneration theory of nearby cycles and in the global topology of conifold transitions, and it carries the same relation law in both settings.

The later sections of the paper build on this comparison in two directions. First, Section~6 lifts the perverse-side relation law compatibly to Saito's category of mixed Hodge modules. Second, Section~7 records the corresponding consequence for the decategorified quiver shadow. Thus the comparison theorem is not an isolated identification, but the structural mechanism by which global node relations propagate through the perverse, Hodge-theoretic, and categorical layers of the finite-node conifold program.

A further combinatorial consequence of the block-separated cycle family is that the number of independent global directions is controlled by the number of relation blocks rather than by the raw node count. The associated dimensional-reduction formulas, together with the resulting block coalescence/thickening dynamics and relation thinning/thickening on the kernel side, are recorded in Appendix~\ref{app:combinatorics}.

\section{Mixed Hodge modules and relation-controlled gluing}

The three-node example of Section~\ref{subsec:intro-prototype} was first formulated on the perverse side, but the same relation law should already be visible before passing to realization. Section~4 identifies the perverse-side geometric relation law under geometrically admissible and block-adapted incidence hypotheses. The purpose of the present section is to show that this perverse-side relation law lifts to the mixed-Hodge-module setting. In particular, the passage from the formal nodewise extension space to a geometrically realized subspace is not merely a feature of the decategorified perverse theory; it is compatible with the full mixed-Hodge-module structure in Saito's category. An independent internal derivation of the same relation law directly from divisor-gluing data, without passing through the perverse realization, is not attempted here. Rather, the point of the present section is that no mixed-Hodge-theoretic obstruction prevents the relation-controlled perverse class from lifting to a corresponding constrained class in \(\MHM(X_0)\).

\subsection{The free nodewise MHM extension space}

We now pass from the perverse category to Saito's category of mixed Hodge modules \cite{SaitoMHM}. Let
\[
IC^H_{X_0}\in \MHM(X_0)
\]
denote the intersection-complex mixed Hodge module on the central fiber, and let
\[
0\to IC^H_{X_0}\to \mathcal P^H\to \bigoplus_{k=1}^r i_{k*}\Q^H_{\{p_k\}}(-1)\to 0
\]
be the finite-node corrected mixed-Hodge-module extension constructed in \cite{RahmanMixedHodgeModules}. Its realization under
\[
\rat:\MHM(X_0)\to \Perv(X_0;\Q)
\]
recovers the corrected perverse extension reviewed in Sections~2 and~4.

The ambient nodewise extension space on the mixed-Hodge-module side is
\[
E_{\mathrm{node}}^H
:=
\Ext^1_{\MHM(X_0)}
\!\left(
\bigoplus_{k=1}^r i_{k*}\Q^H_{\{p_k\}}(-1),
IC^H_{X_0}
\right).
\]
As on the perverse side, this is the ambient extension space determined by the finite direct-sum singular quotient. The first point is again that the corresponding extension problem splits nodewise.

\begin{lemma}[Finite additivity of the nodewise MHM extension space]
\label{lem:mhm-additivity}
In the abelian category $\MHM(X_0)$, one has a natural isomorphism
\[
\Ext^1_{\MHM(X_0)}
\!\left(
\bigoplus_{k=1}^r i_{k*}\Q^H_{\{p_k\}}(-1),
IC^H_{X_0}
\right)
\cong
\bigoplus_{k=1}^r
\Ext^1_{\MHM(X_0)}
\!\left(
i_{k*}\Q^H_{\{p_k\}}(-1),
IC^H_{X_0}
\right).
\]
\end{lemma}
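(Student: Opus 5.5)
The isomorphism should follow from the additivity of Yoneda $\Ext^1$ in the first variable, exactly as in Lemma~\ref{lem:perv-additivity}. The only input needed is that $\MHM(X_0)$ is an abelian category (Saito \cite{SaitoMHM}) in which finite direct sums are biproducts. I would define the comparison map explicitly and then produce an inverse.

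\emph{The forward map.} Write $M_k:=i_{k*}\Q^H_{\{p_k\}}(-1)$ and $M:=\bigoplus_k M_k$, with structure maps $s_k:M_k\to M$ and $\pi_k:M\to M_k$. Given an extension $0\to IC^H_{X_0}\to \mathcal E\to M\to 0$ in $\MHM(X_0)$, pull it back along each $s_k$. The pullback exists because $\MHM(X_0)$ has fibre products, and it gives an extension $s_k^*\mathcal E$ of $M_k$ by $IC^H_{X_0}$. This defines the map
\[
\Phi:[\mathcal E]\longmapsto \bigl(s_1^*[\mathcal E],\dots,s_r^*[\mathcal E]\bigr).
\]

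\emph{The inverse map.} Given a family of extensions $\mathcal E_k$ of $M_k$ by $IC^H_{X_0}$, take the direct-sum extension
\[
0\to (IC^H_{X_0})^{\oplus r}\to \bigoplus_k\mathcal E_k\to M\to 0,
\]
and push it out along the summation (codiagonal) map $\nabla:(IC^H_{X_0})^{\oplus r}\to IC^H_{X_0}$. The pushout exists since the category has cokernels. This gives $\Psi\bigl((\mathcal E_k)_k\bigr):=\nabla_*\bigl(\bigoplus_k\mathcal E_k\bigr)$.

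\emph{Checking that the maps are inverse.} The standard Baer-sum identities show that $\Phi$ and $\Psi$ are mutually inverse and linear:
\begin{itemize}
\item pullback commutes with pushout;
\item $s_\ell^*\bigl(\bigoplus_k \mathcal E_k\bigr)$ is the extension $\mathcal E_\ell$ with the other summands pushed out to zero;
\item $\sum_k s_k\pi_k=\id_M$, so any class $[\mathcal E]$ equals $\sum_k \pi_k^* s_k^*[\mathcal E]$ in the group $\Ext^1_{\MHM(X_0)}(M,IC^H_{X_0})$.
\end{itemize}
Alternatively, the whole argument can be phrased as the statement that $\Ext^1_{\MHM(X_0)}(-,IC^H_{X_0})$ is an additive functor on an additive category, and additive functors preserve finite biproducts. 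Naturality follows because pullbacks are functorial.

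\emph{The one point needing care.} Yoneda $\Ext^1$ computed inside the abelian category $\MHM(X_0)$ must carry its usual group structure (Baer sum), and all the pullbacks and pushouts above must stay inside $\MHM(X_0)$. Both hold because $\MHM(X_0)$ is abelian: kernels and cokernels of morphisms of mixed Hodge modules are again mixed Hodge modules, with the induced filtrations strict.

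No comparison with $D^b\MHM(X_0)$ is needed, since the statement concerns only Yoneda $\Ext^1$ in the abelian category. I therefore expect no real obstacle; the argument is purely formal and repeats the perverse proof verbatim.
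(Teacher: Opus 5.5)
Your proposal is correct and takes the same approach as the paper, which proves the lemma in one line by invoking finite additivity of Yoneda $\Ext^1$ in the abelian category $\MHM(X_0)$. Your explicit pullback/pushout construction of the inverse maps, and the reformulation via additivity of $\Ext^1_{\MHM(X_0)}(-,IC^H_{X_0})$, just spell out that standard fact.
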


\begin{proof}
This is the finite additivity of Yoneda $\Ext^1$ in an abelian category.
\end{proof}

Hence
\[
E_{\mathrm{node}}^H
\cong
\bigoplus_{k=1}^r
\Ext^1_{\MHM(X_0)}
\!\left(
i_{k*}\Q^H_{\{p_k\}}(-1),
IC^H_{X_0}
\right).
\]
In the finite ordinary-double-point setting, each summand is one-dimensional over $\Q$ after choosing a local generator, so that, up to noncanonical identification,
\[
E_{\mathrm{node}}^H\cong \Q^r.
\]
We write
\[
\varepsilon_k^H\in
\Ext^1_{\MHM(X_0)}
\!\left(
i_{k*}\Q^H_{\{p_k\}}(-1),
IC^H_{X_0}
\right)
\]
for a chosen nonzero local generator at the node $p_k$.

The corrected mixed-Hodge-module extension class therefore admits a nodewise presentation entirely parallel to the perverse case.

\begin{lemma}[Nodewise form of the corrected MHM extension class]
\label{lem:mhm-nodewise}
Let
\[
0\to IC^H_{X_0}\to \mathcal P^H\to \bigoplus_{k=1}^r i_{k*}\Q^H_{\{p_k\}}(-1)\to 0
\]
be the corrected mixed-Hodge-module extension. Then its extension class
\[
[\mathcal P^H]\in E_{\mathrm{node}}^H
\]
is equivalent, under Lemma~\ref{lem:mhm-additivity}, to a tuple of nodewise classes
\[
[\mathcal P^H]=(\xi_1^H,\dots,\xi_r^H),
\qquad
\xi_k^H\in
\Ext^1_{\MHM(X_0)}
\!\left(
i_{k*}\Q^H_{\{p_k\}}(-1),
IC^H_{X_0}
\right).
\]
\end{lemma}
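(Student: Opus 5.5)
The plan is to deduce Lemma~\ref{lem:mhm-nodewise} directly from Lemma~\ref{lem:mhm-additivity}, applied to the particular extension $\mathcal P^H$. The only real content is to write out the additivity isomorphism explicitly as pullback along the summand inclusions, and to check that the tuple it produces reassembles to $[\mathcal P^H]$.

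First, I write $Q^H:=\bigoplus_{k=1}^r i_{k*}\Q^H_{\{p_k\}}(-1)$. Let $s_k:i_{k*}\Q^H_{\{p_k\}}(-1)\to Q^H$ and $\pi_k:Q^H\to i_{k*}\Q^H_{\{p_k\}}(-1)$ be the canonical inclusions and projections in the abelian category $\MHM(X_0)$. These morphisms exist and satisfy $\sum_k s_k\pi_k=\id$ and $\pi_k s_\ell=\delta_{k\ell}\,\id$, since finite direct sums in $\MHM(X_0)$ are biproducts.

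I then define
\[
\xi_k^H:=s_k^{*}[\mathcal P^H]\in\Ext^1_{\MHM(X_0)}\!\left(i_{k*}\Q^H_{\{p_k\}}(-1),IC^H_{X_0}\right).
\]
Concretely, $\xi_k^H$ is represented by the fibre product $\mathcal P^H\times_{Q^H} i_{k*}\Q^H_{\{p_k\}}(-1)$. This fibre product exists in $\MHM(X_0)$ because the category is abelian, and it is again an extension of $i_{k*}\Q^H_{\{p_k\}}(-1)$ by $IC^H_{X_0}$.

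Next, the isomorphism of Lemma~\ref{lem:mhm-additivity} is the map $[E]\mapsto(s_k^*[E])_k$. Its inverse sends a tuple $(\zeta_k)$ to $\sum_k\pi_k^*\zeta_k$, where the sum is the Baer sum. This uses that $\Ext^1(-,IC^H_{X_0})$ is an additive contravariant functor, so it converts the biproduct into a product.

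With this description, the equality $[\mathcal P^H]=\sum_k\pi_k^*\xi_k^H$ follows from $\sum_k s_k\pi_k=\id$ together with functoriality of pullback. This yields the stated identification $[\mathcal P^H]=(\xi_1^H,\dots,\xi_r^H)$.

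No step here is a genuine obstacle. The one point that needs care is the claim that Yoneda $\Ext^1$ in $\MHM(X_0)$ is additive in its first variable via Baer sum. This holds in any abelian category, so no Hodge-theoretic input is needed, and no claim about the dimension of the summands is required for this lemma.
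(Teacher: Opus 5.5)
Your proposal is correct and follows the same route as the paper, which simply says the lemma is immediate from the finite direct-sum decomposition of the singular quotient, i.e.\ from Lemma~\ref{lem:mhm-additivity}. You go further only by writing the additivity isomorphism explicitly as $[E]\mapsto (s_k^*[E])_k$ with inverse $(\zeta_k)\mapsto\sum_k\pi_k^*\zeta_k$, and your check of this via $\sum_k s_k\pi_k=\id$, $\pi_k s_\ell=\delta_{k\ell}\,\id$ and additivity of $\Ext^1(-,IC^H_{X_0})$ is sound.
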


\begin{proof}
Immediate from the finite direct-sum decomposition of the singular quotient.
\end{proof}

Thus, just as on the perverse side, the finite-node MHM extension theory begins with a formally free ambient nodewise coefficient space. The question is again to identify the geometrically realized subspace determined by the global incidence data.

\subsection{Incidence constraints in divisor gluing data}

The mixed-Hodge-module refinement of the corrected extension is constructed by means of nearby and vanishing cycles together with Saito's divisor-gluing formalism. In the finite-node ordinary-double-point setting, the singular quotient of the corrected object is point-supported and decomposes as
\[
\bigoplus_{k=1}^r i_{k*}\Q^H_{\{p_k\}}(-1).
\]
Accordingly, the global gluing problem is the problem of assembling finitely many local point-supported correction blocks over the common bulk piece $IC^H_{X_0}$.

Let
\[
(\mathcal C,\iota_{\mathcal C}),
\qquad
\mathcal C=\{C_\alpha\}_{\alpha\in A},
\qquad
\iota_{\mathcal C}:\Q^A\to\Q^r,
\]
be a cycle-node incidence datum as in Section~3, with incidence matrix
\[
A=(a_{\alpha k})_{\alpha\in A,\;1\le k\le r}.
\]
We define the corresponding MHM incidence map
\[
\Gamma_{\mathcal C}^H:\Q^A\to E_{\mathrm{node}}^H,
\qquad
e_\alpha\longmapsto \sum_{k=1}^r a_{\alpha k}\,\varepsilon_k^H.
\]

\begin{definition}
\label{def:mhm-block-adapted}
A cycle-node incidence datum $(\mathcal C,\iota_{\mathcal C})$ is called \emph{MHM block-adapted} if, under the identification $E_{\mathrm{node}}^H\cong\Q^r$, the image $\Im(\Gamma_{\mathcal C}^H)$ coincides with the subspace of nodewise coefficient vectors that are constant on the relation blocks
\[
\Sigma=\bigsqcup_{\beta\in B}\Sigma_\beta.
\]
\end{definition}

\begin{definition}
\label{def:mhm-incidence-compatible}
Let
\[
[E^H]\in
\Ext^1_{\MHM(X_0)}
\!\left(
\bigoplus_{k=1}^r i_{k*}\Q^H_{\{p_k\}}(-1),
IC^H_{X_0}
\right)
\cong
\bigoplus_{k=1}^r
\Ext^1_{\MHM(X_0)}
\!\left(
i_{k*}\Q^H_{\{p_k\}}(-1),
IC^H_{X_0}
\right)
\]
be an extension class, and write
\[
[E^H]=(\xi_1^H,\dots,\xi_r^H)
\]
for its nodewise components. We say that $[E^H]$ is \emph{incidence-compatible} if for every pair of incidence-equivalent nodes
\[
p_k\sim_{\mathrm{inc}}p_\ell
\]
one has
\[
\xi_k^H=\xi_\ell^H.
\]
Equivalently, the nodewise class is constant on each relation block.
\end{definition}

The mixed-Hodge-module incidence map is compatible with realization.

\begin{lemma}[Realization compatibility of incidence maps]
\label{lem:mhm-rat-compat}
Under the realization functor
\[
\rat:\MHM(X_0)\to\Perv(X_0;\Q),
\]
the MHM incidence map $\Gamma_{\mathcal C}^H$ is sent to the perverse incidence map $\Gamma_{\mathcal C}$ of Section~4. In particular, there is a commutative diagram
\[
\begin{tikzcd}
\Q^A \arrow[r,"\Gamma_{\mathcal C}^H"] \arrow[dr,"\Gamma_{\mathcal C}"'] &
E_{\mathrm{node}}^H \arrow[d,"\rat_*"] \\
& E_{\mathrm{node}}.
\end{tikzcd}
\]
\end{lemma}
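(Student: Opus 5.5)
The plan is to derive the commutative diagram from two ingredients: the additivity decompositions on both sides, and a normalization of the local generators so that realization sends $\varepsilon_k^H$ to $\varepsilon_k$. The first step is to define $\rat_*$. The functor $\rat$ is exact and additive, so it sends short exact sequences in $\MHM(X_0)$ to short exact sequences in $\Perv(X_0;\Q)$. It therefore induces a $\Q$-linear map on Yoneda $\Ext^1$:
\[
\rat_*:E^H_{\mathrm{node}}\to \Ext^1_{\Perv(X_0;\Q)}\!\left(\rat\textstyle\bigoplus_k i_{k*}\Q^H_{\{p_k\}}(-1),\ \rat IC^H_{X_0}\right).
\]
Using $\rat IC^H_{X_0}=IC_{X_0}$ and $\rat\, i_{k*}\Q^H_{\{p_k\}}(-1)=i_{k*}\Q_{\{p_k\}}$ (the Tate twist is invisible after realization), the target is identified with $E_{\mathrm{node}}$. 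Additivity of $\rat$ shows that $\rat_*$ respects the nodewise decompositions of Lemma~\ref{lem:mhm-additivity} and Lemma~\ref{lem:perv-additivity}. Hence $\rat_*$ is diagonal and sends the $k$-th MHM summand into the $k$-th perverse summand.

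The second step is to fix the generators compatibly, which I expect to be the main obstacle. We need $\rat_*(\varepsilon_k^H)$ to be a nonzero multiple of $\varepsilon_k$, and after rescaling exactly $\varepsilon_k$. For this I would use the realization compatibility $\rat(\mathcal P^H)\cong\mathcal P$ from \cite{RahmanMixedHodgeModules}, applied in a small neighbourhood of $p_k$ via the locality statement of Lemma~\ref{lem:perv-locality} and its MHM analogue. This shows that the local corrected MHM ODP extension realizes to the local corrected perverse ODP extension of Proposition~\ref{prop:local-odp-class}. That perverse extension is non-split, so its class is nonzero, and hence the MHM class is nonzero as well. Since each nodewise summand is one-dimensional, we may define $\varepsilon_k^H$ to be the local corrected MHM class and $\varepsilon_k$ to be its realization. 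Any other choice changes the generators only by a diagonal rescaling, which by Lemma~\ref{lem:basis-independence} does not affect images. The delicate point is the faithfulness of $\rat_*$ on these local summands, that is, that it does not kill the generator. This is exactly what the non-splitness of the realized local ODP extension supplies.

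The final step is a direct computation. By linearity and diagonality of $\rat_*$,
\[
\rat_*\Gamma^H_{\mathcal C}(e_\alpha)=\sum_k a_{\alpha k}\,\rat_*(\varepsilon^H_k)=\sum_k a_{\alpha k}\,\varepsilon_k=\Gamma_{\mathcal C}(e_\alpha).
\]
This gives the commutative triangle, and consequently $\rat_*$ sends $\Im(\Gamma^H_{\mathcal C})$ onto $\Im(\Gamma_{\mathcal C})=E_{\mathrm{geom}}$.
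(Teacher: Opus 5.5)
Your proposal is correct and follows the paper's argument: the paper also relies on the generators being chosen so that $\rat_*(\varepsilon_k^H)=\varepsilon_k$ (asserted there simply ``by construction''), and then concludes $\rat_*\Gamma_{\mathcal C}^H(e_\alpha)=\Gamma_{\mathcal C}(e_\alpha)$ by linearity. Your additional steps---building $\rat_*$ from exactness of $\rat$, checking that it respects the nodewise decompositions, and justifying the compatible normalization through realization of the local corrected extension and its non-splitness---make explicit what the paper leaves implicit.
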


\begin{proof}
By construction, each local generator $\varepsilon_k^H$ realizes to the corresponding perverse local generator $\varepsilon_k$. Hence
\[
\rat_*(\Gamma_{\mathcal C}^H(e_\alpha))
=
\Gamma_{\mathcal C}(e_\alpha).
\]
\end{proof}

The same compatibility holds for the global corrected object.

\begin{lemma}[Realization compatibility of global corrected MHM gluing]
\label{lem:mhm-realization-gluing}
Let $(M'_U,Q_\Sigma^H,u_\Sigma,v_\Sigma)$ be the global divisor-gluing datum on the mixed-Hodge-module side, and let $\mathcal P^H$ be the resulting glued object. Then
\[
\rat(\mathcal P^H)\cong \mathcal P,
\]
where $\mathcal P$ is the corrected perverse object defined by the variation morphism
\[
\var_F:\phi_\pi(F)\to\psi_\pi(F),\qquad F=\Q_X[3].
\]
\end{lemma}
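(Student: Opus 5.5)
The plan is to reduce the statement to Saito's compatibility of the functors $\psi$, $\phi$, $\can$, $\var$ with the realization functor, and to the standard fact that an object glued from divisor-gluing data is determined up to isomorphism by that data.

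First, I would identify the ingredients of the datum $(M'_U,Q_\Sigma^H,u_\Sigma,v_\Sigma)$ with their nearby- and vanishing-cycle origins. By the construction in \cite{RahmanMixedHodgeModules}, $Q_\Sigma^H$ is the unipotent vanishing-cycle module $\phi^H_{\pi,1}(\Q^H_X[3])$, which is supported on $\Sigma$ and equals $\bigoplus_k i_{k*}\Q^H_{\{p_k\}}(-1)$. The object $M'_U$ is the corresponding nearby-cycle module $\psi^H_{\pi,1}(\Q^H_X[3])$. The maps $u_\Sigma,v_\Sigma$ are Saito's $\can^H$ and $\var^H$. Saito's theory gives natural isomorphisms
\[
\rat\circ\psi^H_{\pi,1}\cong {}^p\psi_{\pi,1}\circ\rat,\qquad \rat\circ\phi^H_{\pi,1}\cong {}^p\phi_{\pi,1}\circ\rat,
\]
under which $\rat(\can^H)=\can$ and $\rat(\var^H)=\var$ \cite{SaitoMHM}. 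Since $\rat(\Q^H_X[3])=\Q_X[3]=F$, the realization of the MHM gluing datum is precisely the perverse datum $(\psi_\pi F,\phi_\pi F,\can_F,\var_F)$. This uses that, for ordinary double points, the nearby cycles are unipotent: the local monodromy on $\tilde H^3(S^3)$ is $T=\id$ in odd fibre dimension up to sign. If a sign appears, one passes to the unipotent part, and it is this part that enters $\mathcal P$.

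Second, I would use that $\mathcal P^H$ is defined as $\Cone(v_\Sigma)[-1]$, or equivalently as the kernel or cokernel term of the gluing, in the abelian category $\MHM(X_0)$. Since $\rat$ is exact and faithful, and it extends to a triangulated functor $D^b\MHM\to D^b_c$ commuting with cones up to non-unique isomorphism, we get
\[
\rat(\mathcal P^H)\cong\Cone(\rat v_\Sigma)[-1]\cong\Cone(\var_F)[-1]=\mathcal P.
\]
The non-uniqueness of cones is harmless here. Because $v_\Sigma$ lands in the perverse heart and $\mathcal P$ is perverse by Proposition~\ref{prop:corrected-basic}, the cone is computed inside the abelian category as a kernel/cokernel complex, and this is functorial. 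As a cross-check, I would verify compatibility with the two exact sequences: $\rat(IC^H_{X_0})=IC_{X_0}$ and $\rat(\Q^H(-1))=\Q$.

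The main obstacle is the first step. One has to pin down exactly which normalization (shift, Tate twist, unipotent part) the global gluing datum of \cite{RahmanMixedHodgeModules} uses, and check that $\rat$ sends $v_\Sigma$ to $\var_F$ itself rather than to a rescaled variant such as $\var$ up to the factor $-1$ or $2\pi i$. I would handle this by working locally at each node via Lemma~\ref{lem:perv-locality}: a scalar discrepancy there changes the morphism by an automorphism of the rank-one target. Such an automorphism does not change the isomorphism class of the cone, so the isomorphism $\rat(\mathcal P^H)\cong\mathcal P$ survives any such normalization ambiguity.
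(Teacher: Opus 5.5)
Your proposal takes essentially the same approach as the paper's proof. Realization is exact (and triangulated on $D^b\MHM(X_0)$) and commutes with $\psi$, $\phi$, $\can$ and $\var$, so the realized gluing datum is the perverse datum $(\psi_\pi F,\phi_\pi F,\can_F,\var_F)$ and the glued object realizes to $\Cone(\var_F)[-1]=\mathcal P$; your version just makes the cone step and the normalization check explicit, with a few small tidy-ups: the cone step needs only that a triangulated functor sends distinguished triangles to distinguished triangles (no appeal to perversity of $\mathcal P$), the rank-one object in your scalar argument is the source $\phi_\pi F$ at each node rather than the target, and the unipotent-part hedge can be dropped because for four variables $T=\id$ on the vanishing line and $T$ is unipotent on $\psi_\pi F$, so $\psi_\pi F=\psi_{\pi,1}F$ and $\phi_\pi F=\phi_{\pi,1}F$.
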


\begin{proof}
The realization functor is exact and compatible with nearby cycles, vanishing cycles, canonical maps, and variation morphisms. Therefore the realization of the MHM gluing datum is exactly the corresponding perverse gluing datum, and the glued perverse object is $\mathcal P$.
\end{proof}

\subsection{Main theorem: geometric MHM extension subspace}

We may now define the MHM analogue of the perverse geometric extension subspace.

\begin{definition}
\label{def:egeom-h}
The geometrically realized MHM extension subspace is
\[
E_{\mathrm{geom}}^H:=\Im(\Gamma_{\mathcal C}^H)\subseteq E_{\mathrm{node}}^H.
\]
\end{definition}

Under the MHM block-adapted hypothesis, incidence-compatibility is equivalent to belonging to the geometric image.

\begin{proposition}
\label{prop:mhm-incidence-compatible-iff-geom}
Assume that $(\mathcal C,\iota_{\mathcal C})$ is MHM block-adapted. Then a mixed-Hodge-module extension class
\[
[E^H]\in E_{\mathrm{node}}^H
\]
is incidence-compatible if and only if
\[
[E^H]\in E_{\mathrm{geom}}^H.
\]
\end{proposition}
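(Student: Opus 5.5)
The argument is the mixed-Hodge-module analogue of Proposition~\ref{prop:incidence-compatible-iff-geom}, and I would follow the perverse proof essentially verbatim, keeping track of the coordinates being used. First I fix the identification $E_{\mathrm{node}}^H\cong\Q^r$ through the chosen local generators $\varepsilon_k^H$, using Lemma~\ref{lem:mhm-additivity} and the one-dimensionality of each nodewise summand. Under this identification a class $[E^H]$ corresponds to its nodewise components $(\xi_1^H,\dots,\xi_r^H)$ as in Lemma~\ref{lem:mhm-nodewise}. Each $\xi_k^H$ is then a scalar $c_k$ times $\varepsilon_k^H$.

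The first step is to make the condition $\xi_k^H=\xi_\ell^H$ of Definition~\ref{def:mhm-incidence-compatible} meaningful. The two classes live in different one-dimensional summands, so equality has to be read through the chosen generators, that is, as $c_k=c_\ell$. This is the same convention the perverse Definition~\ref{def:incidence-compatible-extension} implicitly uses. Lemma~\ref{lem:basis-independence}, transported verbatim to the MHM side, says that the resulting condition is well defined up to the canonical diagonal rescaling. With this reading, incidence-compatibility of $[E^H]$ says exactly that its coefficient vector $(c_1,\dots,c_r)$ is constant on each relation block $\Sigma_\beta$ of Definition~\ref{def:relation-block}.

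The second step is to invoke the MHM block-adapted hypothesis, Definition~\ref{def:mhm-block-adapted}. It states that $E_{\mathrm{geom}}^H=\Im(\Gamma_{\mathcal C}^H)$, read in the same coordinates, is precisely the subspace of block-constant coefficient vectors. Combining the two steps gives the equivalence in both directions at once: $[E^H]$ is incidence-compatible if and only if its coefficient vector is block-constant, which holds if and only if $[E^H]\in E_{\mathrm{geom}}^H$.

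The only real subtlety, and the step I expect to need care, is the coordinate convention in the first step. It must be the same identification $E_{\mathrm{node}}^H\cong\Q^r$ that is used to define $\Gamma_{\mathcal C}^H$ and in Definition~\ref{def:mhm-block-adapted}; otherwise a diagonal rescaling could break block-constancy. I would address this by fixing the generators $\varepsilon_k^H$ once and for all before both definitions are applied, and noting that the statement is insensitive to simultaneous rescaling.
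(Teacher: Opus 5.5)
Your proposal is correct and follows the paper's proof, which simply combines Definition~\ref{def:mhm-block-adapted} (the image $\Im(\Gamma_{\mathcal C}^H)$ is the block-constant subspace) with Definition~\ref{def:mhm-incidence-compatible} (incidence-compatibility means block-constancy). Your extra step of reading $\xi_k^H=\xi_\ell^H$ as equality of coefficients, with respect to the same fixed generators $\varepsilon_k^H$ used to define $\Gamma_{\mathcal C}^H$, makes explicit a convention the paper leaves implicit, but it does not change the argument.
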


\begin{proof}
By Definition~\ref{def:mhm-block-adapted}, $E_{\mathrm{geom}}^H=\Im(\Gamma_{\mathcal C}^H)$ is exactly the block-constant coefficient subspace. By Definition~\ref{def:mhm-incidence-compatible}, this is equivalent to incidence-compatibility.
\end{proof}

On the perverse side, Section~4 shows that geometric admissibility of the incidence datum forces incidence compatibility of the corrected extension class. The mixed-Hodge-module theorem below is the corresponding lift through realization rather than an independent internal divisor-gluing derivation.

\begin{lemma}[Corrected MHM extensions are incidence-compatible]
\label{lem:mhm-corrected-compatible}
Assume that the cycle-node incidence datum $(\mathcal C,\iota_{\mathcal C})$ is geometrically admissible and MHM block-adapted. Then the corrected finite-node MHM extension class
\[
[\mathcal P^H]\in E_{\mathrm{node}}^H
\]
is incidence-compatible.
\end{lemma}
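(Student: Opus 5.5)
The plan is to transfer the perverse-side result, Lemma~\ref{lem:perv-corrected-compatible}, through the realization functor, in the way the section preamble suggests. The argument would rest on three inputs. The first is the nodewise decomposition of $[\mathcal P^H]$ from Lemma~\ref{lem:mhm-nodewise}. The second is the realization compatibility of the global gluing, Lemma~\ref{lem:mhm-realization-gluing}, which gives $\rat(\mathcal P^H)\cong\mathcal P$. The third is the realization compatibility of the local generators, Lemma~\ref{lem:mhm-rat-compat}, which gives $\rat_*(\varepsilon_k^H)=\varepsilon_k$.

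\textbf{Step 1: nodewise coordinates.} Since $\rat$ is exact and faithful, it induces a map on Yoneda $\Ext^1$,
\[
\rat_*:E^H_{\mathrm{node}}\to E_{\mathrm{node}}.
\]
We also have $\rat(i_{k*}\Q^H_{\{p_k\}}(-1))=i_{k*}\Q_{\{p_k\}}$, because the Tate twist is invisible after realization. The map $\rat_*$ respects the direct-sum splittings of Lemmas~\ref{lem:perv-additivity} and~\ref{lem:mhm-additivity}, since both splittings come from pulling back along the summand inclusions, and $\rat$ preserves these. Hence $\rat_*(\xi_k^H)=\xi_k$ for each $k$. By Lemma~\ref{lem:mhm-realization-gluing}, $\rat_*[\mathcal P^H]=[\mathcal P]$ as extensions. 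This uses that the MHM short exact sequence realizes to the perverse one, which holds by construction of $\mathcal P^H$.

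\textbf{Step 2: pass to scalars.} Write $\xi_k^H=c_k\varepsilon_k^H$ and $\xi_k=c_k'\varepsilon_k$. By Lemma~\ref{lem:mhm-rat-compat}, $c_k=c_k'$.

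\textbf{Step 3: conclude.} Geometric admissibility and Corollary~\ref{cor:blockwise} give $\xi_k=\xi_\ell$ whenever $p_k\sim_{\mathrm{inc}}p_\ell$. Read as scalars with respect to generators that are identified by parallel transport (Definition~\ref{def:compatible-trivialization}), this says $c_k'=c_\ell'$. Therefore $c_k=c_\ell$, and so $\xi_k^H=\xi_\ell^H$, after choosing the MHM generators $\varepsilon_k^H$ compatibly, that is, as lifts of the transported perverse generators. Given such a choice, this is exactly Definition~\ref{def:mhm-incidence-compatible}. MHM block-adaptedness is not needed for compatibility itself. It is only needed afterwards, through Proposition~\ref{prop:mhm-incidence-compatible-iff-geom}, to place $[\mathcal P^H]$ in $E^H_{\mathrm{geom}}$.

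\textbf{Main obstacle.} The delicate point is Step 2, together with the identification of the MHM generators across nodes. One needs each MHM summand to be genuinely one-dimensional with $\rat_*$ injective on it; otherwise equal realizations would not force equal MHM classes. To handle this, I would first try to show that $\rat_*$ is nonzero on each summand. This holds because $\varepsilon_k^H$ realizes to the non-split class $\varepsilon_k$ of Proposition~\ref{prop:local-odp-class}. Combined with the one-dimensionality asserted in the setup of $E^H_{\mathrm{node}}$, this makes $\rat_*$ an isomorphism summandwise. If one-dimensionality is in doubt, the fallback is to rerun the propagation argument of Proposition~\ref{prop:geom-prop} internally. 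This works because the unipotent nearby-cycle sector underlies a rank-one admissible variation of mixed Hodge structure on $C_\alpha^\circ$, and the MHM variation morphisms specialize exactly as in the perverse case.
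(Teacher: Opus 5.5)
Your proposal is correct and follows essentially the same route as the paper. Both arguments realize $[\mathcal P^H]$ to $[\mathcal P]$ via Lemma~\ref{lem:mhm-realization-gluing}, invoke the perverse-side incidence compatibility from Section~4 under geometric admissibility, and pull the blockwise equalities back along the nodewise decomposition. You are more explicit than the paper on two points it leaves tacit: that $\rat_*$ is injective on each one-dimensional summand because $\varepsilon_k^H\mapsto\varepsilon_k\neq 0$, and that the MHM generators must be chosen as lifts of the transported perverse generators. You are also right that MHM block-adaptedness plays no role in this lemma.
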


\begin{proof}
By Lemma~\ref{lem:mhm-nodewise}, the class \([\mathcal P^H]\) determines a tuple of nodewise components. By Lemma~\ref{lem:mhm-realization-gluing}, its realization is the corrected perverse extension \(\mathcal P\). Since the incidence datum is geometrically admissible, Section~4 shows that the realized class \([\mathcal P]\) is incidence-compatible. The realization functor preserves the nodewise decomposition and the identification of incidence-equivalent blocks, so the nodewise MHM tuple is likewise constant on each relation block. Hence \([\mathcal P^H]\) is incidence-compatible.
\end{proof}

We now obtain the MHM counterpart of the perverse relation-control theorem.

\begin{theorem}[Geometric MHM extension subspace]
\label{thm:mhm-geom}
Let $\pi:X\to\Delta$ be a projective finite-node conifold degeneration with node set $\Sigma=\{p_1,\dots,p_r\}\subset X_0$, and let
\[
0\to IC^H_{X_0}\to \mathcal P^H\to \bigoplus_{k=1}^r i_{k*}\Q^H_{\{p_k\}}(-1)\to 0
\]
be its corrected mixed-Hodge-module extension. Suppose that $\Sigma$ is equipped with a cycle-node incidence datum $(\mathcal C,\iota_{\mathcal C})$ that is geometrically admissible and MHM block-adapted. Then:
\begin{enumerate}
    \item the extension class
    \[
    [\mathcal P^H]\in E_{\mathrm{node}}^H
    \]
    belongs to the geometrically realized subspace
    \[
    E_{\mathrm{geom}}^H=\Im(\Gamma_{\mathcal C}^H);
    \]
    \item the realization functor sends $E_{\mathrm{geom}}^H$ to the perverse-side geometric subspace:
    \[
    \rat_*(E_{\mathrm{geom}}^H)=E_{\mathrm{geom}};
    \]
    \item one has
    \[
    \dim_{\Q}E_{\mathrm{geom}}^H
    =
    \rank(\Gamma_{\mathcal C}^H)
    \le |A|.
    \]
\end{enumerate}
\end{theorem}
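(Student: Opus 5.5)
The plan is to prove the three parts of Theorem~\ref{thm:mhm-geom} in the order (1), (2), (3), each time reducing to a result already established on the mixed-Hodge-module side or transported from Section~4.

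\textbf{Part (1).} This follows the perverse argument of Theorem~\ref{thm:perv-relation-controlled} verbatim. By Lemma~\ref{lem:mhm-corrected-compatible}, geometric admissibility and MHM block-adaptedness make the class $[\mathcal P^H]$ incidence-compatible in the sense of Definition~\ref{def:mhm-incidence-compatible}. Proposition~\ref{prop:mhm-incidence-compatible-iff-geom} then places $[\mathcal P^H]$ in $E^H_{\mathrm{geom}}=\Im(\Gamma^H_{\mathcal C})$. No new input is needed here.

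\textbf{Part (2).} Here I would use the commutative triangle of Lemma~\ref{lem:mhm-rat-compat}, namely $\rat_*\circ\Gamma^H_{\mathcal C}=\Gamma_{\mathcal C}$. Taking images gives
\[
\rat_*(E^H_{\mathrm{geom}})=\rat_*(\Im\Gamma^H_{\mathcal C})=\Im(\rat_*\circ\Gamma^H_{\mathcal C})=\Im(\Gamma_{\mathcal C})=E_{\mathrm{geom}},
\]
with the last equality by Definition~\ref{def:egeom}. This yields equality, not merely inclusion, because the triangle commutes on the whole of $\Q^A$. I would also note that this is independent of the choice of local generators. By the MHM analogue of Lemma~\ref{lem:basis-independence}, rescaling the $\varepsilon^H_k$ changes both images only by compatible diagonal rescalings, provided the generators are chosen so that $\rat_*\varepsilon^H_k=\varepsilon_k$, as in the proof of Lemma~\ref{lem:mhm-rat-compat}.

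\textbf{Part (3).} The equality $\dim_\Q E^H_{\mathrm{geom}}=\rank(\Gamma^H_{\mathcal C})$ is the definition of rank applied to the image. The bound by $|A|$ holds because the domain is $\Q^A$. To identify this rank with $\rank(\iota_{\mathcal C})$, the analogue of Lemma~\ref{lem:gamma-equals-iota}, I would argue as follows: in the basis $\{\varepsilon^H_k\}$, $\Gamma^H_{\mathcal C}(e_\alpha)$ has coordinate vector $(a_{\alpha1},\dots,a_{\alpha r})$. This requires the $\varepsilon^H_k$ to be linearly independent in $E^H_{\mathrm{node}}$, which holds because they lie in distinct summands of the decomposition in Lemma~\ref{lem:mhm-additivity}.

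\textbf{Main obstacle.} The delicate point is the tacit assumption that each nodewise summand $\Ext^1_{\MHM(X_0)}(i_{k*}\Q^H_{\{p_k\}}(-1),IC^H_{X_0})$ is one-dimensional over $\Q$ and that $\rat_*$ identifies it with the corresponding perverse summand. In $\MHM$ these groups are computed by mixed Hodge structures of the form $\Ext^1_{\mathrm{MHS}}$ and $\Hom_{\mathrm{MHS}}$ of local cohomology at $p_k$, and $\rat_*$ need not be injective in general. For (2) and (3), however, only the facts stated in the paper are required: the chosen generators realize to $\varepsilon_k$, and $E^H_{\mathrm{node}}\cong\Q^r$. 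I would therefore rely on those and flag that any injectivity of $\rat_*$ beyond $E^H_{\mathrm{geom}}$ is not claimed. If a direct check were wanted, I would first verify that $\rat_*$ restricted to $E^H_{\mathrm{geom}}$ is injective. This follows by comparing coordinates: $\rat_*$ sends $\sum c_k\varepsilon^H_k$ to $\sum c_k\varepsilon_k$, and the $\varepsilon_k$ are independent by Lemma~\ref{lem:perv-additivity}. Consequently $\dim E^H_{\mathrm{geom}}=\dim E_{\mathrm{geom}}=\rank(\iota_{\mathcal C})$, consistent with Theorem~\ref{thm:perv-relation-controlled}.
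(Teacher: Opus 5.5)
Your proposal is correct relative to the paper and follows its proof step for step: part (1) from Lemma~\ref{lem:mhm-corrected-compatible} and Proposition~\ref{prop:mhm-incidence-compatible-iff-geom}, part (2) by taking images in the commutative triangle of Lemma~\ref{lem:mhm-rat-compat}, and part (3) from the definition of $E^H_{\mathrm{geom}}$ as the image of a linear map out of $\Q^A$. The one-dimensionality/realization caveat you raise is equally tacit in the paper, and it is needed for part (1) too: Lemma~\ref{lem:mhm-corrected-compatible} pulls block-constancy back from $[\mathcal P]$ to $[\mathcal P^H]$, which requires $\rat_*$ to be injective on all of $E^H_{\mathrm{node}}$, not only on $E^H_{\mathrm{geom}}$; moreover the caveat is not a formality, since with Saito's standard normalization $IC^H_{X_0}$ is pure of weight $3$ and $i_{k*}\Q^H_{\{p_k\}}(-1)$ is pure of weight $2$, so strictness of the weight filtration splits every such nodewise extension.
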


\begin{proof}
By Lemma~\ref{lem:mhm-corrected-compatible}, the corrected mixed-Hodge-module extension class \([\mathcal P^H]\) is incidence-compatible. Since the incidence datum is MHM block-adapted, Proposition~\ref{prop:mhm-incidence-compatible-iff-geom} implies that
\[
[\mathcal P^H]\in E_{\mathrm{geom}}^H.
\]
This proves (1). Statement (2) follows from Lemma~\ref{lem:mhm-rat-compat}, and (3) is immediate from the definition of \(E_{\mathrm{geom}}^H\) as the image of a linear map from \(\Q^A\).
\end{proof}

The theorem shows that the relation law identified on the perverse side lifts compatibly through realization. The corrected finite-node extension is therefore constrained not only in the decategorified category $\Perv(X_0;\Q)$, but also at the level of mixed-Hodge-module gluing data.

\subsection{Weights, Tate twists, and vanishing-sector comparison}

We now record the Hodge-theoretic form of the preceding relation law. The singular quotient of the corrected mixed Hodge module is
\[
\bigoplus_{k=1}^r i_{k*}\Q^H_{\{p_k\}}(-1),
\]
so each node contributes a point-supported pure Tate piece normalized by the twist $(-1)$. The local rank-one correction blocks are therefore not merely formal summands: they come with a canonical mixed-Hodge-theoretic normalization.

The cycle-node incidence constraints do not alter the local Tate structure of these summands. Rather, they constrain the way in which the local Tate-twisted pieces may be assembled into a global extension. In particular, the cycle relations act on the gluing coefficients, not on the intrinsic local Hodge-theoretic type of the nodewise quotients. Thus the passage from the free nodewise space $E_{\mathrm{node}}^H$ to the geometric subspace $E_{\mathrm{geom}}^H$ preserves the local Tate-twisted form of the singular quotient while cutting down the admissible global extension directions.

This is compatible with the interpretation of the point-supported quotient as the finite local vanishing sector in nearby-cycle hypercohomology. Indeed, the finite direct sum
\[
\bigoplus_{k=1}^r i_{k*}\Q^H_{\{p_k\}}(-1)
\]
realizes the local vanishing contribution to the limiting mixed Hodge structure, while the incidence law controls which global linear combinations of these local vanishing pieces are realized by the degeneration. Hence the relation-controlled gluing law may be interpreted as a constraint on the global organization of the vanishing sector itself.

More concretely, if
\[
\delta_1,\dots,\delta_r
\]
denote the local rank-one vanishing directions in the limiting mixed Hodge structure, then the cycle-node incidence law singles out the span of the combinations
\[
\sum_{k=1}^r a_{\alpha k}\,\delta_k,
\qquad \alpha\in A,
\]
as the geometrically admissible vanishing-sector directions. This is the MHM/LMHS counterpart of the perverse-side statement that the corrected extension coefficients lie in $E_{\mathrm{geom}}\subseteq E_{\mathrm{node}}$.

\begin{remark}[Vanishing-sector compatibility]
\label{rem:vanishing-sector}
At the level of nearby-cycle hypercohomology, the point-supported quotient of \(\mathcal P^H\) contributes local vanishing directions
\[
\delta_1,\dots,\delta_r
\]
to the limiting mixed Hodge structure. The cycle-node incidence law then singles out the subspace spanned by the incidence-weighted combinations
\[
\sum_{k=1}^r a_{\alpha k}\,\delta_k,\qquad \alpha\in A,
\]
as the geometrically admissible vanishing directions. This is the mixed-Hodge-theoretic shadow of the corrected-extension constraint
\[
[\mathcal P^H]\in E_{\mathrm{geom}}^H.
\]
\end{remark}

\subsection{Rigidity and uniqueness}

We close with a simple rigidity observation. When the realized geometric MHM extension space is one-dimensional, the corresponding incidence-compatible corrected extension class is unique up to nonzero scalar.

\begin{corollary}[Rigidity relative to incidence data]
\label{cor:mhm-rigidity}
Fix a cycle-node incidence datum $(\mathcal C,\iota_{\mathcal C})$ that is geometrically admissible and MHM block-adapted. Then any two corrected mixed-Hodge-module extensions
\[
0\to IC^H_{X_0}\to \mathcal P_1^H\to \bigoplus_{k=1}^r i_{k*}\Q^H_{\{p_k\}}(-1)\to 0,
\]
\[
0\to IC^H_{X_0}\to \mathcal P_2^H\to \bigoplus_{k=1}^r i_{k*}\Q^H_{\{p_k\}}(-1)\to 0
\]
compatible with the given incidence datum determine classes
\[
[\mathcal P_1^H],[\mathcal P_2^H]\in E_{\mathrm{geom}}^H.
\]
In particular, if
\[
\dim_{\Q}E_{\mathrm{geom}}^H=1,
\]
then the corrected mixed-Hodge-module extension compatible with the incidence datum is unique up to nonzero scalar in extension class.
\end{corollary}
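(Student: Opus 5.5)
The statement to be proved is Corollary~\ref{cor:mhm-rigidity}. The plan is to reduce it to Theorem~\ref{thm:mhm-geom} together with elementary linear algebra on $E^H_{\mathrm{node}}$.

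\emph{Step 1: the classes lie in $E^H_{\mathrm{geom}}$.} I read "compatible with the given incidence datum" as meaning that each $[\mathcal P_i^H]$ is incidence-compatible in the sense of Definition~\ref{def:mhm-incidence-compatible}.
\begin{itemize}
\item If $\mathcal P_i^H$ is the corrected extension attached to a geometrically admissible datum, Lemma~\ref{lem:mhm-corrected-compatible} supplies this compatibility directly.
\item Otherwise it holds by assumption.
\end{itemize}
In either case, the MHM block-adapted hypothesis and Proposition~\ref{prop:mhm-incidence-compatible-iff-geom} give $[\mathcal P_i^H]\in E^H_{\mathrm{geom}}=\Im(\Gamma^H_{\mathcal C})$. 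This is the same argument as part~(1) of Theorem~\ref{thm:mhm-geom}, applied to each $i$ separately.

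\emph{Step 2: the one-dimensional case.} Suppose $\dim_\Q E^H_{\mathrm{geom}}=1$. Then there is $\lambda\in\Q$ with $[\mathcal P_2^H]=\lambda[\mathcal P_1^H]$, provided $[\mathcal P_1^H]\neq 0$. To see $\lambda\neq 0$, it suffices that both classes are nonzero.

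Nonvanishing is checked after realization, as follows.
\begin{itemize}
\item By Lemma~\ref{lem:mhm-realization-gluing} and the compatibility in Lemma~\ref{lem:mhm-rat-compat}, $\rat_*[\mathcal P_i^H]$ is the perverse corrected class.
\item The nodewise components of the perverse corrected class are nonzero multiples of the nonzero local ODP classes $\varepsilon_k$ from Proposition~\ref{prop:local-odp-class}. This rests on the local extension being non-split, with the scalar of $\var_{\mathcal F,k}$ nonzero (Proposition~\ref{prop:local-det}).
\item Hence $\rat_*[\mathcal P_i^H]\neq 0$, and so $[\mathcal P_i^H]\neq 0$.
\end{itemize}
Consequently the two extension classes agree up to a nonzero scalar. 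That is the claimed uniqueness "in extension class"; no isomorphism of middle objects beyond the Yoneda class is asserted.

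\emph{Main obstacle.} Step 2 depends on the nonvanishing of the classes. If $\mathcal P_i^H$ is only assumed incidence-compatible rather than corrected, then a split extension (class $0$) would break the "nonzero scalar" conclusion. I would therefore make explicit that "corrected" includes non-splitness at each node. That property then transfers from the perverse side via faithfulness of $\rat$ on the relevant $\Ext^1$ classes, or more simply via nonvanishing of the realized class.
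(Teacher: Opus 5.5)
Your proposal is correct and follows the paper's proof: both classes land in $E^H_{\mathrm{geom}}$ by the argument of Theorem~\ref{thm:mhm-geom}(1), namely incidence-compatibility combined with Proposition~\ref{prop:mhm-incidence-compatible-iff-geom}, and the one-dimensional case is then linear algebra. The only addition is your explicit nonvanishing check via $\rat_*$, which the paper leaves tacit when it speaks of ``any two nonzero classes''; for that check only the trivial direction is needed (a split MHM extension realizes to a split perverse one), not faithfulness of $\rat$ on $\Ext^1$, which fails in general for mixed Hodge modules.
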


\begin{proof}
The first statement is immediate from Theorem~\ref{thm:mhm-geom}. If \(\dim_{\Q}E_{\mathrm{geom}}^H=1\), then any two nonzero classes in \(E_{\mathrm{geom}}^H\) differ by multiplication by a nonzero scalar.
\end{proof}

\section{Quiver shadow: block structure and consequences}

In the motivating example, the passage from \eqref{eq:intro-free-nodewise} to \eqref{eq:intro-geometric-image} may already be viewed as a first block decomposition of the nodewise data. The purpose of the present section is to record the corresponding consequence for the finite-node quiver shadow. No new mathematical argument is required beyond the decategorification passage: the results of Sections~4 and~6 imply that the quiver shadow inherits the same relation-controlled block structure.

\subsection{Nodewise quiver shadow revisited}

We briefly recall the finite-node quiver picture established in \cite{RahmanMultiNodeSchoberPaper}. Let
\[
\pi:X\to\Delta
\]
be a projective finite-node conifold degeneration with node set
\[
\Sigma=\{p_1,\dots,p_r\}\subset X_0.
\]
The finite-node schober formalism associates to this degeneration a bulk/localized-sector architecture consisting of a bulk category together with one localized categorical sector at each node. After decategorification, this yields a quiver shadow with one local vertex per node and a bulk vertex encoding the common nonsingular sector.

Thus the basic philosophy of the earlier paper is
\[
\text{one node}
\;\longleftrightarrow\;
\text{one localized categorical sector}
\;\longleftrightarrow\;
\text{one local quiver vertex}.
\]
In particular, before imposing any global relation law, the natural quiver shadow has the form
\[
Q_{\mathrm{node}}
=
\bigl(
V_{\mathrm{bulk}},V_1,\dots,V_r;
\ \text{coupling arrows}
\bigr),
\]
where \(V_{\mathrm{bulk}}\) denotes the bulk vertex and \(V_k\) the local vertex attached to the node \(p_k\).

This nodewise quiver is the categorical analogue of the free nodewise extension spaces introduced earlier on the perverse and mixed-Hodge-module sides. Just as
\[
E_{\mathrm{node}}\cong \Q^r,
\qquad
E_{\mathrm{node}}^{H}\cong \Q^r
\]
record one formal local direction per node, the quiver shadow \(Q_{\mathrm{node}}\) records one formal localized sector per node. The point of the present section is that this nodewise quiver is again too large to reflect the actual global geometry whenever the nodes are related by cycle-node incidence data.

\subsection{Relation blocks}

Let
\[
(\mathcal C,\iota_{\mathcal C})
\]
be a cycle-node incidence datum, with relation-block decomposition
\[
\Sigma=\bigsqcup_{\beta\in B}\Sigma_\beta
\]
as in Section~3. Recall that two nodes lie in the same relation block precisely when they are incidence-equivalent, that is, when they are seen identically by the chosen global cycle data.

The key point for the quiver shadow is that the nodewise vertices remain distinct as local sectors, but their \emph{globally admissible couplings} are no longer independent. Instead, the cycle-node incidence law groups together those local vertices that belong to the same relation block. This leads to the following definition.

\begin{definition}
\label{def:block-quiver}
The \emph{block quiver} associated with the cycle-node incidence datum is the quiver
\[
Q_{\mathrm{block}}
=
\bigl(
V_{\mathrm{bulk}},V_{\Sigma_\beta}\ (\beta\in B);
\ \text{block coupling arrows}
\bigr),
\]
whose local vertices are indexed by relation blocks rather than by individual nodes.
\end{definition}

Equivalently, \(Q_{\mathrm{block}}\) is obtained from the free nodewise quiver \(Q_{\mathrm{node}}\) by identifying those local coupling directions that are constant on the relation blocks. Thus the nodewise quiver records all local sectors, whereas the block quiver records the globally admissible sector couplings after imposing the incidence law.

At the linear-algebraic level, this is the quiver-theoretic shadow of the passage
\[
V_{\mathrm{node}}=\Q^r
\longrightarrow
V_{\mathrm{geom}}=\operatorname{Im}(\iota_{\mathcal C}),
\]
and similarly of
\[
E_{\mathrm{node}}
\longrightarrow
E_{\mathrm{geom}},
\qquad
E_{\mathrm{node}}^{H}
\longrightarrow
E_{\mathrm{geom}}^{H}.
\]
The relation blocks therefore provide the correct indexing set for the global quiver couplings.

\begin{definition}
\label{def:block-compatible-quiver}
A \emph{block-compatible quiver coefficient system} is a choice of nodewise coefficients
\[
(c_1,\dots,c_r)\in \Q^r
\]
such that
\[
c_k=c_\ell
\qquad
\text{whenever }p_k\sim_{\mathrm{inc}}p_\ell.
\]
Equivalently, a block-compatible coefficient system is one that factors through the quotient of the node set by the relation-block decomposition.
\end{definition}

Thus block-compatible coefficients are exactly the quiver-theoretic analogue of incidence-compatible extension coefficients on the perverse and mixed-Hodge-module sides.

\subsection{Block-quiver consequence}

We now record the quiver-theoretic consequence of the relation law established in the previous sections.

\begin{corollary}[Block-quiver shadow]
\label{cor:block-quiver-shadow}
Let
\[
\pi:X\to\Delta
\]
be a projective finite-node conifold degeneration with node set
\[
\Sigma=\{p_1,\dots,p_r\}\subset X_0,
\]
equipped with cycle-node incidence datum \((\mathcal C,\iota_{\mathcal C})\), and let
\[
\Sigma=\bigsqcup_{\beta\in B}\Sigma_\beta
\]
be the corresponding relation-block decomposition. Then the quiver shadow of the finite-node schober datum admits a canonical block form
\[
Q_{\mathrm{block}},
\]
with the following properties:
\begin{enumerate}
    \item locally, one retains one localized categorical sector per node;
    \item globally, the admissible coupling data is block-compatible, i.e. constant on relation blocks;
    \item the space of admissible quiver couplings factors through the geometric coefficient space
    \[
    V_{\mathrm{geom}}\cong \operatorname{Im}(\iota_{\mathcal C});
    \]
    \item equivalently, the decategorified quiver coupling data factors through the same relation-controlled subspace that governs the corrected perverse extension and its mixed-Hodge-module refinement.
\end{enumerate}
\end{corollary}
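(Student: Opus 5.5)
The plan is to obtain Corollary~\ref{cor:block-quiver-shadow} purely from the decategorification passage, using the imported identification $Sh(S_\Sigma)\cong\mathcal P$ from \cite{RahmanMultiNodeSchoberPaper}. No new geometric input is needed. First I would record what the quiver shadow consists of: the bulk vertex $V_{\mathrm{bulk}}$ recovers $IC_{X_0}$, and each local vertex $V_k$ recovers the summand $i_{k*}\Q_{\{p_k\}}$. The coupling arrows between $V_{\mathrm{bulk}}$ and $V_k$ encode the nodewise gluing of $\mathcal P$. Concretely, I would define the coupling coefficient $c_k$ of the shadow to be the coordinate of $[\mathcal P]$ along the generator $\varepsilon_k$ of Proposition~\ref{prop:local-odp-class}, using Lemmas~\ref{lem:perv-additivity} and~\ref{lem:perv-locality}. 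This gives a linear map from quiver coupling data to $E_{\mathrm{node}}\cong\Q^r$. Item (1) is then immediate, since this passage never merges vertices: each $V_k$ stays a separate vertex.

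Second, I would prove item (2) by transport. By Lemma~\ref{lem:perv-corrected-compatible} and Corollary~\ref{cor:blockwise}, the coordinates $\xi_k$ of $[\mathcal P]$ agree on incidence-equivalent nodes. Since $c_k=\xi_k$ by construction, the coefficient system $(c_1,\dots,c_r)$ is block-compatible in the sense of Definition~\ref{def:block-compatible-quiver}. This is exactly what lets me define $Q_{\mathrm{block}}$ as in Definition~\ref{def:block-quiver}. The coupling to $V_{\Sigma_\beta}$ is the common value of the $c_k$ with $p_k\in\Sigma_\beta$. This choice is canonical, because the relation-block decomposition depends only on the incidence matrix, and by Lemma~\ref{lem:basis-independence} the generators matter only up to diagonal rescaling.

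Third, for items (3) and (4) I would invoke block-adaptedness through Proposition~\ref{prop:incidence-compatible-iff-geom}. Block-compatible vectors are exactly $\Im(\iota_{\mathcal C})=V_{\mathrm{geom}}$, and by Lemma~\ref{lem:gamma-equals-iota} this space corresponds to $E_{\mathrm{geom}}$. So the coupling data factors through $V_{\mathrm{geom}}$. Combining this with Theorem~\ref{thm:mhm-geom}(2) and Lemma~\ref{lem:mhm-rat-compat} shows that the same subspace governs the MHM refinement, since $\rat_*(E^H_{\mathrm{geom}})=E_{\mathrm{geom}}$.

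The main obstacle is a mismatch between the hypotheses. As stated, the corollary assumes only a cycle-node incidence datum, whereas the perverse-side equality of coefficients needs geometric admissibility, and the identification with $V_{\mathrm{geom}}$ needs block-adaptedness. I would first try to make explicit that these standing hypotheses from Sections~4 and~6 are in force. Failing that, I would prove (1)--(2) unconditionally only in the weak sense that the block quiver is well defined as an indexing device for any incidence datum. The substantive claims, block-constancy and factoring through $V_{\mathrm{geom}}$, would then be stated under admissibility and block-adaptedness. A secondary point to check is that the coupling coefficients really are the Ext-coordinates of $[\mathcal P]$. That is the content of $Sh(S_\Sigma)\cong\mathcal P$ applied to extension classes rather than merely to objects, and I would take it as part of the imported schober package.
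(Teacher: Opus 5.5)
This is essentially the paper's argument, and your flag about hypotheses is correct. The paper gets (1) from the one-sector-per-node schober formalism. It gets (2) by transporting the Section~4/Section~6 incidence-compatibility of $[\mathcal P]$ and $[\mathcal P^H]$ to the coupling coefficients through decategorification, and treats (3)--(4) as a linear-algebraic restatement. That argument tacitly needs geometric admissibility for (2) and block-adaptedness for (3); in the overlapping-cycle configuration of Section~8, the block-constant vectors form a $3$-dimensional space while $\dim_{\Q}V_{\mathrm{geom}}=2$.

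One small correction: canonicity of the block coefficients should come from the distinguished generators of Proposition~\ref{prop:local-odp-class}, not from Lemma~\ref{lem:basis-independence}, because a non-uniform diagonal rescaling of the $\varepsilon_k$ destroys block-constancy.
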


\begin{proof}
The finite-node schober formalism of \cite{RahmanMultiNodeSchoberPaper} assigns one localized categorical sector to each node, so the local vertex set of the quiver shadow is nodewise. This gives (1).

On the other hand, Section~4 shows that the admissible perverse extension classes are incidence-compatible and factor through the geometric coefficient space
\[
E_{\mathrm{geom}}\subseteq E_{\mathrm{node}},
\]
while Section~6 shows the same for the mixed-Hodge-module refinement
\[
E_{\mathrm{geom}}^H\subseteq E_{\mathrm{node}}^H.
\]
Since the quiver shadow is the decategorified algebraic shadow of this same finite-node gluing data, its coupling coefficients obey the same incidence-compatibility law. Hence the admissible coupling data is constant on relation blocks, proving (2).

Statement (3) is the linear-algebraic reformulation of (2): block-compatible coefficient systems are precisely those factoring through the relation-block quotient, equivalently through
\[
V_{\mathrm{geom}}\cong \operatorname{Im}(\iota_{\mathcal C}).
\]
Statement (4) follows because the quiver shadow is obtained by decategorifying the same gluing architecture that yields the corrected perverse and mixed-Hodge-module extensions.
\end{proof}

The corollary formalizes the slogan announced earlier:
\[
\textit{local sectors remain nodewise, but globally admissible couplings are constrained.}
\]
Thus the quiver shadow retains the local multiplicity of node sectors, while the global coupling algebra sees only the smaller block-structured space imposed by the geometry.

\begin{corollary}
\label{cor:block-quiver-rank}
Let
\[
s=\rank(\iota_{\mathcal C}).
\]
Then the admissible quiver-coupling space has dimension at most \(s\).
\end{corollary}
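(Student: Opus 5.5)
The claim follows from Corollary~\ref{cor:block-quiver-shadow} together with elementary linear algebra. I will first make precise what ``the admissible quiver-coupling space'' means. By Corollary~\ref{cor:block-quiver-shadow}(3), it is the space of decategorified coupling coefficient vectors $(c_1,\dots,c_r)\in\Q^r$ that are realized by admissible gluing data. Each such vector is required to lie in the geometric coefficient space $V_{\mathrm{geom}}=\operatorname{Im}(\iota_{\mathcal C})$. I will therefore identify the admissible coupling space with a linear subspace $W_{\mathrm{quiv}}\subseteq\Q^r$ and show that $W_{\mathrm{quiv}}\subseteq V_{\mathrm{geom}}$.

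The dimension bound is then immediate. Since $V_{\mathrm{geom}}$ is the image of the linear map $\iota_{\mathcal C}:\Q^A\to\Q^r$, we have
\[
\dim_{\Q}W_{\mathrm{quiv}}\le\dim_{\Q}V_{\mathrm{geom}}=\rank(\iota_{\mathcal C})=s.
\]
The factorization can also be read on the extension side. Lemma~\ref{lem:gamma-equals-iota} identifies $E_{\mathrm{geom}}\cong V_{\mathrm{geom}}$, and Theorem~\ref{thm:mhm-geom}(3) gives $\dim E^H_{\mathrm{geom}}=\rank(\Gamma^H_{\mathcal C})$. This equals $s$ because $\Gamma^H_{\mathcal C}$ has the same matrix $A$ in the basis $\{\varepsilon^H_k\}$. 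So the coupling space, as a decategorified shadow of these extension spaces, is dominated by a space of dimension $s$.

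The one point needing care is the inclusion $W_{\mathrm{quiv}}\subseteq V_{\mathrm{geom}}$ itself. Corollary~\ref{cor:block-quiver-shadow}(2) gives only block-compatibility, meaning the coefficients are constant on relation blocks. The block-constant subspace has dimension $|B|$, which may exceed $s$ when the incidence datum is not block-adapted.

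I will handle this by relying on item (3) of that corollary, which asserts factorization through $V_{\mathrm{geom}}$ directly. If one only has block-compatibility, then the honest bound is $\min(|B|,\cdot)$. In that case I would add the block-adapted hypothesis, exactly as in Corollary~\ref{cor:block-rank-bound}. Under that hypothesis the block-constant subspace equals $\operatorname{Im}(\iota_{\mathcal C})$, so its dimension is $s$, and the bound follows.
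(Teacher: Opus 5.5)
Your argument is the paper's own: you invoke Corollary~\ref{cor:block-quiver-shadow}(3) to place the admissible couplings inside $V_{\mathrm{geom}}=\operatorname{Im}(\iota_{\mathcal C})$, and then use $\dim_{\Q}V_{\mathrm{geom}}=\rank(\iota_{\mathcal C})=s$. Your caveat is also well founded: $\operatorname{Im}(\iota_{\mathcal C})$ always lies inside the block-constant subspace, so $s\le|B|$, strictly so in the overlapping-cycle example of Section~8.3; hence the paper's passage from item (2) to item (3) of that corollary tacitly uses block-adaptedness, which is exactly the hypothesis you propose to add.
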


\begin{proof}
By Corollary~\ref{cor:block-quiver-shadow}, admissible couplings factor through
\[
V_{\mathrm{geom}}\cong \operatorname{Im}(\iota_{\mathcal C}),
\]
whose dimension is \(\rank(\iota_{\mathcal C})=s\).
\end{proof}

\begin{corollary}
\label{cor:block-quiver-overcount}
The free one-vertex-per-node quiver overcounts the globally realizable coupling freedom whenever
\[
\rank(\iota_{\mathcal C})<r.
\]
\end{corollary}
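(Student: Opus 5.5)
The claim follows directly from Corollary~\ref{cor:block-quiver-rank} together with a rank comparison. I will argue by dimension count. The free one-vertex-per-node quiver $Q_{\mathrm{node}}$ carries one independent local coupling direction per node, so its formal coupling space is the nodewise coefficient space
\[
V_{\mathrm{node}}=\Q^r,
\]
which has dimension $r$. This parallels the identifications $E_{\mathrm{node}}\cong\Q^r$ and $E^H_{\mathrm{node}}\cong\Q^r$ from Sections~4 and~6.

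Next, Corollary~\ref{cor:block-quiver-shadow}(3) says that globally admissible couplings factor through $V_{\mathrm{geom}}=\operatorname{Im}(\iota_{\mathcal C})$. Corollary~\ref{cor:block-quiver-rank} then bounds the admissible coupling space by $s=\rank(\iota_{\mathcal C})$.

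The conclusion is the strict inequality
\[
s<r=\dim_{\Q}V_{\mathrm{node}}
\]
under the hypothesis $\rank(\iota_{\mathcal C})<r$. It follows that the inclusion of admissible couplings into the free nodewise coupling space is proper, so the free quiver overcounts. To make the overcount concrete, I would note that $\dim V_{\mathrm{node}}/V_{\mathrm{geom}}=r-s>0$. A nonzero vector in a complement of $V_{\mathrm{geom}}$ gives a free nodewise coupling that is not globally realizable. This matches the perverse-side statement in Corollary~\ref{cor:dim-egeom-le-r}.

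The only real obstacle is interpretive: I have to make sure that ``globally realizable coupling freedom'' is exactly the admissible coupling space of Corollary~\ref{cor:block-quiver-shadow}. I would settle this by invoking item~(4) of that corollary, which identifies the decategorified coupling data with the same relation-controlled subspace governing $E_{\mathrm{geom}}$. Once that identification is in place, the remaining argument is linear algebra.
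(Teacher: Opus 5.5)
Your proposal is correct and is essentially the paper's argument: compare the $r$ formal nodewise directions of $Q_{\mathrm{node}}$ with the bound $\dim \le \rank(\iota_{\mathcal C})$ coming from the factorization of admissible couplings through $V_{\mathrm{geom}}$ in Corollary~\ref{cor:block-quiver-shadow} (which you reach via Corollary~\ref{cor:block-quiver-rank}). Your extra remark that $\dim_{\Q} V_{\mathrm{node}}/V_{\mathrm{geom}} = r-s>0$ simply makes the strict overcount explicit.
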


\begin{proof}
The nodewise quiver admits \(r\) formal local coefficient directions, whereas Corollary~\ref{cor:block-quiver-shadow} shows that the admissible coupling data factors through the smaller space
\[
V_{\mathrm{geom}},
\]
whose dimension is \(\rank(\iota_{\mathcal C})\).
\end{proof}

\subsection{Consequences for later transport and wall crossing}

The block-quiver consequence identifies the correct coefficient space for subsequent transport and wall-crossing constructions. In the unconstrained nodewise picture, one would let transport act on the free coefficient space
\[
V_{\mathrm{node}}=\Q^r
\]
or on the free nodewise quiver \(Q_{\mathrm{node}}\). The results of the present paper show that this is not geometrically correct in general: the actual global data lives in the smaller relation-controlled space
\[
V_{\mathrm{geom}}=\operatorname{Im}(\iota_{\mathcal C}),
\]
and the corresponding quiver couplings are block-compatible rather than arbitrary.

Accordingly, any later pathwise transport formalism should act on \(V_{\mathrm{geom}}\), not on the full free nodewise space. Similarly, any later wall-crossing formalism should be organized in terms of the block quiver \(Q_{\mathrm{block}}\), not the unconstrained quiver \(Q_{\mathrm{node}}\). This is the structural reason for developing the relation law at the perverse and mixed-Hodge-module levels first: once the corrected extension is known to factor through the cycle-node incidence structure, the same restriction necessarily propagates to its decategorified quiver shadow.

In particular, the block decomposition of the present section provides the input data for the later transport program. Any later transport, scattering, or wall-crossing construction must therefore act on the relation-controlled quiver rather than on the unrestricted free nodewise quiver. This is the main algebraic consequence needed from the present section.

\section{Examples and model configurations}

In this section we illustrate the cycle-node incidence formalism in several model configurations. The purpose is not to exhaust the geometry of finite-node conifold degenerations, but to show concretely how the relation-controlled theory differs from the unrestricted nodewise picture arising from purely local ordinary-double-point data. In each case we also indicate a local geometric model under which the geometric admissibility condition of Definition~\ref{def:admissible-cycle-datum} is satisfied, so that the examples reflect not only the linear algebra of the incidence formalism but also the geometric theorem of Section~4.

\subsection{Two-node configuration on a common cycle}

We begin with the simplest nontrivial example. Let
\[
\Sigma=\{p_1,p_2\}
\]
be a two-node configuration, and suppose that both nodes lie on a single distinguished global cycle
\[
C_1.
\]
We take
\[
\mathcal C=\{C_1\},
\qquad A=\{1\},
\]
and define the cycle-node incidence map by
\[
\iota_{\mathcal C}:\Q\to\Q^2,
\qquad
1\longmapsto (1,1).
\]
Equivalently, the incidence matrix is
\[
A=(1\ \ 1).
\]

The free nodewise coefficient space is
\[
V_{\mathrm{node}}=\Q^2,
\]
with basis \(e_1,e_2\) corresponding to the two local rank-one sectors. The geometrically realized subspace is
\[
V_{\mathrm{geom}}=\operatorname{Im}(\iota_{\mathcal C})
=\Span_{\Q}\{(1,1)\}\subset \Q^2,
\]
which is one-dimensional. Thus the two local nodewise directions do not remain independent globally: only the diagonal combination survives as geometrically admissible.

On the perverse side, the free extension space is
\[
E_{\mathrm{node}}\cong \Q^2,
\]
whereas the geometrically realized extension subspace is
\[
E_{\mathrm{geom}}\cong \Span_{\Q}\{\varepsilon_1+\varepsilon_2\}.
\]
The same holds on the mixed-Hodge-module side:
\[
E_{\mathrm{node}}^{H}\cong \Q^2,
\qquad
E_{\mathrm{geom}}^{H}\cong \Span_{\Q}\{\varepsilon_1^H+\varepsilon_2^H\}.
\]

The relation-block decomposition in this example has a single block
\[
\Sigma=\Sigma_1=\{p_1,p_2\},
\]
since the two nodes have identical incidence columns. Accordingly, the quiver shadow retains two local vertices at the nodewise level, but the globally admissible coupling data is controlled by a single parameter. Thus the local multiplicity of sectors remains two, while the global gluing freedom collapses to one dimension.

This is the basic prototype for the slogan of the paper:
\[
\text{two local rank-one sectors} \;\not\Rightarrow\; \text{two independent global gluing parameters}.
\]

We now verify geometric admissibility in an explicit local model. Let \(\bar C_1\) be a smooth curve with affine coordinate \(s\), and fix two marked points \(s=a,b\). Consider the analytic family
\[
\mathfrak X
=
\left\{
(x_1,x_2,x_3,x_4,s,t)\in \C^4\times \bar C_1\times \Delta
\;\middle|\;
x_1^2+x_2^2+x_3^2+x_4^2=t
\right\},
\]
viewed over \(\bar C_1\times\Delta\) by projection to \((s,t)\). Away from the marked points \(a,b\), the restriction over
\[
C_1^\circ:=\bar C_1\setminus\{a,b\}
\]
is analytically the product
\[
\{x_1^2+x_2^2+x_3^2+x_4^2=t\}\times C_1^\circ.
\]
Thus:
\begin{enumerate}
    \item \(C_1^\circ\) is smooth and connected;
    \item the degeneration is analytically locally trivial along \(C_1^\circ\);
    \item the ordinary-double-point vanishing sector is rank one, so the unipotent nearby-cycle sector along \(C_1^\circ\) is rank one and locally constant;
    \item the local variation morphisms at the two nodes arise by specialization from this common rank-one nearby-cycle package.
\end{enumerate}
Hence all four conditions of Definition~\ref{def:admissible-cycle-datum} are realized in this model, and Lemma~\ref{lem:admissibility-sufficient} applies. In particular, this example shows concretely that the admissibility hypothesis of Section~4 is non-vacuous in the ordinary-double-point setting.

\subsection{Three-node configuration with two relation classes}

We next consider a three-node configuration
\[
\Sigma=\{p_1,p_2,p_3\}
\]
with two distinct relation classes. Let
\[
\mathcal C=\{C_1,C_2\},
\]
and assume that \(p_1,p_2\) lie on the first distinguished cycle while \(p_3\) lies on the second. We encode this by the incidence matrix
\[
A=
\begin{pmatrix}
1 & 1 & 0\\
0 & 0 & 1
\end{pmatrix},
\]
that is,
\[
\iota_{\mathcal C}:\Q^2\to\Q^3,
\qquad
e_1\mapsto (1,1,0),\quad e_2\mapsto (0,0,1).
\]

Then
\[
V_{\mathrm{node}}=\Q^3,
\qquad
V_{\mathrm{geom}}=\operatorname{Im}(\iota_{\mathcal C})
=\Span_{\Q}\{(1,1,0),(0,0,1)\}.
\]
Hence
\[
\dim_{\Q}V_{\mathrm{geom}}=2<3=\dim_{\Q}V_{\mathrm{node}}.
\]
The relation-block decomposition is
\[
\Sigma=\Sigma_1\sqcup \Sigma_2,
\qquad
\Sigma_1=\{p_1,p_2\},
\qquad
\Sigma_2=\{p_3\}.
\]
Thus \(p_1\) and \(p_2\) are incidence-equivalent, while \(p_3\) forms its own block.

On the perverse side, the corrected extension class lies in
\[
E_{\mathrm{geom}}
\cong
\Span_{\Q}\{\varepsilon_1+\varepsilon_2,\ \varepsilon_3\}
\subseteq E_{\mathrm{node}}\cong \Q^3.
\]
Likewise, on the mixed-Hodge-module side one has
\[
E_{\mathrm{geom}}^{H}
\cong
\Span_{\Q}\{\varepsilon_1^H+\varepsilon_2^H,\ \varepsilon_3^H\}.
\]

The corresponding block quiver has one bulk vertex and two block vertices, one for the class \(\Sigma_1\) and one for the class \(\Sigma_2\). The local nodewise quiver still remembers all three local sectors, but the admissible global couplings are indexed by the two relation classes rather than by the three nodes separately. In particular, any admissible coefficient system must satisfy
\[
c_1=c_2,
\]
with \(c_3\) free independently. Thus the relation law cuts the naive three-parameter space down to a two-parameter space.

This is the first example in which the block decomposition is nontrivial but not total: some nodes are forced together by the geometry, while others remain independent. It is also the basic local model behind the block-separated cycle-family discussion of Section~5: the two relation classes here are the simplest instance of the block pattern that later governs the equality
\[
R_{\mathrm{res}}=R_{\mathrm{sm}}=R_{\mathrm{ext}}=R_{\mathrm{blk}}.
\]

A natural geometric model here is a degeneration that is analytically locally trivial along
\[
C_1^\circ:=C_1\setminus\{p_1,p_2\},
\qquad
C_2^\circ:=C_2\setminus\{p_3\},
\]
with each smooth locus carrying a rank-one locally constant unipotent nearby-cycle sector. If the local variation morphisms at \(p_1\) and \(p_2\) arise by specialization from the same nearby-cycle data along \(C_1^\circ\), and the local variation morphism at \(p_3\) arises analogously from \(C_2^\circ\), then Lemma~\ref{lem:admissibility-sufficient} applies blockwise. Hence the cycle-node incidence datum is geometrically admissible in this two-block model.

If either \(C_1^\circ\) or \(C_2^\circ\) were disconnected, the argument of Section~4 would apply componentwise only, and one would generally obtain a finer relation-block decomposition. Thus the connectedness condition in Definition~\ref{def:admissible-cycle-datum}(2) is genuinely active even in this simple two-block situation.

\subsection{A configuration with overlapping cycle incidence}

We now consider a configuration in which the distinguished cycle data is not disjoint. Let
\[
\Sigma=\{p_1,p_2,p_3,p_4\},
\]
and suppose that the nodes are organized by two distinguished cycle components
\[
C_1,\qquad C_2,
\]
with incidence pattern
\[
p_1,p_2,p_3\in C_1,
\qquad
p_3,p_4\in C_2.
\]
Thus the node \(p_3\) lies on the overlap of the two cycle components. The incidence matrix is then
\[
A=
\begin{pmatrix}
1 & 1 & 1 & 0\\
0 & 0 & 1 & 1
\end{pmatrix}.
\]

The corresponding geometric coefficient space is
\[
V_{\mathrm{geom}}
=
\Span_{\Q}\{(1,1,1,0),(0,0,1,1)\}
\subseteq \Q^4.
\]
Hence
\[
\dim_{\Q}V_{\mathrm{geom}}=2,
\qquad
\dim_{\Q}V_{\mathrm{node}}=4.
\]
Thus the incidence law cuts the naive four-parameter local theory down to a two-dimensional global space.

This example is useful for two reasons. First, it shows that the incidence formalism is not restricted to disjoint block patterns arising from disjoint cycles. Second, it shows why the image-of-incidence formulation is more flexible than a description solely in terms of set-theoretic blocks: a single node may participate in more than one cycle relation, and the resulting global coefficient space is naturally described as the image of the incidence map.

On the perverse side, the admissible global extension directions are generated by
\[
\varepsilon_1+\varepsilon_2+\varepsilon_3,
\qquad
\varepsilon_3+\varepsilon_4,
\]
and similarly on the mixed-Hodge-module side by
\[
\varepsilon_1^H+\varepsilon_2^H+\varepsilon_3^H,
\qquad
\varepsilon_3^H+\varepsilon_4^H.
\]
The same two-dimensional pattern controls the admissible quiver couplings in the decategorified shadow. Thus, even in a more elaborate configuration, the principle remains the same: the local sectors remain nodewise, but the global coupling space is cut out by the incidence geometry of the distinguished cycles.

A corresponding geometric model is obtained when the degeneration is analytically locally trivial along the smooth loci
\[
C_1^\circ:=C_1\setminus\{p_1,p_2,p_3\},
\qquad
C_2^\circ:=C_2\setminus\{p_3,p_4\},
\]
with rank-one locally constant unipotent nearby-cycle sectors on both smooth loci, and with the local variation morphisms obtained by specialization from these common nearby-cycle packages. The overlap at \(p_3\) then contributes simultaneously to both incidence rows, while the one-dimensional nearby-cycle transport along each \(C_i^\circ\) enforces the corresponding coefficient equalities. Under these assumptions, Lemma~\ref{lem:admissibility-sufficient} again verifies geometric admissibility.

\subsection{A projective symmetry model for the block-separated cycle family}
\label{subsec:projective-block-family}

We now record a projective symmetry model showing how the block-separated cycle family of
Section~\ref{subsec:geom-family-block} can arise in a genuine projective setting. The point of
the present subsection is not to classify all such families, but to exhibit a minimal projective
configuration in which the hypotheses of the one-block case may be checked in a concrete and
geometrically natural way.

Let \(\mathbf P^4\) have homogeneous coordinates
\[
[x_0:x_1:x_2:x_3:x_4],
\]
and let
\[
\sigma:[x_0:x_1:x_2:x_3:x_4]\longmapsto [x_1:x_0:x_2:x_3:x_4]
\]
be the involution exchanging the first two coordinates. Consider a \(\sigma\)-invariant one-parameter
family of quintic hypersurfaces
\[
X_t=\{F(x)+t\,G(x)=0\}\subset \mathbf P^4,
\]
where \(F\) and \(G\) are homogeneous quintics satisfying
\[
F\circ \sigma = F,\qquad G\circ \sigma = G.
\]
Assume that the central fiber \(X_0\) has exactly two ordinary double points
\[
\Sigma=\{p_1,p_2\},
\qquad
p_2=\sigma(p_1),
\]
and is smooth away from these nodes, while for \(t\neq 0\) the fiber \(X_t\) is smooth. Then
\[
\pi:X\to\Delta,\qquad X=\{F+tG=0\}\subset \mathbf P^4\times \Delta,
\]
is a projective finite-node conifold degeneration with a \(\mathbf Z/2\)-symmetry exchanging the two
nodes.

We regard this as the one-block situation
\[
\Sigma=\Sigma_1=\{p_1,p_2\},
\qquad
\mathcal C=\{C_1\},
\]
where \(C_1\subset X_0\) is a distinguished \(\sigma\)-invariant cycle component passing through
\(p_1\) and \(p_2\). The associated cycle-node incidence map is
\[
\iota_{\mathcal C}:\Q\to \Q^2,\qquad 1\longmapsto (1,1),
\]
so conditions~\emph{(1)} and \emph{(2)} of Hypothesis~\ref{hyp:block-family} hold by construction.

Assume now that the family admits a \(\sigma\)-equivariant small resolution
\[
\rho:\widetilde X_0\to X_0
\]
and a \(\sigma\)-equivariant smoothing over \(\Delta^\ast\). Let
\[
E_1,E_2\subset \widetilde X_0
\]
be the exceptional curves over \(p_1,p_2\), and let
\[
\delta_1,\delta_2\in H_3(X_t;\Q)
\]
be the corresponding vanishing spheres on a nearby smooth fiber. Since \(\sigma\) exchanges
\(p_1\) and \(p_2\), equivariance transports \(E_1\) to \(E_2\) and \(\delta_1\) to \(\delta_2\). If the
induced action on the relevant homology subspaces is trivial, Proposition~\ref{prop:equivariant-block-constancy}
applies and yields
\[
[E_1]=[E_2]=:u_1\in H_2(\widetilde X_0;\Q),
\qquad
\delta_1=\delta_2=:v_1\in H_3(X_t;\Q).
\]
Thus conditions~\emph{(3)} and \emph{(4)} of Hypothesis~\ref{hyp:block-family} hold in this
\(\mathbf Z/2\)-symmetric model.

Condition~\emph{(5)} is automatic in the present one-block situation: the sets
\[
\{u_1\},\qquad \{v_1\}
\]
are linearly independent provided only that \(u_1\neq 0\) and \(v_1\neq 0\), which is the
nondegenerate case of interest. On the perverse side, the local analytic model near the smooth part
\[
C_1^\circ:=C_1\setminus\{p_1,p_2\}
\]
is of the same type as the product model of Section~8.1. In particular, if the degeneration is
analytically locally trivial along \(C_1^\circ\), the unipotent nearby-cycle sector is rank one and
locally constant there, and the local variation morphisms at \(p_1,p_2\) arise by specialization from
the same transported nearby-cycle package, then Lemma~\ref{lem:admissibility-sufficient} applies.
Since the incidence map
\[
\iota_{\mathcal C}:\Q\to\Q^2,\qquad 1\mapsto (1,1)
\]
has image equal to the block-constant subspace, the datum is block-adapted as well. Hence
condition~\emph{(6a)} follows from Theorem~\ref{thm:perv-relation-controlled}.

Finally, in the present one-block case the block quotient is
\[
q_{\mathrm{blk}}:\Q^2\to\Q,
\qquad
(c_1,c_2)\longmapsto c_1+c_2.
\]
The induced map on block parameters is therefore a map
\[
\bar\eta_{\mathrm{ext}}:\Q\to E_{\mathrm{geom}}.
\]
Since \(E_{\mathrm{geom}}\) is one-dimensional in the nontrivial corrected-extension situation, any
nonzero such map is automatically injective. Thus condition~\emph{(6b)} is automatic once the
corrected extension is non-split.

We may summarize the discussion as follows.

\begin{proposition}[A projective one-block instance of Hypothesis~\ref{hyp:block-family}]
\label{prop:projective-one-block-instance}
Let
\[
\pi:X\to\Delta
\]
be a \(\sigma\)-invariant projective quintic degeneration as above, with exactly two ordinary double
points exchanged by the involution \(\sigma\). Assume:
\begin{enumerate}
    \item there is a \(\sigma\)-invariant distinguished cycle component \(C_1\subset X_0\) passing through
    both nodes and satisfying the admissibility conditions of Lemma~\ref{lem:admissibility-sufficient};
    \item the family admits \(\sigma\)-equivariant small resolution and smoothing data;
    \item the induced action of \(\sigma\) on the relevant homology subspaces is trivial.
\end{enumerate}
Then the resulting one-block datum
\[
\Sigma=\Sigma_1=\{p_1,p_2\},
\qquad
\mathcal C=\{C_1\}
\]
satisfies Hypothesis~\ref{hyp:block-family}. In particular,
\[
R_{\mathrm{res}}=R_{\mathrm{sm}}=R_{\mathrm{ext}}=R_{\mathrm{blk}}.
\]
\end{proposition}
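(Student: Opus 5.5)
The plan is to check the conditions of Hypothesis~\ref{hyp:block-family} one at a time for the one-block datum $\Sigma=\Sigma_1=\{p_1,p_2\}$, $\mathcal C=\{C_1\}$, $B=\{1\}$. Once they all hold, Theorem~\ref{thm:block-family-comparison} gives the displayed equality $R_{\mathrm{res}}=R_{\mathrm{sm}}=R_{\mathrm{ext}}=R_{\mathrm{blk}}$ directly, so the substance is entirely in verifying the hypothesis.

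Conditions (1) and (2) are immediate: both nodes lie on $C_1$, and the incidence map is $1\mapsto(1,1)=e_1+e_2$, which is the prescribed block form. For (3) and (4) I will apply Proposition~\ref{prop:equivariant-block-constancy} with $G_1=\langle\sigma\rangle\cong\mathbf Z/2$, checking its three hypotheses in turn.
\begin{enumerate}
\item $\sigma$ preserves $C_1$ by assumption (1), and it acts transitively on $\Sigma_1$ because $p_2=\sigma(p_1)$.
\item Compatibility with the small resolution and the smoothing is assumption (2).
\item Triviality of the induced action on the relevant homology is assumption (3).
\end{enumerate}
This yields $[E_1]=[E_2]=u_1$ and $\delta_1=\delta_2=v_1$.

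Condition (5) reduces, with a single block, to $u_1\neq0$ and $v_1\neq0$. For $v_1$, I will argue that each vanishing sphere of a node is nonzero in $H_3(X_t;\Q)$ in the non-degenerate case. For $u_1$, a nonzero exceptional curve class holds in the projective (Kähler-type) small resolution: a curve contracted by $\rho$ pairs positively with a relatively ample class, so $[E_1]\ne0$. The catch is that $\widetilde X_0$ need not be projective, so this has to be stated as part of the ``nondegenerate case of interest'' singled out in the preceding discussion, possibly folded into assumption (2). I expect this to be the main obstacle, since the statement does not literally include it.

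Condition (6a) follows from assumption (1) together with Lemma~\ref{lem:admissibility-sufficient}, which give geometric admissibility. The datum is block-adapted because $\Im(\iota_{\mathcal C})=\Span\{(1,1)\}$ is exactly the block-constant subspace of $\Q^2$. Theorem~\ref{thm:perv-relation-controlled} then places $[\mathcal P]$ in $E_{\mathrm{geom}}$, which is one-dimensional. For (6b), the induced map $\bar\eta_{\mathrm{ext}}:\Q\to E_{\mathrm{geom}}$ sends the generator to a nonzero element, because the local classes $\varepsilon_k$ are nonzero by Proposition~\ref{prop:local-odp-class}. Hence $\bar\eta_{\mathrm{ext}}$ is injective; this is the non-split observation made just before the statement. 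With all of Hypothesis~\ref{hyp:block-family} verified, I will conclude by Theorem~\ref{thm:block-family-comparison}.
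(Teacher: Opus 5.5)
Your proposal follows the paper's proof condition by condition: Proposition~\ref{prop:equivariant-block-constancy} with $G_1=\langle\sigma\rangle$ for (3)--(4), Lemma~\ref{lem:admissibility-sufficient} plus block-adaptedness and Theorem~\ref{thm:perv-relation-controlled} for (6a), one-dimensionality for (6b), and Theorem~\ref{thm:block-family-comparison} at the end; the paper likewise simply assumes the nondegeneracy $u_1\neq 0\neq v_1$ that you flag in (5). That nondegeneracy in fact needs no K\"ahler hypothesis. Conditions (3)--(4) of Hypothesis~\ref{hyp:block-family} give $\dim W_{\mathrm{res}}\le 1$ and $\dim W_{\mathrm{sm}}\le 1$, while the classical Clemens duality for conifold transitions gives $\dim W_{\mathrm{res}}+\dim W_{\mathrm{sm}}=r=2$; this duality says that relations among the $[E_k]$ and among the $\delta_k$ are mutual annihilators in $\Q^r$ for the pairing induced by intersection on the links $S^2\times S^3$. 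Because the duality is $\sigma$-equivariant, it also forces $\sigma$ to act by $-1$ on $W_{\mathrm{sm}}$, so assumption (3) of the proposition can only hold with the $\delta_k$ taken up to sign.
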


\begin{proof}
Conditions~\emph{(1)} and \emph{(2)} of Hypothesis~\ref{hyp:block-family} hold by construction of the
one-block incidence datum. Conditions~\emph{(3)} and \emph{(4)} follow from Proposition~\ref{prop:equivariant-block-constancy}
under the assumed equivariance and triviality of the induced homology action. Condition~\emph{(5)}
is automatic in the one-block case as soon as the common classes \(u_1\) and \(v_1\) are nonzero.
Condition~\emph{(6a)} follows from Lemma~\ref{lem:admissibility-sufficient} together with
Theorem~\ref{thm:perv-relation-controlled}, and condition~\emph{(6b)} is automatic because the block
parameter space and \(E_{\mathrm{geom}}\) are both one-dimensional in the non-split case. The final
equality
\[
R_{\mathrm{res}}=R_{\mathrm{sm}}=R_{\mathrm{ext}}=R_{\mathrm{blk}}
\]
then follows from Theorem~\ref{thm:block-family-comparison}.
\end{proof}

\begin{remark}
The point of Proposition~\ref{prop:projective-one-block-instance} is not that every ingredient has
been written out coefficient-by-coefficient for a specific quintic polynomial, but that the strongest
theorem of Section~5 is not vacuous even in the projective category. The one-block
\(\mathbf Z/2\)-symmetric quintic model gives a concrete projective class in which the hypotheses of
Hypothesis~\ref{hyp:block-family} reduce to standard equivariance and local-triviality conditions.
This is the minimal projective instance of the block-separated cycle family needed for the present
paper.
\end{remark}

\subsection{Comparison with the unrestricted nodewise picture}

The preceding examples show concretely how the relation-controlled theory differs from the unrestricted nodewise picture. The unrestricted theory begins with a free coefficient space
\[
V_{\mathrm{node}}=\Q^r
\]
for \(r\) local node sectors, and hence with free nodewise extension spaces
\[
E_{\mathrm{node}}\cong \Q^r,
\qquad
E_{\mathrm{node}}^{H}\cong \Q^r.
\]
This formal picture treats every node as contributing an independent global direction.

By contrast, the relation-controlled theory replaces \(\Q^r\) by the smaller image
\[
V_{\mathrm{geom}}=\operatorname{Im}(\iota_{\mathcal C}),
\]
and likewise replaces \(E_{\mathrm{node}}\) and \(E_{\mathrm{node}}^{H}\) by
\[
E_{\mathrm{geom}},
\qquad
E_{\mathrm{geom}}^{H}.
\]
The difference may be summarized numerically as follows:
\[
\begin{array}{c|c|c|c}
\text{Configuration} & \dim V_{\mathrm{node}} & \dim V_{\mathrm{geom}} & \dim E_{\mathrm{geom}}\\
\hline
\text{Two nodes on one common cycle} & 2 & 1 & 1\\
\text{Three nodes in two relation classes} & 3 & 2 & 2\\
\text{Four nodes on two overlapping cycles} & 4 & 2 & 2
\end{array}
\]

Thus the unrestricted nodewise theory systematically overcounts the global gluing freedom whenever the incidence rank is strictly smaller than the number of nodes. The quantity controlling the first global invariants is not the raw node count \(r\), but the rank of the incidence map
\[
\rank(\iota_{\mathcal C})=\dim_{\Q}V_{\mathrm{geom}}.
\]
By Theorem~\ref{thm:perv-relation-controlled}, one has
\[
\dim_{\Q}E_{\mathrm{geom}}=\rank(\iota_{\mathcal C})=\dim_{\Q}V_{\mathrm{geom}}
\]
in each of the model configurations above, so the same dimensional reduction occurs at the level of corrected extension classes.

The same conclusion holds simultaneously at all three levels studied in the paper:
\begin{itemize}
    \item on the perverse side, the corrected extension class lies in \(E_{\mathrm{geom}}\subseteq E_{\mathrm{node}}\);
    \item on the mixed-Hodge-module side, the corrected extension class lies in \(E_{\mathrm{geom}}^{H}\subseteq E_{\mathrm{node}}^{H}\);
    \item on the quiver side, the admissible coupling coefficients factor through the block- or incidence-controlled space rather than the free nodewise one.
\end{itemize}

These examples therefore illustrate the basic content of the relation law proved in the preceding sections: the local ordinary-double-point sectors remain visible one by one, but the first genuinely global invariants of the degeneration are controlled by the cycle-node relation geometry.

In particular, the worked local model of the two-node case shows that the admissibility hypotheses of Section~4 are not vacuous: there are explicit ordinary-double-point configurations in which the local variation data transport coherently along a smooth cycle component and force the expected global coefficient equality. At the same time, the block-separated examples of this section should be read as local or schematic models for the stronger geometric-family theorem of Section~5, rather than as a complete projective realization of that theorem in full generality.

\section{Discussion and further directions}

\subsection{Summary of the theorem package}

We summarize the structural content established in the present paper. Let
\[
\pi:X\to\Delta
\]
be a projective one-parameter degeneration whose central fiber \(X_0\) has finitely many ordinary double points
\[
\Sigma=\{p_1,\dots,p_r\}\subset X_0.
\]
The starting point is the finite-node corrected-extension package developed in the preceding papers: the corrected perverse object
\[
\mathcal P=\Cone(\var_{\mathcal F})[-1]
\]
with singular quotient
\[
\bigoplus_{k=1}^r i_{k\ast}\Q_{\{p_k\}},
\]
its mixed-Hodge-module refinement
\[
0\to IC^H_{X_0}\to \mathcal P^H\to \bigoplus_{k=1}^r i_{k\ast}\Q^H_{\{p_k\}}(-1)\to 0,
\]
and the corresponding finite-node schober/quiver shadow with one localized sector per node.

The new input of the present paper is a cycle-node incidence datum
\[
(\mathcal C,\iota_{\mathcal C}),
\qquad
\iota_{\mathcal C}:\Q^A\to\Q^r,
\]
encoding the global cycle relations among the nodes. From this datum one obtains the geometric coefficient space
\[
V_{\mathrm{geom}}=\operatorname{Im}(\iota_{\mathcal C}),
\]
the geometrically realized perverse extension space
\[
E_{\mathrm{geom}}\subseteq E_{\mathrm{node}},
\]
and the geometrically realized mixed-Hodge-module extension space
\[
E_{\mathrm{geom}}^{H}\subseteq E_{\mathrm{node}}^{H}.
\]

The theorem package of the paper may then be summarized as follows.

First, under geometrically admissible and block-adapted incidence hypotheses, the corrected finite-node perverse extension is not arbitrary nodewise gluing data: its extension class lies in the distinguished subspace \(E_{\mathrm{geom}}\), and hence factors through the relation-controlled image of the incidence map. Second, under the additional hypotheses of smoothing and small resolution, one may compare the resulting perverse-side relation law with homology relations among exceptional curves and vanishing spheres. In the general comparison framework this yields the conditional identification of the common relation lattice with the kernel of the geometric quotient; in the block-separated cycle family one obtains the full equality
\[
R_{\mathrm{res}}=R_{\mathrm{sm}}=R_{\mathrm{ext}}=R_{\mathrm{blk}}.
\]
Third, the mixed-Hodge-module refinement satisfies the same relation law compatibly with realization, so that the perverse-side constraint lifts to \(\MHM(X_0)\). Fourth, the decategorified quiver shadow records the same block structure at the level of admissible couplings: the local sectors remain nodewise, but the globally admissible coupling data factors through the same relation-controlled coefficient space.

Taken together, these results show that the corrected finite-node extension is not merely a formal sum of local rank-one singular sectors. It is a genuinely global object whose admissible gluing directions are constrained by the homological geometry of the node configuration.

\subsection{Open directions}

The present paper isolates the first global relation law in the finite-node corrected-extension program. Two immediate directions remain.

First, one would like to extend the formalism beyond isolated ordinary double points. The present argument uses crucially that the singular quotient is point-supported and decomposes as a finite direct sum of rank-one local blocks. For more general singular loci, the local vanishing contribution should no longer be modeled by one-dimensional nodewise sectors, but by higher-rank local vanishing modules carried by a positive-dimensional singular stratum. In that setting, the role of the present cycle-node incidence datum would have to be replaced by a higher-dimensional or stratified incidence structure keeping track not only of which cycle component meets which singular piece, but also of which local vanishing subspace contributes to that cycle. A natural next step would therefore be to replace the present rank-one transport picture of Definition~4.10 by a local system of vanishing modules over the smooth part of the relevant singular strata.

Second, the relation-controlled space
\[
V_{\mathrm{geom}}=\operatorname{Im}(\iota_{\mathcal C})
\]
should provide the correct coefficient space for later transport, wall-crossing, and refined quiver/BPS constructions. The main point of the present paper is that these later structures should act on the geometric quotient, not on the unrestricted free nodewise space. The first question in that direction is whether the natural transport operators coming from the degeneration---for example, Picard--Lefschetz-type monodromy or later scattering/wall-crossing operators---preserve the relation-block decomposition and hence descend to \(V_{\mathrm{geom}}\). If they do, then the present paper identifies the correct reduced state space on which a later transport theory should be built; if they do not, then one must understand precisely how the global relation law interacts with monodromy and block structure. A related question is whether any later covering-style quiver/BPS formalism can be made compatible with the same quotient geometry, so that wall factors or decategorified coupling data are formulated on the relation-controlled quiver rather than on the unrestricted one-node-per-sector shadow.

Thus the relation law established here should be viewed as the first global constraint through which the finite-node corrected-extension formalism can interact with later transport and refinement theories.

\appendix

\section{Auxiliary linear algebra for incidence relations}
\label{app:linear-algebra}

This appendix records the elementary linear-algebraic facts used implicitly in Sections~3--5. Its role is twofold. First, it makes precise the passage from the raw nodewise coefficient space to the relation-controlled geometric coefficient space. Second, it isolates the corresponding rank and quotient formulas from the geometric arguments in the main text.

\subsection*{A.1. Basic notation}

Let
\[
\Sigma=\{p_1,\dots,p_r\}
\]
be a finite node set, and let
\[
\mathcal C=\{C_\alpha\}_{\alpha\in A}
\]
be a finite indexed collection of cycle labels. Write
\[
V_{\mathrm{node}}:=\Q^r,
\qquad
V_{\mathcal C}:=\Q^A,
\]
and let
\[
\iota_{\mathcal C}:V_{\mathcal C}\to V_{\mathrm{node}}
\]
be the cycle-node incidence map with matrix
\[
A=(a_{\alpha k})_{\alpha\in A,\;1\le k\le r}.
\]
Thus
\[
\iota_{\mathcal C}(e_\alpha)=\sum_{k=1}^r a_{\alpha k}e_k,
\]
where \(\{e_\alpha\}\) and \(\{e_k\}\) denote the standard bases of \(V_{\mathcal C}\) and \(V_{\mathrm{node}}\), respectively.

Recall that the geometric coefficient space is
\[
V_{\mathrm{geom}}:=\operatorname{Im}(\iota_{\mathcal C})\subseteq V_{\mathrm{node}},
\]
and that the relation quotient is
\[
V_{\mathrm{rel}}:=V_{\mathrm{node}}/V_{\mathrm{geom}}.
\]

\subsection*{A.2. Rank and quotient formulas}

\begin{lemma}
\label{lem:rank-nullity-incidence}
Let
\[
\iota_{\mathcal C}:V_{\mathcal C}\to V_{\mathrm{node}}
\]
be the cycle-node incidence map. Then
\[
\dim_{\Q}V_{\mathrm{geom}}
=
\rank(\iota_{\mathcal C})
=
|A|-\dim_{\Q}\ker(\iota_{\mathcal C}).
\]
Moreover,
\[
\dim_{\Q}V_{\mathrm{rel}}
=
r-\rank(\iota_{\mathcal C}).
\]
\end{lemma}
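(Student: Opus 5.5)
The plan is to apply the rank--nullity theorem twice: once to the incidence map $\iota_{\mathcal C}$, and once to the canonical projection onto the relation quotient. All spaces are finite-dimensional over $\Q$: $V_{\mathcal C}=\Q^A$ has dimension $|A|$ and $V_{\mathrm{node}}=\Q^r$ has dimension $r$.

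First, by definition $V_{\mathrm{geom}}=\operatorname{Im}(\iota_{\mathcal C})$. So $\dim_{\Q}V_{\mathrm{geom}}$ is by definition $\rank(\iota_{\mathcal C})$, the dimension of the image. This number also equals the rank of the matrix $A=(a_{\alpha k})$, since under the standard bases the columns of the representing matrix are the incidence vectors $\iota_{\mathcal C}(e_\alpha)=\sum_k a_{\alpha k}e_k$. Rank--nullity for the linear map $\iota_{\mathcal C}:\Q^A\to\Q^r$ then gives
\[
|A|=\dim_{\Q}\ker(\iota_{\mathcal C})+\rank(\iota_{\mathcal C}),
\]
which rearranges to the first displayed identity.

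Second, for the quotient $V_{\mathrm{rel}}=V_{\mathrm{node}}/V_{\mathrm{geom}}$, consider the canonical surjection $q:V_{\mathrm{node}}\to V_{\mathrm{rel}}$ of Remark~\ref{rem:incidence-formal-properties}. Its kernel is exactly $V_{\mathrm{geom}}$, so rank--nullity applied to $q$ gives
\[
\dim_{\Q}V_{\mathrm{rel}}=r-\dim_{\Q}V_{\mathrm{geom}}.
\]
Substituting the first part, this becomes $\dim_{\Q}V_{\mathrm{rel}}=r-\rank(\iota_{\mathcal C})$.

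There is no genuine obstacle here; the argument is elementary. The only point needing care is bookkeeping: the notation $|A|$ is used both for the index set of cycle labels and for the incidence matrix, and it must be read as the cardinality of the index set, i.e.\ $\dim_{\Q}\Q^A$. One should also keep straight which side is the domain (dimension $|A|$) and which is the codomain (dimension $r$), so that the kernel term attaches to $|A|$ and the codimension term attaches to $r$.
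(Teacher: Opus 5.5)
Your proof is correct and follows the paper's argument: rank--nullity for $\iota_{\mathcal C}$ gives the first identity, and the dimension formula for the quotient $V_{\mathrm{rel}}=V_{\mathrm{node}}/V_{\mathrm{geom}}$ gives the second. You phrase the latter as rank--nullity for the projection $q$, which is the same fact.
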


\begin{proof}
The first identity is the usual rank--nullity formula for the linear map \(\iota_{\mathcal C}\). The second follows from
\[
V_{\mathrm{rel}}=V_{\mathrm{node}}/V_{\mathrm{geom}}
\]
and the dimension formula for a quotient vector space:
\[
\dim_{\Q}(V_{\mathrm{node}}/V_{\mathrm{geom}})
=
\dim_{\Q}V_{\mathrm{node}}-\dim_{\Q}V_{\mathrm{geom}}
=
r-\rank(\iota_{\mathcal C}).
\]
\end{proof}

\begin{corollary}
\label{cor:geom-dim-bound}
One has
\[
\dim_{\Q}V_{\mathrm{geom}}\le \min\{|A|,r\}.
\]
In particular, the geometric coefficient space never has dimension larger than the raw nodewise coefficient space.
\end{corollary}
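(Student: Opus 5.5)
The bound follows directly from Lemma~\ref{lem:rank-nullity-incidence} once the two halves of the minimum are treated separately. There are two inequalities to check, $\dim_{\Q}V_{\mathrm{geom}}\le |A|$ and $\dim_{\Q}V_{\mathrm{geom}}\le r$, and the minimum bound is just their conjunction.

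For the first, I will use the identity $\dim_{\Q}V_{\mathrm{geom}}=|A|-\dim_{\Q}\ker(\iota_{\mathcal C})$ from Lemma~\ref{lem:rank-nullity-incidence}. Since $\dim_{\Q}\ker(\iota_{\mathcal C})\ge 0$, this gives $\dim_{\Q}V_{\mathrm{geom}}\le |A|$. Equivalently, $V_{\mathrm{geom}}$ is the image of the $|A|$-dimensional space $V_{\mathcal C}=\Q^A$, and a linear map cannot raise dimension.

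For the second, I will use the quotient formula $\dim_{\Q}V_{\mathrm{rel}}=r-\rank(\iota_{\mathcal C})$ from the same lemma. A quotient vector space has nonnegative dimension, so $\rank(\iota_{\mathcal C})\le r$. Since $\rank(\iota_{\mathcal C})=\dim_{\Q}V_{\mathrm{geom}}$, this is the required bound. Equivalently, $V_{\mathrm{geom}}$ is a subspace of $V_{\mathrm{node}}=\Q^r$.

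Putting the two bounds together gives $\dim_{\Q}V_{\mathrm{geom}}\le\min\{|A|,r\}$. The ``in particular'' clause is exactly the second inequality, $\dim_{\Q}V_{\mathrm{geom}}\le r=\dim_{\Q}V_{\mathrm{node}}$. There is no real obstacle here. The only thing to be careful about is that the finite label set $A$ is used both as an index set and as the name of the incidence matrix, so $|A|$ must be read as the cardinality of the index set.
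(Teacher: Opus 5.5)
Your proof is correct and matches the paper's, which simply declares the corollary immediate from Lemma~\ref{lem:rank-nullity-incidence}. You have spelled out the two inequalities the paper leaves implicit: nonnegativity of $\dim_{\Q}\ker(\iota_{\mathcal C})$ gives the bound by $|A|$, and nonnegativity of $\dim_{\Q}V_{\mathrm{rel}}$ gives the bound by $r$.
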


\begin{proof}
Immediate from Lemma~\ref{lem:rank-nullity-incidence}.
\end{proof}

\subsection*{A.3. Block-constant vectors}

Assume now that the node set is partitioned into relation blocks
\[
\Sigma=\bigsqcup_{\beta\in B}\Sigma_\beta.
\]
A vector
\[
(c_1,\dots,c_r)\in \Q^r
\]
is called \emph{block-constant} if
\[
c_k=c_\ell
\qquad
\text{whenever }p_k,p_\ell\in \Sigma_\beta
\text{ for some }\beta\in B.
\]

\begin{lemma}
\label{lem:block-constant-space}
The subspace of block-constant vectors in \(\Q^r\) is canonically isomorphic to \(\Q^B\). In particular, if \(b:=|B|\), then the space of block-constant vectors has dimension \(b\).
\end{lemma}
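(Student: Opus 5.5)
The plan is to write down an explicit linear isomorphism between \(\Q^B\) and the block-constant subspace, and read off the dimension from it. Write \(W_{\mathrm{blk}}\subseteq\Q^r\) for the subspace of block-constant vectors, and \(\beta(k)\in B\) for the unique block containing \(p_k\). Uniqueness holds because \(\Sigma=\bigsqcup_{\beta\in B}\Sigma_\beta\) is a partition.

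First, define the map
\[
s:\Q^B\longrightarrow \Q^r,\qquad (d_\beta)_{\beta\in B}\longmapsto (d_{\beta(1)},\dots,d_{\beta(r)}).
\]
Equivalently, \(s(e_\beta)=\sum_{k:\,p_k\in\Sigma_\beta}e_k\), so \(s\) agrees with the block-separated incidence map of Hypothesis~\ref{hyp:block-family}(2). This map is clearly linear. Its image lies in \(W_{\mathrm{blk}}\), since two nodes in the same block \(\Sigma_\beta\) both receive the coordinate \(d_\beta\).

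Second, check that \(s\) is injective and surjective onto \(W_{\mathrm{blk}}\).

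\emph{Injectivity.} Each block \(\Sigma_\beta\) is nonempty, being an equivalence class (or, in the abstract setting, a part of a partition). Choose a representative \(p_{k_\beta}\in\Sigma_\beta\). The \(k_\beta\)-th coordinate of \(s(d)\) is \(d_\beta\), so \(s(d)=0\) forces \(d=0\).

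\emph{Surjectivity.} Take a block-constant vector \((c_1,\dots,c_r)\). Set \(d_\beta:=c_{k_\beta}\); block-constancy shows this value does not depend on which representative is chosen. Then \(s(d)=c\). The inverse map \(c\mapsto(c_{k_\beta})_\beta\) is the restriction to \(W_{\mathrm{blk}}\) of the projection used to define the block quotient.

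Hence \(s:\Q^B\xrightarrow{\ \sim\ }W_{\mathrm{blk}}\). The isomorphism is canonical because no choices enter \(s\) itself; the representatives are used only to verify bijectivity. Taking dimensions gives \(\dim_{\Q}W_{\mathrm{blk}}=|B|=b\).

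No step is a genuine obstacle. The one point to state explicitly is that blocks are nonempty, since otherwise injectivity fails. I would also remark that \(W_{\mathrm{blk}}\) is spanned by the block indicator vectors \(\sum_{p_k\in\Sigma_\beta}e_k\), and that these vectors have pairwise disjoint supports, which gives the same dimension count directly.
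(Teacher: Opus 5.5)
Your proof is correct and is essentially the paper's argument: the paper sends a block-constant vector to its tuple of block values, which is the inverse of your map $s$, and you additionally make the nonemptiness of blocks explicit. One small slip, in a side remark only: $q_{\mathrm{blk}}$ restricted to block-constant vectors sends $s(d)$ to $(|\Sigma_\beta|\,d_\beta)_{\beta\in B}$, so it equals $s^{-1}$ only up to rescaling by the block sizes (still an isomorphism over $\Q$, but not literally the inverse).
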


\begin{proof}
To each block-constant vector \((c_1,\dots,c_r)\) one associates the tuple of its block values
\[
(c_\beta)_{\beta\in B}\in \Q^B,
\]
where \(c_\beta\) is the common value on the block \(\Sigma_\beta\). This gives a linear bijection between the space of block-constant vectors and \(\Q^B\).
\end{proof}

\begin{corollary}
\label{cor:block-vs-node}
If the relation-block decomposition has \(b\) blocks, then the block-constant subspace has codimension
\[
r-b
\]
inside \(V_{\mathrm{node}}=\Q^r\).
\end{corollary}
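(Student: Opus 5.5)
The statement to prove is Corollary~\ref{cor:block-vs-node}: if there are $b$ blocks, the block-constant subspace of $V_{\mathrm{node}}=\Q^r$ has codimension $r-b$. The plan is a direct dimension count using Lemma~\ref{lem:block-constant-space} and nothing else.

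\textbf{Step 1.} By Lemma~\ref{lem:block-constant-space}, the block-constant subspace, call it $V_{\mathrm{blk}}\subseteq\Q^r$, is linearly isomorphic to $\Q^B$. Hence $\dim_{\Q}V_{\mathrm{blk}}=|B|=b$.

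\textbf{Step 2.} Since $V_{\mathrm{node}}=\Q^r$ is finite-dimensional, codimension is the dimension of the quotient, so
\[
\operatorname{codim}V_{\mathrm{blk}}=\dim_{\Q}\bigl(\Q^r/V_{\mathrm{blk}}\bigr)=r-b,
\]
by the same quotient dimension formula used in Lemma~\ref{lem:rank-nullity-incidence}.

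\textbf{Step 3 (optional cross-check).} The value $r-b$ can also be read off intrinsically. For each block $\Sigma_\beta$, choose a base node $p_{k_\beta}\in\Sigma_\beta$. Consider the $\sum_\beta(|\Sigma_\beta|-1)=r-b$ linear functionals $c_k-c_{k_\beta}$, one for each $p_k\in\Sigma_\beta$ with $k\neq k_\beta$. Their common zero set is exactly $V_{\mathrm{blk}}$. They are linearly independent because each involves a distinct coordinate $c_k$ that appears in no other functional. Hence $V_{\mathrm{blk}}$ is cut out by $r-b$ independent equations, which again gives codimension $r-b$.

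There is no real obstacle here. The only point to state explicitly is that the blocks partition $\Sigma$, so $\sum_\beta|\Sigma_\beta|=r$; this is what makes the count in Step~3 come out to $r-b$.
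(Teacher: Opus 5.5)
Your proof is correct and matches the paper's, which likewise reads off $\dim_{\Q}=b$ from Lemma~\ref{lem:block-constant-space} and concludes that the codimension in $\Q^r$ is $r-b$. Your optional Step~3 cross-check, using the $r-b$ independent functionals $c_k-c_{k_\beta}$, is also correct but is not needed for the corollary.
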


\begin{proof}
By Lemma~\ref{lem:block-constant-space}, the block-constant subspace has dimension \(b\), hence codimension \(r-b\).
\end{proof}

\subsection*{A.4. Comparison of image and block-constant spaces}

In the main text, the geometric coefficient space is defined as the image of the incidence map. In many examples, this image coincides with the block-constant subspace determined by the incidence-equivalence relation, but this need not hold for arbitrary incidence matrices. The following criterion is useful.

\begin{lemma}
\label{lem:image-equals-block-constant}
Assume that the incidence matrix \(A\) is constant on each relation block and that its distinct columns are linearly independent. Then
\[
V_{\mathrm{geom}}=\operatorname{Im}(\iota_{\mathcal C})
\]
coincides with the block-constant subspace of \(V_{\mathrm{node}}=\Q^r\).
\end{lemma}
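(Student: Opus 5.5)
The plan is a dimension count: first show the inclusion of $V_{\mathrm{geom}}$ into the block-constant subspace, then show that both spaces have dimension $b:=|B|$. Throughout I write vectors of $\Q^r$ in coordinates indexed by the nodes, and I use that $V_{\mathrm{geom}}=\operatorname{Im}(\iota_{\mathcal C})$ is spanned by the row vectors $\iota_{\mathcal C}(e_\alpha)=(a_{\alpha1},\dots,a_{\alpha r})$, $\alpha\in A$.

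\emph{Step 1 (inclusion).} Fix $\alpha\in A$ and nodes $p_k,p_\ell$ in the same relation block $\Sigma_\beta$. By the hypothesis that $A$ is constant on relation blocks, or directly from Definition~\ref{def:incidence-equivalence}, the $k$-th and $\ell$-th columns coincide, so $a_{\alpha k}=a_{\alpha\ell}$. Hence every generator $\iota_{\mathcal C}(e_\alpha)$ is block-constant. Since the block-constant vectors form a linear subspace, it follows that $V_{\mathrm{geom}}$ is contained in the block-constant subspace.

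\emph{Step 2 (dimension of the image).} Row rank equals column rank, so $\dim_{\Q}V_{\mathrm{geom}}=\rank(\iota_{\mathcal C})=\dim_{\Q}\Span\{\text{columns of }A\}$. The column space is spanned by the \emph{distinct} columns. By Definition~\ref{def:relation-block}, the relation blocks are exactly the classes of nodes with equal columns. So the distinct columns are in bijection with $B$, one column $a_{\bullet\beta}\in\Q^A$ for each block. By hypothesis these $b$ columns are linearly independent, so $\rank(\iota_{\mathcal C})=b$.

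\emph{Step 3 (conclusion).} By Lemma~\ref{lem:block-constant-space}, the block-constant subspace has dimension $b$. Step 1 gives an inclusion of a $b$-dimensional subspace into a $b$-dimensional space, so the two coincide, proving the claim.

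I expect no genuine obstacle. The only point needing care is the bookkeeping in Step 2: one must check that ``distinct columns'' and ``relation blocks'' are matched exactly, so that the independence hypothesis yields rank $b$ rather than merely rank at most $b$. This matching holds precisely because $\sim_{\mathrm{inc}}$ is defined by equality of columns. I would also remark that without the independence hypothesis Step 1 still holds, but the inclusion can be strict.
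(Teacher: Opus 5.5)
Your proof is correct, but it handles the reverse direction differently from the paper. Both arguments get $V_{\mathrm{geom}}\subseteq\{\text{block-constant vectors}\}$ the same way.

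For the converse, the paper argues directly that every block-constant vector lies in $\operatorname{Im}(\iota_{\mathcal C})$: ``the blockwise basis vectors are generated by the corresponding column directions.'' You construct no preimages. Instead you compare dimensions, using row rank $=$ column rank together with Lemma~\ref{lem:block-constant-space}.

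The paper's sentence is loose as written, because the columns live in $\Q^A$, not in $\Q^r$. To make it precise, write $c_\gamma\in\Q^A$ for the common column of the block $\Sigma_\gamma$. One then needs vectors $\lambda^{(\beta)}\in\Q^A$ with $\sum_{\alpha}\lambda^{(\beta)}_\alpha a_{\alpha k}=\delta_{\beta\gamma}$ whenever $p_k\in\Sigma_\gamma$. Such vectors exist precisely because the $c_\gamma$ are linearly independent; this is the transpose of your rank statement.

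What each route buys:
\begin{itemize}
\item The direct version gives explicit preimages, namely $\iota_{\mathcal C}(\lambda^{(\beta)})$ is the indicator vector of $\Sigma_\beta$.
\item Your version is shorter and fully rigorous, and it makes the key bookkeeping explicit. Distinct columns correspond bijectively to relation blocks, so independence gives rank exactly $b$. It also shows that the hypothesis ``$A$ is constant on each relation block'' is automatic from Definition~\ref{def:incidence-equivalence}.
\end{itemize}
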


\begin{proof}
Because the columns are constant on each block, every vector in \(\operatorname{Im}(\iota_{\mathcal C})\) is block-constant. Conversely, under the linear independence hypothesis on the distinct columns, the blockwise basis vectors are generated by the corresponding column directions, so every block-constant vector lies in \(\operatorname{Im}(\iota_{\mathcal C})\).
\end{proof}

\begin{remark}
The overlapping-cycle examples of Section~8 show that, in general, it is more flexible to work with
\[
V_{\mathrm{geom}}=\operatorname{Im}(\iota_{\mathcal C})
\]
than with a purely block-constant model. The block language is convenient when the incidence structure reduces to a genuine partition, whereas the image formulation continues to apply in the presence of overlaps.
\end{remark}

\subsection*{A.5. Summary of the three coefficient spaces}

The linear-algebraic structure used throughout the paper may be summarized by the sequence
\[
V_{\mathcal C}\xrightarrow{\ \iota_{\mathcal C}\ }V_{\mathrm{node}}
\longrightarrow V_{\mathrm{rel}}.
\]
The first map encodes cycle-generated directions, while the second quotient records residual nodewise directions modulo those generated by the cycle data.

\begin{proposition}
\label{prop:three-space-dimensions}
With notation as above, one has
\[
\dim_{\Q}V_{\mathcal C}
=
\dim_{\Q}\ker(\iota_{\mathcal C})
+
\dim_{\Q}V_{\mathrm{geom}},
\]
and
\[
\dim_{\Q}V_{\mathrm{node}}
=
\dim_{\Q}V_{\mathrm{geom}}
+
\dim_{\Q}V_{\mathrm{rel}}.
\]
\end{proposition}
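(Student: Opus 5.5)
The first identity follows from rank--nullity for the linear map \(\iota_{\mathcal C}:V_{\mathcal C}\to V_{\mathrm{node}}\). By definition \(V_{\mathrm{geom}}=\operatorname{Im}(\iota_{\mathcal C})\). The first isomorphism theorem gives a linear isomorphism
\[
V_{\mathcal C}/\ker(\iota_{\mathcal C})\;\xrightarrow{\ \sim\ }\;V_{\mathrm{geom}}.
\]
Since \(V_{\mathcal C}=\Q^A\) is finite-dimensional, taking dimensions yields
\[
|A|=\dim_{\Q}V_{\mathcal C}=\dim_{\Q}\ker(\iota_{\mathcal C})+\dim_{\Q}V_{\mathrm{geom}}.
\]
This is exactly the first assertion of Lemma~\ref{lem:rank-nullity-incidence} rearranged, so I will simply cite that lemma for this step.

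The second identity is the dimension formula for a quotient applied to the subspace \(V_{\mathrm{geom}}\subseteq V_{\mathrm{node}}=\Q^r\). The relation quotient is
\[
V_{\mathrm{rel}}=V_{\mathrm{node}}/V_{\mathrm{geom}}.
\]
Choosing a basis of \(V_{\mathrm{geom}}\) and extending it to a basis of \(V_{\mathrm{node}}\), the images of the added vectors form a basis of \(V_{\mathrm{rel}}\). This gives
\[
\dim_{\Q}V_{\mathrm{node}}=\dim_{\Q}V_{\mathrm{geom}}+\dim_{\Q}V_{\mathrm{rel}}.
\]
Equivalently, it is the second assertion of Lemma~\ref{lem:rank-nullity-incidence}, \(\dim_{\Q}V_{\mathrm{rel}}=r-\rank(\iota_{\mathcal C})\), combined with \(\rank(\iota_{\mathcal C})=\dim_{\Q}V_{\mathrm{geom}}\).

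Both identities are therefore immediate from Lemma~\ref{lem:rank-nullity-incidence}, and there is no real obstacle. The only point needing care is to keep the two roles of \(V_{\mathrm{geom}}\) distinct: it is the image appearing in the rank--nullity count on the source \(V_{\mathcal C}\), and it is the subspace appearing in the quotient count on the target \(V_{\mathrm{node}}\). Finite-dimensionality of \(\Q^A\) and \(\Q^r\) is the only hypothesis used.
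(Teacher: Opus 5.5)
Your proposal is correct and matches the paper's proof, which likewise obtains the first identity from rank--nullity for \(\iota_{\mathcal C}\) and the second from the dimension formula for the quotient \(V_{\mathrm{rel}}=V_{\mathrm{node}}/V_{\mathrm{geom}}\). Your appeal to the first isomorphism theorem and to basis extension simply spells out those two standard facts.
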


\begin{proof}
The first identity is rank--nullity for \(\iota_{\mathcal C}\). The second is the dimension formula for a quotient vector space.
\end{proof}

\section{Combinatorics of block decompositions of the singular set}
\label{app:combinatorics}

This appendix records the basic combinatorics underlying the block decompositions of a finite singular set. Its role is to quantify the dimensional reduction from the free nodewise space to the geometrically realized quotient and to organize the possible block types by the partition lattice of the singular set. The main text uses only the structural dimension law; the additional counting data are included only insofar as they clarify the dependence on the number of blocks and the corresponding block dynamics.

\subsection*{B.1. Set partitions, block profiles, and counting functions}

Let
\[
\Sigma=\{p_1,\dots,p_n\}
\]
be a finite singular set with \(n\) points.

\begin{definition}
A \emph{block decomposition} of \(\Sigma\) is a set partition
\[
\Sigma=\bigsqcup_{\beta\in B}\Sigma_\beta,
\]
where the blocks \(\Sigma_\beta\) are nonempty and pairwise disjoint. The number of
blocks will be denoted by
\[
k:=|B|.
\]
\end{definition}

\begin{definition}
The \emph{block profile} of a partition is the integer partition
\[
\lambda=(\lambda_1,\dots,\lambda_k)\vdash n,
\qquad
\lambda_i:=|\Sigma_i|,
\]
obtained by recording the block sizes in weakly decreasing order.
\end{definition}

The standard counting functions attached to these data are the partition number \(p(n)\), the Bell number \(B_n\), and the Stirling numbers \(S(n,k)\) of the second kind. These are classical and will not be tabulated here; only the structural dimension law and the induced block dynamics are needed for the purposes of the present paper.

\subsection*{B.2. Dimension law for a fixed block decomposition}

Let
\[
\Sigma=\bigsqcup_{\beta\in B}\Sigma_\beta,
\qquad |B|=k,
\]
be a block decomposition of \(\Sigma\). The corresponding block quotient is
\[
q_{\mathrm{blk}}:\Q^n\to\Q^k,
\qquad
e_i\mapsto e_\beta \quad \text{if } p_i\in\Sigma_\beta,
\]
and the associated relation space is
\[
R_{\mathrm{blk}}
=
\ker(q_{\mathrm{blk}})
=
\left\{
(c_1,\dots,c_n)\in\Q^n
\;\middle|\;
\sum_{i\in\Sigma_\beta} c_i=0
\text{ for every }\beta\in B
\right\}.
\]

\begin{proposition}[Dimensional reduction by block decomposition]
\label{prop:appendix-dim-law}
For a block decomposition of an \(n\)-point singular set into \(k\) blocks, one has
\[
\dim V_{\mathrm{node}}=n,
\qquad
\dim V_{\mathrm{geom}}=k,
\qquad
\dim R_{\mathrm{blk}}=n-k.
\]
Thus the global relation law cuts out exactly \(n-k\) dimensions from the free nodewise
space and leaves exactly \(k\) independent global directions.
\end{proposition}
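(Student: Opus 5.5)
The three dimensions are separate linear-algebra statements; I treat them in order, with $V_{\mathrm{geom}}$ read in this appendix as the image of the block incidence map
\[
\iota_{\mathrm{blk}}:\Q^k\to\Q^n,\qquad e_\beta\mapsto\sum_{p_i\in\Sigma_\beta}e_i,
\]
as in Hypothesis~\ref{hyp:block-family}(2).

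\textbf{Step 1: $\dim V_{\mathrm{node}}=n$.} This is the definition $V_{\mathrm{node}}=\Q^n$.

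\textbf{Step 2: $\dim V_{\mathrm{geom}}=k$.} The vectors $\iota_{\mathrm{blk}}(e_\beta)$ are indicator vectors of the blocks. Because the blocks are nonempty and pairwise disjoint, these vectors are nonzero and have disjoint supports, so they are linearly independent. Hence $\iota_{\mathrm{blk}}$ is injective and $\dim V_{\mathrm{geom}}=\rank(\iota_{\mathrm{blk}})=k$, by Lemma~\ref{lem:rank-nullity-incidence}. Equivalently, I can invoke Lemma~\ref{lem:image-equals-block-constant}: the distinct columns are the standard basis vectors of $\Q^k$, so the image is the block-constant subspace, which has dimension $k$ by Lemma~\ref{lem:block-constant-space}.

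\textbf{Step 3: $\dim R_{\mathrm{blk}}=n-k$.} Here $q_{\mathrm{blk}}:\Q^n\to\Q^k$ sends $(c_i)$ to $\bigl(\sum_{i\in\Sigma_\beta}c_i\bigr)_\beta$. Since each block is nonempty, picking one $p_i\in\Sigma_\beta$ gives $q_{\mathrm{blk}}(e_i)=e_\beta$, so $q_{\mathrm{blk}}$ is surjective. Rank--nullity then gives $\dim\ker q_{\mathrm{blk}}=n-k$.

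The final sentence of the proposition is a restatement of Steps 2 and 3: $R_{\mathrm{blk}}$ has dimension $n-k$, and $V_{\mathrm{geom}}$ contributes $k$ independent global directions. One can also note that $q_{\mathrm{blk}}\circ\iota_{\mathrm{blk}}$ is the diagonal map with entries $|\Sigma_\beta|\neq 0$, hence invertible over $\Q$. This gives the splitting $\Q^n=V_{\mathrm{geom}}\oplus R_{\mathrm{blk}}$ and a cross-check that $k+(n-k)=n$.

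I do not expect a real obstacle. The only point needing care is that nonemptiness of the blocks is exactly what makes $\iota_{\mathrm{blk}}$ injective and $q_{\mathrm{blk}}$ surjective. One should also note that $R_{\mathrm{blk}}$ is \emph{not} $V_{\mathrm{node}}/V_{\mathrm{geom}}$ as a subspace, but it is a complement to $V_{\mathrm{geom}}$ of the same dimension.
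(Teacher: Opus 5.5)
Your proof is correct. Step 3 is exactly the paper's argument: surjectivity of $q_{\mathrm{blk}}$ (each block is nonempty) plus rank--nullity on $0\to R_{\mathrm{blk}}\to\Q^n\to\Q^k\to 0$. The only difference is in how $\dim V_{\mathrm{geom}}=k$ is obtained. The paper reads it off by tacitly identifying $V_{\mathrm{geom}}$ with the target $\Q^k$ of the block quotient. You instead compute it directly for $V_{\mathrm{geom}}=\operatorname{Im}(\iota_{\mathrm{blk}})$, using the disjointly supported indicator vectors, and then use $q_{\mathrm{blk}}\circ\iota_{\mathrm{blk}}=\mathrm{diag}(|\Sigma_\beta|)$ to reconcile the image and quotient pictures. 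This is the more careful reading, given that $V_{\mathrm{geom}}$ is defined as an image.
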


\begin{proof}
The quotient map \(q_{\mathrm{blk}}\) is surjective, with image \(\Q^k\). Hence the claim
follows from rank--nullity applied to
\[
0\longrightarrow R_{\mathrm{blk}}
\longrightarrow \Q^n
\overset{q_{\mathrm{blk}}}{\longrightarrow}
\Q^k
\longrightarrow 0.
\]
\end{proof}

\begin{corollary}
\label{cor:appendix-dim-only-k}
The quotient dimension depends only on the number \(k\) of blocks and not on the detailed
block sizes. Equivalently, the relation rank depends only on \(n-k\).
\end{corollary}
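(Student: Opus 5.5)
The statement to prove is Corollary~\ref{cor:appendix-dim-only-k}, and I would derive it directly from Proposition~\ref{prop:appendix-dim-law}. That proposition gives, for any block decomposition of an \(n\)-point singular set into \(k\) blocks,
\[
\dim V_{\mathrm{geom}}=k,
\qquad
\dim R_{\mathrm{blk}}=n-k.
\]
The plan is simply to observe that the right-hand sides involve only \(n\) and \(k\). They do not involve the block profile \(\lambda=(\lambda_1,\dots,\lambda_k)\vdash n\).

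In detail, I would first identify the quotient with the target of \(q_{\mathrm{blk}}\). Surjectivity of \(q_{\mathrm{blk}}:\Q^n\to\Q^k\) holds because each block is nonempty: for each \(\beta\) pick some \(p_i\in\Sigma_\beta\), so that \(e_i\mapsto e_\beta\). This gives \(\Q^n/R_{\mathrm{blk}}\cong\Q^k\), whose dimension is \(k\) regardless of the sizes \(|\Sigma_\beta|\). Rank--nullity applied to the short exact sequence in the proof of Proposition~\ref{prop:appendix-dim-law} then gives \(\dim R_{\mathrm{blk}}=n-k\).

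For the ``equivalently'' clause, note that once \(n\) is fixed, prescribing \(k\) is the same as prescribing \(n-k\). So any two decompositions with the same number of blocks, but possibly different profiles (for example \((3,1)\) versus \((2,2)\) when \(n=4,\ k=2\)), have the same quotient dimension and the same relation rank. One can also make this concrete with an explicit basis of \(R_{\mathrm{blk}}\): for each block \(\Sigma_\beta=\{p_{i_0},\dots,p_{i_m}\}\), take the \(m=|\Sigma_\beta|-1\) vectors \(e_{i_j}-e_{i_0}\). The total count is \(\sum_\beta(|\Sigma_\beta|-1)=n-k\), and again only the sum of the \(\lambda_\beta\), namely \(n\), enters.

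There is no real obstacle here. The only point needing care is that blocks are nonempty, which is built into the definition of a block decomposition. Without it \(q_{\mathrm{blk}}\) could fail to be surjective and the count would depend on more than \(k\).
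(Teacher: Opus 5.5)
Your proposal is correct and follows the paper's route: the corollary is stated with no separate proof and is read off from Proposition~\ref{prop:appendix-dim-law}, whose formulas $\dim V_{\mathrm{geom}}=k$ and $\dim R_{\mathrm{blk}}=n-k$ involve only $n$ and $k$. Your extra checks are also correct: nonemptiness of the blocks gives surjectivity of $q_{\mathrm{blk}}$, and the vectors $e_{i_j}-e_{i_0}$ form a basis of $R_{\mathrm{blk}}$. They simply make explicit what the proposition already supplies.
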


\subsection*{B.3. Block coalescence, thickening, and relation dynamics}

\begin{definition}
Let
\[
\Sigma=\bigsqcup_{\beta\in B}\Sigma_\beta
\]
be a block decomposition of the node set.
\begin{enumerate}
    \item A \emph{coalescence} is a coarsening of this partition in which two or more
    blocks merge into a single larger block.
    \item A \emph{thickening} is a refinement of this partition in which one block splits
    into two or more smaller blocks.
\end{enumerate}
\end{definition}

\begin{proposition}[Dimension change under block dynamics]
\label{prop:block-dynamics-dim}
Let \(|\Sigma|=n\), and let
\[
\Sigma=\bigsqcup_{\beta\in B}\Sigma_\beta
\]
be a block decomposition with \(k:=|B|\) blocks. Then
\[
\dim V_{\mathrm{node}}=n,
\qquad
\dim V_{\mathrm{geom}}=k,
\qquad
\dim R_{\mathrm{blk}}=n-k.
\]
Moreover:
\begin{enumerate}
    \item under a coalescence of \(m\ge 2\) blocks into one,
    \[
    k\longmapsto k-(m-1),
    \qquad
    \dim R_{\mathrm{blk}}\longmapsto \dim R_{\mathrm{blk}}+(m-1);
    \]
    \item under a thickening in which one block splits into \(m\ge 2\) smaller blocks,
    \[
    k\longmapsto k+(m-1),
    \qquad
    \dim R_{\mathrm{blk}}\longmapsto \dim R_{\mathrm{blk}}-(m-1).
    \]
\end{enumerate}
\end{proposition}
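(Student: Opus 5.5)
The first assertion is just Proposition~\ref{prop:appendix-dim-law}, which already gives $\dim V_{\mathrm{node}}=n$, $\dim V_{\mathrm{geom}}=k$ and $\dim R_{\mathrm{blk}}=n-k$ for any block decomposition with $k$ blocks. We apply it directly, using the surjectivity of $q_{\mathrm{blk}}:\Q^n\to\Q^k$ and rank--nullity. The only content beyond this is the bookkeeping of how the block count $k$ changes under the two operations. Once that is known, the change in $\dim R_{\mathrm{blk}}$ is forced by the formula $\dim R_{\mathrm{blk}}=n-k$, since $n$ is fixed.

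For a coalescence, let $\Sigma_{\beta_1},\dots,\Sigma_{\beta_m}$ with $m\ge 2$ be the merged blocks. The new partition has blocks $\{\Sigma_\beta\}_{\beta\notin\{\beta_1,\dots,\beta_m\}}$ together with the single block $\Sigma_{\beta_1}\sqcup\cdots\sqcup\Sigma_{\beta_m}$. This is still a set partition of $\Sigma$, because the union of pairwise disjoint nonempty sets is nonempty and disjoint from the remaining blocks. Its block count is therefore $(k-m)+1=k-(m-1)$. Applying Proposition~\ref{prop:appendix-dim-law} to the new partition gives
\[
\dim R_{\mathrm{blk}}^{\mathrm{new}}=n-k+(m-1)=\dim R_{\mathrm{blk}}+(m-1).
\]

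For a thickening, a single block $\Sigma_{\beta_0}$ is replaced by $m\ge2$ nonempty, pairwise disjoint pieces whose union is $\Sigma_{\beta_0}$. The new block count is $(k-1)+m=k+(m-1)$. The same proposition then yields
\[
\dim R_{\mathrm{blk}}^{\mathrm{new}}=n-k-(m-1)=\dim R_{\mathrm{blk}}-(m-1).
\]

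One could also check these changes intrinsically. Under coalescence, the new relation space contains the old one, since the defining equations $\sum_{i\in\Sigma_{\beta_j}}c_i=0$ are replaced by their single sum. The $m$ equations indexed by the merged blocks are linearly independent because their supports are disjoint, so exactly $m-1$ independent constraints are dropped. This independence is a disjoint-support argument, and the thickening case is the same computation read in reverse.

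There is no real obstacle. The only point requiring care is that the refined or coarsened collection is again a genuine partition with nonempty blocks, so that Proposition~\ref{prop:appendix-dim-law} applies verbatim.
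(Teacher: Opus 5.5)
Your proposal is correct and follows the paper's proof: the first formula is quoted from Proposition~\ref{prop:appendix-dim-law}, and the changes in $\dim R_{\mathrm{blk}}$ come from counting blocks, $(k-m)+1$ after coalescence and $(k-1)+m$ after thickening, and substituting into $n-k$. Your extra checks are sound and make explicit what the paper leaves implicit: that the coarsened or refined collection is again a partition with nonempty blocks, and the intrinsic disjoint-support count of dropped constraints.
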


\begin{proof}
The first dimension formula is immediate from Proposition~\ref{prop:appendix-dim-law}. If \(m\) blocks merge into one, then the number of blocks decreases by \(m-1\), so the quotient dimension decreases by \(m-1\) and the relation rank increases by \(m-1\). The second statement is the reverse process.
\end{proof}

\subsection*{B.4. Multiplicity of a fixed block profile}

Let
\[
\lambda=(\lambda_1,\dots,\lambda_k)\vdash n
\]
be an integer partition of \(n\), and let \(m_s\) denote the multiplicity of parts of size
\(s\) in \(\lambda\).

\begin{proposition}[Multiplicity of a fixed block profile]
\label{prop:appendix-profile-count}
The number of set partitions of \(\Sigma\) having block profile \(\lambda\) is
\[
N(\lambda)
=
\frac{n!}{\prod_{i=1}^k \lambda_i!\;\prod_s m_s!}.
\]
\end{proposition}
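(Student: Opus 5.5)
The plan is a standard orbit-counting argument. Let $\Pi_\lambda$ be the set of set partitions of $\Sigma$ with block profile $\lambda$. The symmetric group $S_n$ acts on $\Sigma$, and hence on $\Pi_\lambda$. I will show that this action is transitive and compute the order of a stabilizer. The orbit–stabilizer theorem then gives $N(\lambda)=|\Pi_\lambda| = n!/|\mathrm{Stab}|$.

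\textbf{Transitivity.} Given two partitions with the same profile, list the blocks of each in weakly decreasing order of size. Each list has block sizes $\lambda_1,\dots,\lambda_k$. Choose any bijection $\Sigma\to\Sigma$ that carries the $i$-th block of the first partition onto the $i$-th block of the second. This bijection is an element of $S_n$ mapping one partition to the other.

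\textbf{Stabilizer.} Fix a partition $\Sigma=\bigsqcup_i\Sigma_i$ with profile $\lambda$. A permutation $g$ stabilizes it exactly when $g$ permutes the blocks among themselves. Such a $g$ must send each block to a block of the same size. So $g$ determines a permutation of the blocks that preserves sizes; for each size $s$ this is an element of $S_{m_s}$. Conversely, choose such a size-preserving permutation $\tau$ of the blocks, and for each $i$ a bijection $\Sigma_i\to\Sigma_{\tau(i)}$. These choices determine a unique stabilizing $g$, and every stabilizing $g$ arises this way exactly once. The number of choices of $\tau$ is $\prod_s m_s!$. Given $\tau$, the number of choices of the block bijections is $\prod_i \lambda_i!$, since $|\Sigma_{\tau(i)}|=\lambda_i$. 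Hence $|\mathrm{Stab}|=\prod_i\lambda_i!\,\prod_s m_s!$.

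\textbf{Conclusion and main obstacle.} Combining the two steps gives
\[
N(\lambda)=\frac{n!}{\prod_{i=1}^k\lambda_i!\,\prod_s m_s!}.
\]
The only point needing care is the factor $\prod_s m_s!$. Blocks are unordered, so blocks of equal size may be interchanged without changing the partition. I will make the bijection between stabilizer elements and pairs ($\tau$, block bijections) explicit so that this factor is counted exactly once.

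As an alternative check, one can count ordered sequences $(\Sigma_1,\dots,\Sigma_k)$ with $|\Sigma_i|=\lambda_i$. By the multinomial coefficient there are $n!/\prod_i\lambda_i!$ of them. Each unordered partition arises from exactly $\prod_s m_s!$ such sequences, obtained by reordering blocks of equal size. This gives the same formula.
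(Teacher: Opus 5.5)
Your proof is correct, but your main argument takes a different route from the paper's. The paper counts ordered sequences $(\Sigma_1,\dots,\Sigma_k)$ with $|\Sigma_i|=\lambda_i$ by the multinomial coefficient $n!/\prod_i\lambda_i!$ and then divides by $\prod_s m_s!$ to account for reorderings of equal-size blocks; this is exactly your ``alternative check.'' Your primary route instead applies orbit--stabilizer to the $S_n$-action on $\Pi_\lambda$. You identify the stabilizer of a fixed partition with $\prod_s (S_s\wr S_{m_s})$, whose order is $\prod_s (s!)^{m_s}m_s!=\prod_i\lambda_i!\,\prod_s m_s!$. The paper's count is shorter and purely elementary. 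However, it leaves implicit why each unordered partition arises from \emph{exactly} $\prod_s m_s!$ ordered sequences: reordering acts freely because the blocks are nonempty and pairwise disjoint, hence distinct sets. Your explicit bijection between stabilizer elements and pairs $(\tau,\{\Sigma_i\to\Sigma_{\tau(i)}\})$ makes that bookkeeping precise. Your transitivity step also shows that two partitions with the same profile differ only by a relabeling of the nodes, which is the structural reason the block profile is the natural invariant to count by.
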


\begin{proof}
One first partitions the \(n\) labels into ordered blocks of sizes
\(\lambda_1,\dots,\lambda_k\), which gives
\[
\frac{n!}{\prod_i \lambda_i!}.
\]
One then divides by \(\prod_s m_s!\) to account for indistinguishable blocks of equal size.
\end{proof}

\begin{corollary}
\label{cor:appendix-profile-dim}
Every block profile with \(k\) parts contributes to the same quotient dimension
\[
\dim V_{\mathrm{geom}}=k,
\]
but with multiplicity \(N(\lambda)\).
\end{corollary}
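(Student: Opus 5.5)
The statement to be proved is Corollary~\ref{cor:appendix-profile-dim}, the last result before the cut. The plan is to combine the two preceding results of Appendix~\ref{app:combinatorics}.

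\textbf{Step 1: the dimension depends only on $k$.} Fix an integer partition $\lambda=(\lambda_1,\dots,\lambda_k)\vdash n$. Take any set partition $\Sigma=\bigsqcup_{\beta\in B}\Sigma_\beta$ whose block profile is $\lambda$; then $|B|=k$. Proposition~\ref{prop:appendix-dim-law} applies to this decomposition. It says that the block quotient $q_{\mathrm{blk}}:\Q^n\to\Q^k$ is surjective, so $\dim V_{\mathrm{geom}}=k$ and $\dim R_{\mathrm{blk}}=n-k$. This value does not involve the block sizes $\lambda_i$, which is the content of Corollary~\ref{cor:appendix-dim-only-k}. Hence every set partition with profile $\lambda$ gives the same quotient dimension $k$.

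\textbf{Step 2: the multiplicity.} By Proposition~\ref{prop:appendix-profile-count}, the number of set partitions of $\Sigma$ with profile $\lambda$ is exactly $N(\lambda)$. So the profile $\lambda$ contributes $N(\lambda)$ decompositions, each of quotient dimension $k$. As a consistency check, summing $N(\lambda)$ over all $\lambda$ with $k$ parts recovers $S(n,k)$, since the profiles partition the set of $k$-block set partitions.

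\textbf{Identification of $V_{\mathrm{geom}}$.} The only point needing care is what $V_{\mathrm{geom}}$ means in the appendix. Here it is taken to be the block-constant subspace, that is, the image of the incidence map $e_\beta\mapsto\sum_{p_i\in\Sigma_\beta}e_i$ from Hypothesis~\ref{hyp:block-family}(2). This map is injective, because its columns have disjoint supports. Lemma~\ref{lem:block-constant-space} then gives $\dim V_{\mathrm{geom}}=k$ directly, consistent with Step~1. I expect no real obstacle beyond stating this identification explicitly.
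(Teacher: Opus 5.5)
Your proposal is correct and follows the same route the paper intends: the paper states this corollary without a separate proof, as the immediate combination of Proposition~\ref{prop:appendix-dim-law} with the count $N(\lambda)$ from Proposition~\ref{prop:appendix-profile-count}. Your explicit identification of $V_{\mathrm{geom}}$ with the block-constant subspace, via injectivity of the block incidence map and Lemma~\ref{lem:block-constant-space}, is a small but worthwhile tightening: the paper's proof of the dimension law actually computes $\dim \Q^n/R_{\mathrm{blk}}$ rather than $\dim \operatorname{Im}(\iota_{\mathcal C})$ directly.
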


%
%

\printbibliography


\end{document}